\documentclass[a4paper,12pt]{amsart}
\usepackage[ansinew]{inputenc}
\usepackage{amscd}
\usepackage{amsmath}

\usepackage{amsthm}

\usepackage{graphicx}

\usepackage{txfonts}
\usepackage{dsfont}
\usepackage{mathabx}

\usepackage{tikz-cd}
\usepackage[titletoc]{appendix}

\usepackage{geometry}
\geometry{left=27mm,right=27mm,top=25mm,bottom=25mm}

\usepackage{tikz}

\newtheorem{Defn}{Definition}[section]
\newtheorem{Lemma}[Defn]{Lemma}
\newtheorem{prop}[Defn]{Proposition}
\newtheorem{theorem}[Defn]{Theorem}
\newtheorem{remark}[Defn]{Remark}
\newtheorem{Corollary}[Defn]{Corollary}
\newtheorem{Example}[Defn]{Example}

\title{Integrals along bimonoid homomorphisms}
\author{MINKYU KIM}
\date{}

\address{Graduate School of Mathematical Sciences \\ University of Tokyo}
\email{kim@ms.u-tokyo.ac.jp}

\begin{document}

\maketitle

\begin{abstract}
We introduce a notion of {\it an integral along a bimonoid homomorphism} as a simultaneous generalization of the integral and cointegral of bimonoids.
The purpose of this paper is to characterize an existence of a specific integral, called {\it a normalized generator integral}, along a bimonoid homomorphism in terms of the kernel and cokernel of the homomorphism.

We introduce a notion of {\it a volume on an abelian category} as a generalization of the dimension of vector spaces and the order of abelian groups.
In applications, we show that there exists a nontrivial volume partially defined on a category of bicommutative Hopf monoids.
The volume yields a notion of Fredholm homomorphisms between bicommutative Hopf monoids, which gives an analogue of the Fredholm index theory.
This paper gives a technical preliminary of our subsequent paper about a construction of TQFT's.
\end{abstract}

\setcounter{tocdepth}{2}
\tableofcontents

\section{Introduction}
\label{202002271058}

The notion of integrals of a bialgebra is introduced by Larson and Sweedler \cite{LarSwe}.
It is a generalization of the Haar measure of a group.
The integral theory has been used to study bialgebras or Hopf algebras \cite{LarSwe} \cite{sweedler1969integrals} \cite{radford1976order}.
The notion of bialgebras are generalized to bimonoids in a symmetric monoidal category $\mathcal{C}$ \cite{MS} \cite{mac}. 
The integral theory is generalized to the categorical settings and used to study bimonoids or Hopf monoids \cite{bespalov2000integrals}.

In this paper, we introduce a notion of an integral along a bimonoid homomorphism in a symmetric monoidal category $\mathcal{C}$.
For bimonoids $A$ and $B$, an integral along a bimonoid homomorphism $\xi : A \to B$ is a morphism $\mu : B \to A$ in $\mathcal{C}$ satisfying some axioms (see Definition \ref{201908021117}).
The integrals along bimonoid homomorphisms simultaneously generalize the notions of integral and cointegral of bimonoid : the notion of integral (cointegrals, resp.) of a bimonoid coincides with that of integrals along the counit (unit, resp.).

The purpose of this paper is to characterize the existence of a normalized generator integral along a bimonoid homomorphism in terms of the kernel and cokernel of the homomorphism.
The reader is referred to Definition \ref{201908021117} and \ref{201907311452} for the definition of normalized integrals and generator integrals respectively.
If $\mathcal{C}$ satisfies some assumptions (see (Assumption 0,1,2) in section \ref{202002201415}), then the existence of a normalized generator integral is characterized as follows.
Note that the assumptions on $\mathcal{C}$ automatically hold if $\mathcal{C} = \mathsf{Vec}^\otimes_{k}$, the tensor category of (possibly infinite-dimensional) vector spaces over a field ${k}$ :

\begin{theorem}
\label{202002211033}
Let $A,B$ be bicommutative Hopf monoids in $\mathcal{C}$ and $\xi : A \to B$ be a Hopf homomorphism.
Then there exists a normalized generator integral $\mu_\xi$ along $\xi$ if and only if the following conditions hold :
\begin{enumerate}
\item
the kernel Hopf monoid $Ker(\xi)$ has a normalized integral.
\item
the cokernel Hopf monoid $Cok (\xi)$ has a normalized cointegral.
\end{enumerate}
Moreover, if a normalized integral exists, then it is unique.
\end{theorem}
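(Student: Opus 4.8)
The plan is to prove both directions by reducing to the two classical cases already understood, namely integrals along the counit and cointegrals along the unit, using the factorization of $\xi$ through its image. First I would set up the relevant factorization: since we work in an abelian-category-like setting (Assumptions 0,1,2), the Hopf homomorphism $\xi : A \to B$ factors as $A \twoheadrightarrow I \hookrightarrow B$ where $I$ is the image Hopf monoid, the first map is the cokernel of $Ker(\xi) \hookrightarrow A$, and the second map is the kernel of $B \twoheadrightarrow Cok(\xi)$. I expect that an integral along $\xi$ can then be analyzed as a composite $B \to I \to A$, where the piece $B \to I$ behaves like an integral for the surjection (equivalently, like a cointegral-type datum controlled by $Cok(\xi)$ via the exact sequence $0 \to I \to B \to Cok(\xi) \to 0$) and the piece $I \to A$ behaves like a cointegral for the injection (equivalently, like an integral-type datum controlled by $Ker(\xi)$ via $0 \to Ker(\xi) \to A \to I \to 0$). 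The normalization and generator conditions should factor accordingly into a normalization/generator condition on each piece.

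Concretely, for the forward direction I would assume $\mu_\xi : B \to A$ is a normalized generator integral, and extract from it: (a) a morphism $Cok(\xi) \to I$ or rather show that the composite behavior along $I \hookrightarrow B$ forces the existence of a normalized cointegral on $Cok(\xi)$ — here the axioms of an integral along $\xi$ (Definition \ref{201908021117}) should, when restricted via the unit/counit of the sub/quotient objects, degenerate precisely to the defining axioms of a cointegral on the cokernel; and symmetrically (b) restricting/corestricting $\mu_\xi$ along $Ker(\xi) \hookrightarrow A$ should yield a normalized integral on $Ker(\xi)$. The generator hypothesis is what should guarantee these extracted morphisms are themselves generators (hence, after normalization, the honest normalized (co)integrals). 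For the converse, given a normalized integral $e$ on $Ker(\xi)$ and a normalized cointegral $c$ on $Cok(\xi)$, I would build $\mu_\xi$ by splicing: use $c$ (together with the exact sequence and the splitting-type properties available under the assumptions) to produce a canonical $B \to I$, use $e$ similarly to produce $I \to A$, and define $\mu_\xi$ as the composite, then verify it satisfies all the axioms in Definition \ref{201908021117} and is normalized and a generator. Uniqueness should follow because at each of the two stages the relevant (co)integral is unique (the classical uniqueness of normalized integrals/cointegrals, which the paper presumably records), so the composite is pinned down.

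The main obstacle I anticipate is the middle step: making precise how an integral along a general $\xi$ decomposes through the image and showing that the decomposition is faithful, i.e. that no information is lost and that the two extracted data genuinely recombine to the original. This requires a careful bookkeeping of the compatibility (bimonoid/Hopf) axioms under the two exact sequences, and in particular exploiting whatever splitting or "partial volume" machinery the assumptions provide (the mention of volumes and Assumptions 0,1,2 suggests one has controlled lifting/section statements, perhaps only partially defined, so I would need to check the relevant objects lie in the domain of definition). A secondary subtlety is the interaction of the \emph{generator} condition with the factorization: it is plausible that being a generator integral along $\xi$ is strictly stronger than the conjunction of being a generator at each stage, in which case an extra argument — likely using bicommutativity and the Hopf (antipode) structure to rule out degenerate intermediate pieces — will be needed. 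I would handle normalization last, as it is a linear condition that should be straightforward once the structural decomposition is in place.
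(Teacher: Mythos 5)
Your overall strategy coincides with the paper's. The existence direction is proved there exactly as you propose: under the standing assumptions $\xi$ is well-decomposable, one factors $\xi$ as $A \to Coim(\xi) \stackrel{\bar\xi}{\cong} Im(\xi) \to B$, uses the normalized integral of $Ker(\xi)$ to build a normalized integral $\tilde\mu_{coim(\xi)}$ along the projection and the normalized cointegral of $Cok(\xi)$ to build $\tilde\mu_{im(\xi)}$ along the inclusion (Lemma \ref{ker_cok_integral_exist}, via splitting the idempotents $L_{\alpha^{\to}_{ker(\xi)}}(\sigma_{Ker(\xi)})$ and $R^{\beta^{\leftarrow}_{cok(\xi)}}(\sigma^{Cok(\xi)})$), and sets $\mu_\xi = \tilde\mu_{coim(\xi)} \circ \bar\xi^{-1} \circ \tilde\mu_{im(\xi)}$ (Definition \ref{201907312115}, Theorem \ref{201907292156}). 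The necessity direction is also as you describe: $\mu \circ \eta_B$ factors through $Ker(\xi)$ and $\epsilon_A \circ \mu$ through $Cok(\xi)$, giving $\check F(\mu)$ and $\hat F(\mu)$ (Definition \ref{201907311644}, Theorem \ref{norm_inte_along_induces_norm_inte}). One correction here: the generator hypothesis plays no role in this direction — normalization of $\mu$ alone forces $\check F(\mu)$ and $\hat F(\mu)$ to be normalized, and a normalized (co)integral of a bimonoid is automatically a generator (Proposition \ref{201908021401}), so the extra bookkeeping you anticipate for transporting the generator condition is unnecessary.

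The genuine gap is in your uniqueness argument. Uniqueness of the normalized (co)integrals of $Ker(\xi)$ and $Cok(\xi)$ only pins down the particular composite you construct; it does not show that an \emph{arbitrary} normalized integral along $\xi$ arises this way, and that is precisely the ``faithfulness of the decomposition'' you flag as the main obstacle without resolving it. The paper sidesteps the factorization entirely: if $\mu$ is a normalized right integral and $\mu'$ a normalized left integral along $\xi$, then the integral axioms together with normalization give $\mu = \mu \circ \xi \circ \mu' = \mu'$ directly (Proposition \ref{r_integral_l_integra_coincide_along}), which yields uniqueness among all normalized integrals along $\xi$ with no reference to $Ker(\xi)$ or $Cok(\xi)$. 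Without this (or an argument that every normalized integral along $\xi$ factors through $Im(\xi)$ in the prescribed way, which is not obvious), your proof of the ``Moreover'' clause does not close. The same identity is also what the paper uses to verify the generator property of $\mu_\xi$ globally — via $\mu_\xi \circ \xi \circ \mu = \mu$ and $\mu \circ \xi \circ \mu_\xi = \mu$ for all one-sided integrals $\mu$ — rather than stage by stage, so your suspicion that the generator condition does not simply decompose along the two exact sequences is well founded, and this is the second place where you would need the explicit idempotent descriptions of $\tilde\mu_{coim(\xi)} \circ coim(\xi)$ and $im(\xi) \circ \tilde\mu_{im(\xi)}$.
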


Note that even if $A,B$ are not bicommutative or it is not obvious whether $\mathcal{C}$ satisfies (Assumption 0,1,2), we have more general results under some assumptions on the homomorphism $\xi$.
The integral is constructed concretely.
We prove such a generalization in Corollary \ref{202002211035} which implies our main theorem.

In applications, we investigate the category $\mathsf{Hopf}^\mathsf{bc,\star} ( \mathcal{C} )$ of bicommutative Hopf monoids with a normalized integral and cointegral.
We prove that the category $\mathsf{Hopf}^\mathsf{bc,\star} ( \mathcal{C} )$ is an abelian subcategory of $\mathsf{Hopf}^\mathsf{bc} ( \mathcal{C})$ and closed under short exact sequences.
See section \ref{202002201415}\footnote{By Theorem \ref{small_integral_equiv}, the category $\mathsf{Hopf}^\mathsf{bc,\star} ( \mathcal{C} )$ coincides with $\mathsf{Hopf}^\mathsf{bc,bs} ( \mathcal{C} )$ in section \ref{202002201415}.}.

We introduce a notion of volume on an abelian category $\mathcal{A}$ as a generalization of the dimension of vector spaces and the order of abelian groups.
It is an invariant of objects in $\mathcal{A}$ compatible with short exact sequences (see Definition \ref{201911231935}).
As another application to $\mathsf{Hopf}^\mathsf{bc,\star} ( \mathcal{C} )$, we construct an $End_\mathcal{C} ( \mathds{1} )$-valued volume $vol^{-1}$ on the abelian category $\mathcal{A} = \mathsf{Hopf}^\mathsf{bc,\star} ( \mathcal{C} )$.
Here $\mathds{1}$ is the unit object of $\mathcal{C}$ and the endomorphism set $End_\mathcal{C} ( \mathds{1} )$ is an abelian monoid induced by the symmetric monoidal structure of $\mathcal{C}$.

By using the volume $vol^{-1}$, we introduce a notion of {\it Fredholm homomorphisms} between bicommutative Hopf monoids as an analogue of the Fredholm operator \cite{kato2013perturbation} (see Definition \ref{201911232050}).
We study its {\it index} which is robust to some finite perturbations (see Proposition \ref{202002271226}).
Furthermore, we construct a functorial assignment of integrals to Fredholm homomorphisms.

This paper gives a technical preliminary of our subsequent paper \cite{kim2020family}.
Indeed, we use the results in this paper to give a generalization of the untwisted abelian Dijkgraaf-Witten theory \cite{DW} \cite{Wakui} \cite{FQ} and the bicommutative Turaev-Viro TQFT \cite{TV} \cite{BW}.
We will give a systematic way to construct a sequence of TQFT's from (co)homology theory.
The TQFT's are constructed by using {\it path-integral} which is formulated by some integral along bimonoid homomorphisms.

As a corollary of our subsequent paper, if the volume $vol^{-1} (A)$ of an object $A$ in $\mathsf{Hopf}^\mathsf{bc,\star} ( \mathcal{C} )$ is invertible in $End_\mathcal{C} ( \mathds{1} )$, then the underlying object of $A$ is dualizable in $\mathcal{C}$ and its categorical dimension coincides with the inverse of $vol^{-1} (A)$.
If $\mathcal{C}$ is a rigid symmetric monoidal category with split idempotents, then the inverse volume of any Hopf monoid is invertible \cite{bespalov2000integrals}.
It is not obvious whether the inverse volume is invertible or not in general.
Note that we do not assume a duality on objects of $\mathcal{C}$.

There is another approach to a generalization of (co)integrals of bimonoids.
In \cite{bespalov2000integrals}, (co)integrals are defined by a universality.
It is not obvious whether our integrals could be generalized by universality.

We expect that the result in this paper could be applied to topology through another approach.
There is a topological invariant of 3-manifolds induced by a finite-dimensional Hopf algebra, called the Kuperberg invariant \cite{kuperberg1991involutory} \cite{kuperberg1997non}.
In particular, if the Hopf algebra is involutory, then it is defined by using the normalized integral and cointegral of the Hopf algebra.

\vspace{2mm}

The organization of this paper is as follows.
In section \ref{201912060935}, we give our convention for string diagrams and a brief review of monoids in a symmetric monoidal category.
In subsection \ref{201907220213}, we review integrals of bimonoids.
In subsection \ref{201908051600}, we introduce the notion of (normalized) integral along bimonoid homomorphisms and give some basic properties.
In subsection \ref{201908051612}, we introduce a notion of generator integral and give some basic properties.
In subsection \ref{201907220216}, \ref{201907220217}, we introduce the notion of invariant objects and stabilized objects respectively.
In subsection \ref{201907220219}, we introduce the notion of (co, bi) stable monoidal structure.
In section \ref{201908051552}, we introduce the notions of (co,bi)normality of bimonoid homomorphisms and give some basic properties.
In section \ref{201912060950}, we introduce the notion of (co, bi) small bimonoids and examine it in terms of an existence of normalized (co)integrals.
In subsection \ref{201908051601}, we prove the uniqueness of a normalized integral.
In subsection \ref{201908051602}, we prove some necessary conditions for existence of a normalized integral.
In section \ref{201908021404}, by using a normalized generator integral, we show an isomorphism between the set of endomorphisms on the unit object $\mathds{1}$ and the set of integrals.
In subsection \ref{201908051614}, we prove a key lemma to prove the main theorem.
In subsection \ref{201908051616},  we introduce two notions of (weakly) well-decomposable homomorphism and (weakly) Fredholm homomorphism, and prove the main theorem.
In section \ref{201908051617}, we investigate a commutativity of some homomorphisms and normalized integrals.
In subsection \ref{201908051620}, we introduce the inverse volume of some bimonoids.
In subsection \ref{201912060955}, we introduce the inverse volume of some bimonoid homomorphisms.
In subsection \ref{201908051621}, we study a commutativity of normalized integrals.
In subsection \ref{201908041309}, we give some conditions where $Ker(\xi)$, $Cok (\xi)$ inherits a (co)smallness from that of the domain and the target of $\xi$. 
In subsection \ref{201908051618}, we study bismallness of bimonoids in an exact sequence. 
In section \ref{201912022325}, we introduce the notion of volume on an abelian category and study basic notions related with it.
In subsection \ref{201911161328}, we prove that the inverse volume is a volume on the category of bicommutative Hopf monoids.
In subsection \ref{201908010930}, we construct functorial integrals for Fredholm homomorphisms.


\section*{Acknowledgements}
The author appreciates Christine Vespa who read this paper carefully and gave helpful comments.
The author was supported by FMSP, a JSPS Program for Leading Graduate Schools in the University of Tokyo, and JPSJ Grant-in-Aid for Scientific Research on Innovative Areas Grant Number JP17H06461.

\section*{Data availablity}

Data sharing not applicable to this article as no datasets were generated or analysed during the current study.


\section{Notations}
\label{201912060935}

This section gives our convention about notations.
The reader is referred to some introductory books for category theory or (Hopf) monoid theory \cite{mac} \cite{MS}.

We denote by $\mathds{1}$ the unit object of a monoidal category $\mathcal{C}$, by $\otimes$ the monoidal operation.
We often omit the coherence isomorphisms : the associator $\mathbf{a}_{x,y,z} : (x \otimes y ) \otimes z \to x \otimes ( y \otimes z )$, the right unitor $\mathbf{r}_x : x \otimes \mathds{1} \to x$ and the left unitor $\mathbf{l}_x : \mathds{1} \otimes x \to x$ ; and denote by $\cong$.

\vspace{0.1cm}
{\it String diagrams.}
We explain our convention to represent {\it string diagrams}.
It is convenient to use string diagrams to discuss equations of morphisms in a symmetric monoidal category $\mathcal{C}$.
It is based on finite graphs where for each vertex $v$ the set of edges passing through $v$ has a partition by, namely, {\it incoming edges} and {\it outcoming edges}.
For example, a morphism $f : x \to y$ in $\mathcal{C}$ is represented by (1) in Figure \ref{201912060940}.
In this example, the underlying graph has one 2-valent vertex and two edges.
If there is no confusion from the context, we abbreviate the objects as (2) in Figure \ref{201912060940}.
For another example, a morphism $g : a \otimes b \to x \otimes y \otimes z$ is represented by (3) in Figure \ref{201912060940}.

\begin{figure}[ht]
  \includegraphics[width=8cm]{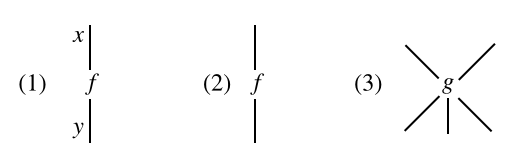}
  \caption{}
  \label{201912060940}
\end{figure}

We represent the tensor product of morphisms in a symmetric monoidal category $\mathcal{C}$ by gluing two string diagrams.
For example, if $h : x \to y$, $k : a \to b$ are morphisms, then we represent $h \otimes k : x \otimes a \to y \otimes b$ by (1) in Figure \ref{201912060941}.

We represent the composition of morphisms by connecting some edges of string diagrams.
For example, if $q : x \to y$ and $p : y \to z$ are morphisms, we represent their composition $p \circ q : x \to z$ by (2) in Figure \ref{201912060941}.

\begin{figure}[ht]
  \includegraphics[width=4.5cm]{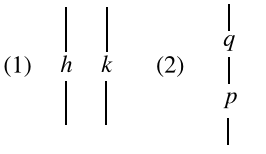}
  \caption{}
  \label{201912060941}
\end{figure}

The symmetry $\mathbf{s}_{x,y} : x \otimes y \to y \otimes x$ which is a natural isomorphism is denoted by (1) in Figure \ref{201912072118}.

The edge colored by the unit object $\mathds{1}$ of the symmetric monoidal category $\mathcal{C}$ is abbreviated.
For example, a morphism $u : \mathds{1} \to a$ is denoted by (2) in Figure \ref{201912072118} and a morphism $v : b \to  \mathds{1}$ is denoted by (3) in Figure \ref{201912072118}.

\begin{figure}[ht]
  \includegraphics[width=7.5cm]{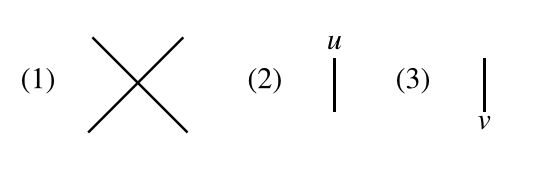}
  \caption{}
  \label{201912072118}
\end{figure}

\vspace{0.1cm}
{\it Monoid.}
The notion of monoid in a symmetric monoidal category is a generalization of the notion of {\it monoid} which is a set equipped with a unital and associative product.
Furthermore, it is a generalization of the notion of {\it algebra}.
We use the notations $\nabla : A \otimes A \to A$ and $\eta : \mathds{1} \to A$ to represent the multiplication and the unit.
On the one hand, the comonoid is a dual notion of the monoid.
We use the notations $\Delta : A  \to A\otimes A$ and $\epsilon : A \to \mathds{1} $ to represent the comultiplication and the counit.
Figure \ref{201912042210} denotes the structure morphisms as string diagrams.

The notions of bimonoid and Hopf monoid are defined as an object of $\mathcal{C}$ equipped with a monoid structure and a comonoid structure which are subject to some axioms.
We denote by $\mathsf{Bimon} (\mathcal{C}), \mathsf{Hopf} (\mathcal{C})$ the categories of bimonoids and Hopf monoids respectively.

\begin{figure}[ht]
  \includegraphics[width=6.5cm]{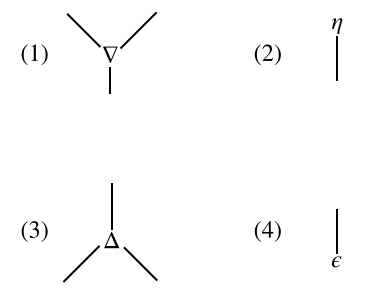}
  \caption{}
  \label{201912042210}
\end{figure}


{\it Action.}
We give some notations about actions in a symmetric monoidal category.
The notations related with coaction is defined similarly.

\begin{Defn}
\label{201911301531}
\rm
Let $X$ be an object of $\mathcal{C}$, $A$ be a bimonoid, and $\alpha : A \otimes X \to X$ be a morphism in $\mathcal{C}$.
A triple $(A,\alpha , X)$ is a {\it left action} in $\mathcal{C}$ if following diagrams commute :
\begin{equation}
\begin{tikzcd}
A\otimes A\otimes X \ar[r, "id_A \otimes \alpha"] \ar[d, "\nabla_A \otimes id_X"] & A\otimes X \ar[d, "\alpha"] \\
A\otimes X \ar[r, "\alpha"] & X
\end{tikzcd}
\end{equation}
\begin{equation}
\begin{tikzcd}
 \mathds{1} \otimes X  \ar[dr, "\cong"'] \ar[r, "\eta_A \otimes id_X"] & A \otimes X \ar[d, "\alpha"] \\
& X
\end{tikzcd}
\end{equation}
Let $(A, \alpha, X)$, $(A^\prime, \alpha^\prime , X^\prime)$ be left actions in a symmetric monoidal category $\mathcal{C}$.
A pair $(\xi_0 , \xi_1) : (A, \alpha, X) \to (A^\prime, \alpha^\prime , X^\prime)$ is a {\it morphism of left actions} if $\xi_0 : A \to A^\prime$ is a monoid homomorphism and $\xi_1 : X \to X^\prime$ is a morphism in $\mathcal{C}$ which intertwines the actions.

Left actions in $\mathcal{C}$ and morphisms of left actions form a category which we denote by $\mathsf{Act}_{l} (\mathcal{C})$.
The symmetric monoidal category structures of $\mathcal{C}$ and $\mathsf{Bimon}(\mathcal{C})$ induce a symmetric monoidal category on $\mathsf{Act}_{l} (\mathcal{C})$ by $(A,\alpha, X) \otimes (A^\prime, \alpha^\prime, X^\prime) \stackrel{\mathrm{def.}}{=} (A \otimes A^\prime, \alpha \tilde{\otimes} \alpha^\prime , X \otimes X^\prime)$.
Here, $\alpha \tilde{\otimes} \alpha^\prime  : (A \otimes A^\prime) \otimes (X \otimes X^\prime) \to  X \otimes X^\prime$ is defined by composing
\begin{align}
A \otimes A^\prime \otimes X \otimes X^\prime 
\stackrel{id_A \otimes \mathbf{s}_{A^\prime, X} \otimes id_{X^\prime}}{\longrightarrow} A \otimes  X \otimes A^\prime \otimes X^\prime 
\stackrel{\alpha \otimes \alpha^\prime}{\longrightarrow} X \otimes X^\prime .
\end{align}

We define a {\it right action in} a symmetric monoidal category $\mathcal{C}$ and its morphism similarly.
Note that for a right action, we use the notation $(X,\alpha, A)$ where $A$ is a bimonoid and $X$ is an object on which $A$ acts.
We denote by $\mathsf{Act}_{r} (\mathcal{C})$ the symmetric monoidal category of right actions.

Let $A$ be a bimonoid in a symmetric monoidal category $\mathcal{C}$ and $X$ be an object of $\mathcal{C}$.
A left action $(A,\tau_{A,X} , X)$ is {\it trivial} if
\begin{align}
\tau_{A,X} : A \otimes X \stackrel{\epsilon_A \otimes id_X}{\to} \mathds{1} \otimes X \stackrel{\cong}{\to} X . 
\end{align}
We also define a {trivial right action} analogously.
We abbreviate $\tau = \tau_{A,X}$ if there is no confusion.
\end{Defn}

\begin{remark}
The notion of action is usually defined for a monoid $A$, but we require that $A$ should be a bimonoid in Definition \ref{201911301531}.
In fact, the trivial action is well-defined since $A$ has a counit which is a structure of a comonoid.
\end{remark}


\section{Integrals}


\subsection{Integrals of bimonoids}
\label{201907220213}

In this subsection, we review the notion of integral of a bimonoid and its basic properties.

We give some remarks on terminology.
The integral in this paper is called a Haar integral \cite{BalKir}, \cite{BMCA}, \cite{meusburger}, an $Int (H)$-based integral \cite{bespalov2000integrals} or an integral-element \cite{KTLV}.
The cointegral in this paper is called an $Int(H)$-valued integral in \cite{bespalov2000integrals} or integral-functional \cite{KTLV}.
In fact, those notions introduced in \cite{bespalov2000integrals}, \cite{KTLV} are more general ones which are defined by a universality.

\begin{Defn}
\label{Defn_Haar}
\rm
Let $A$ be a bimonoid.
A morphism $\varphi : \mathds{1} \to A$ is a {\it left integral} of $A$ if it satisfy a commutative diagram (\ref{Haar_axiom2}).
A morphism $\varphi : \mathds{1} \to A$ is a {\it right integral} if it satisfy a commutative diagram (\ref{Haar_axiom3}).
A morphism $\varphi : \mathds{1} \to A$ is an {\it integral} if it is a left integral and a right integral.
A left (right) integral is {\it normalized} if it satisfies a commutative diagram (\ref{Haar_axiom1}).
For a bimonoid $A$, we denote by $\sigma_A : \mathds{1} \to A$ the normalized integral of $A$ if exists.
It is unique for $A$ as we will discuss in this section.
Denote by $Int_r (A), Int_l ( A), Int (A)$ the set of right integrals, left integrals and integrals of $A$.

We analogously define {\it cointegral} of a bimonoid as a morphism from $A$ to $\mathds{1}$.
Denote by $Cont_r (A), Coint_l ( A), Coint (A)$ the set of right cointegrals, left cointegrals and cointegrals of $A$.

\begin{equation}
\label{Haar_axiom2}
\begin{tikzcd}
\mathds{1} \otimes A \ar[r, "\varphi \otimes id_A"] \ar[d, "\varphi \otimes \epsilon_A"] & A \otimes A \ar[d, "\nabla_A"]  \\
 A \otimes \mathds{1} \ar[r, "\cong"] & A
\end{tikzcd}
\end{equation}
\begin{equation}
\label{Haar_axiom3}
\begin{tikzcd}
A \otimes \mathds{1} \ar[r, "id_A \otimes \varphi"] \ar[d, "\epsilon_A \otimes \varphi"] & A \otimes A \ar[d, "\nabla_A"] \\
\mathds{1} \otimes A \ar[r , "\cong"] & A
\end{tikzcd}
\end{equation}
\begin{equation}
\label{Haar_axiom1}
\begin{tikzcd}
\mathds{1}  \ar[r, "\varphi"] \ar[dr, equal] & A \ar[d, "\epsilon_A"] \\
& \mathds{1}
\end{tikzcd}
\end{equation}
\end{Defn}

\begin{remark}
The commutative diagrams in Definition \ref{Defn_Haar} can be understood by equations of some string diagrams in Figure \ref{201911231145} where the null diagram is the identity on the unit $\mathds{1}$.

\begin{figure}[ht]
\includegraphics[width=3.7cm]{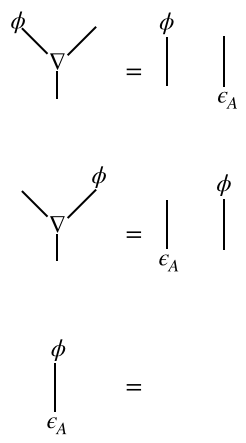}
  \caption{}
 \label{201911231145}
\end{figure}
\end{remark}

\begin{prop}
\label{norm_r_l_integral_is_integral}
Let $A$ be a bimonoid in a symmetric monoidal category, $\mathcal{C}$.
If the bimonoid $A$ has a normalized left integral $\sigma$ and a normalized right integral $\sigma^{\prime}$, then $\sigma = \sigma^\prime$ and it is a normalized integral of the bimonoid $A$.
In particular, if a normalized integral exists, then it is unique.
\end{prop}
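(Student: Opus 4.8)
The plan is to study the single morphism $m:=\nabla_A\circ(\sigma\otimes\sigma'):\mathds{1}\otimes\mathds{1}\to A$ (equivalently $\mathds{1}\to A$, after the canonical identification $\mathds{1}\otimes\mathds{1}\cong\mathds{1}$), which in the string calculus of Figure \ref{201911231145} is just the product of $\sigma$ and $\sigma'$. I would compute $m$ in two ways: once using that $\sigma$ is a left integral, and once using that $\sigma'$ is a right integral, exploiting the normalization axiom in each case to collapse the counit that appears.

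First I would precompose diagram (\ref{Haar_axiom2}) for $\varphi=\sigma$ with $id_{\mathds{1}}\otimes\sigma':\mathds{1}\otimes\mathds{1}\to\mathds{1}\otimes A$. The top-right path becomes $\nabla_A\circ(\sigma\otimes id_A)\circ(id_{\mathds{1}}\otimes\sigma')=\nabla_A\circ(\sigma\otimes\sigma')=m$, while the left-bottom path becomes $\mathbf{r}_A\circ\bigl(\sigma\otimes(\epsilon_A\circ\sigma')\bigr)$; since $\epsilon_A\circ\sigma'=id_{\mathds{1}}$ by normalization of $\sigma'$, this path is $\mathbf{r}_A\circ(\sigma\otimes id_{\mathds{1}})$, which by naturality of the right unitor equals $\sigma$ up to the coherence isomorphism. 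Hence $m=\sigma$. Dually, I would precompose diagram (\ref{Haar_axiom3}) for $\varphi=\sigma'$ with $\sigma\otimes id_{\mathds{1}}:\mathds{1}\otimes\mathds{1}\to A\otimes\mathds{1}$: the top-right path becomes $\nabla_A\circ(id_A\otimes\sigma')\circ(\sigma\otimes id_{\mathds{1}})=m$, and the left-bottom path becomes $\mathbf{l}_A\circ\bigl((\epsilon_A\circ\sigma)\otimes\sigma'\bigr)=\mathbf{l}_A\circ(id_{\mathds{1}}\otimes\sigma')=\sigma'$, using normalization of $\sigma$. Hence $m=\sigma'$ as well, so $\sigma=\sigma'$.

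Once equality is established, $\sigma$ is simultaneously a normalized left integral and a normalized right integral, hence a normalized integral of $A$ by definition. Uniqueness then follows at once: any two normalized integrals are in particular a normalized left integral and a normalized right integral, so the statement just proved forces them to coincide. I do not expect any real obstacle here; the only point requiring care is the bookkeeping of the unitors $\mathbf{l}_A,\mathbf{r}_A$ and the coherence isomorphism $\mathds{1}\otimes\mathds{1}\cong\mathds{1}$, which is invisible in the string-diagram rendering of the axioms. Note that associativity of $\nabla_A$ and the bimonoid compatibility between product and coproduct are not needed — only the (co)unit data and the three axioms (\ref{Haar_axiom2}), (\ref{Haar_axiom3}), (\ref{Haar_axiom1}) enter.
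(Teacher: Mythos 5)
Your argument is correct. Writing $m=\nabla_A\circ(\sigma\otimes\sigma')$ and evaluating it once via the left-integral axiom (\ref{Haar_axiom2}) for $\sigma$ together with $\epsilon_A\circ\sigma'=id_{\mathds{1}}$, and once via the right-integral axiom (\ref{Haar_axiom3}) for $\sigma'$ together with $\epsilon_A\circ\sigma=id_{\mathds{1}}$, gives $m=\sigma\circ\mathbf{r}_{\mathds{1}}$ and $m=\sigma'\circ\mathbf{l}_{\mathds{1}}$; since $\mathbf{l}_{\mathds{1}}=\mathbf{r}_{\mathds{1}}$ is invertible, $\sigma=\sigma'$, and the rest follows as you say. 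You are also right that only the (co)unit data and the three axioms are used.

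The paper itself gives no direct computation here: it derives the proposition as the special case $\xi=\epsilon_A$ of the uniqueness statement for normalized integrals along a bimonoid homomorphism (Proposition \ref{r_integral_l_integra_coincide_along}), whose proof establishes $\mu=\mu\circ\xi\circ\mu'$ and $\mu'=\mu\circ\xi\circ\mu'$ by string-diagram manipulations. The skeleton is the same two-sided evaluation trick, but the intermediate object differs: the general lemma compares both morphisms to the composite $\mu\circ\xi\circ\mu'$, whereas you compare both to the convolution $\sigma*\sigma'$. Your route is more elementary and self-contained for bimonoid (co)integrals; the paper's route buys the stronger statement for arbitrary homomorphisms $\xi:A\to B$, which is needed later (e.g.\ for the uniqueness claims in Theorems \ref{201912050852} and \ref{202002211035}) and of which this proposition is only the degenerate instance.
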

\begin{proof}
It is proved by definitions.
It follows from more general results in Proposition \ref{r_integral_l_integra_coincide_along}.
In fact, a normalized left (right) integral of $A$ is a normalized left (right) integral along counit of $A$.
\end{proof}


\subsection{Integrals along bimonoid homomorphisms}
\label{201908051600}

In this subsection, we introduce the notion of {\it an integral along a homomorphism} and give its basic properties.
They are defined for bimonoid homomorphisms whereas the notion of (co)integrals is defined for bimonoids.
In fact, it is a generalization of (co)integrals.
See Proposition \ref{201907311133}.
We also give a typical example in Example \ref{202002271724}.

\begin{Defn}
\label{201908021117}
\rm
Let $A,B$ be bimonoids in a symmetric monoidal category $\mathcal{C}$ and $\xi : A\to B$ be a bimonoid homomorphism.
A morphism $\mu : B \to A$ in $\mathcal{C}$ is a {\it right integral along} $\xi$ if the diagrams (\ref{Haar_fam_axiom1}), (\ref{Haar_fam_axiom2}) commute.
A morphism $\mu : B \to A$ in $\mathcal{C}$ is a {\it left integral along} $\xi$ if the diagrams (\ref{Haar_fam_axiom3}), (\ref{Haar_fam_axiom4}) commute.
A morphism $\mu : B \to A$ in $\mathcal{C}$ is an {\it integral along} $\xi$ if it is a right integral along $\xi$ and a left integral along $\xi$.
An integral (or a right integral, a left integral) is {\it normalized} if the diagram (\ref{Haar_fam_axiom5}) commutes.

We denote by $Int_l ( \xi )$, $Int_r (\xi)$, $Int (\xi)$ the set of left integrals along $\xi$, the set of right integrals along $\xi$, the set of integrals along $\xi$ respectively.
\begin{equation}
\label{Haar_fam_axiom1}
\begin{tikzcd}
B\otimes A \ar[r, "\mu \otimes id_A"] \ar[d, "id_B \otimes \xi"] & A \otimes A \ar[r, "\nabla_A"] & A \\
B \otimes B \ar[r, "\nabla_B"] & B \ar[ur, "\mu"] & 
\end{tikzcd}
\end{equation}
\begin{equation}
\label{Haar_fam_axiom2}
\begin{tikzcd}
B \ar[r, "\Delta_B"] \ar[d, "\mu"] & B \otimes B \ar[r, "\mu \otimes id_B"] & A \otimes B \\
A \ar[r, "\Delta_A"] & A \otimes A \ar[ur, "id_A \otimes \xi"] 
\end{tikzcd}
\end{equation}
\begin{equation}
\label{Haar_fam_axiom3}
\begin{tikzcd}
A \otimes B \ar[r, "id_A \otimes \mu"] \ar[d, "\xi \otimes id_B"] & A \otimes A \ar[r, "\nabla_A"] & A \\
B \otimes B \ar[r, "\nabla_B"] & B \ar[ur, "\mu"'] & 
\end{tikzcd}
\end{equation}
\begin{equation}
\label{Haar_fam_axiom4}
\begin{tikzcd}
B \ar[r, "\Delta_B"] \ar[d, "\mu"] & B \otimes B \ar[r,  "id_B \otimes \mu"] & B \otimes A \\
A \ar[r, "\Delta_A"] & A \otimes A \ar[ur, "\xi \otimes id_A"'] 
\end{tikzcd}
\end{equation}
\begin{equation}
\label{Haar_fam_axiom5}
\begin{tikzcd}
A \ar[rrr, "\xi"] \ar[dr, "\xi"] & & & B \\
& B \ar[r, "\mu"] & A \ar[ur, "\xi"] &
\end{tikzcd}
\end{equation}
\end{Defn}

\begin{remark}
The commutative diagrams in Definition \ref{201908021117} can be understood by using some string diagrams in Figure \ref{201908021111}.
From now on, we freely use these string diagrams.
The string diagram is explained briefly in appendix \ref{201912060935}.
\begin{figure}[ht]
  \includegraphics[width=13cm]{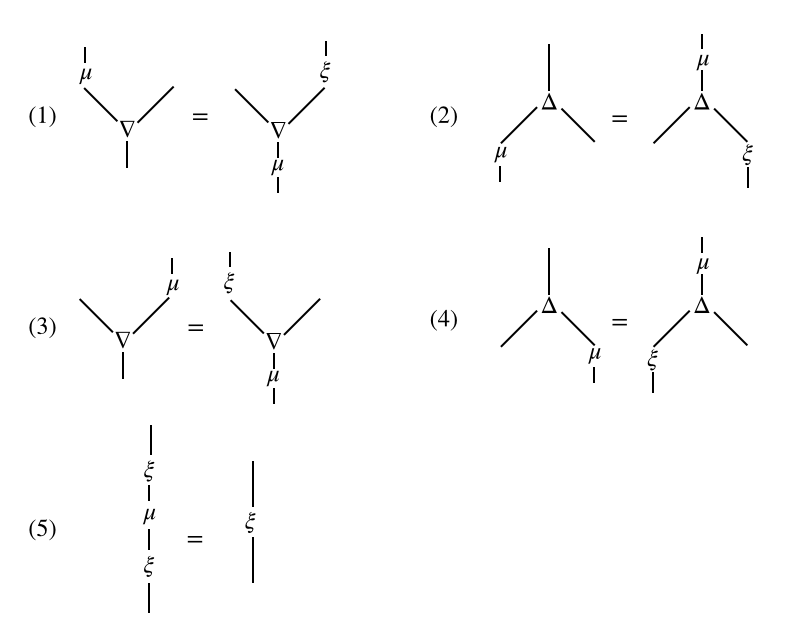}
  \caption{}
  \label{201908021111}
\end{figure}
\end{remark}

\begin{Example}
\label{202002271724}
Consider $\mathcal{C} = \mathsf{Vec}^\otimes_{k}$.
Let $G,H$ be (possibly infinite and non-abelian) groups and $\varrho : G \to H$ be a group homomorphism.
It induces a bialgebra homomorphism $\xi = \varrho_\ast : A \to B$ for the group Hopf algebras $A= {k} G$ and $B = {k} H$.
Suppose that the kernel $Ker (\varrho)$ is finite.
Put $\mu : B \to A$ by
\begin{align}
\mu ( h ) = \sum_{\varrho (g) = h} g .
\end{align}
Then $\mu$ is an integral along the homomorphism $\xi$.
If the order of $Ker (\varrho)$, say $N$, is coprime to the characteristic of ${k}$, then $N^{-1} \cdot \mu$ is a normalized integral.
\end{Example}

\begin{prop}
\label{201907311133}
Let $A$ be a bimonoid in a symmetric monoidal category $\mathcal{C}$.
For $\star = r,l$, we have
\begin{align}
& Int_\star (\epsilon_A ) = Int_\star (A),  \\
& Int_\star (\eta_A ) = Coint_\star (A) .  
\end{align}
In particular, we have
\begin{align}
& Int (\epsilon_A ) = Int (A), \\
& Int (\eta_A ) = Coint (A) .  
\end{align}
Under these equations, the normalized condition is preserved.
\end{prop}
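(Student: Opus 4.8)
The plan is to recognize the counit $\epsilon_A\colon A\to\mathds{1}$ and the unit $\eta_A\colon\mathds{1}\to A$ as bimonoid homomorphisms between $A$ and the trivial bimonoid carried by $\mathds{1}$, whose structure maps are the coherence isomorphisms (so $\nabla_{\mathds{1}}=\mathbf{r}_{\mathds{1}}=\mathbf{l}_{\mathds{1}}$, $\Delta_{\mathds{1}}=\mathbf{r}_{\mathds{1}}^{-1}=\mathbf{l}_{\mathds{1}}^{-1}$ and $\eta_{\mathds{1}}=\epsilon_{\mathds{1}}=id_{\mathds{1}}$), and then to specialize Definition \ref{201908021117}. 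An integral along $\epsilon_A$ is by definition a morphism $\mathds{1}\to A$, hence has exactly the shape of an integral of $A$, while an integral along $\eta_A$ is a morphism $A\to\mathds{1}$, hence has the shape of a cointegral of $A$; so all four asserted equalities of sets are well posed. In string-diagram language the content is just that the trivial bimonoid is the empty diagram: once the $\mathds{1}$-strand is erased, the diagrams of Definition \ref{201908021117} visibly become those of Definition \ref{Defn_Haar}.

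Concretely I would substitute $B=\mathds{1}$, $\xi=\epsilon_A$ into (\ref{Haar_fam_axiom1})--(\ref{Haar_fam_axiom5}) and simplify using naturality of the unitors and coherence. After deleting the now-invisible tensor factor $\mathds{1}$, the diagrams (\ref{Haar_fam_axiom1}) and (\ref{Haar_fam_axiom3}) become the two one-sided integral diagrams (\ref{Haar_axiom2}) and (\ref{Haar_axiom3}), while (\ref{Haar_fam_axiom2}) and (\ref{Haar_fam_axiom4}) collapse, after simplification, to the two counit identities $(\epsilon_A\otimes id_A)\Delta_A=\mathbf{l}_A^{-1}$ and $(id_A\otimes\epsilon_A)\Delta_A=\mathbf{r}_A^{-1}$ of the comonoid $A$, each composed with $\mu$, and hence hold automatically. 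Reading off the left/right labels gives $Int_l(\epsilon_A)=Int_l(A)$, $Int_r(\epsilon_A)=Int_r(A)$, and therefore $Int(\epsilon_A)=Int(A)$. The case $\xi=\eta_A$ (now with $\mathds{1}$ in the source slot, $A$ in the target slot and $\mu\colon A\to\mathds{1}$) is the formal dual: (\ref{Haar_fam_axiom1}) and (\ref{Haar_fam_axiom3}) collapse to the two unit identities $\nabla_A(\eta_A\otimes id_A)=\mathbf{l}_A$ and $\nabla_A(id_A\otimes\eta_A)=\mathbf{r}_A$ of the monoid $A$ (each composed with $\mu$) and so hold automatically, while (\ref{Haar_fam_axiom2}) and (\ref{Haar_fam_axiom4}) become exactly the defining (dual) diagrams for cointegrals of $A$; this yields $Int_r(\eta_A)=Coint_r(A)$, $Int_l(\eta_A)=Coint_l(A)$, and $Int(\eta_A)=Coint(A)$.

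Finally, for the normalization clause: with $\xi=\epsilon_A$ the diagram (\ref{Haar_fam_axiom5}) reads $\epsilon_A=\epsilon_A\circ\mu\circ\epsilon_A$, and since $\epsilon_A$ is a split epimorphism (it admits the section $\eta_A$, because $\epsilon_A\circ\eta_A=id_{\mathds{1}}$ in any bimonoid) this is equivalent to $\epsilon_A\circ\mu=id_{\mathds{1}}$, i.e.\ to the normalization diagram (\ref{Haar_axiom1}) for $\mu$; dually, with $\xi=\eta_A$ the diagram (\ref{Haar_fam_axiom5}) reads $\eta_A=\eta_A\circ\mu\circ\eta_A$, and since $\eta_A$ is a split monomorphism with retraction $\epsilon_A$, this is equivalent to $\mu\circ\eta_A=id_{\mathds{1}}$, the normalized-cointegral condition. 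Hence the normalized condition is preserved under all the identifications above. I expect the whole argument to be essentially routine unitor bookkeeping; the only step that genuinely uses more than coherence is this last one, where one must note that $\epsilon_A$ and $\eta_A$ are split in order to see that the one-sided axiom (\ref{Haar_fam_axiom5}) is equivalent to (not merely implied by) the two-sided (co)unit normalization of Definition \ref{Defn_Haar}, and the only point needing care is matching the left/right labels against the orientation conventions fixed in Definitions \ref{Defn_Haar} and \ref{201908021117}.
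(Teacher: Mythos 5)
Your proposal is correct and follows essentially the same route as the paper's proof: specialize Definition \ref{201908021117} to $B=\mathds{1}$ (resp.\ $A=\mathds{1}$), observe that diagrams (\ref{Haar_fam_axiom1})/(\ref{Haar_fam_axiom3}) reduce to the (co)integral axioms while (\ref{Haar_fam_axiom2})/(\ref{Haar_fam_axiom4}) become automatic via the (co)unit identities, and use the splitting $\epsilon_A\circ\eta_A=id_{\mathds{1}}$ to identify the normalization conditions. You in fact supply more detail than the paper, which only writes out the case $Int_r(\epsilon_A)=Int_r(A)$ and leaves the rest to the reader; your closing caution about matching the left/right labels is well taken, since the paper's own labelling of (\ref{Haar_axiom2}) versus (\ref{Haar_fam_axiom1}) is the only delicate point.
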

\begin{proof}
We only prove that $Int_r (\epsilon_A) = Int_r (A)$ and leave the other parts to the readers.

Suppose that $\mu \in Int_r (\epsilon_A)$.
Then by (\ref{Haar_fam_axiom1}), we have $\nabla_A \circ (\mu \otimes id_A) = \mathbf{r}_A \circ (\mu \otimes \epsilon_A)$, i.e. $\mu$ is a right integral of the bimonoid $A$ where $\mathbf{r}_A$ is the right unitor.

Suppose that $\sigma \in Int_r(A)$.
Then $\sigma$ satisfies the commutative diagram (\ref{Haar_fam_axiom1}).
On the other hand, (\ref{Haar_fam_axiom2}) is automatic since $B= \mathds{1}$.

Note that $\mu \in Int_r (\epsilon_A)$ is normalized ,i.e. $\epsilon_A \circ \mu \circ \epsilon_A = \epsilon_A$, if and only if $\epsilon_A \circ \mu = id_{\mathds{1}}$.
\end{proof}

\begin{prop}
\label{201907311053}
If a bimonoid homomorphism $\xi : A \to B$ is an isomorphism, then we have $\xi^{-1} \in E (\xi )$.
Here, $E$ denotes either $Int_r$, $Int_l$ or $Int$.
In particular, $id_A \in E (id_A)$ for any bimonoid $A$.
\end{prop}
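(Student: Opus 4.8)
The plan is to verify directly that $\mu := \xi^{-1}$ satisfies each of the five axioms (\ref{Haar_fam_axiom1})--(\ref{Haar_fam_axiom5}) of Definition \ref{201908021117}, using only that $\xi$ is a bimonoid homomorphism --- hence simultaneously a monoid and a comonoid homomorphism --- together with the invertibility of $\xi$. There is no real obstacle here; the only thing to keep track of is which structure morphism ($\nabla$ or $\Delta$) and which tensor side each axiom involves, and this is exactly where the string-diagram reformulation of Figure \ref{201908021111} makes the identities transparent.

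For the multiplicative axioms I would argue as follows. Axiom (\ref{Haar_fam_axiom1}) asks that $\nabla_A \circ (\mu \otimes id_A) = \mu \circ \nabla_B \circ (id_B \otimes \xi)$ as morphisms $B \otimes A \to A$. Since $\xi$ is a monoid homomorphism, $\xi \circ \nabla_A = \nabla_B \circ (\xi \otimes \xi)$, hence $\nabla_A = \xi^{-1} \circ \nabla_B \circ (\xi \otimes \xi)$; composing on the right with $\xi^{-1} \otimes id_A$ gives $\nabla_A \circ (\xi^{-1} \otimes id_A) = \xi^{-1} \circ \nabla_B \circ (id_B \otimes \xi)$, which is precisely the required identity for $\mu = \xi^{-1}$. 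Axiom (\ref{Haar_fam_axiom3}) follows by the same computation read from the other tensor factor. Dually, axiom (\ref{Haar_fam_axiom2}) asks that $(\mu \otimes id_B) \circ \Delta_B = (id_A \otimes \xi) \circ \Delta_A \circ \mu$; from $\Delta_B \circ \xi = (\xi \otimes \xi) \circ \Delta_A$ we obtain $\Delta_A \circ \xi^{-1} = (\xi^{-1} \otimes \xi^{-1}) \circ \Delta_B$, and substituting into the right-hand side yields $(id_A \otimes \xi) \circ (\xi^{-1} \otimes \xi^{-1}) \circ \Delta_B = (\xi^{-1} \otimes id_B) \circ \Delta_B$, as wanted. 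Axiom (\ref{Haar_fam_axiom4}) is the mirror of (\ref{Haar_fam_axiom2}).

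Finally, the normalization axiom (\ref{Haar_fam_axiom5}) demands $\xi \circ \mu \circ \xi = \xi$, which holds trivially for $\mu = \xi^{-1}$. Together these show $\xi^{-1} \in Int(\xi)$, and in particular $\xi^{-1} \in Int_r(\xi)$ and $\xi^{-1} \in Int_l(\xi)$, so the statement holds for each of the three choices of $E$. The ``in particular'' assertion is then the special case $\xi = id_A$, for which $\xi^{-1} = id_A$.
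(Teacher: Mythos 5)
Your proof is correct and follows essentially the same route as the paper's: a direct verification of the axioms of Definition \ref{201908021117} using only that $\xi$ is a bimonoid homomorphism together with its invertibility (the paper writes the computation for (\ref{Haar_fam_axiom1}) as $\nabla_A \circ (\xi^{-1}\otimes id_A) = \nabla_A \circ (\xi^{-1}\otimes\xi^{-1})\circ(id_B\otimes\xi) = \xi^{-1}\circ\nabla_B\circ(id_B\otimes\xi)$, which is the same identity you derive by rearranging $\xi\circ\nabla_A=\nabla_B\circ(\xi\otimes\xi)$). You additionally check the normalization axiom (\ref{Haar_fam_axiom5}), which the statement does not require but which holds trivially.
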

\begin{proof}
We only prove the case of $E= Int_r$ and leave the other parts to the readers.
The morphism $\xi^{-1}$ satisfies the axiom (\ref{Haar_fam_axiom1}) by the following equalitites.
\begin{align}
\nabla_A \circ ( \xi^{-1} \otimes id_A ) &= \nabla_A \circ (\xi^{-1} \otimes \xi^{-1} ) \circ (id_B \otimes \xi)  \\
&= \xi^{-1} \circ \nabla_B \circ (id_B \otimes \xi) . 
\end{align}
Here we use the assumption that $\xi$ is a bimonoid homomorphism.
Similarly, (\ref{Haar_fam_axiom2}) is verified.
Hence, $\xi^{-1} \in Int_r (\xi)$.
\end{proof}

\begin{prop}
\label{201907311054}
We have $E(id_{\mathds{1}}) = End_{\mathcal{C}}( \mathds{1}  )$.
Here, $E$ denotes either $Int_r$, $Int_l$ or $Int$.
\end{prop}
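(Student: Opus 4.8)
The plan is to exploit the fact that the unit object $\mathds{1}$ carries a canonical bimonoid structure all of whose structure morphisms are coherence isomorphisms (or identities), so that each defining axiom of an integral along $id_\mathds{1}$ degenerates into an instance of naturality and is therefore automatically satisfied. Concretely, I would first recall that, as a bimonoid, $\mathds{1}$ has multiplication $\nabla_\mathds{1} = \mathbf{l}_\mathds{1} = \mathbf{r}_\mathds{1} : \mathds{1} \otimes \mathds{1} \to \mathds{1}$, comultiplication $\Delta_\mathds{1} = \mathbf{l}_\mathds{1}^{-1} : \mathds{1} \to \mathds{1} \otimes \mathds{1}$, and both unit and counit equal to $id_\mathds{1}$ (these are forced, and the bimonoid axioms hold by coherence). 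Then $id_\mathds{1} : \mathds{1} \to \mathds{1}$ is a bimonoid homomorphism, and since by Definition \ref{201908021117} an element of $E(id_\mathds{1})$ is in particular a morphism $\mathds{1} \to \mathds{1}$, the inclusion $E(id_\mathds{1}) \subseteq End_\mathcal{C}(\mathds{1})$ is immediate.

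For the reverse inclusion, let $\mu \in End_\mathcal{C}(\mathds{1})$ be arbitrary, and I would check the axioms with $A = B = \mathds{1}$ and $\xi = id_\mathds{1}$. Axiom (\ref{Haar_fam_axiom1}) then reads $\nabla_\mathds{1} \circ (\mu \otimes id_\mathds{1}) = \mu \circ \nabla_\mathds{1}$, which is exactly the naturality square of the unitor $\nabla_\mathds{1} = \mathbf{r}_\mathds{1}$ evaluated at $\mu$; dually, (\ref{Haar_fam_axiom2}) becomes $(\mu \otimes id_\mathds{1}) \circ \Delta_\mathds{1} = \Delta_\mathds{1} \circ \mu$, the naturality of $\Delta_\mathds{1} = \mathbf{l}_\mathds{1}^{-1}$; and (\ref{Haar_fam_axiom3}), (\ref{Haar_fam_axiom4}) are settled the same way using naturality in the other tensor factor. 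Hence $\mu$ lies in $Int_r(id_\mathds{1})$ and in $Int_l(id_\mathds{1})$, so also in $Int(id_\mathds{1})$; this gives $End_\mathcal{C}(\mathds{1}) \subseteq E(id_\mathds{1})$ for all three choices of $E$, completing the equality.

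There is essentially no real obstacle here; the only point demanding a little care is pinning down the canonical bimonoid structure on $\mathds{1}$ and correctly identifying each axiom, after absorbing the coherence isomorphisms, as a naturality square. I would also note an alternative route: by Proposition \ref{201907311133} we have $E(\epsilon_\mathds{1}) = E(\mathds{1})$, and since $\epsilon_\mathds{1} = id_\mathds{1}$ this gives $E(id_\mathds{1}) = E(\mathds{1})$, after which one still needs $E(\mathds{1}) = End_\mathcal{C}(\mathds{1})$ — which is again the naturality computation above. I would mention this reduction but carry out the direct verification, as it is short and self-contained.
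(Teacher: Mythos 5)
Your proposal is correct and follows essentially the same route as the paper: the paper also verifies axiom (\ref{Haar_fam_axiom1}) by observing that $\nabla_{\mathds{1}} = \mathbf{r}_{\mathds{1}}$ and invoking naturality of the unitor, and leaves the remaining axioms and the (trivial) reverse inclusion to the reader. Your write-up is just a more complete version of the same argument.
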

\begin{proof}
We only prove the case of $E= Int_r$ and leave the other parts to the readers.
For $\varphi \in End_{\mathcal{C}} ( \mathds{1}  )$, the morphism $\varphi$ satisfies the axiom (\ref{Haar_fam_axiom1}) with respect to $\xi = id_\mathds{1}$ :
\begin{align}
\nabla_{\mathds{1}} \circ ( \varphi \otimes id_{\mathds{1}} ) &= \mathbf{r}_{\mathds{1}} \circ (\varphi \otimes id_{\mathds{1}} )  \\
&= \varphi \circ \nabla_{\mathds{1}} .  
\end{align}
Here, $\mathbf{r}_\mathds{1}$ is the right unitor of $\mathds{1}$.
Similarly, the axiom (\ref{Haar_fam_axiom2}) is verified.
It implies that $\varphi \in Int_r ( id_{\mathds{1}})$.
\end{proof}

\begin{prop}
\label{201907311055}
The composition of morphisms induces a map,
\begin{align}
E ( \xi^\prime ) \times E (\xi) \to E (\xi^\prime \circ \xi) ; ( \mu^\prime, \mu) \mapsto \mu \circ \mu^\prime . 
\end{align}
Here, $E$ denotes either $Int_r$, $Int_l$ or $Int$.
\end{prop}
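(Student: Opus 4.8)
The plan is to verify directly that $\mu \circ \mu'$ satisfies the defining axioms of $E(\xi' \circ \xi)$, treating the three cases $E = Int_r, Int_l, Int$ uniformly (the last being the conjunction of the first two, so it suffices to handle $Int_r$ and $Int_l$; by symmetry between \eqref{Haar_fam_axiom1}--\eqref{Haar_fam_axiom2} and \eqref{Haar_fam_axiom3}--\eqref{Haar_fam_axiom4}, really only one of these needs a full argument). So suppose $\xi : A \to B$ and $\xi' : B \to C$ are bimonoid homomorphisms, $\mu \in Int_r(\xi)$ and $\mu' \in Int_r(\xi')$; I claim $\mu \circ \mu' : C \to A$ lies in $Int_r(\xi' \circ \xi)$.

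First I would check the multiplicative axiom \eqref{Haar_fam_axiom1} for $\mu \circ \mu'$ relative to $\xi' \circ \xi$, i.e.\ that $\nabla_A \circ ((\mu \circ \mu') \otimes id_A) = (\mu \circ \mu') \circ \nabla_C \circ (id_C \otimes (\xi' \circ \xi))$. The idea is to insert the two hypotheses one at a time. Starting from $\nabla_A \circ ((\mu\circ\mu') \otimes id_A)$, apply the axiom \eqref{Haar_fam_axiom1} for $\mu$ (with the input in the $A$-slot being $\mu' \otimes id_A$ on $C \otimes A$, viewed as landing in $B \otimes A$ after we expose an appropriate $\xi$): this rewrites the expression as $\mu \circ \nabla_B \circ (id_B \otimes \xi) \circ (\mu' \otimes id_A)$, which after naturality of the tensor product equals $\mu \circ \nabla_B \circ (\mu' \otimes \xi)$. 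Now factor $\mu' \otimes \xi = (\mu' \otimes id_B) \circ (id_C \otimes \xi)$ and apply the axiom \eqref{Haar_fam_axiom1} for $\mu'$ (relative to $\xi'$) to $\nabla_B \circ (\mu' \otimes id_B)$, obtaining $\mu' \circ \nabla_C \circ (id_C \otimes \xi')$. Reassembling, the whole thing becomes $\mu \circ \mu' \circ \nabla_C \circ (id_C \otimes \xi') \circ (id_C \otimes \xi) = \mu\circ\mu' \circ \nabla_C \circ (id_C \otimes (\xi'\circ\xi))$, which is exactly \eqref{Haar_fam_axiom1} for $\mu\circ\mu'$. The comultiplicative axiom \eqref{Haar_fam_axiom2} is the same argument read upside down (using $\Delta$, and that $\xi,\xi'$ are comonoid homomorphisms), and the left-integral axioms \eqref{Haar_fam_axiom3}--\eqref{Haar_fam_axiom4} follow by the left--right mirror symmetry built into the symmetric monoidal structure. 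This establishes the statement for all three choices of $E$. In practice I would draw all of this as string diagrams in the style of Figure \ref{201908021111}, where each step is visibly a local application of one of the defining identities, and where the unitor/associator bookkeeping disappears.

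The only genuinely delicate point — and the one I would be most careful about — is the slot-matching in the very first rewrite: axiom \eqref{Haar_fam_axiom1} for $\mu$ is stated for morphisms $B \otimes A \to A$, so to apply it I must first produce an honest $\xi$ in the second tensor factor, which is why I precompose with $id_C \otimes \xi$ before invoking $\mu'$. Getting the order of insertions right (first $\mu$, then $\mu'$, creating the $\xi$ factors in the correct places) is the crux; everything else is naturality of $\otimes$ and the coherence isomorphisms, which the string-diagram calculus absorbs automatically. No obstacle of substance is expected — this is a routine but genuinely necessary compatibility check — and I would leave the mirror cases ($Int_l$, and the $\Delta$-side axioms) to the reader exactly as in the preceding propositions.
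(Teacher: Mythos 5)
Your proposal is correct and follows essentially the same route as the paper: the paper likewise verifies axiom (\ref{Haar_fam_axiom1}) for $\mu\circ\mu'$ by factoring $(\mu\circ\mu')\otimes id_A$, applying the axiom for $\mu$ to produce $\mu\circ\nabla_B\circ(\mu'\otimes\xi)$, and then applying the axiom for $\mu'$ to reach $\mu\circ\mu'\circ\nabla_C\circ(id_C\otimes(\xi'\circ\xi))$, leaving the comultiplicative and left-integral cases to the reader. Your extra care about slot-matching is exactly the content of the paper's intermediate step $\nabla_B\circ(\mu'\otimes\xi)=\nabla_B\circ(\mu'\otimes id_B)\circ(id_C\otimes\xi)$, so nothing is missing.
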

\begin{proof}
We only prove the case of $E= Int_r$.
Let $\xi : A \to B$, $\xi^\prime : B \to C$ be bimonoid homomorphisms and $\mu \in Int_r (\xi)$ and $\mu^\prime \in Int_r (\xi^\prime)$.
The composition $\mu\circ\mu^\prime$ satisfies he axiom (\ref{Haar_fam_axiom2}) as follows :
\begin{align}
\nabla_A \circ \left( (\mu \circ \mu^\prime ) \otimes id_A \right) &= \nabla_A \circ \left( \mu \otimes id_A \right) \circ \left( \mu^\prime \otimes id_A \right)  \\
&= \mu \circ \nabla_B \circ (\mu^\prime \otimes \xi)   \\
&= \mu \circ \mu^\prime \circ \nabla_C \circ \left( id_A \otimes (\xi^\prime \circ \xi) \right) . 
\end{align}
It is similarly verified that the composition $\mu\circ\mu^\prime$ satisfies the axiom (\ref{Haar_fam_axiom2}).
Hence, we obtain $\mu \circ \mu^\prime \in Int_r (\xi^\prime\circ \xi)$.
\end{proof}


\subsection{Generator integrals}
\label{201908051612}

In this subsection, we define a notion of {\it a generator integral}.
The terminology is motivated by Proposition \ref{201908021401},  which says that it plays a role of generator of (co)integrals of bimonoids.
It is named after the property that it generates the set of integrals under some conditions (see Theorem \ref{201906281559}).

\begin{Defn}
\rm
\label{201907311452}
Let $\mu$ be an integral along a bimonoid homomorphism $\xi : A \to B$.
The integral $\mu$ is a {\it generator} if the following two diagrams below commute for any $\mu^\prime \in Int_r (\xi) \cup Int_l (\xi)$ :
\begin{equation}
\label{201907311521}
\begin{tikzcd}
B \ar[rrr, "\mu^\prime"] \ar[dr, "\mu^\prime"] & & & A \\
& A \ar[r, "\xi"] & B \ar[ur, "\mu"] &
\end{tikzcd}
\end{equation}

\begin{equation}
\label{201907311522}
\begin{tikzcd}
B \ar[rrr, "\mu^\prime"] \ar[dr, "\mu"] & & & A \\
& A \ar[r, "\xi"] & B \ar[ur, "\mu^\prime"] &
\end{tikzcd}
\end{equation}
\end{Defn}

\begin{prop}
\label{201908021401}
Recall Proposition \ref{201907311133}.
Let $A$ be a bimonoid in a symmetric monoidal category $\mathcal{C}$.
Let $\sigma$ be an integral along the counit $\epsilon_A$.
The integral $\sigma$ is a generator if and only if for any $\sigma^\prime \in \left( Int_r (\epsilon_A) \cup Int_l (\epsilon_A) \right) = \left( Int_r (A) \cup Int_l (A) \right)$
\begin{align}
\sigma^\prime = \left( \epsilon_A \circ \sigma^\prime \right) \cdot \sigma . 
\end{align}
In particular, if an integral $\sigma$ is normalized, then $\sigma$ is a generator.
\end{prop}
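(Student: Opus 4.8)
The plan is to specialize Definition \ref{201907311452} to the homomorphism $\xi = \epsilon_A : A \to \mathds{1}$. By Proposition \ref{201907311133}, an integral along $\epsilon_A$ is the same as an integral of $A$, and $Int_r(\epsilon_A) \cup Int_l(\epsilon_A) = Int_r(A) \cup Int_l(A)$; in particular $\sigma$ and every $\sigma'$ appearing in the definition are morphisms $\mathds{1} \to A$. Since the target of $\xi$ is $\mathds{1}$, I would first note that the diagram (\ref{201907311521}) collapses to the equation $\sigma \circ \epsilon_A \circ \sigma' = \sigma'$ and the diagram (\ref{201907311522}) to $\sigma' \circ \epsilon_A \circ \sigma = \sigma'$, both as morphisms $\mathds{1} \to A$. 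Using that for $\phi : \mathds{1} \to \mathds{1}$ and $f : \mathds{1} \to A$ the composite $f \circ \phi$ equals the $End_{\mathcal{C}}(\mathds{1})$-action $\phi \cdot f$ (a consequence of naturality of the unitors together with $\mathbf{l}_\mathds{1} = \mathbf{r}_\mathds{1}$), these become $\sigma' = (\epsilon_A \circ \sigma') \cdot \sigma$ and $\sigma' = (\epsilon_A \circ \sigma) \cdot \sigma'$ respectively.

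The second step is to show that, quantified over all $\sigma' \in Int_r(A) \cup Int_l(A)$, the first of these equations already forces the second, so that being a generator is equivalent to the first equation holding for every such $\sigma'$. Indeed, granting the first equation for all $\sigma'$, I would apply it to an arbitrary $\sigma'$ and also to $\sigma$ itself (which lies in $Int_r(A) \cap Int_l(A)$) and use that $End_{\mathcal{C}}(\mathds{1})$ is a commutative monoid acting on $Hom_{\mathcal{C}}(\mathds{1}, A)$:
\[
(\epsilon_A \circ \sigma) \cdot \sigma' = (\epsilon_A \circ \sigma) \cdot \bigl( (\epsilon_A \circ \sigma') \cdot \sigma \bigr) = (\epsilon_A \circ \sigma') \cdot \bigl( (\epsilon_A \circ \sigma) \cdot \sigma \bigr) = (\epsilon_A \circ \sigma') \cdot \sigma = \sigma' .
\]
The converse implication is immediate since the first equation is literally diagram (\ref{201907311521}). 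This yields the asserted equivalence.

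For the ``in particular'' clause, I would assume $\sigma$ normalized, which by the observation in the proof of Proposition \ref{201907311133} means $\epsilon_A \circ \sigma = id_{\mathds{1}}$. Given $\sigma' \in Int_r(A) \cup Int_l(A)$, I would evaluate the product $\nabla_A \circ (\sigma \otimes \sigma')$ when $\sigma'$ is a right integral, and $\nabla_A \circ (\sigma' \otimes \sigma)$ when $\sigma'$ is a left integral, in two ways: applying the integral axiom (\ref{Haar_axiom2}) or (\ref{Haar_axiom3}) to $\sigma$ on one factor (recall $\sigma$ is a two-sided integral of $A$), and to $\sigma'$ on the other. In both cases this produces $(\epsilon_A \circ \sigma') \cdot \sigma = (\epsilon_A \circ \sigma) \cdot \sigma' = \sigma'$, the last equality by normalization; by the equivalence above, $\sigma$ is then a generator.

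I expect the only mildly delicate points to be the unitor bookkeeping identifying post- and pre-composition with an endomorphism of $\mathds{1}$ with the $End_{\mathcal{C}}(\mathds{1})$-action, and keeping track of the left/right dichotomy in the final two-way computation; the rest is formal manipulation inside the commutative monoid $End_{\mathcal{C}}(\mathds{1})$.
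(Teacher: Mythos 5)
Your proof is correct and follows essentially the same route as the paper's: unpack the two generator diagrams for $\xi = \epsilon_A$ into the equations $\sigma' = (\epsilon_A\circ\sigma')\cdot\sigma$ and $\sigma' = (\epsilon_A\circ\sigma)\cdot\sigma'$, show the first (quantified over all $\sigma'$) implies the second, and then verify the first for normalized $\sigma$ by computing the convolution of $\sigma$ and $\sigma'$ in two ways using the integral axioms. The one point where you diverge is the implication (\ref{201907311521}) $\Rightarrow$ (\ref{201907311522}): the paper gets $(\epsilon_A\circ\sigma')\cdot\sigma = (\epsilon_A\circ\sigma)\cdot\sigma'$ from the integral axioms via $\nabla_A\circ(\sigma\otimes\sigma')$, whereas you argue purely formally in the commutative monoid $End_{\mathcal{C}}(\mathds{1})$ by also applying the hypothesis to $\sigma'=\sigma$ (legitimate, since $\sigma\in Int_r(\epsilon_A)$); both arguments are sound.
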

\begin{proof}
Let $\sigma$ be a generator.
Then the commutative diagram (\ref{201907311521}) proves the claim.

Let $\sigma^\prime \in Int_l (\epsilon_A) = Int_l ( A)$.
Suppose that $\sigma^\prime = \left( \epsilon_A \circ \sigma^\prime \right) \cdot \sigma $.
Since $\sigma^\prime$ is a left integral of $A$, we have $\left( \epsilon_A \circ \sigma^\prime \right) \cdot \sigma = \nabla_A \circ ( \sigma \otimes \sigma^\prime ) = (\epsilon_A \circ \sigma) \cdot \sigma^\prime$.
Hence, we obtain $\sigma^\prime = (\epsilon_A \circ \sigma) \cdot \sigma^\prime$, which is equivalent with (\ref{201907311522}).
We leave the proof for a right integral $\sigma^\prime$ to the readers.

We prove that if $\sigma$ is normalized, then it is a generator.
Let $\sigma^\prime \in Int_r ( A)$.
Then $\sigma^\prime \ast \sigma = \left( \epsilon_A \circ \sigma \right) \cdot \sigma^\prime = \sigma^\prime$ since $\sigma$ is normalized.
We also have $\sigma^\prime \ast \sigma = \left( \epsilon_A \circ \sigma^\prime \right) \cdot \sigma$ since $\sigma$ is an integral.
Hence, we obtain $\sigma^\prime =\left( \epsilon_A \circ \sigma^\prime \right) \cdot \sigma$.
We leave the proof for $\sigma^\prime \in Int_l ( A)$ to the readers.
It completes the proof.
\end{proof}

We also have a dual statement for cointegrals.

\begin{remark}
There exists a bimonoid $A$ with a generator integral which is not normalized.
For example, finite-dimensional Hopf algebra which is not semi-simple is such an example.
\end{remark}

\begin{prop}
Let $\xi : A \to B$ be a bimonoid isomorphism.
Recall that $\xi^{-1}$ is an integral of $\xi$ by Proposition \ref{201907311053}.
The integral $\xi^{-1}$ is a generator.
\end{prop}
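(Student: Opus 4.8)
The plan is to unwind the definition of a generator integral (Definition \ref{201907311452}) applied to $\mu = \xi^{-1}$, which is already known to be an integral along $\xi$ by Proposition \ref{201907311053}, so that the question is meaningful. By that definition I must verify that, for every $\mu^\prime \in Int_r(\xi) \cup Int_l(\xi)$, the two triangles (\ref{201907311521}) and (\ref{201907311522}) commute. Reading these diagrams off, (\ref{201907311521}) is precisely the equation $\mu \circ \xi \circ \mu^\prime = \mu^\prime$, and (\ref{201907311522}) is precisely $\mu^\prime \circ \xi \circ \mu = \mu^\prime$.

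Now I would substitute $\mu = \xi^{-1}$ and use that $\xi$ is a bimonoid isomorphism, hence has a two-sided inverse in $\mathcal{C}$: $\xi^{-1} \circ \xi = id_A$ and $\xi \circ \xi^{-1} = id_B$. Then the left-hand side of the first equation becomes $\xi^{-1} \circ \xi \circ \mu^\prime = id_A \circ \mu^\prime = \mu^\prime$, and the left-hand side of the second becomes $\mu^\prime \circ \xi \circ \xi^{-1} = \mu^\prime \circ id_B = \mu^\prime$. Both identities hold for an arbitrary morphism $\mu^\prime : B \to A$, and in particular for every $\mu^\prime \in Int_r(\xi) \cup Int_l(\xi)$, so both diagrams commute and $\xi^{-1}$ is a generator. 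There is no genuine obstacle: the statement is immediate once the two triangle diagrams of Definition \ref{201907311452} are correctly transcribed, and in fact the argument never uses that $\mu^\prime$ is an integral along $\xi$.
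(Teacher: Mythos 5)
Your proof is correct and is exactly the content behind the paper's one-line remark that the claim ``is immediate from definitions'': substituting $\mu=\xi^{-1}$ into the two triangles of Definition \ref{201907311452} reduces them to $\xi^{-1}\circ\xi=id_A$ and $\xi\circ\xi^{-1}=id_B$. Your observation that the argument never uses that $\mu^\prime$ is an integral is a fair and accurate aside.
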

\begin{proof}
It is immediate from definitions.
\end{proof}


\section{Some objects associated with action}
\label{201912060945}

\subsection{Invariant object}
\label{201907220216}

In this subsection, we define a notion of an invariant object of a (co)action.
It is a generalization of the invariant subspace of a group action.

\begin{Defn}
\rm
Let $\mathcal{C}$ be a symmetric monoidal category.
Let $(A, \alpha, X)$ be a left action in $\mathcal{C}$.
A pair $(\alpha \backslash \backslash X , i )$ is an {\it invariant object} of the action $(A, \alpha, X)$ if it satisfies the following axioms :
\begin{itemize}
\item
$\alpha \backslash \backslash X$ is an object of $\mathcal{C}$.
\item
$ i : \alpha \backslash \backslash X \to X$ is a morphism in $\mathcal{C}$.
\item
The diagram commutes where $\tau$ is the trivial action :
\begin{equation}
\begin{tikzcd}
A \otimes X \ar[r, "\alpha"] & X \\
A \otimes (\alpha \backslash \backslash X) \ar[r, "\tau"] \ar[u, "i \otimes id_A"] &  \alpha \backslash \backslash X \ar[u, "i"]
\end{tikzcd}
\end{equation}
\item
It is {\it universal} :
If a morphism $\xi : Z \to X$ satisfies a commutative diagram,
\begin{equation}
\begin{tikzcd}
A \otimes X \ar[r, "\alpha"] & X \\
A \otimes Z \ar[r, "\tau"] \ar[u, "\xi \otimes id_A"] &  Z\ar[u, "\xi"]
\end{tikzcd}
\end{equation}
then there exists a unique morphism $\bar{\xi} : Z \to \alpha \backslash \backslash X$ such that $i \circ \bar{\xi} = \xi$.
\end{itemize}

In an analogous way, we define {\it invariant object} of a left (right) coactions.
\end{Defn}

\subsection{Stabilized object}
\label{201907220217}

In this subsection, we define a notion of a stabilized object of an action (coaction, resp.).
It is enhanced to a functor from the category of (co)actions if the symmetric monoidal category $\mathcal{C}$ has every coequalizer (equalizer, resp.).

\begin{Defn}
\rm
We define a {\it stabilized object of a left action} $(A,\alpha, X)$ in $\mathcal{C}$ by a coequalizer of following morphisms where $\tau_{A,X}$ is the trivial action in Definition \ref{201911301531}.
\begin{equation}
\begin{tikzcd}
A \otimes X \ar[r, shift left, "\alpha"] \ar[r, shift right, "\tau_{A,X}"'] & X
\end{tikzcd}
\end{equation}
We denote it by $\alpha \backslash X$.
Analogously, we define a {\it stabilized object of a right action} $(X, \alpha, A)$ by a coequalizer of $\alpha$ and $\tau_{X,A}$.
We denote it by $X / \alpha$.

We define a {\it stabilized object of a left coaction} $(B , \beta, Y)$ in $\mathcal{C}$ by an equalizer of following morphisms where $\tau^{A,X}$ is the trivial action in Definition \ref{201911301531}.
\begin{equation}
\begin{tikzcd}
Y \ar[r, shift left, "\beta"] \ar[r, shift right, "\tau^{B,Y}"'] & B \otimes Y
\end{tikzcd}
\end{equation}
We denote it by $\beta / Y$.
Analogously, we define a {\it stabilized object of a right coaction} $(Y, \beta, B)$ by an equalizer of $\alpha$ and $\tau^{Y,B}$.
We denote it by $Y \backslash \beta$.
\end{Defn}

\begin{prop}
\label{201911301554}
The assignments of stabilized objects to (co)actions have the following functoriality :
\begin{enumerate}
\item
Suppose that the category $\mathcal{C}$ has any coequalizers.
The assignment $(A,\alpha, X) \mapsto \alpha \backslash X$ gives a symmetric comonoidal functor (SCMF) from $\mathsf{Act}_l (\mathcal{C})$ to $\mathcal{C}$.
Analogouly, the assignment $(X,\alpha, A) \mapsto X / \alpha$ gives a SCMF from $\mathsf{Act}_r (\mathcal{C})$ to $\mathcal{C}$.
\item
Suppose that the category $\mathcal{C}$ has any equalizers.
The assignment $(A,\alpha, X) \mapsto \alpha / X$ gives a symmetric monoidal functor (SMF) from $\mathsf{Coact}_l (\mathcal{C})$ to $\mathcal{C}$.
Analogously, the assignment $(X ,\alpha, A) \mapsto X \backslash \alpha$ gives a SMF from $\mathsf{Coact}_r (\mathcal{C})$ to $\mathcal{C}$.
\end{enumerate}
\end{prop}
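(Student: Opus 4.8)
The plan is to treat the four assignments in parallel, since each is a formal dual or mirror of the others; it suffices to establish the first one, namely that $(A,\alpha,X) \mapsto \alpha \backslash X$ is a symmetric comonoidal functor from $\mathsf{Act}_l(\mathcal{C})$ to $\mathcal{C}$, and then obtain the right-action case by the left--right symmetry of $\mathcal{C}$ and the coaction cases by passing to $\mathcal{C}^{\mathrm{op}}$ (which swaps equalizers and coequalizers, and swaps monoidal with comonoidal). So I would first spell out how $\alpha\backslash X$ acts on morphisms: a morphism of left actions $(A,\alpha,X)\to(A',\alpha',X')$ is a pair $(f\colon A\to A', g\colon X\to X')$ intertwining $\alpha$ with $\alpha'$ and also intertwining the trivial actions $\tau_{A,X},\tau_{A',X'}$ (the latter automatically, by naturality of $\tau$ in its arguments). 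Such a pair makes the obvious square of coequalizer diagrams commute, so the universal property of the coequalizer $\alpha\backslash X$ produces a unique induced morphism $\alpha\backslash X \to \alpha'\backslash X'$; uniqueness gives functoriality (identities to identities, composites to composites) immediately.

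Next I would equip this functor with its comonoidal structure. The monoidal structure on $\mathsf{Act}_l(\mathcal{C})$ is given by $(A,\alpha,X)\otimes(A',\alpha',X') = (A\otimes A', \ \alpha\star\alpha', \ X\otimes X')$, the diagonal-type action, with unit the trivial action on $\mathds{1}$; and on $\mathcal{C}$ the monoidal unit is $\mathds{1}$ with its standard tensor. The counit of the comonoidal structure is the canonical map $\varepsilon\backslash \mathds{1} \to \mathds{1}$; since the trivial action on $\mathds{1}$ is (up to coherence) the identity coequalizer pair, this is an isomorphism. For the comultiplication I need a natural morphism
\begin{equation}
\alpha\backslash X \ \longrightarrow\ (\alpha\backslash X)\otimes(\alpha'\backslash X')
\end{equation}
— wait, that is backwards for a \emph{comonoidal} (oplax monoidal) functor; the structure map should go
\begin{equation}
(\alpha\star\alpha')\backslash (X\otimes X') \ \longrightarrow\ (\alpha\backslash X)\otimes(\alpha'\backslash X').
\end{equation}
This I construct as follows: the two quotient maps $X\to\alpha\backslash X$ and $X'\to\alpha'\backslash X'$ tensor to a map $X\otimes X'\to(\alpha\backslash X)\otimes(\alpha'\backslash X')$, and I must check this map coequalizes the pair $(\alpha\star\alpha',\tau)$ on $(A\otimes A')\otimes(X\otimes X')$. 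Here one uses that $\otimes$ preserves the relevant identifications on each factor together with the symmetry of $\mathcal{C}$ to shuffle $A\otimes A'\otimes X\otimes X'$ into $(A\otimes X)\otimes(A'\otimes X')$; applying the defining coequalizer relation in each factor and then reassembling does the job. The universal property then yields the desired structure map, and its naturality and the coassociativity/counitality coherence diagrams follow by uniqueness of maps out of coequalizers. Symmetry (compatibility with the braidings) likewise reduces to a uniqueness argument.

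The main obstacle is the verification that the candidate structure map genuinely factors through the tensor-product coequalizer — i.e.\ that $\otimes$ interacts correctly with these particular coequalizers. In general $-\otimes-$ need not preserve coequalizers, so the subtle point is that one is \emph{not} claiming $(\alpha\star\alpha')\backslash(X\otimes X')$ equals $(\alpha\backslash X)\otimes(\alpha'\backslash X')$, only that there is a canonical comparison map in the stated direction; establishing that map cleanly, and then checking the full bundle of coherence hexagons/triangles for a symmetric comonoidal functor, is where the real (though routine) work lies. I would organize this by introducing the trivial-action functor as a strong monoidal section and phrasing $\alpha\backslash X$ as a quotient of the identity functor by the "augmentation", which makes the comonoidal coherences transparent. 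Everything else — functoriality, the counit being invertible, compatibility with the symmetry — is a direct diagram chase using only the universal property of coequalizers and the coherence of the symmetric monoidal structure on $\mathcal{C}$.
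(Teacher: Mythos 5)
Your proposal is correct and follows essentially the same route as the paper: the action on morphisms and all coherence checks come from the universal property of the coequalizer, the counit $\tau\backslash\mathds{1}\to\mathds{1}$ is the canonical isomorphism, and the comonoidal structure map $(\alpha\tilde{\otimes}\alpha')\backslash(X\otimes X')\to(\alpha\backslash X)\otimes(\alpha'\backslash X')$ is obtained by checking that the tensor of the two projections coequalizes the tensor action against the trivial action. Your emphasis that this is only a comparison map (not an isomorphism, since $\otimes$ need not preserve coequalizers) is exactly the point, and is why the paper reserves strongness for the separate notion of a stable monoidal structure.
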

\begin{proof}
The functoriality follows from the universality of coequalizers and equalizers.
We only consider the first case.
It is necessary to construct structure maps of a symmetric monoidal functor.
Let us prove the first claim.

Let $(\mathds{1}, \tau, \mathds{1})$ be the unit object of the symmetric monoidal category, $\mathsf{Act}_l(\mathcal{C})$, i.e. the trivial action of the trivial bimonoid $\mathds{1}$ on the object $\mathds{1}$.
Then we have a canonical morphism $\Phi : \tau \backslash \mathds{1} \to \mathds{1}$, in particular an isomprhism.

Let $O = (A,\alpha, X),O^\prime = (A^\prime,\alpha^\prime,X^\prime) $ be left actions in $\mathcal{C}$, i.e. objects of $ \mathsf{Act}_l(\mathcal{C})$.
Denote by $(A\otimes A^\prime ,\beta, X \otimes X^\prime ) = (A,\alpha, X) \otimes (A^\prime,\alpha^\prime,X^\prime) \in \mathsf{Act}_l(\mathcal{C})$.
We construct a morphism $\Psi_{O,O^\prime} : \beta \backslash (X\otimes X^\prime ) \to (\alpha \backslash X) \otimes (\alpha^\prime \backslash X^\prime)$ :
The canonical projections induce a morphism $\xi : X \otimes X^\prime \to (\alpha \backslash X) \otimes (\alpha^\prime \backslash X^\prime)$.
The morphism $\xi$ coequalizes $\beta : (A\otimes A^\prime) \otimes (X\otimes X^\prime) \to X\otimes X^\prime$ and the trivial action of $A\otimes A^\prime$ due to the definitions of $\alpha \backslash X $ and $\alpha^\prime \backslash X^\prime$.
Thus, we obtain a canonical morphism $\Psi_{O,O^\prime} : \beta \backslash ( X\otimes X^\prime ) \to (\alpha \backslash X) \otimes (\alpha^\prime \backslash X^\prime)$.

Due to the universality of coequalizers and the symmetric monoidal structure of $\mathcal{C}$, $\Phi, \Psi_{O,O^\prime}$ give structure morphisms for a symmetric monoidal functor $(A,\alpha,X) \mapsto \alpha \backslash X$.

We leave it to the readers the proof of other part.
\end{proof}


\subsection{Stable monoidal structure}
\label{201907220219}

In this subsection, we define a (co)stability and bistability of the monoidal structure of a symmetric monoidal category.
We assume that $\mathcal{C}$ is a symmetric monoidal category with arbitrary equalizer and coequalizer.

\begin{Defn}
\label{201907230933}
\rm
\label{monstr_stable}
Recall that the assignments of stabilized objects to actions (coactions, resp.) are symmetric comonoidal functors (symmetric monoidal functors, resp.) by Proposition \ref{201911301554}.
The monoidal structure of $\mathcal{C}$ is {\it stable} if the assignments of stabilized objects to actions, $\mathsf{Act}_l(\mathcal{C}) \to \mathcal{C}$ and $\mathsf{Act}_r(\mathcal{C}) \to \mathcal{C}$, are strongly symmetric monoidal functors.
The monoidal structure of $\mathcal{C}$ is {\it costable} if the assignments of stabilized objects to coactions, $\mathsf{Coact}_l(\mathcal{C}) \to \mathcal{C}$ and $\mathsf{Coact}_r(\mathcal{C}) \to \mathcal{C}$, are SSMF's.
The monoidal structure of $\mathcal{C}$ is {\it bistable} if the monoidal structure is stable and costable.
\end{Defn}

\begin{Lemma}
\label{201911301627}
Let $\Lambda,\Lambda^\prime$ be small categories.
Let $F : \Lambda \to \mathcal{C}$, $F^\prime : \Lambda^\prime \to \mathcal{C}$ be functors with colimits $\varinjlim_{\Lambda} F$ and $\varinjlim_{\Lambda^\prime} F^\prime$ respectively.
Suppose that the functor $F(\lambda) \otimes (-)$ preserves small colimits for any object $\lambda$ of $\Lambda$ and so does the functor $(-) \otimes \varinjlim F^\prime$.
Then the exterior tensor product $F \boxtimes F^\prime : \Lambda \times \Lambda^\prime \to \mathcal{C}$ has a colimit $\varinjlim_{\Lambda \times \Lambda^\prime} F \boxtimes F^\prime$, and we have $\varinjlim_{\Lambda \times \Lambda^\prime} F \boxtimes F^\prime \cong \varinjlim_{\Lambda} F \otimes \varinjlim_{\Lambda^\prime} F^\prime$.
\end{Lemma}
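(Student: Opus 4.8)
The plan is to build the colimit of $F \boxtimes F'$ by iterating colimits one factor at a time, using that both of the relevant tensor functors preserve small colimits. First I would form, for each fixed $\lambda' \in \Lambda'$, the colimit $\varinjlim_\Lambda F \boxtimes F'(\lambda') = \varinjlim_\Lambda \bigl(F(-) \otimes F'(\lambda')\bigr)$. Since the hypothesis is that $F(\lambda) \otimes (-)$ preserves small colimits for each $\lambda$, one must be slightly careful: what is needed here is that $(-) \otimes F'(\lambda')$ commutes with the colimit over $\Lambda$. This does not follow directly from the stated hypothesis on $F(\lambda)\otimes(-)$; instead I would argue that the canonical comparison map $\varinjlim_\Lambda \bigl(F(-)\otimes F'(\lambda')\bigr) \to \bigl(\varinjlim_\Lambda F\bigr) \otimes F'(\lambda')$ is built out of the structure maps, and then identify $\bigl(\varinjlim_\Lambda F\bigr) \otimes F'(\lambda')$ with the correct colimit by a Fubini/interchange argument once both variables are in play. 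Concretely, the cleanest route is: the functor $\Lambda \times \Lambda' \to \mathcal C$ has a colimit iff the iterated colimit exists, and iterated colimits compute the colimit over the product (this is a standard fact about colimits commuting with colimits, not requiring the tensor at all).

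So the key steps, in order, are as follows. Step 1: Using that colimits commute with colimits, reduce the existence of $\varinjlim_{\Lambda\times\Lambda'} F\boxtimes F'$ to the existence of the iterated colimit $\varinjlim_{\lambda'\in\Lambda'}\bigl(\varinjlim_{\lambda\in\Lambda} F(\lambda)\otimes F'(\lambda')\bigr)$, together with a canonical isomorphism between the two. Step 2: For each fixed $\lambda'$, show $\varinjlim_{\lambda\in\Lambda} \bigl(F(\lambda)\otimes F'(\lambda')\bigr)$ exists and equals $\bigl(\varinjlim_\Lambda F\bigr)\otimes F'(\lambda')$; here I would use the hypothesis in the form that tensoring preserves the colimit $\varinjlim_\Lambda F$ — more precisely I would invoke that $\varinjlim_\Lambda F$ is preserved by $(-)\otimes F'(\lambda')$, which is exactly the content needed and which the paper evidently intends the first hypothesis to supply after observing that the colimit $\varinjlim_\Lambda F$ is a colimit of the diagram of the $F(\lambda)$'s. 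Step 3: Now the iterated colimit becomes $\varinjlim_{\lambda'\in\Lambda'}\bigl((\varinjlim_\Lambda F)\otimes F'(\lambda')\bigr)$, and since by the second hypothesis $(-)\otimes\varinjlim_{\Lambda'}F'$ preserves small colimits — in particular $(\varinjlim_\Lambda F)\otimes(-)$ applied to the diagram $F'$ has colimit $(\varinjlim_\Lambda F)\otimes(\varinjlim_{\Lambda'}F')$ — I conclude this equals $\bigl(\varinjlim_\Lambda F\bigr)\otimes\bigl(\varinjlim_{\Lambda'} F'\bigr)$. Step 4: Assemble Steps 1--3 into the stated isomorphism $\varinjlim_{\Lambda\times\Lambda'} F\boxtimes F' \cong \varinjlim_\Lambda F \otimes \varinjlim_{\Lambda'} F'$, checking that the isomorphism is the canonical comparison map so that there are no cocycle issues.

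I would also record the symmetric-variable bookkeeping carefully: the two hypotheses are deliberately asymmetric (one is "for every $\lambda$", the other is a single statement about $\varinjlim_{\Lambda'}F'$), and the asymmetry is what forces the order of the iterated colimit — one must colimit over $\Lambda$ first (inner) and over $\Lambda'$ second (outer). It is worth a sentence explaining why this suffices even though we never assume $(-)\otimes F'(\lambda')$ preserves arbitrary colimits: we only ever apply it to the one colimit $\varinjlim_\Lambda F$, and the needed preservation of that particular colimit is what the first hypothesis buys us once we note $\varinjlim_\Lambda F$ is a colimit of a diagram whose values are the $F(\lambda)$.

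The main obstacle I anticipate is not the colimit-interchange formalism, which is routine, but matching the precise wording of the hypotheses to what is used: the first hypothesis literally says $F(\lambda)\otimes(-)$ preserves small colimits, whereas Step 2 needs $(-)\otimes F'(\lambda')$ to preserve the colimit $\varinjlim_\Lambda F$. Bridging this requires observing that preservation of a colimit by every $F(\lambda)\otimes(-)$ (for the "rows") plus the fact that the comparison map out of $\varinjlim_\Lambda\bigl(F(-)\otimes F'(\lambda')\bigr)$ is constructed from the same universal data lets one identify the target correctly; alternatively, and perhaps more honestly, I suspect the intended reading is that $(-)\otimes Z$ preserves colimits for $Z$ ranging over the objects appearing, and I would phrase the argument to make the logical dependence transparent rather than sweeping it under "by symmetry".
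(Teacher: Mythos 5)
There is a genuine gap: you have taken the iterated colimit in the wrong order, and as a consequence neither of your two colimit-interchange steps is actually licensed by the stated hypotheses. The first hypothesis, that $F(\lambda)\otimes(-)$ preserves small colimits for each $\lambda$, is tailor-made to compute the \emph{inner} colimit over $\Lambda'$ with $\lambda$ fixed: it says precisely that $F(\lambda)\otimes\varinjlim F'$ is the colimit of $F(\lambda)\otimes F'(-)$. The second hypothesis, that $(-)\otimes\varinjlim F'$ preserves small colimits, is then tailor-made for the \emph{outer} colimit over $\Lambda$: it says that $\varinjlim F\otimes\varinjlim F'$ is the colimit of $F(-)\otimes\varinjlim F'$. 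Your plan inverts this (inner over $\Lambda$, outer over $\Lambda'$), and then in Step 2 you need $(-)\otimes F'(\lambda')$ to preserve $\varinjlim_\Lambda F$ (not given; by symmetry of $\otimes$ this would amount to $F'(\lambda')\otimes(-)$ preserving colimits, whereas the hypothesis is about $F(\lambda)\otimes(-)$), and in Step 3 you need $(\varinjlim F)\otimes(-)$ to preserve $\varinjlim_{\Lambda'}F'$, which you attribute to the second hypothesis; but that hypothesis concerns tensoring with the fixed object $\varinjlim F'$, not with $\varinjlim F$, and the two are unrelated. Your closing sentence, ``one must colimit over $\Lambda$ first (inner) and over $\Lambda'$ second (outer),'' is exactly backwards. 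The attempted bridges (``the comparison map is built out of the same universal data,'' or reading the hypotheses as ``$(-)\otimes Z$ preserves colimits for all $Z$ appearing'') either do not close the gap or silently strengthen the hypotheses.

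The fix is simply to swap the order, after which your argument becomes essentially the paper's proof. The paper verifies the universal property of $\varinjlim F\otimes\varinjlim F'$ directly: given a compatible family $g_{\lambda,\lambda'}:F(\lambda)\otimes F'(\lambda')\to X$, it first uses the hypothesis on $F(\lambda)\otimes(-)$ to factor through a unique $g_\lambda:F(\lambda)\otimes\varinjlim F'\to X$, checks these form a cocone over $\Lambda$, then uses the hypothesis on $(-)\otimes\varinjlim F'$ to factor through a unique $g:\varinjlim F\otimes\varinjlim F'\to X$, and proves uniqueness by running the same two factorizations in reverse. Your Step 1 (reduction of the product colimit to an iterated colimit) is fine and standard; the error is confined to which variable is iterated first and to the misattribution of the two preservation statements.
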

\begin{proof}
Let $X$ be an object of $\mathcal{C}$ and $g_{\lambda, \lambda^\prime} : F(\lambda) \otimes F^\prime ( \lambda^\prime ) \to X$ be a family of morphisms for $\lambda \in \Lambda, \lambda^\prime \in \Lambda^\prime$ such that $g_{\lambda_1,\lambda^\prime_1} \circ \left( F(\xi) \otimes F(\xi^\prime) \right) = g_{\lambda_0,\lambda^\prime_0}$ where $\xi : \lambda_0 \to \lambda_1$, $\xi^\prime : \lambda^\prime_0 \to \lambda^\prime_1$ are morphisms in $\Lambda, \Lambda^\prime$ respectively.
By the first assumption, the object $F(\lambda) \otimes \varinjlim F^\prime$ is a colimit of $F(\lambda) \otimes F^\prime (-)$ for arbitrary object $\lambda \in \Lambda$.
We obtain a unique morphism $g_\lambda : F(\lambda) \otimes \varinjlim F^\prime \to X$ such that $g_\lambda \circ (id_{F(\lambda)} \otimes \pi_{\lambda^\prime}) = g_{\lambda, \lambda^\prime}$ for every object $\lambda \in \Lambda$.
By the universality of colimits, the family of morphisms $g_\lambda$ is, in fact, a natural transformation.
By the second assumption, $\varinjlim F \otimes \varinjlim F^\prime$ is a colimit of the functor $F(-) \otimes \varinjlim F^\prime$.
Hence, the family of morphisms $g_\lambda$ for $\lambda \in \Lambda$ induces a unique morphism $g  : \varinjlim F \otimes \varinjlim F^\prime \to X$ such that $g \circ (\pi_\lambda \otimes id_{\varinjlim F^\prime}) = g_\lambda$.
Above all, for objects $\lambda \in \Lambda, \lambda^\prime \in \Lambda^\prime$, we have $g \circ ( \pi_\lambda \otimes \pi_{\lambda^\prime} ) = g \circ (\pi_\lambda \otimes id_{\varinjlim F^\prime}) \circ (id_{F(\lambda)} \otimes \pi_{\lambda^\prime}) = g_\lambda \circ (id_{F(\lambda)} \otimes \pi_{\lambda^\prime}) = g_{\lambda, \lambda^\prime}$.

We prove that such a morphism $g$ that $g \circ ( \pi_\lambda \otimes \pi_{\lambda^\prime} ) = g_{\lambda, \lambda^\prime}$ is unique.
Let $g^\prime : \varinjlim F \otimes \varinjlim F^\prime \to X$ be a morphism such that $g^\prime \circ ( \pi_\lambda \otimes \pi_{\lambda^\prime} ) = g_{\lambda, \lambda^\prime}$.
Denote by $h = g \circ (\pi_\lambda \otimes id_{\varinjlim F^\prime})$ and  $h^\prime = g^\prime \circ (\pi_\lambda \otimes id_{\varinjlim F^\prime})$.
Then we have $h^\prime \circ (id_{F(\lambda)} \otimes \pi_{\lambda^\prime}) = g_{\lambda, \lambda^\prime} = h \circ (id_{F(\lambda)} \otimes \pi_{\lambda^\prime})$ by definitions.
Since $F(\lambda) \otimes \varinjlim F^\prime$ is a colimit of the functor $F(\lambda) \otimes F^\prime (-)$ by the first assumption, we see that $h^\prime = h$.
Equivalently, we have $g \circ (\pi_\lambda \otimes id_{\varinjlim F^\prime}) = g^\prime \circ (\pi_\lambda \otimes id_{\varinjlim F^\prime})$.
Since $\varinjlim F \otimes \varinjlim F^\prime$ is a colimit of the functor $F (-) \otimes \varinjlim F^\prime$ by the second assumption, we see that $g = g^\prime$ by the universality.
It completes the proof.
\end{proof}

\begin{prop}
\label{comp_monoidal_stab}
Suppose that the functor $Z \otimes (-)$ preserves coequalizers (equalizers resp.) for arbitrary object $Z \in \mathcal{C}$.
Then the monoidal structure of $\mathcal{C}$ is stable (costable, resp.).
\end{prop}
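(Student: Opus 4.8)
Since the proposition is self-dual (pass to $\mathcal{C}^{op}$: coequalizers become equalizers, actions become coactions, comonoidal becomes monoidal), it is enough to prove the ``stable'' assertion, i.e. that if $Z \otimes (-)$ preserves coequalizers for every object $Z$, then the monoidal structure of $\mathcal{C}$ is stable; and since the right--handed functor $\mathsf{Act}_r(\mathcal{C}) \to \mathcal{C}$ is obtained from $\mathsf{Act}_l(\mathcal{C}) \to \mathcal{C}$ by conjugating with the symmetry of $\mathcal{C}$, it suffices to treat $\mathsf{Act}_l(\mathcal{C}) \to \mathcal{C}$. By Proposition~\ref{201911301554} this assignment is already a symmetric comonoidal functor; write $\Phi : \tau \backslash \mathds{1} \to \mathds{1}$ and $\Psi_{O,O'} : \beta \backslash (X \otimes X') \to (\alpha \backslash X) \otimes (\alpha' \backslash X')$ for its comonoidal structure morphisms, with $O = (A,\alpha,X)$, $O' = (A',\alpha',X')$ and $O \otimes O' = (A \otimes A', \beta, X \otimes X')$ in the notation of the proof of Proposition~\ref{201911301554}. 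Being strongly symmetric monoidal (in the sense of Definition~\ref{201907230933}) amounts precisely to all of $\Phi$ and $\Psi_{O,O'}$ being isomorphisms. Now $\Phi$ is always an isomorphism, as already observed in the proof of Proposition~\ref{201911301554} (the coequalizer defining $\tau \backslash \mathds{1}$ is the coequalizer of a morphism with itself), so the whole content is that each $\Psi_{O,O'}$ is an isomorphism, and this is where the hypothesis enters.

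First I would use the hypothesis to describe $(\alpha \backslash X) \otimes (\alpha' \backslash X')$ as an iterated coequalizer. Since $\mathcal{C}$ is symmetric, $(-) \otimes Z \cong Z \otimes (-)$, so $(-) \otimes X'$ preserves coequalizers; applying it to the coequalizer $\pi_\alpha : X \to \alpha \backslash X$ of $\alpha, \tau_{A,X}$ shows that $\pi_\alpha \otimes id_{X'}$ is the coequalizer of $\alpha \otimes id_{X'}$ and $\tau_{A,X} \otimes id_{X'}$, and similarly that $\pi_\alpha \otimes id_{A' \otimes X'}$ is a coequalizer, hence an epimorphism. Applying the functor $(\alpha \backslash X) \otimes (-)$, which also preserves coequalizers, to the coequalizer $\pi_{\alpha'} : X' \to \alpha' \backslash X'$ of $\alpha', \tau_{A',X'}$ shows that $id_{\alpha \backslash X} \otimes \pi_{\alpha'}$ is the coequalizer of $id_{\alpha \backslash X} \otimes \alpha'$ and $id_{\alpha \backslash X} \otimes \tau_{A',X'}$. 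A short diagram chase with the interchange law (using $(id_{\alpha \backslash X} \otimes \pi_{\alpha'}) \circ (\pi_\alpha \otimes id_{X'}) = \pi_\alpha \otimes \pi_{\alpha'}$, and the epimorphism $\pi_\alpha \otimes id_{A' \otimes X'}$ to transport a condition on the second factor down from $X \otimes X'$) then shows: a morphism $g : X \otimes X' \to Y$ factors, necessarily uniquely, through $\pi_\alpha \otimes \pi_{\alpha'}$ if and only if $g$ coequalizes both pairs $(\alpha \otimes id_{X'},\ \tau_{A,X} \otimes id_{X'})$ and $(id_X \otimes \alpha',\ id_X \otimes \tau_{A',X'})$.

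Next I would check that this ``double'' coequalizing condition on $g$ coincides with the single condition $g \circ \beta = g \circ \tau_{A \otimes A', X \otimes X'}$, where $\beta = (\alpha \otimes \alpha') \circ c$ and $c$ is the canonical coherence isomorphism $(A \otimes A') \otimes (X \otimes X') \cong (A \otimes X) \otimes (A' \otimes X')$. If the single condition holds, then precomposing $\beta$ and $\tau_{A \otimes A', X \otimes X'}$ with the morphism $(A \otimes X) \otimes X' \to (A \otimes A') \otimes (X \otimes X')$ that inserts $\eta_{A'}$ into the $A'$-slot turns them into $\alpha \otimes id_{X'}$ and $\tau_{A,X} \otimes id_{X'}$ (using $\epsilon_{A'} \circ \eta_{A'} = id_{\mathds{1}}$ and $\alpha' \circ (\eta_{A'} \otimes id_{X'}) = id_{X'}$), so $g$ coequalizes the first pair; inserting $\eta_A$ similarly gives the second. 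Conversely, if the double condition holds, writing $\beta = (\alpha \otimes id_{X'}) \circ (id_{A \otimes X} \otimes \alpha') \circ c$ and using first that $g$ coequalizes $(\alpha \otimes id_{X'},\ \tau_{A,X} \otimes id_{X'})$ and then that it coequalizes $(id_X \otimes \alpha',\ id_X \otimes \tau_{A',X'})$, one rewrites $g \circ \beta$ until it becomes $g \circ \bigl((\epsilon_A \otimes id_X) \otimes (\epsilon_{A'} \otimes id_{X'})\bigr) \circ c = g \circ \tau_{A \otimes A', X \otimes X'}$. These are finite computations with the interchange law, naturality of the symmetry, and the coherence isomorphisms of $\mathcal{C}$.

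Putting the last two paragraphs together, $(\alpha \backslash X) \otimes (\alpha' \backslash X')$ with $\pi_\alpha \otimes \pi_{\alpha'}$, and $\beta \backslash (X \otimes X')$ with its projection $\pi_\beta$, corepresent the same functor on $\mathcal{C}$, namely morphisms out of $X \otimes X'$ coequalizing $\beta$ and $\tau_{A \otimes A', X \otimes X'}$; hence there is a unique isomorphism $\Theta : \beta \backslash (X \otimes X') \to (\alpha \backslash X) \otimes (\alpha' \backslash X')$ with $\Theta \circ \pi_\beta = \pi_\alpha \otimes \pi_{\alpha'}$. But in the proof of Proposition~\ref{201911301554} the morphism $\Psi_{O,O'}$ is constructed as the unique morphism with $\Psi_{O,O'} \circ \pi_\beta = \pi_\alpha \otimes \pi_{\alpha'}$; therefore $\Psi_{O,O'} = \Theta$ is an isomorphism. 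Since $O,O'$ were arbitrary and the same argument applies verbatim to $\mathsf{Act}_r(\mathcal{C})$, the monoidal structure of $\mathcal{C}$ is stable, and the costable statement is the formal dual (equalizers, coactions and symmetric monoidal structure maps in place of coequalizers, actions and comonoidal structure maps). As an alternative to the second and fourth paragraphs one may instead invoke Lemma~\ref{201911301627} with $\Lambda = \Lambda'$ the two-arrow parallel-pair category, which (once $A \otimes (-)$, $X \otimes (-)$ and $(-) \otimes (\alpha' \backslash X')$ are known to preserve coequalizers) yields $(\alpha \backslash X) \otimes (\alpha' \backslash X') \cong \varinjlim(F_\alpha \boxtimes F_{\alpha'})$ directly for the parallel-pair diagrams $F_\alpha, F_{\alpha'}$ computing $\alpha \backslash X, \alpha' \backslash X'$; one still needs the third paragraph to identify $\varinjlim(F_\alpha \boxtimes F_{\alpha'})$ with $\beta \backslash (X \otimes X')$ and to match the universal maps. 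The main obstacle is exactly this identification: comparing the single coequalizer along the product action with the two coequalizers along the individual actions while carrying along all coherence isomorphisms, so that the resulting isomorphism is recognized as $\Psi_{O,O'}$ itself rather than merely some isomorphism.
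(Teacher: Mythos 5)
Your proof is correct and follows essentially the same route as the paper: it realizes $(\alpha \backslash X) \otimes (\alpha' \backslash X')$ as a joint coequalizer using the hypothesis on $Z \otimes (-)$, and then identifies that joint coequalizer with $\beta \backslash (X \otimes X')$ by the unit-insertion trick ($\eta_{A'}$, $\eta_A$), which is exactly how the paper collapses the four-map coequalizer to the two-map one. The only cosmetic difference is that you inline the iterated-coequalizer computation where the paper cites Lemma \ref{201911301627}, an alternative you yourself point out.
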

\begin{proof}
Note that since $\mathcal{C}$ is a symmetric monoidal category, the functor $(-) \otimes Z$ preserves coequalizers (equalizers resp.) for arbitrary object $Z \in \mathcal{C}$ by the assumption.
We prove the stability and leave the proof o the costability to the readers.

Let $(A,\alpha, X)$, $(B, \beta , Y)$ be left actions in $\mathcal{C}$.
Denote by $\alpha \backslash X , \beta \backslash Y$ their stabilized objects as before.
By the assumption, we can apply  Lemma \ref{201911301627}.
By Lemma \ref{201911301627}, $( \alpha \backslash X \otimes \beta \backslash Y)$ is a coequalizer of morphisms $\alpha \tilde{\otimes} \beta$, $\alpha \tilde{\otimes} \tau_{B}$, $\tau_{A} \tilde{\otimes} \beta$, $\tau_A \tilde{\otimes} \tau_B$.
Here, $\tilde{\otimes}$ is defined in Definition \ref{201911301531}.
It suffices to show that a coequalizer of $\alpha \tilde{\otimes} \beta$, $\alpha \tilde{\otimes} \tau_{B}$, $\tau_{A} \tilde{\otimes} \beta$, $\tau_A \tilde{\otimes} \tau_B$ coincides with the stabilized object $(\alpha \tilde{\otimes} \beta) \backslash ( X \otimes Y)$, i.e. a coequalizer of $\alpha \tilde{\otimes} \beta$, $\tau_A \tilde{\otimes} \tau_B$.

Let $\pi : X \otimes Y \to (\alpha \tilde{\otimes} \beta ) \backslash ( X\otimes Y )$ be the canonical projection.
The unit axiom of the action $\beta$ induces the following commutative diagram :
\begin{equation}
\begin{tikzcd}
A \otimes B \otimes X \otimes Y \ar[rr, "\alpha\tilde{\otimes}\tau_B"] \ar[dr, "id_{A} \otimes (\eta_B\circ\epsilon_B) \otimes id_{X\otimes Y}"'] & & X \otimes Y \\
& A \otimes B \otimes X \otimes Y \ar[ur, "\alpha \tilde{\otimes} \beta"'] &
\end{tikzcd}
\end{equation}
Hence, we have $\pi \circ (\alpha \tilde{\otimes} \tau_B) = \pi \circ (\alpha \tilde{\otimes} \beta) \circ (id_{A} \otimes (\eta_B\circ\epsilon_B) \otimes id_{X\otimes Y}) = \pi \circ (\tau_A \tilde{\otimes} \tau_B) \circ (id_{A} \otimes (\eta_B\circ\epsilon_B) \otimes id_{X\otimes Y}) = \pi \circ (\tau_A \tilde{\otimes} \tau_B)$.
We obtain $\pi \circ (\alpha \tilde{\otimes} \tau_B ) = \pi \circ (\tau_A \tilde{\otimes} \tau_B)$.
Likewise, we have  $\pi \circ (\tau_A \tilde{\otimes} \beta) = \pi \circ (\tau_A \tilde{\otimes} \tau_B)$.

Let $g  : X \otimes Y \to Z$ be a morphism which coequalizes $\alpha \tilde{\otimes} \beta$, $\alpha \tilde{\otimes} \tau_{B}$, $\tau_{A} \tilde{\otimes} \beta$, $\tau_A \tilde{\otimes} \tau_B$.
Since the morphism $g$ coequalizes $\alpha \tilde{\otimes} \beta$, $\tau_A \tilde{\otimes} \tau_B$, there exists a unique morphism $g^\prime : ( \alpha \tilde{\otimes} \beta ) \backslash ( X \otimes Y ) \to Z$ such that $g^\prime \circ \pi = g$.
Above all, $( \alpha \tilde{\otimes} \beta ) \backslash ( X \otimes Y )$ is a coequalizer of $\alpha \tilde{\otimes} \beta$, $\alpha \tilde{\otimes} \tau_{B}$, $\tau_{A} \tilde{\otimes} \beta$, $\tau_A \tilde{\otimes} \tau_B$.
\end{proof}

\begin{Example}
\label{201912022052}
Consider the symmetric monoidal category, $\mathsf{Vec}^{\otimes}_{{k}}$, the category of vector spaces over ${k}$ and linear homomorphisms.
Note that a coequalizer (an equalizer, resp.) of two morphisms in the category $\mathsf{Vec}_{{k}}$ is obtained via a cokernel (a kernel, resp.) of their difference morphism.
A functor $V \otimes (-)$ preserves coequazliers and equazliers since it is an exact functor for any linear space $V$.
Hence, by Proposition \ref{comp_monoidal_stab}, the monoidal structure of the symmetric monoidal category, $\mathsf{Vec}^{\otimes}_{{k}}$, is bistable.
\end{Example}


\section{Normal homomorphism}
\label{201908051552}

In this section, we define a notion of {\it normality}, {\it conormality} and {\it binormality} of bimonoid homomorphisms.
We prove that every homomorphism between bicommutative Hopf monoids is binormal under some assumptions on the symmetric monoidal category $\mathcal{C}$.

\begin{Defn}
\label{201908040955}
\rm
Let $\mathcal{D}$ be a category with a zero object, i.e. an initial object which is simultaneously a terminal object.
Let $A,B$ be objects of $\mathcal{D}$ and $\xi : A \to B$ be a morphism in $\mathcal{D}$.
A {\it cokernel} of $\xi$ is given by a pair $(Cok (\xi) , cok (\xi))$ of an object $Cok(\xi)$ and a morphism $cok (\xi ) : B \to Cok (\xi )$, which gives a coequalizer of $\xi : A \to B$ and $0 : A \to B$ in $\mathcal{D}$.

A {\it kernel} of $\xi$ is given by a pair $(Ker (\xi) , ker (\xi))$ of an object $Ker(\xi)$ and a morphism $ker (\xi ) : Ker (\xi ) \to A$, which gives an equalizer of $\xi : A \to B$ and $0 : A \to B$ in $\mathcal{D}$.
\end{Defn}

\begin{Defn}
\label{201912021024}
\rm
Let $A,B$ be bimonoids in a symmetric monoidal category $\mathcal{C}$ and $\xi : A \to B$ be a bimonoid homomorphism.
We define a left action $(A, \alpha^{\to}_\xi , B)$ and a right action $(B, \alpha^{\leftarrow}_\xi , A)$ by the following compositions :
\begin{align}
\alpha^{\to}_\xi : A \otimes B \stackrel{\xi\otimes id_B}{\to} B \otimes B \stackrel{\nabla_B}{\to} B , \\
\alpha^{\leftarrow}_\xi : B \otimes A \stackrel{id_B \otimes \xi}{\to} B \otimes B \stackrel{\nabla_B}{\to} B .
\end{align}
We define a left coaction $(A, \beta^{\to}_\xi , B)$ and a right coaction $(B, \beta^{\leftarrow}_\xi , A)$ by the following compositions : 
\begin{align}
\beta^{\to}_\xi : A \stackrel{\Delta_A}{\to} A \otimes A \stackrel{\xi\otimes id_A}{\to} B \otimes A , \\
\beta^{\leftarrow}_\xi : A \stackrel{\Delta_A}{\to} A \otimes A \stackrel{id_A\otimes \xi}{\to} A \otimes B  .
\end{align}
\end{Defn}

\begin{Defn}
\label{201908040928}
\rm
Let $A,B$ be bimonoids in a symmetric monoidal category $\mathcal{C}$.
A bimonoid homomorphism $\xi : A \to B$ is {\it normal} if there exists a bimonoid structure on the stabilized objects $\alpha^{\to}_\xi \backslash B$, $B / \alpha^{\leftarrow}_\xi$ such that the canonical morphisms $\pi : B\to \alpha^{\to}_\xi \backslash B$, $\tilde{\pi} : B \to B/\alpha^{\leftarrow}_\xi$ are bimonoid homomorphisms and the pairs $(\alpha^{\to}_\xi \backslash B, \pi)$, $(B / \alpha^{\leftarrow}_\xi , \tilde{\pi})$ give cokernels of $\xi$ in $\mathsf{Bimon}(\mathcal{C})$.

A {\it conormal} bimonoid homomorphism is defined in a dual way by using the coactions $\beta^{\leftarrow}_\xi, \beta^{\to}_\xi$ instead of $\alpha^{\to}_\xi, \alpha^{\leftarrow}_\xi$.
A bimonoid homomrphism $\xi : A \to B$ is {\it binormal} if it is normal and conormal in $\mathsf{Bimon}(\mathcal{C})$.
\end{Defn}

\begin{remark}
We use the terminology {\it normal} due to the following reason.
If $\mathcal{C} = \mathsf{Sets}^\times$, then a Hopf monoid in that symmetric monoidal category is given by a group.
For a group $H$ and its subgroup $G$, one can determine a set $H / G$ which is a candidate of a cokernel of the inclusion.
The set $H / G$ plays a role of cokernel group if and only if the image $G$ is a normal subgroup of $H$.
In this example, the {\it normality} defined in this paper means that the set $H / G$ is a cokernel group of the inclusion $G \to H$.
\end{remark}

\begin{remark}
We remark that our notion is implied by the Milnor-Moore's definition if $\mathcal{C} = \mathsf{Vec}^\otimes_{k}$.
Milnor and Moore defined the notion of normality of morphisms of augmented algebras over a ring and normality of morphisms of augmented coalgebras over a ring (Definition 3.3, 3.5 \cite{Mil}).
They are defined by using the additive structure of the category $\mathsf{Vec}_{k}$.
We introduce a weaker notion of normality and conormality of bimonoid homomorphisms without assuming an additive category structure on $\mathcal{C}$.
\end{remark}

\begin{prop}
\label{201907021116}
Let $A$ be a bimonoid.
The identity homomorphism $id_A : A \to A$ is binormal.
\end{prop}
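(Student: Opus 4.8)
The plan is to show that for the identity homomorphism $id_A : A \to A$, both the relevant actions and coactions degenerate to trivial ones, so that the stabilized objects are canonically $A$ itself with its original bimonoid structure, and the canonical projections become identities. First I would compute the left action $\alpha^{\to}_{id_A} : A \otimes A \to A$, which by Definition \ref{201912021024} equals $\nabla_A \circ (id_A \otimes id_A) = \nabla_A$; similarly the right action $\alpha^{\leftarrow}_{id_A}$ equals $\nabla_A$. The stabilized object $\alpha^{\to}_{id_A} \backslash A$ is, by definition, the coequalizer of $\nabla_A$ and the trivial action $\tau_{A,A} = \mathbf{l}_A \circ (\epsilon_A \otimes id_A)$. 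I would check that the counit $\epsilon_A : A \to \mathds{1}$... no — rather, that $A$ itself together with the identity map is such a coequalizer: one verifies $\nabla_A = \mathbf{l}_A \circ (\epsilon_A \otimes id_A)$ is \emph{not} generally true, so instead I would argue that the cokernel of $id_A$ in $\mathsf{Bimon}(\mathcal{C})$ is the zero bimonoid $\mathds{1}$, and that the coequalizer of $\nabla_A$ and $\tau_{A,A}$ is also $\mathds{1}$ via $\epsilon_A$, since $\nabla_A$ and $\tau_{A,A}$ both become equal after postcomposing with $\epsilon_A$ (as $\epsilon_A$ is an algebra map and the counit of the trivial action composite also collapses), and $\epsilon_A$ is universal among such maps because any $g : A \to Z$ coequalizing them factors through $\mathds{1}$ using the unit axiom applied via $g \circ \eta_A$.

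The key steps in order: (1) identify $\alpha^{\to}_{id_A} = \alpha^{\leftarrow}_{id_A} = \nabla_A$ and the dual coactions $\beta^{\to}_{id_A} = \beta^{\leftarrow}_{id_A} = \Delta_A$; (2) show the coequalizer of $(\nabla_A, \tau_{A,A})$ is $(\mathds{1}, \epsilon_A)$, equivalently that $\alpha^{\to}_{id_A}\backslash A \cong \mathds{1}$, carrying the (unique) bimonoid structure of $\mathds{1}$, and that $\pi = \epsilon_A$ is a bimonoid homomorphism; (3) observe that $(\mathds{1}, \epsilon_A)$ is indeed a cokernel of $id_A$ in $\mathsf{Bimon}(\mathcal{C})$, since a bimonoid map $g : A \to Z$ with $g \circ id_A = g \circ 0 = \eta_Z \circ \epsilon_A$ forces $g = \eta_Z \circ \epsilon_A$, which factors uniquely through $\epsilon_A$; (4) dualize all of the above to get that $\beta^{\to}_{id_A} / A \cong \mathds{1}$ with $\eta_A$... (the dual projection) being the kernel inclusion, establishing conormality; (5) conclude $id_A$ is binormal.

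I expect step (2) — checking that the \emph{monoidal} coequalizer in $\mathcal{C}$ of $(\nabla_A, \tau_{A,A})$ really is $\mathds{1}$ and not something larger — to be the main obstacle, because it requires genuinely using that $A$ is a bimonoid (so that $\epsilon_A$ is a monoid homomorphism and the unit/counit compatibility holds) rather than merely a monoid; a bare monoid would not have this property. Concretely, given $g : A \to Z$ coequalizing $\nabla_A$ and $\tau_{A,A}$, one precomposes with $id_A \otimes \eta_A : A \cong A \otimes \mathds{1} \to A \otimes A$ to get $g \circ \nabla_A \circ (id_A \otimes \eta_A) = g$ and $g \circ \tau_{A,A} \circ (id_A \otimes \eta_A) = g \circ \eta_A \circ \epsilon_A \circ (\text{something})$, and then uses the other composite to collapse $g$ onto $\eta_Z \circ \epsilon_A$, though care is needed with which leg to precompose and with the interchange of $\nabla$, $\epsilon$, and the unitors. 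Everything else is a routine unwinding of Definitions \ref{201908040955}, \ref{201912021024} and \ref{201908040928} together with the bimonoid axioms, and the dual half is formally identical.
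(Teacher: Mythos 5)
Your proposal is correct and follows essentially the same route as the paper: identify $\alpha^{\to}_{id_A}$ with the regular action $\nabla_A$, show that $\epsilon_A$ gives the coequalizer of $\nabla_A$ and $\tau_{A,A}$ (so $\alpha^{\to}_{id_A}\backslash A \cong \mathds{1}$ with its unique bimonoid structure), observe that $(\mathds{1},\epsilon_A)$ is the cokernel of $id_A$ in $\mathsf{Bimon}(\mathcal{C})$, and dualize for conormality. Your worked-out universality argument (precomposing with $id_A\otimes\eta_A$ to get $g=(g\circ\eta_A)\circ\epsilon_A$) correctly supplies the detail the paper leaves implicit.
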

\begin{proof}
We prove that the identity homomorphism $id_A$ is normal.
The counit $\epsilon_A : A \to \mathds{1}$ on $A$ induces gives a coequalizer of the regular action $\alpha^{\to}_{id_A} : A \otimes A \to A$ and the trivial action $\tau : A \otimes A \to A$.
In particular, we have a natural isomorphism $\alpha^{\to}_{id_A} \backslash A \cong \mathds{1}$.
We give a bimonoid structure on $\alpha^{\to}_{id_A}$ by the isomorphism.
Moreover the counit $\epsilon_A : A \to \mathds{1}$ is obviously a cokernel of the identity homomorphism $id_A$ in the category of bimonoids $\mathsf{Bimon}(\mathcal{C})$.
Thus, the identity homomorphsim $id_A$ is normal.
In a dual way, the identity homomorphsim $id_A$ is conormal, so that binormal.
\end{proof}

\begin{prop}
\label{201911302012}
Let $A,B$ be Hopf monoids in a symmetric monoidal category $\mathcal{C}$. 
Let $\xi : A \to B$ be a bimonoid homomorphism.
If the homomorphism $\xi$ is normal, then a cokernel $(Cok (\xi) , cok (\xi) )$ in the category of bimoniods $\mathsf{Bimon}(\mathcal{C})$ is a cokernel in the category of Hopf monoids $\mathsf{Hopf} ( \mathcal{C})$.
\end{prop}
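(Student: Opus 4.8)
The plan is to exploit that $\mathsf{Hopf}(\mathcal{C})$ is a \emph{full} subcategory of $\mathsf{Bimon}(\mathcal{C})$: an antipode of a bimonoid is unique when it exists, and a bimonoid homomorphism between two Hopf monoids automatically commutes with the antipodes (both $f \circ S_H$ and $S_K \circ f$ are convolution inverses of $f$ in the monoid $Hom_{\mathcal{C}}(H, K)$). Granting this, it suffices to prove that the cokernel bimonoid $Cok(\xi)$ admits an antipode, i.e.\ underlies a Hopf monoid: the universal property of $(Cok(\xi), cok(\xi))$ in $\mathsf{Hopf}(\mathcal{C})$ then follows formally from the one in $\mathsf{Bimon}(\mathcal{C})$, because the zero morphism $A \to Cok(\xi)$ and the test homomorphisms $f : B \to T$ into a Hopf monoid $T$ are literally the same in the two categories, and fullness upgrades the induced bimonoid homomorphism out of $Cok(\xi)$ to a Hopf homomorphism.

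By normality we may take $Cok(\xi) = \alpha^{\to}_\xi \backslash B$ and $cok(\xi) = \pi : B \to \alpha^{\to}_\xi \backslash B$ the canonical projection, which is a bimonoid homomorphism and a coequalizer of $\alpha^{\to}_\xi$ and the trivial left action $\tau_{A,B}$. Two elementary remarks: $\pi$ is an epimorphism (it is a coequalizer), and $\pi$ also coequalizes the right action $\alpha^{\leftarrow}_\xi$ with the trivial right action $\tau_{B,A}$ --- indeed $\pi \circ \alpha^{\leftarrow}_\xi = \pi \circ \nabla_B \circ (id_B \otimes \xi) = \nabla_{Cok} \circ (\pi \otimes (\pi \circ \xi)) = \nabla_{Cok} \circ (\pi \otimes (\eta_{Cok} \circ \epsilon_A)) = \mathbf{r}_{Cok} \circ (\pi \otimes \epsilon_A) = \pi \circ \tau_{B,A}$, using that $\pi$ is a bimonoid homomorphism and $\pi \circ \xi = 0$.

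Next I would show that $\pi \circ S_B$ coequalizes $\alpha^{\to}_\xi$ and $\tau_{A,B}$, so that it descends along $\pi$. Since $S_B$ is anti-multiplicative, $S_B \circ \xi = \xi \circ S_A$ (fullness again), and the symmetry $c$ is natural, one computes $S_B \circ \alpha^{\to}_\xi = \alpha^{\leftarrow}_\xi \circ c_{A,B} \circ (S_A \otimes S_B)$. Composing with $\pi$, using the second remark together with the coherence identity $\mathbf{r}_B \circ c_{\mathds{1},B} = \mathbf{l}_B$ and the antipode identity $\epsilon_A \circ S_A = \epsilon_A$, this reduces to $\pi \circ \mathbf{l}_B \circ (\epsilon_A \otimes S_B)$; the same expression equals $\pi \circ S_B \circ \tau_{A,B}$ by naturality of the left unitor. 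By the universal property of the coequalizer $\pi$ there is a unique $S_{Cok} : Cok(\xi) \to Cok(\xi)$ with $S_{Cok} \circ \pi = \pi \circ S_B$.

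Finally I would verify that $S_{Cok}$ is an antipode on the bimonoid $Cok(\xi)$. Precomposing the desired equation $\nabla_{Cok} \circ (S_{Cok} \otimes id_{Cok}) \circ \Delta_{Cok} = \eta_{Cok} \circ \epsilon_{Cok}$ with the epimorphism $\pi$, and using that $\pi$ is a bimonoid homomorphism and $S_{Cok} \circ \pi = \pi \circ S_B$, the left side becomes $\pi \circ \nabla_B \circ (S_B \otimes id_B) \circ \Delta_B = \pi \circ \eta_B \circ \epsilon_B$ and the right side becomes $\eta_{Cok} \circ \epsilon_{Cok} \circ \pi = \eta_{Cok} \circ \epsilon_B$; these agree because $\pi$ preserves the unit and the counit. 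The opposite antipode axiom is identical. Hence $Cok(\xi)$ is a Hopf monoid, which finishes the proof. The only step requiring genuine care is the descent of $\pi \circ S_B$ to the quotient; morally this says that the ``bi-ideal'' cut out by the cokernel is stable under the antipode, which is forced by $\xi$ being a Hopf homomorphism together with anti-multiplicativity of $S_B$, while the rest is routine bookkeeping or formal facts about full subcategories.
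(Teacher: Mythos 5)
Your proof is correct and follows essentially the same route as the paper: descend $S_B$ along the epimorphism $cok(\xi)$ to an endomorphism $S$ of $Cok(\xi)$, then verify the antipode axioms by precomposing with $cok(\xi)$ and using that it is a bimonoid homomorphism. In fact you are more careful than the paper at the one delicate point — the paper asserts in a single line that $cok(\xi)\circ S_B$ "induces" $S$, whereas you actually verify that $\pi\circ S_B$ coequalizes $\alpha^{\to}_\xi$ and $\tau_{A,B}$ via the identity $S_B\circ\alpha^{\to}_\xi=\alpha^{\leftarrow}_\xi\circ c_{A,B}\circ(S_A\otimes S_B)$ and the fact that $\pi$ also kills the right action, which is exactly the justification the paper's one-liner needs.
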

\begin{proof}
Since $cok( \xi ) \circ S_B \circ \xi  = cok ( \xi ) \circ \xi \circ S_A$ is trivial, the anti-homomorphism $cok( \xi ) \circ S_B$ induces an anti-homomorphism $S : Cok ( \xi ) \to Cok ( \xi )$ such that $S \circ cok ( \xi ) = cok ( \xi ) \circ S_B$.
We claim that $S$ gives an antipode on the bimonoid $C= Cok ( \xi )$.
It suffices to prove that $\nabla_C \circ ( S \otimes id_C ) \circ \Delta_C = \eta_C \circ \epsilon_C = \nabla_C \circ (  id_C \otimes S ) \circ \Delta_C$.
Since $(\alpha^{\to}_\xi \backslash B, \pi)$, $(B / \alpha^{\leftarrow}_\xi , \tilde{\pi})$ give cokernels, the canonical morphism $cok ( \xi )$ is an epimorphism in $\mathcal{C}$ by the universality of stabilized objects.
Hence, it suffices to prove that $\nabla_C \circ ( S \otimes id_C ) \circ \Delta_C \circ cok ( \xi ) = \eta_C \circ \epsilon_C \circ cok ( \xi )  = \nabla_C \circ (  id_C \otimes S ) \circ \Delta_C \circ cok ( \xi ) $.
We prove the first equation by using the fact that $cok ( \xi ) : B \to Cok ( \xi ) =C$ is a bimonoid homomorphism.
\begin{align}
\nabla_C \circ ( S \otimes id_C ) \circ \Delta_C \circ cok ( \xi )
&=
\nabla_C \circ ( S \otimes id_C ) \circ ( cok ( \xi ) \otimes cok ( \xi )) \circ \Delta_B , \\
&=
\nabla_C \circ ( (S \circ cok ( \xi )) \otimes cok ( \xi ) ) \circ \Delta_B , \\
&=
\nabla_C \circ ( (cok ( \xi ) \circ S_B ) \otimes cok ( \xi ) ) \circ \Delta_B , \\
&=
\nabla_C \circ ( cok ( \xi ) \otimes cok ( \xi ) ) \circ ( S_B \otimes id_B ) \circ \Delta_B , \\
&=
cok ( \xi ) \circ \nabla_B \circ ( S_B \otimes id_B ) \circ \Delta_B , \\
&=
cok ( \xi ) \circ \eta_B \circ \epsilon_B , \\
&=
\eta_C \circ \epsilon_C \circ cok ( \xi ) .
\end{align}
The second equation is proved similarly.
It completes the proof.
\end{proof}

\begin{prop}
\label{stab_gives_cokernel}
Suppose that the monoidal structure of $\mathcal{C}$ is stable (costable, resp.).
Then every bimonoid homomorphism between bicommutative bimonoids is normal (conormal, resp.) and its cokernel (kernel, resp.) is a bicommutative bimonoid.
In particular, if the monoidal structure of $\mathcal{C}$ is bistable, then every bimonoid homomorphism between bicommutative bimonoids is binormal.
\end{prop}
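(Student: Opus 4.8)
The plan is to realize $B$, equipped with the left action $\alpha^{\to}_\xi$ and the right action $\alpha^{\leftarrow}_\xi$ induced by $\xi$, as a bimonoid object in the symmetric monoidal categories $\mathsf{Act}_l(\mathcal{C})$ and $\mathsf{Act}_r(\mathcal{C})$, and then to transport this structure along the stabilized-object functors $\mathcal{S}_l : \mathsf{Act}_l(\mathcal{C})\to\mathcal{C}$ and $\mathcal{S}_r : \mathsf{Act}_r(\mathcal{C})\to\mathcal{C}$, which are SSMF's by the stability hypothesis (Definition \ref{201907230933}). First I would record the formal inputs. By Proposition \ref{201911301554} these functors are symmetric comonoidal, and stability upgrades them to SSMF's; moreover, unwinding the construction of the structure maps $\Phi,\Psi_{O,O^\prime}$ in that proof, the canonical projections $\pi_{(A,\alpha,X)} : X\to\alpha\backslash X$ assemble into a monoidal natural transformation from the action-forgetting functor $(A,\alpha,X)\mapsto X$ to $\mathcal{S}_l$ (and likewise on the right). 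Consequently $\mathcal{S}_l,\mathcal{S}_r$ send bimonoid objects to bimonoid objects and preserve commutativity and cocommutativity, for any bimonoid object $M$ of $\mathsf{Act}_l(\mathcal{C})$ the projection $\pi_M$ is a bimonoid homomorphism in $\mathcal{C}$, and for a trivial action $(A,\tau_{A,C},C)$ one has $\tau_{A,C}\backslash C\cong C$ with $\pi=id_C$.

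The heart of the argument is the claim that, for bicommutative $A,B$ and a bimonoid homomorphism $\xi : A\to B$, the triple $(A,\alpha^{\to}_\xi,B)$ is a bicommutative bimonoid object in $\mathsf{Act}_l(\mathcal{C})$ whose structure morphisms have underlying $\mathcal{C}$-morphisms $\nabla_B,\Delta_B,\eta_B,\epsilon_B$ and acting-object parts $\nabla_A,\Delta_A,\eta_A,\epsilon_A$, and dually that $(B,\alpha^{\leftarrow}_\xi,A)$ is a bicommutative bimonoid object in $\mathsf{Act}_r(\mathcal{C})$. This is where bicommutativity is genuinely used: commutativity of $A$ is exactly what makes $\nabla_A$ a bimonoid homomorphism $A\otimes A\to A$, hence an admissible acting-object part; commutativity of $B$ is exactly what makes $\nabla_B$ a morphism of $A$-actions from the tensor-square action to $\alpha^{\to}_\xi$ (it encodes centrality of the image of $\xi$ in $B$); cocommutativity of $A$ and $B$ yields cocommutativity of the resulting object, while the bialgebra compatibilities and the (co)unit and (co)associativity axioms are inherited componentwise from those of $A$ and $B$ together with $\xi$ being a bimonoid homomorphism. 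I expect this componentwise verification to be essentially the only real work, and it is routine once the correct structure morphisms are identified.

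Granting the claim, applying $\mathcal{S}_l$ shows that $\alpha^{\to}_\xi\backslash B$ is a bicommutative bimonoid in $\mathcal{C}$ and that $\pi : B\to\alpha^{\to}_\xi\backslash B$ is a bimonoid homomorphism, and $\mathcal{S}_r$ gives the analogous statement for $\tilde\pi : B\to B/\alpha^{\leftarrow}_\xi$. It then remains to check that $(\alpha^{\to}_\xi\backslash B,\pi)$ is a cokernel of $\xi$ in $\mathsf{Bimon}(\mathcal{C})$ (the case of $\tilde\pi$ being symmetric). Since $\pi$ coequalizes $\alpha^{\to}_\xi$ and $\tau_{A,B}$ by construction, precomposing with $id_A\otimes\eta_B$ yields $\pi\circ\xi=\pi\circ\eta_B\circ\epsilon_A$, so $\pi$ coequalizes $\xi$ and the zero homomorphism $\eta_B\circ\epsilon_A$ in $\mathsf{Bimon}(\mathcal{C})$. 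For universality, given a bimonoid homomorphism $f : B\to C$ with $f\circ\xi=\eta_C\circ\epsilon_A$, I would observe that $f$ is a morphism of bimonoid objects in $\mathsf{Act}_l(\mathcal{C})$ from $(A,\alpha^{\to}_\xi,B)$ to the trivial action $(A,\tau_{A,C},C)$ — the identity $f\circ\alpha^{\to}_\xi=\tau_{A,C}\circ(id_A\otimes f)$ follows from $f$ being an algebra homomorphism and $f\circ\xi=\eta_C\circ\epsilon_A$ — so $\mathcal{S}_l(f) : \alpha^{\to}_\xi\backslash B\to\tau_{A,C}\backslash C\cong C$ is a bimonoid homomorphism which, by naturality of $\pi$, is the unique (since $\pi$ is an epimorphism) factorization of $f$ through $\pi$. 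This proves that $\xi$ is normal and that $Cok(\xi)=\alpha^{\to}_\xi\backslash B$ is bicommutative.

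The conormal statement is proved dually: one checks that $(A,\beta^{\to}_\xi,B)$ and $(B,\beta^{\leftarrow}_\xi,A)$ are bicommutative bimonoid objects in $\mathsf{Coact}_l(\mathcal{C})$ and $\mathsf{Coact}_r(\mathcal{C})$, applies the stabilized-object functors (SSMF's now by costability), and identifies the resulting equalizers with $Ker(\xi)$ in $\mathsf{Bimon}(\mathcal{C})$ by transporting the universal cone through the functors. Finally, if the monoidal structure of $\mathcal{C}$ is bistable it is both stable and costable, so $\xi$ is normal and conormal, hence binormal by Definition \ref{201908040928}. The main obstacle, as indicated, is the componentwise verification of the central claim — pinpointing that bicommutativity is exactly what lets the regular (co)action carry a bimonoid structure in $\mathsf{Act}_l(\mathcal{C})$ (resp. $\mathsf{Coact}_l(\mathcal{C})$); the passage from coequalizers and equalizers in $\mathcal{C}$ to cokernels and kernels in $\mathsf{Bimon}(\mathcal{C})$ is a secondary point, handled uniformly by pushing the relevant universal cocones and cones through the strong monoidal stabilized-object functors.
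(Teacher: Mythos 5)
Your proposal is correct and follows essentially the same route as the paper: both endow $(A,\alpha^{\to}_\xi,B)$ with a bicommutative bimonoid structure in $\mathsf{Act}_l(\mathcal{C})$, transport it along the stabilized-object functor (an SSMF by stability), and verify that $\pi$ is a bimonoid homomorphism and that the cokernel universal property holds by pushing the relevant morphisms of actions through that functor. The only cosmetic difference is that you package the homomorphism property of $\pi$ (and of the induced factorization) via a monoidal natural transformation from the forgetful functor, whereas the paper realizes the same maps as images of explicit morphisms of actions involving the trivial actions $(\mathds{1},\alpha^{\to}_{\eta_B},B)$ and $(\mathds{1},\alpha^{\to}_{\eta_C},C)$.
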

\begin{proof}
We prove that if the monoidal structure of $\mathcal{C}$ is stable, then every bimonoid homomorphism between bicommutative bimonoids is normal and its cokernel is a bicommutative bimonoid.
Let $A,B$ be bicommutative bimonoids in a symmetric monoidal category $\mathcal{C}$ and $\xi : A \to B$ be a bimonoid homomorphism.
Note that the left action $(A, \alpha^{\to}_\xi , B)$ has a natural bicommutative bimonoid structure in the symmetric monoidal category $\mathsf{Act}_{l} (\mathcal{C})$, the category of left actions in $\mathcal{C}$.
The symmetric monoidal category structure on $\mathsf{Act}_{l} (\mathcal{C})$ is described in Definition \ref{201911301531}.
In fact, it is due to the commutativity of $B$ :
We explain the monoid structure of $(A, \alpha^{\to}_\xi , B)$ here.
Since $B$ is a bicommutative bimonoid, $\nabla_B : B \otimes B \to B$ is a bimonoid homomorphism.
In particular, $\nabla_B$ is compatible with the action $\alpha^{\to}_\xi$, i.e. the following diagram commutes.
\begin{equation}
\begin{tikzcd}
(A \otimes A) \otimes (B \otimes B) \ar[r, "\alpha^{\to}_\xi \tilde{\otimes} \alpha^{\to}_\xi"] \ar[d, "\nabla_A \otimes \nabla_B"] & B \otimes B \ar[d, "\nabla_B"] \\
A \otimes B \ar[r, "\alpha^{\to}_\xi"] & B
\end{tikzcd}
\end{equation}
Since $\eta_B : \mathds{1} \to B$ is a bimonoid homomorphism, the following diagram commutes.
\begin{equation}
\begin{tikzcd}
\mathds{1} \otimes \mathds{1} \ar[r, "\cong"] \ar[d, "\eta_A \otimes \eta_B"] & \mathds{1} \ar[d, "\eta_B"] \\
A \otimes B \ar[r, "\alpha^{\to}_\xi"] & B
\end{tikzcd}
\end{equation}
Hence, they induce a monoid structure on $(A, \alpha^{\to}_\xi , B)$ in the symmetric monoidal category $\mathsf{Act}_{l} (\mathcal{C})$.
Likewise, $(A, \alpha^{\to}_\xi , B)$ has a comonoid structure in $\mathsf{Act}_l ( \mathcal{C})$ :
The comultiplications on $A,B$ induces a comultiplication on $(A, \alpha^{\to}_\xi , B)$ due to following diagram commutes.
\begin{equation}
\begin{tikzcd}
(A \otimes A) \otimes (B \otimes B) \ar[r, "\alpha^{\to}_\xi \tilde{\otimes} \alpha^{\to}_\xi"]  & B \otimes B  \\
A \otimes B \ar[r, "\alpha^{\to}_\xi"] \ar[u, "\Delta_A \otimes \Delta_B"] & B \ar[u, "\Delta_B"]
\end{tikzcd}
\end{equation}
In fact, we do not need any commutativity or cocommutativity of $A,B$ to prove the commutativity of the diagram.
The counits on $A,B$ induce a counit on $(A, \alpha^{\to}_\xi , B)$ due to the following commutativity diagram.
\begin{equation}
\begin{tikzcd}
\mathds{1} \otimes \mathds{1} \ar[r, "\cong"]  & \mathds{1}  \\
A \otimes B \ar[u, "\epsilon_A \otimes \epsilon_B"] \ar[r, "\alpha^{\to}_\xi"] & B \ar[u, "\epsilon_B"]
\end{tikzcd}
\end{equation}
Since the morphisms $\Delta_A,\nabla_A, \epsilon_A,\eta_A$ and the morphisms $\Delta_B,\nabla_B, \epsilon_B,\eta_B$ give bicommutative bimonoid structure on $A,B$ respectively, the above monoid structure and comonoid structure on $(A, \alpha^{\to}_\xi , B)$ give a bicommutative bimonoid structure on $(A, \alpha^{\to}_\xi , B)$.

Since the monoidal structure of $\mathcal{C}$ is stable by the assumption, the assignment of stabilized objects to actions is a strongly symmetric monoidal functor by definition.
The bicommutative bimonoid structure on $(A, \alpha^{\to}_\xi , B)$ is inherited to its stabilized object $\alpha^{\to}_\xi \backslash B$.
We consider $\alpha^{\to}_\xi \backslash B$ as a bicommutative bimonoid by the inherited structure.

The canonical morphism $\pi : B \to \alpha^{\to}_\xi \backslash B$ is a bimonoid homomorphism with respect to the bimonoid structure on $\alpha^{\to}_\xi \backslash B$ described above.
In fact, the commutative diagram (\ref{201908040659}) induces a bimonoid homomorphism $(\mathds{1} , \alpha^{\to}_{\eta_B} , B) \to (A , \alpha^{\to}_\xi , B)$ between bicommutative bimonoids in the symmetric monoidal category $\mathsf{Act}_l ( \mathcal{C} )$.
\begin{equation}
\label{201908040659}
\begin{tikzcd}
\mathds{1} \ar[r, "\eta_B"] \ar[d, "\eta_A"] & B \ar[d, "id_B"] \\
A \ar[r, "\xi"] & B
\end{tikzcd}
\end{equation}
By the stability of the monoidal structure of $\mathcal{C}$ again, we obtain a bimonoid homomorphism, 
\begin{align}
B \cong \alpha^{\to}_{\eta_B} \backslash B \to \alpha^{\to}_\xi \backslash B . 
\end{align}
It coincides with the canonical projection $\pi : B  \to \alpha^{\to}_\xi \backslash B$ by definitions.

All that remain is to show that the pair $(\alpha^{\to}_\xi \backslash B , \pi)$ is a cokernel of the bimonoid homomorphism $\xi$ in $\mathsf{Bimon}(\mathcal{C})$ in the sense of Definition \ref{201908040955}.
Let $C$ be another bimonoid and $\varphi : B \to C$ be a bimonoid homomorphism such that $\varphi \circ \xi = \eta_C \circ \epsilon_A$.
It coequazlies the action $\alpha^{\to}_\xi : A \otimes B \to B$ and the trivial action $\tau_{A,B} : A \otimes B \to B$ so that it induces a unique morphism $\bar{\varphi} : \alpha^{\to}_\xi \backslash B \to C$ such that $\bar{\varphi}\circ\pi = \varphi$.
We prove that $\bar{\varphi}$ is a bimonoid homomorphism.
Note that the counit $\epsilon_A : A \to \mathds{1}$ and the homomorphism $\varphi : B \to C$ induces a bimonoid homomorphism $(A , \alpha^{\to}_\xi , B) \to ( \mathds{1} , \alpha^{\to}_{\eta_C} , C )$.
By the stability of the monoidal structure of $\mathcal{C}$ again, it induces a bimonoid homomorphism $\alpha^{\to}_\xi \backslash B \to \alpha^{\to}_{\eta_C} \backslash C \cong C$ which coincides with $\bar{\varphi}$.
It completes the proof.
\end{proof}

\begin{Corollary}
\label{201908040952}
Suppose that the monoidal structure of $\mathcal{C}$ is stable (costable, resp.).
Let $A,B$ be bicommutative Hopf monoids and $\xi : A \to B$ be a bimonoid homomorphism.
Then a cokernel (kernel, resp.) of $\xi$ in $\mathsf{Bimon} (\mathcal{C})$ is a cokernel (kernel, resp.) of $\xi$ in $\mathsf{Hopf}^\mathsf{bc}(\mathcal{C})$.
\end{Corollary}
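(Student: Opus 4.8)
The plan is to bootstrap from the two preceding propositions: Proposition \ref{stab_gives_cokernel} supplies the bicommutative bimonoid structure on the cokernel and normality of $\xi$, and Proposition \ref{201911302012} upgrades a cokernel in $\mathsf{Bimon}(\mathcal{C})$ to a cokernel in $\mathsf{Hopf}(\mathcal{C})$. I treat the cokernel case; the kernel case is obtained by the evident dualization (replace ``stable'' by ``costable'', ``normal'' by ``conormal'', cokernels by kernels, and invoke the dual of Proposition \ref{201911302012}).

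First I would record what the hypotheses give. Since the monoidal structure of $\mathcal{C}$ is stable and $A,B$ are bicommutative, Proposition \ref{stab_gives_cokernel} tells us that $\xi$ is normal and that a cokernel $(Cok(\xi), cok(\xi))$ computed in $\mathsf{Bimon}(\mathcal{C})$ has underlying bimonoid a bicommutative bimonoid. Write $C := Cok(\xi)$. Now $A,B$ are Hopf monoids and $\xi$ is normal, so Proposition \ref{201911302012} applies: it produces an antipode on $C$ compatible with $cok(\xi)$ and asserts that $(C, cok(\xi))$ is already a cokernel of $\xi$ in $\mathsf{Hopf}(\mathcal{C})$. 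Combining the two outputs, $C$ is a bicommutative Hopf monoid, i.e.\ an object of $\mathsf{Hopf}^\mathsf{bc}(\mathcal{C})$.

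It then remains to pass the universal property from $\mathsf{Hopf}(\mathcal{C})$ to the full subcategory $\mathsf{Hopf}^\mathsf{bc}(\mathcal{C})$. Given a bicommutative Hopf monoid $D$ and a homomorphism $\varphi : B \to D$ with $\varphi \circ \xi = \eta_D \circ \epsilon_A$, we regard $D$ as an object of $\mathsf{Hopf}(\mathcal{C})$ and use that $(C, cok(\xi))$ is a cokernel there to obtain a unique Hopf monoid homomorphism $\bar\varphi : C \to D$ with $\bar\varphi \circ cok(\xi) = \varphi$. Because $\mathsf{Hopf}^\mathsf{bc}(\mathcal{C})$ is a full subcategory of $\mathsf{Hopf}(\mathcal{C})$ and both $C$ and $D$ lie in it, $\bar\varphi$ is automatically a morphism in $\mathsf{Hopf}^\mathsf{bc}(\mathcal{C})$, and its uniqueness there is inherited. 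Hence $(C, cok(\xi))$ is a cokernel of $\xi$ in $\mathsf{Hopf}^\mathsf{bc}(\mathcal{C})$.

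The argument is essentially formal once the two cited propositions are available; the only step that deserves attention is the compatibility of the antipode from Proposition \ref{201911302012} with the bicommutative bimonoid structure already built in Proposition \ref{stab_gives_cokernel}, so that $C$ really is a bicommutative \emph{Hopf} monoid rather than merely a bimonoid with antipode --- but this is exactly what Proposition \ref{201911302012} delivers (the antipode is defined through $cok(\xi)$, which is also the bimonoid projection, so no clash can occur). I do not expect any genuine obstacle here.
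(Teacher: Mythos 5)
Your proposal is correct and follows essentially the same route as the paper: invoke Proposition \ref{stab_gives_cokernel} for normality of $\xi$ and the bicommutative bimonoid structure on the cokernel, then Proposition \ref{201911302012} to upgrade it to a cokernel in $\mathsf{Hopf}(\mathcal{C})$. The only addition is your explicit check that the universal property restricts to the full subcategory $\mathsf{Hopf}^\mathsf{bc}(\mathcal{C})$, which the paper leaves implicit but which is the correct formal justification.
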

\begin{proof}
Suppose that the monoidal structure of $\mathcal{C}$ is stable.
Let $A,B$ be bicommutative Hopf monoids and $\xi : A \to B$ be a bimonoid homomorphism.
By Proposition \ref{stab_gives_cokernel}, the homomorphism $\xi$ is normal and its cokernel is a bicommutative bimonoid.
By Proposition \ref{201911302012}, the cokernel of $\xi$ is a bicommutative Hopf monoid.
\end{proof}

\section{Small bimonoid and integral}
\label{201912060950}

In this section, we introduce a notion of {\it (co,bi)small bimonoids}.
We study the relationship between existence of normalized (co)integrals and (co)smallness of bimonoids.

\begin{Defn}
\label{201911301753}
\rm
Let $\mathcal{C}$ be a symmetric monoidal category.
Let $(A,\alpha,X)$ be a left action in the symmetric monoidal category $\mathcal{C}$.
Recall the invariant object $\alpha \backslash\backslash  X$ and the stabilized object $\alpha \backslash X$ of the left action $(A,\alpha,X)$.
We define a morphism $_\alpha\gamma: \alpha \backslash\backslash  X \to \alpha \backslash X$ in $\mathcal{C}$ by composing the canonical morphisms $X \to \alpha \backslash X$ and $\alpha \backslash\backslash X \to X$.
Likewise, we define $\gamma_\alpha : X // \alpha \to X / \alpha$ for a right action $(X, \alpha, A)$, $^\beta\gamma : \beta / Y \to \beta // Y$ for a left coaction $(B , \beta, Y)$, $\gamma^\beta : Y \backslash \beta \to Y \backslash \backslash \beta$ for a right coaction $(Y, \beta, B)$.
\end{Defn}

\begin{Defn}
\label{202002271041}
\rm
Let $\mathcal{C}$ be a symmetric monoidal category.
Recall Definition \ref{201912021024}.
A bimonoid $A$ in the symmetric monoidal category $\mathcal{C}$ is {\it small} if 
\begin{itemize}
\item
For every left action $(A, \alpha , X)$, an invariant object $\alpha \backslash \backslash X$ and a stabilized object $\alpha \backslash X$ exist.
Furthermore, the canonical morphism $_\alpha\gamma : \alpha \backslash \backslash X \to \alpha \backslash X$ is an isomorphism.
\item
For every right action $(X, \alpha , A)$, an invariant object $X // \alpha$ and a stabilized object $X / \alpha$ exist.
Furthermore, the canonical morphism $\gamma_\alpha : X // \alpha \to X / \alpha$ is an isomorphism.
\end{itemize}

A bimonoid $A$ in the symmetric monoidal category $\mathcal{C}$ is {\it cosmall} if
\begin{itemize}
\item
For every left coaction $(B, \beta , Y)$, an invariant object $\beta // Y$ and a stabilized object $\beta / Y$ exist.
Furthermore, the canonical morphism $^\beta\gamma : \beta \backslash  Y \to \beta \backslash \backslash Y$ is an isomorphism.
\item
For every right coaction $(Y , \beta , B)$, an invariant object $Y \backslash \backslash \beta$ and a stabilized object $Y \backslash \beta$ exist.
Furthermore, the canonical morphism $\gamma^\beta : Y / \beta \to Y // \beta$ is an isomorphism.
\end{itemize}
A bimonoid $A$ is {\it bismall} if the bimonoid $A$ is small and cosmall.

We use subscript `bs' to denote `bismall'.
For example, $\mathsf{Hopf}^\mathsf{bs} ( \mathcal{C})$ is a full subcategory of $\mathsf{Hopf}(\mathcal{C})$ formed by bismall Hopf monoids.
\end{Defn}

\begin{remark}
In general, the morphism $_\alpha\gamma : \alpha \backslash \backslash X \to \alpha \backslash X$ (also, $^\beta\gamma , \gamma_\alpha, \gamma^\beta$) in Definition \ref{201911301753} is not an isomorphism.
We give three examples as follows.
\end{remark}

\begin{Example}
Let $(A, \alpha , X)$ be a left action where $A=X = {k} G$ and $\alpha$ is the multiplication where $G$ is a finite group.
There exists an invariant object $\alpha \backslash \backslash {k} G$ and a stabilized object $\alpha \backslash {k} G$ given by
\begin{align}
\alpha \backslash \backslash {k} G &= \{ \lambda \sum_{g\in G} g ~;~ \lambda \in{k} \}  \\
\alpha \backslash {k} G &= {k}G / \left( g \sim e \right)  
\end{align}
Here, $e\in G$ denotes the unit of $G$ and ${k}G / \left( g \sim e \right)$ means the quotient space of ${k}G$ by the given relation.
Then if the characteristic of $G$ divides the order $|G|$ we see that the morphism $_\alpha\gamma$ is zero while $\alpha \backslash \backslash {k} G$, $\alpha \backslash {k} G$ are 1-dimensional.
\end{Example}

\begin{Defn}
\label{201907130830}
\rm
Let $\mathcal{C}$ be a category.
A morphism $p : X \to X$ is an {\it idempotent} if $p \circ p = p$.
A {\it retract} of an idempotent $p$ is given by $(X^p , \iota  , \pi )$ where $\iota : X^p \to X$, $\pi  : X \to X^p$ are morphisms in $\mathcal{C}$ such that $\pi \circ \iota = id_{X^p}$ and $\iota \circ \pi = p$.
If an idempotent $p$ has a retract, then $p$ is called a {\it split idempotent}.
\end{Defn}

\begin{prop}
\label{201912021623}
Let $\mathcal{C}$ be a category and $p : X \to X$ be an idempotent.
Suppose that there exists an equalizer of the identity $id_X$ and $p$ and a coequalizer of the identity $id_X$ and $p$.
Then the idempotent $p$ is a split idempotent.
\end{prop}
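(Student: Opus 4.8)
The plan is to produce a retract of $p$ directly from the equalizer of $id_X$ and $p$; the coequalizer is not strictly needed to obtain one splitting, though the dual construction from it yields a canonically isomorphic one.

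First I would fix an equalizer $\iota : X^p \to X$ of the parallel pair $id_X, p : X \to X$. By the defining property one has $\iota = p \circ \iota$, and $\iota$ is a monomorphism since every equalizer is. The key observation is that $p : X \to X$ itself equalizes this pair: because $p$ is idempotent, $id_X \circ p = p = p \circ p$. Hence the universal property of the equalizer supplies a unique morphism $\pi : X \to X^p$ with $\iota \circ \pi = p$.

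It then remains to verify $\pi \circ \iota = id_{X^p}$. Composing on the left with $\iota$ and using $\iota \circ \pi = p$ together with $p \circ \iota = \iota$, we get
\begin{equation}
\iota \circ ( \pi \circ \iota ) = ( \iota \circ \pi ) \circ \iota = p \circ \iota = \iota = \iota \circ id_{X^p} .
\end{equation}
Since $\iota$ is a monomorphism, cancelling it yields $\pi \circ \iota = id_{X^p}$. Therefore $(X^p, \iota, \pi)$ satisfies $\pi \circ \iota = id_{X^p}$ and $\iota \circ \pi = p$, i.e.\ it is a retract of $p$ in the sense of Definition \ref{201907130830}, so $p$ is a split idempotent.

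The argument is purely formal and I do not anticipate a genuine obstacle; the only delicate points are that $p$ really does factor through the equalizer — which is precisely where idempotency of $p$ enters — and that equalizers are monic, which legitimizes the final cancellation. Running the dual argument with the assumed coequalizer $\pi' : X \to X_p$ of $id_X$ and $p$ produces another retract $(X_p, \iota', \pi')$ with $\pi' \circ \iota' = id_{X_p}$ and $\iota' \circ \pi' = p$; by the usual uniqueness of splittings of an idempotent these two retracts are canonically isomorphic, so assuming that both limits exist is harmless redundancy.
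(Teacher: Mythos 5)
Your proof is correct, and it takes a genuinely different and more economical route than the paper's. You split $p$ using only the equalizer: factor $p$ through $\iota$ via the universal property to get $\pi$ with $\iota \circ \pi = p$, then cancel the monomorphism $\iota$ in $\iota \circ \pi \circ \iota = p \circ \iota = \iota$ to conclude $\pi \circ \iota = id_{X^p}$. This correctly shows the coequalizer hypothesis is redundant for the bare statement. The paper instead uses both hypotheses: it factors $p$ through the equalizer $e$ to get $p'$, observes $p'$ coequalizes the pair to get $p'' : C \to E$, shows $c \circ e$ is an isomorphism with inverse $p''$, and takes the retract to be $(E, e, (c\circ e)^{-1}\circ c)$. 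What the paper's longer construction buys is the extra information that the equalizer and coequalizer objects are canonically isomorphic via $c \circ e$, and that the splitting can be written as ``inclusion from the equalizer'' composed with ``$(c\circ e)^{-1}$ applied to the coequalizer projection''; this specific form is invoked later (e.g.\ in Proposition \ref{201912021501}, where the retract of $L_\alpha(\sigma_A)$ is identified with $(\alpha\backslash\backslash X, \iota, {}_\alpha\gamma^{-1}\circ\pi)$, matching the invariant-object/stabilized-object comparison map ${}_\alpha\gamma$). So your argument fully proves the proposition as stated, but if you wanted to substitute it into the paper you would also want to record, as you do in your closing remark, that the two splittings agree up to canonical isomorphism, since downstream uses rely on the equalizer--coequalizer comparison being an isomorphism.
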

\begin{proof}
Denote by $e : E \to X$ an equalizer of the identity $id_X$ and the morphism $p : X \to X$.
Denote by $c : X \to C$ a coequalizer of the identity $id_X$ and the morphism $p : X \to X$.
We claim that $c \circ e : K \to E$ is an isomorphism and $(E , e , (c \circ e)^{-1} \circ c)$ is a retract of the idempotent $p$.

Note that the morphism $p$ equalizes the identity $id_X$ and the morphism $p$ due to $p \circ p = p$.
The morphism $p$ induces a unique morphism $p^\prime : X \to E$ such that $e \circ p^\prime = p$.
Note that the morphism $p^\prime$ coequalizes the identity $id_X$ and the morphism $p$ due to $p^\prime \circ p = p^\prime$.
The morphism $p^\prime$ induces a unique morphism $p^{\prime\prime} : C \to E$ such that $p^{\prime\prime} \circ c = p^\prime$.
Then $p^{\prime\prime}$ is an inverse of the composition $c \circ e$ so that $c \circ e$ is an isomorphism.

We prove that $(E , e , (c \circ e)^{-1} \circ c)$ is a retract of the idempotent $p$.
It follows from $\left( (c \circ e)^{-1} \circ c \right) \circ e = id_K$ and $e \circ \left( (c \circ e)^{-1} \circ c \right) = p$.
The latter one follows from the above discussion that $(c \circ e)^{-1} = p^{\prime\prime}$ and $e \circ p^{\prime\prime} \circ c = e \circ p^\prime = p$.
\end{proof}

\begin{prop}
\label{201912021539}
Let $(A, \alpha , X)$ be a left action in a symmetric monoidal category $\mathcal{C}$ with an invariant object $\alpha \backslash \backslash X$ and a stabilized object $\alpha \backslash X$.
Suppose that the morphism $_\alpha \gamma : \alpha \backslash \backslash X \to \alpha \backslash X$ is an isomorphism.
Then the endomorphism $p : X \to X$ defined by following composition is a split idempotent.
\begin{align}
_\alpha p = \left( X \stackrel{\pi}{\to} \alpha \backslash X \stackrel{_\alpha\gamma^{-1}}{\to} \alpha \backslash \backslash X \stackrel{\iota}{\to} X \right) .
\end{align}
Here, $\iota,\pi$ are the canonical morphisms.
\end{prop}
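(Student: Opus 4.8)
The plan is to produce an explicit retract of $_\alpha p$ whose underlying object is $\alpha\backslash\backslash X$ itself; the whole content of the statement is carried by the hypothesis that $_\alpha\gamma$ is an isomorphism, which is exactly what makes the retracting morphisms well defined.

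First I would unwind Definition \ref{201911301753}: by construction $_\alpha\gamma$ is the composite of the canonical morphism $\pi : \alpha\backslash\backslash X \to X$ coming from the invariant object followed by the canonical projection $\iota : X \to \alpha\backslash X$ coming from the stabilized object, that is, $_\alpha\gamma = \iota\circ\pi$. Since $_\alpha\gamma$ is assumed invertible, I may set $s := \pi : \alpha\backslash\backslash X \to X$ and $r := {_\alpha\gamma}^{-1}\circ\iota : X \to \alpha\backslash\backslash X$. Then I would carry out the two routine verifications. On the one hand $s\circ r = \pi\circ{_\alpha\gamma}^{-1}\circ\iota$, which is precisely the composite defining $_\alpha p$, so $s\circ r = {_\alpha p}$. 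On the other hand $r\circ s = {_\alpha\gamma}^{-1}\circ(\iota\circ\pi) = {_\alpha\gamma}^{-1}\circ{_\alpha\gamma} = id_{\alpha\backslash\backslash X}$, using the identity $\iota\circ\pi = {_\alpha\gamma}$ from the previous sentence.

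These two equalities say exactly that $(\alpha\backslash\backslash X,\, s,\, r)$ is a retract of $_\alpha p$ in the sense of Definition \ref{201907130830}; in particular $_\alpha p$ is a split idempotent. Note that idempotency of $_\alpha p$ needs no separate argument, since ${_\alpha p}\circ{_\alpha p} = s\circ(r\circ s)\circ r = s\circ id\circ r = s\circ r = {_\alpha p}$. (Equivalently one could take $\alpha\backslash X$ as the retracting object, with $s = \pi\circ{_\alpha\gamma}^{-1}$ and $r = \iota$.) I do not expect any genuine obstacle here: the only thing to keep track of is the mildly overloaded notation for the canonical morphisms $\iota,\pi$, and the fact that invertibility of $_\alpha\gamma$ is precisely the hypothesis that lets $r$ be defined.
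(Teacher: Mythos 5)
Your proof is correct and follows essentially the same route as the paper: both exhibit an explicit retract of $_\alpha p$ built from $_\alpha\gamma^{-1}$ together with the identity expressing $_\alpha\gamma$ as the composite of the two canonical morphisms, from which idempotency also falls out. The only cosmetic difference is that you take $\alpha\backslash\backslash X$ as the retracting object where the paper takes $\alpha\backslash X$ (an option you note parenthetically); the two choices are interchangeable since $_\alpha\gamma$ is an isomorphism.
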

\begin{proof}
We prove that $p$ is an idempotent on $X$.
It follows from $p \circ p = \iota \circ _\alpha\gamma^{-1} \circ \pi \circ \iota \circ _\alpha\gamma^{-1} \circ \pi = \iota \circ _\alpha\gamma^{-1} \circ _\alpha\gamma \circ _\alpha\gamma^{-1} \circ \pi = \iota \circ _\alpha\gamma^{-1} \circ \pi = p$.

We prove that $(\alpha \backslash X , \iota \circ _\alpha\gamma^{-1} , \pi)$ give a retract of the idempotent $p$.
By definition, we have $\iota \circ _\alpha\gamma^{-1} \circ \pi = p$.
Moreover, we have $\pi \circ \iota \circ _\alpha\gamma^{-1} = _\alpha\gamma \circ _\alpha\gamma^{-1} = id_{\alpha \backslash X}$.
\end{proof}

\begin{Lemma}
\label{small_has_nor_integral}
Let $A$ be a bimonoid in a symmetric monoidal category $\mathcal{C}$.
Suppose that for the regular left action $(A, \alpha^{\to}_{id_A} , A)$, an invariant object $\alpha^{\to}_{id_A} \backslash \backslash A$ and a stabilized object $\alpha^{\to}_{id_A} \backslash A$ exist and the canonical morphism $_{\alpha^{\to}_{id_A}}\gamma : \alpha^{\to}_{id_A} \backslash \backslash A \to \alpha^{\to}_{id_A} \backslash A$ is an isomorphism.
Then the bimonoid $A$ has a normalized left integral.
\end{Lemma}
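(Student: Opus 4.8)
The plan is to produce the normalized left integral explicitly from the data in the hypothesis. Write $\alpha = \alpha^{\to}_{id_A}$ for the regular left action $\nabla_A : A \otimes A \to A$, let $\iota : \alpha \backslash\backslash A \to A$ denote the canonical morphism out of the invariant object and $\pi : A \to \alpha \backslash A$ the canonical projection onto the stabilized object, so that by Definition \ref{201911301753} one has ${}_\alpha\gamma = \pi \circ \iota$.

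\emph{Step 1: identify $\alpha \backslash A$ with $\mathds{1}$.} By the proof of Proposition \ref{201907021116}, the counit $\epsilon_A : A \to \mathds{1}$ exhibits $\mathds{1}$ as a coequalizer of $\alpha = \nabla_A$ and the trivial action $\tau_{A,A}$. Concretely: $\epsilon_A \circ \nabla_A = \epsilon_A \circ \tau_{A,A}$, both being $\epsilon_A \otimes \epsilon_A$ up to the unit constraints because $\epsilon_A$ is a bimonoid homomorphism to $\mathds{1}$; and any $g : A \to Z$ coequalizing $\nabla_A$ and $\tau_{A,A}$ factors as $g = (g \circ \eta_A) \circ \epsilon_A$, obtained by precomposing the coequalizing identity with $id_A \otimes \eta_A$ and invoking the unit axiom of $A$, the factorization being unique since $\epsilon_A \circ \eta_A = id_\mathds{1}$. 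Since the stabilized object is by definition a coequalizer of $\nabla_A$ and $\tau_{A,A}$, and coequalizers are unique up to unique isomorphism, I would take $\alpha \backslash A = \mathds{1}$ with $\pi = \epsilon_A$; then ${}_\alpha\gamma = \epsilon_A \circ \iota : \alpha \backslash\backslash A \to \mathds{1}$, which is an isomorphism by hypothesis.

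\emph{Step 2: build and test the integral.} I would then set $\varphi := \iota \circ ({}_\alpha\gamma)^{-1} : \mathds{1} \to \alpha \backslash\backslash A \to A$ and verify the axioms of Definition \ref{Defn_Haar}. Normalization (diagram (\ref{Haar_axiom1})) is immediate: $\epsilon_A \circ \varphi = (\epsilon_A \circ \iota) \circ ({}_\alpha\gamma)^{-1} = {}_\alpha\gamma \circ ({}_\alpha\gamma)^{-1} = id_\mathds{1}$. For the left-integral axiom (\ref{Haar_axiom2}) I would start from the defining identity of the invariant object, $\nabla_A \circ (id_A \otimes \iota) = \iota \circ \tau_{A,\,\alpha \backslash\backslash A}$, compose on the right with $id_A \otimes ({}_\alpha\gamma)^{-1}$, and use naturality of the trivial action $\tau$ (which is just naturality of the unit constraints) to rewrite $\tau_{A,\,\alpha \backslash\backslash A} \circ (id_A \otimes ({}_\alpha\gamma)^{-1}) = ({}_\alpha\gamma)^{-1} \circ \tau_{A,\mathds{1}}$; since $\tau_{A,\mathds{1}}$ equals $\epsilon_A$ up to the unit constraint, this delivers the commutativity of (\ref{Haar_axiom2}) for $\varphi$, so that $\varphi$ is a normalized left integral of $A$.

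I expect no genuine obstacle: the argument is formal once Step 1 pins down the stabilized object. The one place demanding a little care is the diagram chase of Step 2, where the invariant-object identity must be transported correctly across the isomorphism ${}_\alpha\gamma$ while bookkeeping the unitors $\mathbf{l}_A, \mathbf{r}_A$; and the same argument, applied to the regular right action and to left and right coactions, yields the right-handed version and the (co)integral analogues used elsewhere in the paper.
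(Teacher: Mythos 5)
Your construction is essentially the paper's: the paper sets $p=\iota\circ{}_\alpha\gamma^{-1}\circ\pi$ and takes $\sigma=p\circ\eta_A$, which coincides with your $\varphi=\iota\circ{}_\alpha\gamma^{-1}$ once the stabilized object is identified with $\mathds{1}$ via $\pi=\epsilon_A$, and both arguments verify normalization together with the absorption identity read off from the defining square of the invariant object. One small caution: the identity you actually derive in Step 2, $\nabla_A\circ(id_A\otimes\varphi)=\mathbf{l}_A\circ(\epsilon_A\otimes\varphi)$, is diagram (\ref{Haar_axiom3}) (the \emph{right}-integral axiom of Definition \ref{Defn_Haar}) rather than (\ref{Haar_axiom2}) as you label it; the paper's own proof exhibits the same left/right slippage relative to the lemma's statement, so the mathematical content is fine, but the chirality label should be corrected.
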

\begin{proof}
Let $A$ be a bimonoid.
Suppose that the bimonoid $A$ is small.
Consider a left action $(A, \alpha , A)$ in $\mathcal{C}$ where $\alpha = \alpha^{\to}_{id_A} = \nabla_A : A \otimes A \to A$ is the regular left action.
Since $A$ is small, the invariant object $ \alpha \backslash\backslash A$ and the stabilized object $\alpha \backslash A$ exist and the morphism $_\alpha\gamma : \alpha \backslash\backslash A \to \alpha \backslash A$ is an isomorphism.
Let $p : A \to A$ be a composition of $A \stackrel{\pi}{\to} \alpha \backslash A \stackrel{_\alpha\gamma^{-1}}{\to} \alpha \backslash \backslash A \stackrel{\iota}{\to} A$ where $\pi$, $\iota$ are canonical morphisms.
We prove that $\sigma = p \circ \eta_A : \mathds{1} \to A$ is a normalized right integral.

We claim that $\epsilon_A \circ p = \epsilon$.
Then $\epsilon_A \circ \sigma =\epsilon_A \circ \eta_A = id_{\mathds{1}}$ which is the axiom (\ref{Haar_axiom1}) :
Note that the canonical morphism $\pi : A \to \alpha \backslash A$ coequalizes the regular left action $\alpha$ and the trivial left action.
The counit morphism $\epsilon_A$ induces a unique morphism $\bar{\epsilon_A} : \alpha \backslash A \to \mathds{1}$ such that $\bar{\epsilon_A} \circ \pi = \epsilon_A$.
We obtain following commutative diagram so that $\epsilon_A \circ p = \epsilon$.
\begin{equation}
\begin{tikzcd}
A \ar[rrrr, "p", bend left] \ar[r, "\pi"] \ar[ddrr, "\epsilon_A"'] & \alpha \backslash A \ar[rr, "_\alpha\gamma^{-1}"] \ar[ddr, "\bar{\epsilon_A}"'] & & \alpha \backslash\backslash A \ar[r, "\iota"] \ar[dl, "\iota"] & A \ar[ddll, "\epsilon_A"] \\
& & A \ar[d, "\epsilon_A"] \ar[ul, "\pi"] & & \\
& & \mathds{1} & & 
\end{tikzcd}
\end{equation}

We claim that $\nabla_A \circ (id_A \otimes p) = \mathbf{r}_A \circ (\epsilon_A \otimes p ) : A \otimes A \to A$.
Then by composing $id_A \otimes \eta_A : A \otimes \mathds{1}   \to A \otimes A$ we see that $\sigma = p \circ \eta_A$ satisfies the axiom (\ref{Haar_axiom3}) :
In fact, we have $\nabla_A \circ ( id_A \otimes \iota) = \epsilon_A \otimes \iota : A \otimes (\alpha \backslash\backslash A) \to A$ by definition of $\iota : \alpha \backslash\backslash A \to A$.
Thus, we have $\nabla_A \circ (id_A \otimes p) = \nabla_A \circ (id_A \otimes \iota) \circ (id_A \otimes (_\alpha\gamma^{-1} \circ \pi)) = (\epsilon_A \otimes \iota)  \circ (id_A \otimes (_\alpha\gamma^{-1} \circ \pi)) = \mathbf{r}_A \otimes (\epsilon_A \otimes p)$.

Above all, the morphism $\sigma = p \circ \eta_A : \mathds{1} \to A$ is a normalized right integral of $A$.
\end{proof}

\begin{remark}
\label{201912021229}
In Lemma \ref{small_has_nor_integral}, we show that a bimonoid $A$ has a normalized left integral under some assumptions on the bimonoid $A$.
Similarly, a bimonoid has a normalized right integral if $A$ satisfies similar assumptions on the regular right action.
Especially, if the bimonoid $A$ is small, then the bimonoid $A$ has a normalized left integral and a normalized right integral.
We also have a dual statement.
\end{remark}

\begin{Defn}
\label{201907132333}
\rm
Let $(A , \alpha , X)$ be a left action in a symmetric monoidal category $\mathcal{C}$.
For a morphism $a : \mathds{1} \to A$ in $\mathcal{C}$, we define an endomorphism $L_\alpha (a ) : X \to X$ by a composition,
\begin{align}
X \stackrel{\cong}{\to} \mathds{1} \otimes X \stackrel{a \otimes id_X}{\to} A \otimes X \stackrel{\alpha}{\to} X  . 
\end{align}

Let $(Y, \beta, B)$ be a right coaction in $\mathcal{C}$.
For a morphism $b : B \to \mathds{1}$ in $\mathcal{C}$, we define an endomorphism $R^\beta ( b) : Y \to Y$ by a composition,
\begin{align}
Y \stackrel{\beta}{\to} Y \otimes B \stackrel{id_Y \otimes b}{\to} Y \otimes \mathds{1} \stackrel{\mathrm{r}_Y}{\to} Y . 
\end{align}
\end{Defn}

\begin{prop}
\label{201907122224}
Let $(A,\alpha, X)$ be a left action in $\mathcal{C}$.
Then $a \in Mor_{\mathcal{C}}( \mathds{1} , A) \mapsto L_\alpha ( a) \in End_\mathcal{C} ( X)$ is a homomorphism.
Here, the monoid $End_\mathcal{C} ( X)$ consists of endomorphisms on $X$ :
\begin{align}
L_\alpha ( a \ast a^\prime) &= L_\alpha ( a) \circ L_\alpha ( a^\prime),~a,a^\prime \in Mor_{\mathcal{C}}( \mathds{1} , A) .  
\end{align}
Likewise, for a right coaction $(Y,\beta , B)$, the assignment $b \in Mor_{\mathcal{C}}( B, \mathds{1}) \mapsto R^\beta (b) \in End_\mathcal{C} ( Y)$ is a homomorphism :
\begin{align}
R^\beta ( b \ast b^\prime) &= R^\beta  (b) \circ R^\beta (b^\prime ) , ~b,b^\prime \in Mor_{\mathcal{C}}( B, \mathds{1})
\end{align}
\end{prop}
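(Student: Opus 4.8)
The plan is to reduce the multiplicativity identity $L_\alpha(a \ast a') = L_\alpha(a) \circ L_\alpha(a')$ to the associativity axiom of the left action $(A,\alpha,X)$, and the unit-preservation $L_\alpha(\eta_A) = id_X$ to the unit axiom of that action; together these exhibit $a \mapsto L_\alpha(a)$ as a homomorphism of monoids. Recall that the convolution product $a \ast a'$ of $a, a' : \mathds{1} \to A$ is the composite $\mathds{1} \xrightarrow{\cong} \mathds{1} \otimes \mathds{1} \xrightarrow{a \otimes a'} A \otimes A \xrightarrow{\nabla_A} A$, where the first arrow is the canonical isomorphism; I would carry out the argument in the string-diagram calculus of appendix \ref{201912060935}, in which the unitors and associators are invisible, so that the argument becomes a short rewriting.

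First I would unwind the left-hand side using the definition of $L_\alpha$ and functoriality of $\otimes$, obtaining
\begin{equation}
L_\alpha(a \ast a') = \alpha \circ (\nabla_A \otimes id_X) \circ (a \otimes a' \otimes id_X) \circ c,
\end{equation}
where $c$ denotes the unique coherence isomorphism assembled from unitors (and is the identity in the string calculus). Next I would apply the associativity axiom of the left action, namely $\alpha \circ (\nabla_A \otimes id_X) = \alpha \circ (id_A \otimes \alpha)$ up to the associator, to replace the multiplication $\nabla_A$ of $A$ by a nested application of $\alpha$. Then, regrouping the tensor factors and using naturality of the symmetric monoidal structure, I would identify $\alpha \circ (id_A \otimes \alpha) \circ (a \otimes a' \otimes id_X) \circ c$ with $\big( \alpha \circ (a \otimes id_X) \circ \mathbf{l}_X^{-1} \big) \circ \big( \alpha \circ (a' \otimes id_X) \circ \mathbf{l}_X^{-1} \big) = L_\alpha(a) \circ L_\alpha(a')$; this last identification is pure Mac Lane coherence. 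Unit preservation is immediate, since $L_\alpha(\eta_A) = \alpha \circ (\eta_A \otimes id_X) \circ \mathbf{l}_X^{-1}$ is precisely the content of the unit axiom of the action. This settles the first assertion.

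For the right coaction $(Y,\beta,B)$ the argument is formally dual: the convolution product on $Mor_\mathcal{C}(B,\mathds{1})$ is built from the comultiplication $\Delta_B$, so I would expand $R^\beta(b \ast b')$ by the definition of $R^\beta$, apply the coassociativity axiom of the right coaction, $(\beta \otimes id_B) \circ \beta = (id_Y \otimes \Delta_B) \circ \beta$ up to the associator, and regroup to obtain $R^\beta(b) \circ R^\beta(b')$; counit-preservation $R^\beta(\epsilon_B) = id_Y$ follows from the counit axiom of the coaction. I do not expect a genuine obstacle: the whole proof is a diagram chase. The only point that deserves a moment's care is that the convolution product used on $Mor_\mathcal{C}(\mathds{1},A)$ (and on $Mor_\mathcal{C}(B,\mathds{1})$) be normalized so as to match the multiplication (respectively comultiplication) appearing in the action (respectively coaction) axioms — and in the string-diagram calculus even this bookkeeping vanishes.
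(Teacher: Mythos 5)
Your proposal is correct and follows exactly the paper's route: the paper's entire proof is ``It follows from the associativity of an action and a coaction,'' and your rewriting reduces $L_\alpha(a\ast a') = L_\alpha(a)\circ L_\alpha(a')$ to precisely that associativity axiom (and dually, coassociativity of the coaction), with unit preservation coming from the unit axiom. You have merely spelled out the diagram chase that the paper leaves implicit.
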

\begin{proof}
It follows from the associativity of an action and a coaction.
\end{proof}

\begin{prop}
\label{201912021501}
Let $A$ be a small bimonoid in a symmetric monoidal category $\mathcal{C}$.
Let $(A, \alpha , X)$ be a left action in $\mathcal{C}$.
Recall Lemma \ref{small_has_nor_integral}, then we have a normalized integral $\sigma_A$ of $A$.
The induced morphism $L_\alpha ( \sigma_A)$ is a split idempotent.
Moreover we have $_\alpha p =  L_\alpha ( \sigma_A)$ where $_\alpha p$ is given in Proposition \ref{201912021539}.
\end{prop}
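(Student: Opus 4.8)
The plan is to identify the two endomorphisms $L_\alpha(\sigma_A)$ and ${}_\alpha p$ directly and then invoke the general splitting result of Proposition \ref{201912021539}. First I would recall from Lemma \ref{small_has_nor_integral} (and Remark \ref{201912021229}) that, since $A$ is small, the normalized integral $\sigma_A : \mathds{1} \to A$ is realized as $\sigma_A = p_0 \circ \eta_A$ where $p_0 : A \to A$ is the split idempotent ${}_{\alpha^{\to}_{id_A}}p$ attached to the regular left action of $A$ on itself. The key structural input is that for any left action $(A,\alpha,X)$, the canonical projection $\pi : X \to \alpha\backslash X$ and inclusion $\iota : \alpha\backslash\backslash X \to X$ are compatible, via the action $\alpha$, with the corresponding data for the regular action; concretely, $\alpha : A \otimes X \to X$ is a morphism of left $A$-actions from the regular action (on the $A$-factor) to $(A,\alpha,X)$, so it carries the invariant/stabilized objects of the regular action to those of $(A,\alpha,X)$ and intertwines the idempotents $p_0$ and ${}_\alpha p$.

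The main computation, then, is to show $L_\alpha(\sigma_A) = {}_\alpha p$. I would unwind the definition: $L_\alpha(\sigma_A) = \alpha \circ (\sigma_A \otimes id_X) \circ \mathbf{l}_X^{-1} = \alpha \circ (p_0 \otimes id_X) \circ (\eta_A \otimes id_X) \circ \mathbf{l}_X^{-1}$. Using the factorization $p_0 = \iota_0 \circ {}_{\alpha^{\to}_{id_A}}\gamma^{-1} \circ \pi_0$ from Proposition \ref{201912021539} applied to the regular action, together with the intertwining property of $\alpha$ just described — i.e. that $\alpha \circ (\iota_0 \otimes id_X)$ factors through $\iota : \alpha\backslash\backslash X \to X$ and that $\alpha \circ (\eta_A \otimes id_X)\circ \mathbf{l}_X^{-1} = id_X$ by the unit axiom of the action — one checks that $L_\alpha(\sigma_A)$ is exactly the composite $X \xrightarrow{\pi} \alpha\backslash X \xrightarrow{{}_\alpha\gamma^{-1}} \alpha\backslash\backslash X \xrightarrow{\iota} X$, which is ${}_\alpha p$. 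The identity $\epsilon_A \circ \sigma_A = id_{\mathds{1}}$ (normalization) is what guarantees the relevant diagrams close up. Once $L_\alpha(\sigma_A) = {}_\alpha p$ is established, Proposition \ref{201912021539} immediately gives that it is a split idempotent, with $(\alpha\backslash X,\ \iota\circ {}_\alpha\gamma^{-1},\ \pi)$ as an explicit retract.

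The step I expect to be the main obstacle is making precise and verifying that $\alpha : A \otimes X \to X$ really does transport the invariant and stabilized objects of the regular action to those of $(A,\alpha,X)$ — that is, that it induces well-defined morphisms $\alpha^{\to}_{id_A}\backslash\backslash A \to \alpha\backslash\backslash X$ and $\alpha^{\to}_{id_A}\backslash A \to \alpha\backslash X$ commuting with all the canonical maps and with ${}_\alpha\gamma$. This is where the associativity of the action $\alpha$ and the universal properties of the invariant object (an equalizer-type condition) and stabilized object (a coequalizer) must be combined carefully; the unit axiom of $(A,\alpha,X)$ is needed to see that $\alpha\circ(\eta_A\otimes id_X)$ is the identity up to the unitor, which is what collapses the extra $A$-factor. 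Everything else is a diagram chase using the definitions of $\iota$, $\pi$, and ${}_\alpha\gamma$ together with the already-proved Propositions \ref{201912021623}, \ref{201912021539} and Lemma \ref{small_has_nor_integral}.
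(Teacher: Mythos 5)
Your proposal is correct in outline but takes a genuinely different route from the paper. The paper never passes through the regular action: it first notes that $L_\alpha(\sigma_A)$ is an idempotent (from $\sigma_A \ast \sigma_A = \sigma_A$ and Proposition \ref{201907122224}), then shows --- using the integral axioms of $\sigma_A$ together with the universal properties of $\alpha\backslash\backslash X$ and $\alpha\backslash X$ --- that $\iota$ is an \emph{equalizer} and $\pi$ a \emph{coequalizer} of the pair $(L_\alpha(\sigma_A), id_X)$; splitness then follows from Proposition \ref{201912021623}, and the identity ${}_\alpha p = L_\alpha(\sigma_A)$ drops out because the retract produced there is exactly $(\alpha\backslash\backslash X,\ \iota,\ {}_\alpha\gamma^{-1}\circ\pi)$. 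You instead prove the identity $L_\alpha(\sigma_A)={}_\alpha p$ first, by transporting the splitting of the regular action along $\alpha$, and only then cite Proposition \ref{201912021539} for splitness; this is legitimate, since smallness guarantees the hypotheses of that proposition for the given action, and since the uniqueness of the normalized integral lets you write $\sigma_A = p_0\circ\eta_A$ as in Lemma \ref{small_has_nor_integral}. Your route buys a shorter argument that reuses already-established splittings; the paper's buys the sharper fact that $\iota$ and $\pi$ are literally the (co)equalizers of $L_\alpha(\sigma_A)$ with the identity, which is the mechanism reused in the converse direction of Theorem \ref{small_integral_equiv}. Two points in your sketch deserve care when you fill in the details. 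First, the factorization of $\alpha\circ(\iota_0\otimes id_X)$ through $\iota:\alpha\backslash\backslash X\to X$ needs only the associativity of $\alpha$ and the invariance of $\iota_0$ (no compatibility of $\otimes$ with equalizers is available or needed here). Second, the step that actually identifies the resulting map $g: X\to\alpha\backslash\backslash X$ with ${}_\alpha\gamma^{-1}\circ\pi$ is not the unit axiom you invoke but the computation $\pi\circ L_\alpha(\sigma_A)=\pi$, which uses $\pi\circ\alpha=\pi\circ\tau_{A,X}$ together with the normalization $\epsilon_A\circ\sigma_A=id_{\mathds{1}}$, followed by cancelling the isomorphism ${}_\alpha\gamma=\pi\circ\iota$; once that is written out, your argument closes.
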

\begin{proof}
The morphsim $L_\alpha ( \sigma_A)$ is an idempotent by Proposition \ref{201907122224} and $\sigma_A \ast \sigma_A = \sigma_A$.
$\sigma_A \ast \sigma_A = \sigma_A$ follows from the normality of $\sigma_A$.

Let $ \alpha \backslash\backslash X$ be an invariant object and $\alpha \backslash X$ be a stabilized object of the left action $(A,\alpha, X)$.
Denote by $\iota : \alpha \backslash\backslash X \to X$ and $\pi :X \to \alpha \backslash X$ the canonical morphisms.
We claim that the morphism $\iota$ gives an equalizer of $L_\alpha ( \sigma_A )$ and $id_X$, and the morphism $\pi$ gives a coequalizer of $L_\alpha ( \sigma_A )$ and $id_X$.
Then the idempotent $L_\alpha ( \sigma_A)$ is a split idempotent by Proposition \ref{201912021623}.

We prove that the morphism $\iota$ gives an equalizer of $L_\alpha ( \sigma_A )$ and $id_X$.
Note that $L_\alpha ( \sigma_A ) \circ \iota = id_X \circ \iota$ since the integral $\sigma_A$ is normalized.
We prove the universality.
Suppose that $f : Z \to X$ equalizes $L_\alpha ( \sigma_A )$ and $id_X$, i.e. $L_\alpha ( \sigma_A ) \circ f = f$.
Then $\alpha \circ ( id_A \otimes f) =\tau_{A,X} \circ ( id_A \otimes f)$ by Figure \ref{201912021633}.
By definition of the invariant object $\alpha \backslash \backslash X$, $f$ induces a unique morphism $f^\prime : Z \to \alpha \backslash \backslash X$ such that $\iota \circ f^\prime = f$.

\begin{figure}[ht]
  \includegraphics[width=\linewidth]{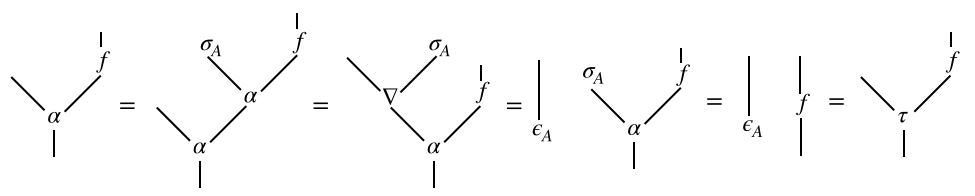}
  \caption{}
  \label{201912021633}
\end{figure}

We prove that the morphism $\pi$ gives a coequalizer of $L_\alpha ( \sigma_A )$ and $id_X$.
Note that $\pi \circ L_\alpha ( \sigma_A )$ and $\pi \circ id_X$ since the integral $\sigma_A$ is normalized.
We prove the universality.
Suppose that $g : X \to Z$ coequalizes $L_\alpha ( \sigma_A )$ and $id_X$, i.e. $g \circ L_\alpha ( \sigma_A ) = g$.
Then $g \circ \alpha = g \circ \tau_{A,X}$ by Figure \ref{201912021644}.
By definition of the stabilzed object $\alpha \backslash X$, the morphism $g$ induces a unique morphism $g^\prime : \alpha \backslash X \to Z$ such that $g^\prime \circ \pi = g$.

\begin{figure}[ht]
  \includegraphics[width=\linewidth]{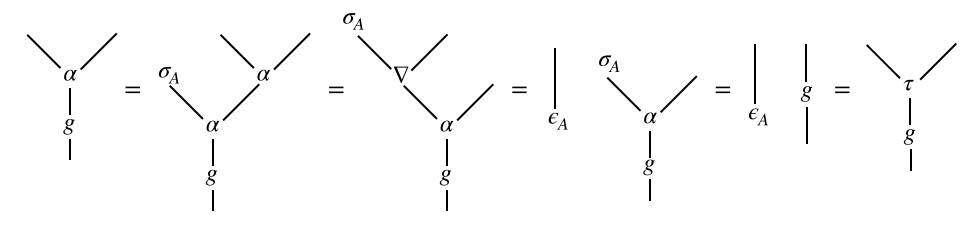}
  \caption{}
  \label{201912021644}
\end{figure}

All that remain is to prove that $_\alpha p = L_\alpha ( \sigma_A)$.
Note that $( \alpha \backslash \backslash X , \iota , _\alpha \gamma^{-1} \circ \pi)$ gives a retract of the idempotent of $L_\alpha ( \sigma_A)$.
See the proof of Proposition \ref{201912021623}.
Hence, $L_\alpha ( \sigma_A) = \iota \circ (_\alpha \gamma^{-1} \circ \pi) = _\alpha p$.
It completes the proof.
\end{proof}

\begin{theorem}
\label{small_integral_equiv}
Let $\mathcal{C}$ be a symmetric monoidal category.
Suppose that every idempotent in $\mathcal{C}$ is a split idempotent.
A bimonoid $A$ in $\mathcal{C}$ is small if and only if the bimonoid $A$ has a normalized integral.
\end{theorem}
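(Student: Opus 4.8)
The plan is to prove the two implications separately; the hypothesis that every idempotent in $\mathcal{C}$ splits is needed only for the direction ``normalized integral $\Rightarrow$ small''.

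For ``small $\Rightarrow$ normalized integral'': if $A$ is small, then Lemma \ref{small_has_nor_integral} applied to the regular left action $(A,\alpha^{\to}_{id_A},A)$ produces a normalized left integral of $A$, and the dual of that lemma (Remark \ref{201912021229}), applied to the regular right action, produces a normalized right integral. By Proposition \ref{norm_r_l_integral_is_integral} these two morphisms coincide, and the common value is a normalized integral of $A$. This half uses neither the splitting hypothesis nor anything beyond the $X=A$ case of smallness.

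For ``normalized integral $\Rightarrow$ small'': let $\sigma_A : \mathds{1} \to A$ be a normalized integral and let $(A,\alpha,X)$ be an arbitrary left action. Since $\sigma_A$ is normalized we have $\sigma_A \ast \sigma_A = \sigma_A$, so by Proposition \ref{201907122224} the endomorphism $L_\alpha(\sigma_A) : X \to X$ is an idempotent; by hypothesis it splits, so fix a retract $(X^p,\iota,\pi)$ with $\pi \circ \iota = id_{X^p}$ and $\iota \circ \pi = L_\alpha(\sigma_A)$. I would then show that $(X^p,\iota)$ is an invariant object and $(X^p,\pi)$ a stabilized object of $(A,\alpha,X)$, reusing the two equivalences already established inside the proof of Proposition \ref{201912021501} (Figures \ref{201912021633} and \ref{201912021644}): because $\sigma_A$ is a normalized left integral, a morphism $f$ equalizes $L_\alpha(\sigma_A)$ and $id_X$ if and only if $\alpha\circ(id_A\otimes f)=\tau_{A,X}\circ(id_A\otimes f)$, and a morphism $g$ coequalizes $L_\alpha(\sigma_A)$ and $id_X$ if and only if $g\circ\alpha=g\circ\tau_{A,X}$. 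Granting this: $L_\alpha(\sigma_A)\circ\iota=\iota\circ\pi\circ\iota=\iota$, so $\iota$ equalizes $L_\alpha(\sigma_A)$ and $id_X$, hence is compatible with the trivial action; and any $f:Z\to X$ compatible with the trivial action satisfies $L_\alpha(\sigma_A)\circ f=f$, hence factors uniquely as $f=\iota\circ(\pi\circ f)$ since $\iota$ is a split monomorphism — this is the universal property of $\alpha\backslash\backslash X$. The mirror argument with $\pi$ in place of $\iota$ gives the universal property of $\alpha\backslash X$. Under these identifications the canonical morphism ${}_\alpha\gamma$ of Definition \ref{201911301753} is the composite $\pi\circ\iota=id_{X^p}$, an isomorphism. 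Running the same argument for an arbitrary right action $(X,\alpha,A)$, now using that $\sigma_A$ is also a right integral, shows that $X // \alpha$ and $X/\alpha$ exist with $\gamma_\alpha$ an isomorphism, so both clauses of Definition \ref{202002271041} hold and $A$ is small.

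The only genuinely new ingredient is the observation that the construction in Proposition \ref{201912021501} is reversible: instead of producing a split idempotent out of $\alpha\backslash\backslash X$ and $\alpha\backslash X$, one feeds in an abstract split retract of $L_\alpha(\sigma_A)$ and reconstructs both objects from it. I expect the point to watch to be that the normalization $\epsilon_A\circ\sigma_A=id_{\mathds{1}}$ must be invoked in \emph{both} directions of the two equivalences above — it is precisely what upgrades ``compatible with $\tau_{A,X}$'' to ``fixed by $L_\alpha(\sigma_A)$'' — and that the left- and right-handed versions be kept separate so that each of the two bullet points of smallness is verified; everything else is string-diagram bookkeeping already available in the preceding propositions.
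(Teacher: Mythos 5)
Your proposal is correct and follows essentially the same route as the paper: the forward direction via Lemma \ref{small_has_nor_integral}, its dual (Remark \ref{201912021229}) and Proposition \ref{norm_r_l_integral_is_integral}, and the converse by splitting the idempotent $L_\alpha(\sigma_A)$ and checking that the retract $(X^p,\iota,\pi)$ realizes both the invariant and the stabilized object, with ${}_\alpha\gamma = \pi\circ\iota = id_{X^p}$. The only cosmetic difference is that you package the verifications as the two equivalences implicit in the proof of Proposition \ref{201912021501}, whereas the paper redoes the string-diagram computations directly; the mathematical content is identical.
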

\begin{proof}
By Proposition \ref{norm_r_l_integral_is_integral}, Lemma \ref{small_has_nor_integral}, and Remark \ref{201912021229}, if a bimonoid $A$ is small, then $A$ has a normalized integral.

Suppose that a bimonoid $A$ has a normalized integral $\sigma_A$.
Let $(A,\alpha,X)$ be a left action in $\mathcal{C}$.
Let us write $p = L_\alpha ( \sigma_A) : X \to X$.
By Proposition \ref{201907122224}, we have $p \circ p = L_\alpha ( \sigma_A) \circ L_\alpha ( \sigma_A) = L_\alpha ( \sigma_A \ast \sigma_A ) = L_\alpha ( \sigma_A) = p$ since $\sigma_A$ is a normalized integral of $A$.
In other words, the morphsim $p$ is an idempotent on $X$.
By the assumption, there exists a retract $(X^p , \iota , \pi )$ of the idempotent $p : X \to X$.
We claim that,
\begin{enumerate}
\item
The morphism $\pi : X \to X^p$ gives a stabilized object $\alpha \backslash X$ of the left action $(A , \alpha , X)$.
\item
The morphism $\iota : X^p \to X$ gives an invariant object $\alpha \backslash \backslash X$ of the left action $(A , \alpha , X)$.
\end{enumerate}
Then the canonical morphism $_\alpha \gamma : \alpha \backslash \backslash X \to \alpha \backslash X$ coincides with $\pi \circ \iota = id_{X^p}$ so that $_\alpha \gamma$ is an isomorphism.
It completes the proof.

We prove the first claim.
Suppose that a morphism $f : X \to Y$ coequalizes the action $\alpha : A \otimes X \to X$ and the trivial action $\tau_{A,X} : A \otimes X \to X$, i.e. $f \circ \alpha = f \circ \tau_{A,X}$.
We set $f^\prime = f \circ \iota : X^p \to Y$.
Then we have $f^\prime \circ \pi = f \circ \iota \circ \pi = f \circ p = f \circ L_\alpha ( \sigma_ A) = f \circ \alpha \circ ( \sigma_A \otimes id_X)$.
By $f \circ \alpha = f \circ \tau_{A,X}$, we obtain $f^\prime \circ \pi = f \circ \tau_{A,X} \circ ( \sigma_A \otimes id_X) = f$ since $\sigma_A$ is a normalized integral.
Moreover, if we have $f^{\prime\prime} \circ \pi = f$ for a morphism $f^{\prime\prime} : X^p \to Y$, then $f^{\prime\prime} = f^{\prime\prime} \circ \pi \circ \iota = f \circ \iota = f^\prime$.
Above all, the morphism $\pi : X \to X^p$ gives a stabilized object $\alpha \backslash X$ of the left action $(A , \alpha , X)$.

We prove the second claim.
The following diagram commutes :
\begin{equation}
\begin{tikzcd}
A \otimes X \ar[r, "\alpha"] & X \\
A \otimes X^p \ar[r, "\tau_{A,X^p}"] \ar[u, "id_A \otimes \iota"] & X^p \ar[u, "\iota"]
\end{tikzcd}
\end{equation}
It follows from Figure \ref{201912021338}.
\begin{figure}[ht]
  \includegraphics[width=14.5cm]{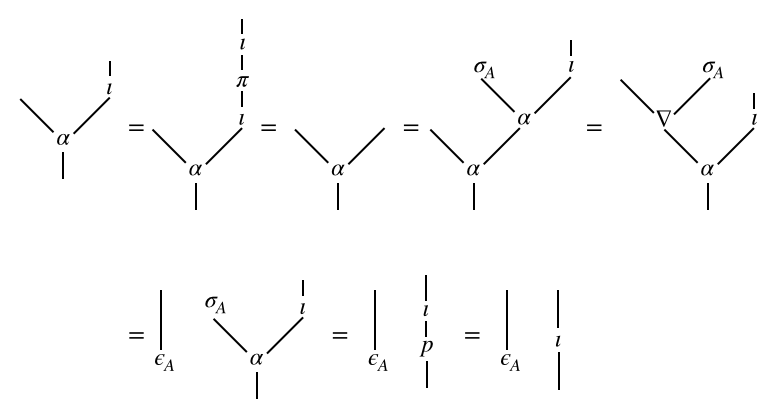}
  \caption{}
  \label{201912021338}
\end{figure}
We prove the universality of an invariant object.
Suppose that a morphism $g : Z \to X$ satisfies $\alpha \circ (id_A \otimes g) = \tau_{A,X} \circ (id_A \otimes g) : A \otimes Z \to X$.
Put $g^\prime = \pi \circ g : Z \to X^p : Z \to X^p$.
We have $\iota \circ g^\prime = \iota \circ \pi \circ g = p \circ g = \alpha \circ ( \sigma_A \otimes id_X ) \circ g = \tau_{A,X}  \circ ( \sigma_A \otimes id_X ) \circ g = g$ since $\sigma_A$ is the normalized integral.
If for a morphism $g^{\prime\prime} : Z \to X^p$ we have $\iota \circ g^{\prime\prime} = g$, then we have $g^{\prime\prime} = \pi \circ \iota \circ g^{\prime\prime} = \pi \circ g = g^\prime$.
It proves the universality of an invariant object $\iota : X^p \to X$.
\end{proof}

\begin{Corollary}
\label{201912021909}
Let $\mathcal{C}$ be a symmetric monoidal category.
Suppose that every idempotent in $\mathcal{C}$ is a split idempotent.
A bimonoid $A$ in $\mathcal{C}$ is bismall if and only if $A$ has a normalized integral and a normalized cointegral.
\end{Corollary}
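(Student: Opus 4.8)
The plan is to reduce the statement entirely to Theorem \ref{small_integral_equiv} and its formal dual. First I would unwind Definition \ref{202002271041}: by definition $A$ is bismall precisely when $A$ is small and $A$ is cosmall, and by Definition \ref{Defn_Haar} (together with Proposition \ref{norm_r_l_integral_is_integral}) ``$A$ has a normalized integral and a normalized cointegral'' is the conjunction of the two separate conditions. Hence it suffices to prove the two equivalences ``$A$ small $\iff$ $A$ has a normalized integral'' and ``$A$ cosmall $\iff$ $A$ has a normalized cointegral'' under the standing hypothesis that every idempotent in $\mathcal{C}$ splits.

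The first equivalence is exactly Theorem \ref{small_integral_equiv}, so nothing remains to be done there. For the second equivalence I would invoke duality: passing from $\mathcal{C}$ to $\mathcal{C}^{\mathrm{op}}$ interchanges monoids with comonoids and hence sends a bimonoid $A$ in $\mathcal{C}$ to a bimonoid $A^{\mathrm{op}}$ in $\mathcal{C}^{\mathrm{op}}$; it sends left (right) coactions of $A$ to left (right) actions of $A^{\mathrm{op}}$, it interchanges invariant objects with stabilized objects (equalizers become coequalizers and vice versa), so that $A$ is cosmall in $\mathcal{C}$ exactly when $A^{\mathrm{op}}$ is small in $\mathcal{C}^{\mathrm{op}}$; and it sends cointegrals of $A$ (with the normalization axiom) to integrals of $A^{\mathrm{op}}$. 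Crucially, the hypothesis ``every idempotent splits'' is self-dual: $p$ is a split idempotent in $\mathcal{C}$ iff $p$ is a split idempotent in $\mathcal{C}^{\mathrm{op}}$ (Definition \ref{201907130830} is symmetric under swapping $\iota$ and $\pi$), so $\mathcal{C}^{\mathrm{op}}$ also satisfies the running assumption. Applying Theorem \ref{small_integral_equiv} to $A^{\mathrm{op}}$ in $\mathcal{C}^{\mathrm{op}}$ and translating back yields ``$A$ cosmall $\iff$ $A$ has a normalized cointegral''.

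Combining the two equivalences gives the claim: $A$ bismall $\iff$ ($A$ small and $A$ cosmall) $\iff$ ($A$ has a normalized integral and $A$ has a normalized cointegral). I expect the only point needing a little care — the ``main obstacle'', such as it is — to be checking that the dualization respects all the relevant structure maps, in particular that the normalization axiom (\ref{Haar_axiom1}) dualizes correctly to the normalization axiom for cointegrals, and that the canonical comparison morphism $^{\beta}\gamma$ of Definition \ref{201911301753} is identified with $_{\alpha}\gamma$ under $\mathcal{C} \leftrightarrow \mathcal{C}^{\mathrm{op}}$; both are routine but should be stated explicitly rather than merely asserted. Alternatively, if one prefers to avoid the opposite-category formalism, one can simply repeat the proof of Theorem \ref{small_integral_equiv} verbatim with actions replaced by coactions, integrals by cointegrals, and equalizers by coequalizers, invoking the dual of Lemma \ref{small_has_nor_integral} and Remark \ref{201912021229} in place of the originals.
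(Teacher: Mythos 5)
Your proposal is correct and matches the paper's own argument, which likewise combines Theorem \ref{small_integral_equiv} with its formal dual to handle the small and cosmall halves separately. The paper simply asserts the dual statement without spelling out the $\mathcal{C}^{\mathrm{op}}$ translation, so your extra care about self-duality of the split-idempotent hypothesis and of the normalization axiom is a harmless elaboration of the same route.
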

\begin{proof}
We have a dual statement of Theorem \ref{small_integral_equiv}.
The dual statement and Theorem \ref{small_integral_equiv} complete the proof.
\end{proof}

\begin{Corollary}
\label{201907101753}
Suppose that every idempotent in $\mathcal{C}$ is a split idempotent.
The full subcategory of (co)small bimonoids in a symmetric monoidal category $\mathcal{C}$ forms a sub symmetric monoidal category of $\mathsf{Bimon}(\mathcal{C})$.
In particular, the full subcategory of bismall bimonoids in a symmetric monoidal category $\mathcal{C}$ forms a sub symmetric monoidal category of $\mathsf{Bimon}(\mathcal{C})$.
\end{Corollary}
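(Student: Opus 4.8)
The plan is to route everything through the characterization of (co)smallness by normalized (co)integrals. By Theorem \ref{small_integral_equiv} and Corollary \ref{201912021909}, under the split-idempotent hypothesis a bimonoid is small (cosmall, bismall) precisely when it admits a normalized integral (cointegral, both). Hence it suffices to prove that the class $\mathcal{S}$ of bimonoids admitting a normalized integral contains the trivial bimonoid $\mathds{1}$ and is closed under $\otimes$; the cosmall case is the formal dual, and the bismall case is the intersection of the two. Since the subcategory in question is full, closure of the object-class under $\otimes$ and under taking the unit automatically upgrades to a sub symmetric monoidal category: the associator, unitors, and symmetry of $\mathsf{Bimon}(\mathcal{C})$ are morphisms between objects already in the subcategory.

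For the unit, the trivial bimonoid $\mathds{1}$ has all of $\nabla,\Delta,\epsilon,\eta$ equal to coherence isomorphisms, and $id_{\mathds{1}}$ is a normalized integral of it, the axioms (\ref{Haar_axiom2}), (\ref{Haar_axiom3}), (\ref{Haar_axiom1}) collapsing to instances of unit coherence. Thus $\mathds{1}\in\mathcal{S}$, and dually $\mathds{1}$ admits a normalized cointegral.

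For tensor products, given normalized integrals $\sigma_A:\mathds{1}\to A$ and $\sigma_B:\mathds{1}\to B$, I would set $\sigma_{A\otimes B} := (\sigma_A\otimes\sigma_B)\circ\mathbf{l}_{\mathds{1}}^{-1}:\mathds{1}\to A\otimes B$ and check it is a normalized integral of the tensor-product bimonoid $A\otimes B$, whose multiplication is $(\nabla_A\otimes\nabla_B)$ precomposed with the middle interchange $id_A\otimes c\otimes id_B$ (for $c$ the symmetry) and whose counit is $\epsilon_A\otimes\epsilon_B$. Verifying the left-integral axiom (\ref{Haar_axiom3}) for $\sigma_{A\otimes B}$ amounts to pushing the $\sigma_B$-strand past an $A$-strand by naturality of $c$ and then applying the left-integral axiom for $A$ on the first tensor factor and for $B$ on the second; the right-integral axiom (\ref{Haar_axiom2}) is symmetric, and by Proposition \ref{norm_r_l_integral_is_integral} the two together yield a normalized integral. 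Axiom (\ref{Haar_axiom1}) is immediate, since $(\epsilon_A\otimes\epsilon_B)\circ\sigma_{A\otimes B} = (\epsilon_A\circ\sigma_A)\otimes(\epsilon_B\circ\sigma_B) = id_{\mathds{1}}$ up to the unitor. Hence $A\otimes B\in\mathcal{S}$, and dually a tensor product of two bimonoids with normalized cointegrals has a normalized cointegral.

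Combining the three steps with Theorem \ref{small_integral_equiv} and Corollary \ref{201912021909} gives the statement for small, cosmall, and bismall bimonoids. The only step requiring genuine (but routine) bookkeeping is the tensor-product computation: one must handle the middle-four interchange correctly and invoke naturality of the symmetry at the right point. I expect no conceptual obstacle beyond that, since each per-factor integral axiom is used exactly once; in particular no further use of the split-idempotent hypothesis is needed once the reduction to normalized integrals has been made.
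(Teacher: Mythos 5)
Your proposal is correct and follows essentially the same route as the paper: reduce (co,bi)smallness to the existence of normalized (co)integrals via Theorem \ref{small_integral_equiv} and Corollary \ref{201912021909}, and then check that $\sigma_A\otimes\sigma_B$ is a normalized integral of $A\otimes B$ (the paper leaves this as a ``direct calculation,'' which you have fleshed out correctly, including the naturality-of-symmetry step and the explicit treatment of the unit object).
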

\begin{proof}
We prove the claim for small cases and leave the second claim to the readers.
By Theorem \ref{small_integral_equiv}, small bimonoids $A,B$ have nomalized integrals $\sigma_A,\sigma_B$.
Then a morphism $\sigma_A \otimes \sigma_B : \mathds{1} \cong \mathds{1} \otimes \mathds{1} \to A \otimes B$ is verified to give a morphism of the bimonoid $A\otimes B$ by direct calculation.
Hence the bimonoid $A\otimes B$ possesses a normalized integral so that $A \otimes B$ is small by Theorem \ref{small_integral_equiv}.
It completes the proof.
\end{proof}

\section{Integral along bimonoid homomorphism}


\subsection{Uniqueness of normalized integral}
\label{201908051601}

In this subsection, we prove the uniqueness of normalized integrals along homomorphisms.
It is a generalization of the uniqueness of normalized (co)integrals of bimonoids in Proposition \ref{norm_r_l_integral_is_integral}.

\begin{prop}
\label{r_integral_l_integra_coincide_along}
Let $\xi : A \to B$ be a bimonoid homomorphism.
Suppose that $\mu \in Int_r (\xi), \mu^\prime \in Int_l (\xi)$ are normalized.
Then we have
\begin{align}
\mu = \mu^\prime \in Int (\xi). 
\end{align}
In particular, a normalized integral along $\xi$ is unique if exists.
\end{prop}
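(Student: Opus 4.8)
The plan is to imitate the classical bialgebra argument — if $\Lambda$ is a normalized left integral and $\Lambda'$ a normalized right integral of a bialgebra then $\Lambda=\Lambda'\Lambda=\Lambda'$ — but to replace the product $\Lambda'\Lambda$, which has no evident meaning here since $\mu,\mu'$ are morphisms $B\to A$ rather than points of $A$, by the composite $\nu:=\mu'\circ\xi\circ\mu : B\to A$. I will show $\nu=\mu$ and $\nu=\mu'$; then $\mu=\mu'$, this common morphism lies in $Int_r(\xi)\cap Int_l(\xi)=Int(\xi)$, and the asserted uniqueness follows because any two normalized integrals along $\xi$ constitute in particular a normalized right integral and a normalized left integral. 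The main obstacle is precisely to spot that $\nu$ is the right object: the more naive candidate $\nabla_A\circ(\mu\otimes\mu')\circ\Delta_B$, which reduces to $\Lambda'\Lambda$ when $B=\mathds{1}$, does \emph{not} in general equal $\mu$ for a genuine homomorphism $\xi$ (already for maps of group bialgebras), so the twist by $\xi$ in the middle is essential.

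First I would record two consequences of the normalization axiom (\ref{Haar_fam_axiom5}) together with the fact that $\xi$ is a bimonoid homomorphism, so $\xi\circ\eta_A=\eta_B$ and $\epsilon_B\circ\xi=\epsilon_A$. Writing $u':=\mu'\circ\eta_B : \mathds{1}\to A$, the identity $\xi\circ\mu'\circ\xi=\xi$ precomposed with $\eta_A$ gives $\xi\circ u'=\eta_B$; and the identity $\xi\circ\mu\circ\xi=\xi$ composed with $\epsilon_B$ gives $f\circ\xi=\epsilon_A$ where $f:=\epsilon_A\circ\mu : B\to\mathds{1}$.

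For $\nu=\mu$: precomposing the left-integral axiom (\ref{Haar_fam_axiom3}) for $\mu'$ with $id_A\otimes\eta_B$ and using the unit axiom of $B$ identifies $\mu'\circ\xi$ with right multiplication by $u'$ in $A$, i.e. $\mu'\circ\xi=\nabla_A\circ(id_A\otimes u')\circ\mathbf{r}_A^{-1}$. Composing on the right with $\mu$ and then applying the right-integral axiom (\ref{Haar_fam_axiom1}) for $\mu$, precomposed with $id_B\otimes u'$, rewrites $\nu$ as $\mu\circ\nabla_B\circ(id_B\otimes(\xi\circ u'))\circ\mathbf{r}_B^{-1}$; since $\xi\circ u'=\eta_B$ this collapses to $\mu$ by the unit axiom of $B$.

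For $\nu=\mu'$: composing the right-integral axiom (\ref{Haar_fam_axiom2}) for $\mu$ with $\epsilon_A\otimes id_B$ and using the counit axiom of $A$ gives $\xi\circ\mu=(f\otimes id_B)\circ\Delta_B$, hence $\nu=\mu'\circ\xi\circ\mu=(f\otimes\mu')\circ\Delta_B$. Composing the left-integral axiom (\ref{Haar_fam_axiom4}) for $\mu'$ with $f\otimes id_A$ and using $f\circ\xi=\epsilon_A$ together with the counit axiom of $A$ gives $(f\otimes\mu')\circ\Delta_B=\mu'$. Combining, $\nu=\mu'$, so $\mu=\mu'$ and we are done. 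All steps are equalities of morphisms in $\mathcal{C}$; no commutativity or cocommutativity of $A$ or $B$ is needed, and the only real bookkeeping is with unitors and counits.
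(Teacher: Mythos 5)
Your proof is correct and follows essentially the same route as the paper, which establishes the two identities $\mu = \mu \circ \xi \circ \mu'$ and $\mu' = \mu \circ \xi \circ \mu'$ via string-diagram computations; you use the mirrored composite $\mu' \circ \xi \circ \mu$ instead, but the idea --- show that a single three-fold composite through $\xi$ equals both normalized integrals, using the normalization identities $\xi\circ u'=\eta_B$ and $f\circ\xi=\epsilon_A$ together with axioms (\ref{Haar_fam_axiom1})--(\ref{Haar_fam_axiom4}) --- is the same. Your derivations of $\nu=\mu$ and $\nu=\mu'$ check out, including the bookkeeping with unitors.
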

\begin{proof}
It is proved by two equations $\mu = \mu \circ \xi \circ \mu^\prime$  and $\mu^\prime = \mu \circ \xi \circ \mu^\prime $.
The former claim follows from (Figure \ref{unique_nor_int_along}) and the latter claim follows from (Figure \ref{unique_nor_int_along(2)}).
It completes the proof.

\begin{figure}[ht]
  \includegraphics[width=\linewidth]{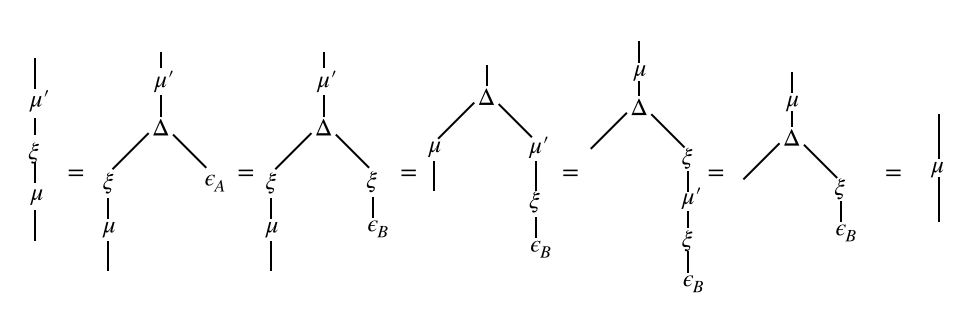}
  \caption{}
  \label{unique_nor_int_along}
\end{figure}
\begin{figure}[ht]
  \includegraphics[width=\linewidth]{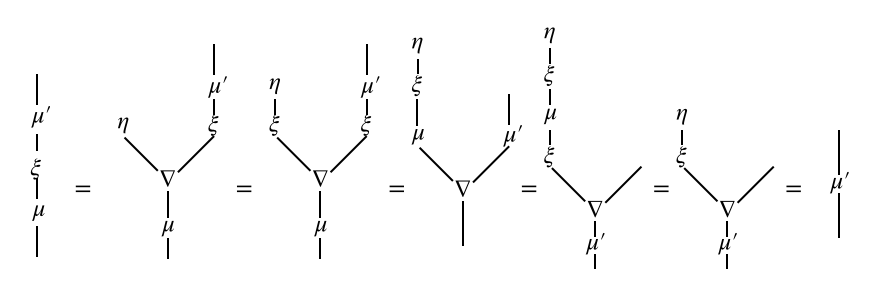}
  \caption{}
  \label{unique_nor_int_along(2)}
\end{figure}
\end{proof}

\begin{Corollary}
Let $\xi : A \to B$ a bimonoid homomorphism.
If $\mu \in Int (\xi)$ is normalized, then we have
\begin{itemize}
\item $\mu \circ \xi \circ \mu = \mu$.
\item $\mu \circ \xi : A \to A$ is an idempotent on $A$.
\item $\xi \circ \mu : B \to B$ is an idempotent on $B$.
\end{itemize}
\end{Corollary}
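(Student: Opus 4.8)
The plan is to obtain all three assertions as essentially immediate consequences of Proposition \ref{r_integral_l_integra_coincide_along}. First I would observe that a normalized integral $\mu$ along $\xi$ is, in particular, simultaneously a normalized right integral along $\xi$ and a normalized left integral along $\xi$. Hence Proposition \ref{r_integral_l_integra_coincide_along} may be applied with both of its inputs taken to be $\mu$ itself (i.e. $\mu \in Int_r(\xi)$ and $\mu^\prime = \mu \in Int_l(\xi)$), which yields the identity $\mu = \mu \circ \xi \circ \mu$. This is the first bullet, and it is the only step that uses anything beyond formal manipulation — and even that step is already done, since the relevant string-diagram identities are exactly those established in the proof of Proposition \ref{r_integral_l_integra_coincide_along}.

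Granting $\mu = \mu \circ \xi \circ \mu$, the remaining two bullets follow by direct substitution. For the endomorphism $\mu \circ \xi : A \to A$ I would compute
\[
(\mu\circ\xi)\circ(\mu\circ\xi) = \mu\circ(\xi\circ\mu\circ\xi) = (\mu\circ\xi\circ\mu)\circ\xi = \mu\circ\xi,
\]
the last equality being the first bullet. Symmetrically, for $\xi\circ\mu : B \to B$,
\[
(\xi\circ\mu)\circ(\xi\circ\mu) = \xi\circ(\mu\circ\xi\circ\mu) = \xi\circ\mu.
\]
Thus both composites are idempotents, as claimed.

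I do not anticipate any genuine obstacle. The only point requiring a moment's care is confirming that the hypotheses of Proposition \ref{r_integral_l_integra_coincide_along} are met, namely that one has on hand both a normalized right integral and a normalized left integral along $\xi$; here the single morphism $\mu$ plays both roles by definition of $Int(\xi)$ together with the normalization condition. Everything else is a two-line associativity rearrangement, so no further lemmas or diagrammatic work are needed.
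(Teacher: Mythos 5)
Your argument is correct, but for the first bullet it takes a slightly different route from the paper's. The paper sets $\mu'' := \mu \circ \xi \circ \mu$, verifies directly that $\mu''$ is again an integral along $\xi$ and that it is normalized (using $\xi\circ\mu\circ\xi=\xi$ twice), and then invokes the \emph{statement} of Proposition \ref{r_integral_l_integra_coincide_along} (uniqueness of normalized integrals) to conclude $\mu''=\mu$. You instead reuse the \emph{internal} identity $\mu = \mu\circ\xi\circ\mu'$ established in the proof of that proposition (Figure \ref{unique_nor_int_along}), specialized to $\mu'=\mu$; this is legitimate because a normalized element of $Int(\xi)$ is by definition simultaneously a normalized right and a normalized left integral along $\xi$. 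One caveat: as literally stated, Proposition \ref{r_integral_l_integra_coincide_along} only asserts $\mu=\mu'$, which is vacuous when you feed it $\mu'=\mu$; the identity you need lives in its proof, not its statement. You acknowledge this, so the argument stands, but a referee might prefer you either restate that intermediate identity as a lemma or follow the paper's route, which has the advantage of citing only a stated result (at the cost of the unglossed ``direct verification'' that $\mu\circ\xi\circ\mu$ is an integral). The second and third bullets are handled identically in both arguments, by associativity from the first.
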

\begin{proof}
By direct verification, $\mu^\prime = \mu \circ \xi \circ \mu$ is an integral along $\xi$.
Also, $\mu^\prime$ is normalized since $\xi \circ \mu^\prime \circ \xi = \xi \circ \mu \circ \xi \circ \mu \circ \xi = \xi$ by the normality of $\mu$.
By Proposition \ref{r_integral_l_integra_coincide_along}, we have $\mu^\prime = \mu$.
It completes the proof of the first claim.
The other claims are immediate from the first claim.
\end{proof}


\subsection{Necessary conditions for existence of a normalized integral}
\label{201908051602}

An existence of a normalized integral along a homomorphism $\xi$ is related with an existence of a normlaized integral of $Ker (\xi)$ and a cointegral $Cok (\xi)$.
In this subsection, we prove Theorem \ref{norm_inte_along_induces_norm_inte} which implies Theorem \ref{201912051457}.
We define an integral $\check{F} (\mu)$ of $Ker (\xi)$ from an integral $\mu$ along $\xi$ when $\xi$ is conormal.
Furthermore, if the integral $\mu$ is normalized, then the integral $\check{F} ( \mu)$ is normalized.

\begin{Lemma}
\label{201907292200}
Let $\mu \in Int_r (\xi)$.
Then $\mu \circ \eta_B : \mathds{1} \to A$ equalizes the homomorphism $\xi $ and the trivial homomorphism, i.e. $\xi\circ ( \mu\circ\eta_B) = \eta_B\circ\epsilon_A\circ ( \mu\circ\eta_B )$.
\end{Lemma}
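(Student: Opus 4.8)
The plan is to read the identity off directly from the right-integral axiom (\ref{Haar_fam_axiom2}), which states that $(\mu \otimes id_B) \circ \Delta_B = (id_A \otimes \xi) \circ \Delta_A \circ \mu$ as morphisms $B \to A \otimes B$. First I would precompose both sides with the unit $\eta_B : \mathds{1} \to B$ and then postcompose both sides with $\epsilon_A \otimes id_B : A \otimes B \to B$, suppressing throughout the unitors $\mathbf{l}, \mathbf{r}$ and the coherence isomorphism $\mathds{1} \cong \mathds{1} \otimes \mathds{1}$. In string-diagram terms this is just capping the $B$-input of (\ref{Haar_fam_axiom2}) with $\eta_B$ and the $A$-output with $\epsilon_A$; the resulting two morphisms $\mathds{1} \to B$ are then straightforward to identify.

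On the left-hand side I would use that $\eta_B$ is a morphism of comonoids, so $\Delta_B \circ \eta_B = \eta_B \otimes \eta_B$ up to the canonical iso; after applying $\mu \otimes id_B$ and then $\epsilon_A \otimes id_B$ the first tensor factor collapses to the scalar $\epsilon_A \circ \mu \circ \eta_B \in End_\mathcal{C}(\mathds{1})$, leaving $\eta_B \circ \epsilon_A \circ \mu \circ \eta_B : \mathds{1} \to B$. On the right-hand side, precomposition with $\eta_B$ gives $(id_A \otimes \xi) \circ \Delta_A \circ (\mu \circ \eta_B)$; pushing $\epsilon_A \otimes id_B$ past $id_A \otimes \xi$ by bifunctoriality of $\otimes$ rewrites this as $\xi \circ (\epsilon_A \otimes id_A) \circ \Delta_A \circ (\mu \circ \eta_B)$, and the counit axiom for the comonoid $A$ collapses $(\epsilon_A \otimes id_A) \circ \Delta_A$ to the inverse left unitor, so the right-hand side is $\xi \circ \mu \circ \eta_B$. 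Comparing the two sides yields $\xi \circ (\mu \circ \eta_B) = \eta_B \circ \epsilon_A \circ (\mu \circ \eta_B)$, which is precisely the assertion that $\mu \circ \eta_B$ equalizes $\xi$ and the trivial homomorphism $\eta_B \circ \epsilon_A : A \to B$.

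I do not anticipate a real obstacle: the argument uses only axiom (\ref{Haar_fam_axiom2}) (axiom (\ref{Haar_fam_axiom1}) is not needed), the comonoid-morphism property of $\eta_B$, and the counit axiom for $A$. The only care required is the routine bookkeeping of unitors and coherence isomorphisms, which the string-diagram calculus of the paper absorbs automatically. As a sanity check, in the group-algebra setting of Example \ref{202002271724} the morphism $\mu \circ \eta_B$ is $\sum_{g \in Ker(\varrho)} g$, and $\xi$ sends it to $|Ker(\varrho)|$ times the unit of $B$, i.e. to $\eta_B \circ \epsilon_A \circ (\mu \circ \eta_B)$, exactly as predicted.
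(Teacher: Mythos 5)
Your proof is correct and is essentially the paper's own argument: the paper's proof is a single string-diagram computation (Figure \ref{mu_eta_equalize_hom}) that amounts to exactly your manipulation, namely capping axiom (\ref{Haar_fam_axiom2}) with $\eta_B$ on the input and $\epsilon_A$ on the $A$-output, then simplifying with the bimonoid axiom $\Delta_B\circ\eta_B=\eta_B\otimes\eta_B$ on one side and the counit axiom for $A$ on the other. Nothing is missing.
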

\begin{proof}
It is verified by Figure \ref{mu_eta_equalize_hom}.
\begin{figure}[ht]
  \includegraphics[width=11.3cm]{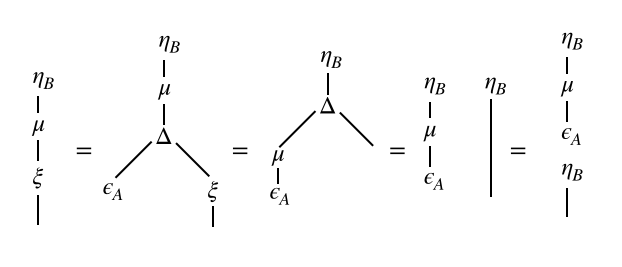}
  \caption{}
  \label{mu_eta_equalize_hom}
\end{figure}
\end{proof}

\begin{Defn}
\label{201907311644}
\rm
Let $\xi : A \to B$ be a bimonoid homomorphism and $\mu \in Int_r (\xi)$.
If $\xi$ is conormal, a morphism $\check{F} ( \mu ) : \mathds{1} \to Ker (\xi)$ is defined as follows.
By Lemma \ref{201907292200}, $\mu \circ \eta_B$ is decomposed into 
\begin{equation}
\mathds{1} \stackrel{\varphi}{\to} 
A \backslash \beta^{\leftarrow}_\xi \to
A . 
\end{equation}
Since $\xi$ is conormal, $A \backslash \beta^{\leftarrow}_\xi$ gives a kernel bimonoid of $\xi$, $Ker (\xi)$ so that the morphism $\varphi$ defines $\check{F} (\mu) : \mathds{1} \to Ker (\xi)$.

If $\xi$ is normal, we define a morphism $\hat{F} ( \mu ) : Cok (\xi ) \to \mathds{1}$ in an analogous way, i.e. $\epsilon_A \circ \mu$ is decomposed into 
\begin{equation}
B \to
Cok (\xi) \stackrel{\hat{F}(\mu)}{\to}
\mathds{1} . 
\end{equation}
\end{Defn}

\begin{theorem}
\label{norm_inte_along_induces_norm_inte}
Let $A,B$ be bimonoids and  $\xi : A \to B$ be a bimonoid homomorphism
Let $\mu \in Int_r (\xi)$.
\begin{enumerate}
\item
Suppose that $\xi$ is conormal.
Then the morphism $\check{F}(\mu) : \mathds{1} \to Ker ( \xi )$ is defined and it is a right integral of $Ker (\xi)$.
If the integral $\mu$ along $\xi$ is normalized, then the integral $\check{F}(\mu)$  is normalized.
\item
Suppose that $\xi : A \to B$ is normal.
Then the morphism $\hat{F}(\mu) : Cok ( \xi) \to \mathds{1}$ is defined and it is a right cointegral of $Cok (\xi)$.
If the integral $\mu$ along $\xi$ is normalized, then the cointegral $\hat{F}(\mu)$ is normalized.
\end{enumerate}
\end{theorem}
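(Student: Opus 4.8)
The plan is to prove (1) in full and obtain (2) by running the dual argument throughout: replace $Ker$, equalizers, right coactions, monomorphisms and the axioms \eqref{Haar_fam_axiom1},\eqref{Haar_fam_axiom2} used on the domain by $Cok$, coequalizers, left/right actions, epimorphisms and those axioms used on the target, and replace ``integral'' by ``cointegral''. Write $K=Ker(\xi)$ and $k=ker(\xi):K\to A$. Since $\xi$ is conormal, by Definition \ref{201908040928} we may take $K=A\backslash\beta^{\leftarrow}_\xi$, so that $k$ is \emph{both} a bimonoid homomorphism (hence $\nabla_A\circ(k\otimes k)=k\circ\nabla_K$, $\Delta_A\circ k=(k\otimes k)\circ\Delta_K$, $\epsilon_A\circ k=\epsilon_K$) \emph{and} a monomorphism in $\mathcal{C}$, being the equalizer of $\beta^{\leftarrow}_\xi=(id_A\otimes\xi)\circ\Delta_A$ and the trivial right coaction $\tau$. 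By construction $\check F(\mu)$ is the unique morphism with $k\circ\check F(\mu)=\mu\circ\eta_B$; I would first recall why this factorization exists — this is where \eqref{Haar_fam_axiom2} enters (equivalently, the computation behind Lemma \ref{201907292200}): precomposing \eqref{Haar_fam_axiom2} with $\eta_B$ and using that $\eta_B$ is a comonoid morphism gives $\beta^{\leftarrow}_\xi\circ(\mu\circ\eta_B)=\tau\circ(\mu\circ\eta_B)$, so $\mu\circ\eta_B$ factors through the equalizer $k$.

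The core step is to show $\check F(\mu)\in Int_r(K)$. As $k$ is a monomorphism, it suffices to verify the defining diagram of a right integral of $K$ after postcomposing with $k$; and as $k$ is a bimonoid homomorphism, that postcomposed diagram unwinds to the single identity of morphisms $\mathds{1}\otimes K\to A$
\[
\nabla_A\circ\bigl((\mu\circ\eta_B)\otimes k\bigr)=\mathbf{r}_A\circ\bigl((\mu\circ\eta_B)\otimes\epsilon_K\bigr).
\]
To prove it I would use \eqref{Haar_fam_axiom1} precomposed with $\eta_B\otimes id_A$, which yields $\nabla_A\circ\bigl((\mu\circ\eta_B)\otimes id_A\bigr)=\mu\circ\xi$ (up to the left unitor), together with the identity $\xi\circ k=\eta_B\circ\epsilon_K$. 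The latter follows by postcomposing the coinvariance equation $\beta^{\leftarrow}_\xi\circ k=\tau\circ k$ with $\epsilon_A\otimes id_B$ and invoking the counit axiom of $A$ and $\epsilon_A\circ k=\epsilon_K$. Combining, $\nabla_A\circ\bigl((\mu\circ\eta_B)\otimes k\bigr)=\mu\circ\xi\circ k=\mu\circ\eta_B\circ\epsilon_K=\mathbf{r}_A\circ\bigl((\mu\circ\eta_B)\otimes\epsilon_K\bigr)$, as required; in the write-up this chain would be recorded as a short string-diagram computation.

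For the normalized case, assume \eqref{Haar_fam_axiom5}, i.e. $\xi\circ\mu\circ\xi=\xi$. Since $\epsilon_K=\epsilon_A\circ k$ we get $\epsilon_K\circ\check F(\mu)=\epsilon_A\circ k\circ\check F(\mu)=\epsilon_A\circ\mu\circ\eta_B$, so it suffices to show $\epsilon_A\circ\mu\circ\eta_B=id_{\mathds{1}}$. Precomposing \eqref{Haar_fam_axiom5} with $\eta_A$ and postcomposing with $\epsilon_B$, and using $\xi\circ\eta_A=\eta_B$, $\epsilon_B\circ\xi=\epsilon_A$, $\epsilon_B\circ\eta_B=id_{\mathds{1}}$, gives $id_{\mathds{1}}=\epsilon_B\circ\eta_B=\epsilon_B\circ\xi\circ\eta_A=\epsilon_B\circ\xi\circ\mu\circ\xi\circ\eta_A=\epsilon_B\circ\xi\circ\mu\circ\eta_B=\epsilon_A\circ\mu\circ\eta_B$. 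Hence $\check F(\mu)$ is a normalized right integral of $Ker(\xi)$, and the dual run of these three steps proves (2) for $Cok(\xi)$ and $\hat F(\mu)$, with $cok(\xi)$ now an epimorphism that is a bimonoid homomorphism and the analogue of the displayed identity verified by precomposition.

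The part I expect to be fussiest is not conceptual but bookkeeping: keeping the left/right handedness consistent among the axioms of $\mu$, the chosen presentation $K=A\backslash\beta^{\leftarrow}_\xi$ of the kernel, and the convention of Definition \ref{Defn_Haar} for a right integral of a bimonoid — and then mirroring all of this correctly on the cokernel side. It is worth stressing that no (co)commutativity of $A$ or $B$, nor the Hopf property, enters anywhere; only (co)normality of $\xi$, which is exactly what lets us present $Ker(\xi)$ (resp.\ $Cok(\xi)$) by the relevant stabilized object with the canonical morphism a bimonoid homomorphism.
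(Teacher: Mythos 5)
Your proof is correct and follows essentially the same route as the paper: factor $\mu\circ\eta_B$ through the kernel presented as the stabilized object of $\beta^{\leftarrow}_\xi$, use that $ker(\xi)$ is both a monomorphism and a bimonoid homomorphism to reduce the integral axiom for $\check F(\mu)$ to the identity $\nabla_A\circ((\mu\circ\eta_B)\otimes ker(\xi))=\mathbf{r}_A\circ((\mu\circ\eta_B)\otimes\epsilon_{Ker(\xi)})$, verify it from axiom (\ref{Haar_fam_axiom1}) together with $\xi\circ ker(\xi)=\eta_B\circ\epsilon_{Ker(\xi)}$, and obtain normalization by the same counit computation the paper uses. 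Your explicit check that $\mu\circ\eta_B$ equalizes $\beta^{\leftarrow}_\xi$ and the trivial coaction (via (\ref{Haar_fam_axiom2}) precomposed with $\eta_B$) is exactly what is needed for the factorization and replaces the paper's string-diagram figures by an equivalent algebraic chain.
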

\begin{proof}
We only prove the first part.
For simplicity, let us write $j= ker (\xi) : Ker (\xi) \to A$.
We prove that $\nabla_{Ker(\xi)} \circ ( \check{F}(\mu) \otimes id_{Ker(\xi)}) = \check{F}(\mu) \otimes \epsilon_{Ker (\xi)}$.
Due to the universality of kernels, it suffices to show that $j \circ \nabla_{Ker(\xi)} \circ ( \check{F}(\mu) \otimes id_{Ker(\xi)}) = j \circ (\check{F}(\mu) \otimes \epsilon_{Ker (\xi)})$.
See Figure \ref{pf induced integral of kernel}.

Let us prove that $\check{F}(\mu)$ is normalized if $\mu$ is normalized.
It is shown by the following direct calculation :
\begin{align}
\epsilon_{Ker(\xi)} \circ \check{F}(\mu) 
&= \epsilon_{A} \circ ker(\xi) \circ \check{F}(\mu)  \\
&= \epsilon_{A} \circ \mu \circ \eta_B \\
&= \epsilon_{B} \circ \xi \circ \mu \circ \xi \circ \eta_A  \\
&= \epsilon_{B} \circ \xi \circ \eta_A ~~(\because \mu : \mathrm{normalized})  \\
&= id_{\mathds{1}} 
\end{align}

\begin{figure}[ht]
  \includegraphics[width=\linewidth]{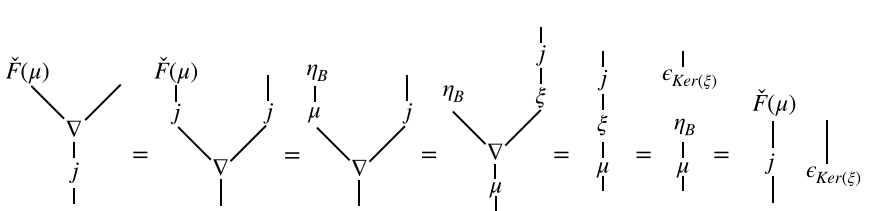}
  \caption{}
  \label{pf induced integral of kernel}
\end{figure}
\end{proof}

\begin{Corollary}
\label{201912051457}
Let $\xi : A \to B$ be a bimonoid homomorphism with a normalized integral along $\xi$.
If the homomorphism $\xi$ is conormal, then the kernel bimonoid $Ker (\xi)$ has a normalized integral.

We have a dual claim : if the homomorphism $\xi$ is normal, then the cokernel bimonoid $Cok (\xi)$ has a normalized cointegral.
\end{Corollary}


\section{Computation of $Int (\xi)$}
\label{201908021404}

In this section, we compute $Int ( \xi)$ by using $\check{F}$, $\hat{F}$ in Definition \ref{201907311644}.
The main result in this subsection is that if $\xi$ has a normalized generator integral, then $Int (\xi)$ is isomorphic to $End_{\mathcal{C}}(\mathds{1})$, the endomorphism set of the unit $\mathds{1} \in \mathcal{C}$.

\begin{Defn}
\rm
Let $A,B$ be bimonoids and $\xi : A \to B$ be a bimonoid homomorphism with a kernel bimonoid $Ker(\xi)$.
Let $\varphi \in Mor_{\mathcal{C}} (\mathds{1} , Ker(\xi))$ and $\mu \in Int_r ( \xi)$.
We define $\varphi \ltimes \mu \in Mor_{\mathcal{C}} ( B, A)$ by 
\begin{align}
\varphi \ltimes \mu &\stackrel{\mathrm{def.}}{=} \left( B \stackrel{\cong}{\to} \mathds{1} \otimes B \stackrel{\varphi \otimes id_B}{\to} Ker (\xi) \otimes B \stackrel{ker(\xi)\otimes \mu}{\to} A \otimes A \stackrel{\nabla_A}{\to} A \right)  \\
\mu \rtimes \varphi &\stackrel{\mathrm{def.}}{=} \left( B \stackrel{\cong}{\to} B \otimes \mathds{1}  \stackrel{id_B \otimes \varphi}{\to} B \otimes Ker (\xi) \stackrel{\mu \otimes ker(\xi)}{\to} A \otimes A \stackrel{\nabla_A}{\to} A \right) 
\end{align}
\end{Defn}

\begin{remark}
The definitions of $\varphi \ltimes \mu$ and $\mu \rtimes \varphi$ can be understood via some string diagrams in Figure \ref{201908021116}.
\begin{figure}[ht]
  \includegraphics[width=11.3cm]{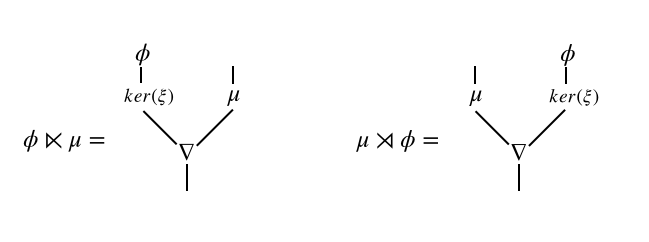}
  \caption{}
  \label{201908021116}
\end{figure}
\end{remark}

\begin{prop}
Let $\mu \in Int_r (\xi)$.
Then we have
\begin{itemize}
\item
$\varphi \ltimes \mu \in Int_r (\xi)$.
\item
$\mu \rtimes \varphi = (\epsilon_{Ker(\xi)} \circ \varphi ) \cdot \mu \in Int_r (\xi)$.
\end{itemize}
\end{prop}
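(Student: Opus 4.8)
Write $a := ker(\xi)\circ\varphi : \mathds{1}\to A$ and $c := \epsilon_{Ker(\xi)}\circ\varphi \in End_\mathcal{C}(\mathds{1})$. The plan is to record at the outset a few identities forced by the kernel. Since $(Ker(\xi),ker(\xi))$ is the equalizer of $\xi$ and the zero homomorphism $\eta_B\circ\epsilon_A$ in $\mathsf{Bimon}(\mathcal{C})$, and $ker(\xi)$ is a bimonoid homomorphism, one has $\xi\circ ker(\xi) = \eta_B\circ\epsilon_{Ker(\xi)}$; hence $\xi\circ a = \eta_B\circ c$ and $\epsilon_A\circ a = c$. Moreover, using $\Delta_A\circ ker(\xi) = (ker(\xi)\otimes ker(\xi))\circ\Delta_{Ker(\xi)}$ together with the counit axiom of $Ker(\xi)$, the composite $(id_A\otimes\xi)\circ\Delta_A\circ a$ equals $a\otimes\eta_B$ (under the canonical iso $\mathds{1}\cong\mathds{1}\otimes\mathds{1}$). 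Finally, unfolding Definition makes it visible that $\varphi\ltimes\mu = \nabla_A\circ(a\otimes\mu)\circ\mathbf{l}^{-1}_B$ is ``left multiplication of $\mu$ by $a$'' and $\mu\rtimes\varphi = \nabla_A\circ(\mu\otimes a)\circ\mathbf{r}^{-1}_B$ is ``right multiplication of $\mu$ by $a$''.

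For the second bullet I would rewrite $\mu\rtimes\varphi = \nabla_A\circ(\mu\otimes id_A)\circ(id_B\otimes a)\circ\mathbf{r}^{-1}_B$ and apply axiom (\ref{Haar_fam_axiom1}) for $\mu$, i.e. replace $\nabla_A\circ(\mu\otimes id_A)$ by $\mu\circ\nabla_B\circ(id_B\otimes\xi)$. Since $\xi\circ a = \eta_B\circ c$ and $\nabla_B\circ(id_B\otimes\eta_B) = \mathbf{r}_B$, the composite collapses to $\mu$ postcomposed with the scalar $c$, giving $\mu\rtimes\varphi = c\cdot\mu = (\epsilon_{Ker(\xi)}\circ\varphi)\cdot\mu$. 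That $c\cdot\mu\in Int_r(\xi)$ then follows from Proposition \ref{201907311055}: a one-line check of (\ref{Haar_fam_axiom1}) and (\ref{Haar_fam_axiom2}) shows $c\cdot id_B\in Int_r(id_B)$, whence $\mu\circ(c\cdot id_B) = c\cdot\mu\in Int_r(id_B\circ\xi) = Int_r(\xi)$; equivalently, $Int_r(\xi)$ is plainly stable under the $End_\mathcal{C}(\mathds{1})$-action.

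For the first bullet I would verify axioms (\ref{Haar_fam_axiom1}) and (\ref{Haar_fam_axiom2}) for $\varphi\ltimes\mu = \nabla_A\circ(a\otimes\mu)\circ\mathbf{l}^{-1}_B$ directly, most conveniently using the string diagrams of Figure \ref{201908021116}. Axiom (\ref{Haar_fam_axiom1}) is essentially the associativity of $\nabla_A$: peel the $a$-strand off the outer product, apply (\ref{Haar_fam_axiom1}) for $\mu$ to the remainder, and reorganize by naturality of the left unitor — this step in fact works for an arbitrary $a:\mathds{1}\to A$. Axiom (\ref{Haar_fam_axiom2}) is the substantive computation and is where the hypothesis $\varphi:\mathds{1}\to Ker(\xi)$ is genuinely used. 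Comultiplying the output and invoking the bimonoid compatibility of $\Delta_A$ with $\nabla_A$, one rewrites $\Delta_A\circ(\varphi\ltimes\mu)$ as the ``componentwise product'' of $\Delta_A\circ a$ and $\Delta_A\circ\mu$; one then pushes the second tensor leg through $\xi$. On the $\mu$-part this is exactly axiom (\ref{Haar_fam_axiom2}) for $\mu$, which replaces it by $(\mu\otimes id_B)\circ\Delta_B$; on the $a$-part, the identity $(id_A\otimes\xi)\circ\Delta_A\circ a = a\otimes\eta_B$ recorded above turns that leg into a bare unit $\eta_B$. Multiplying componentwise and collapsing the units via the unit axioms of $A$ and $B$, the right-hand side of (\ref{Haar_fam_axiom2}) reduces to $((\varphi\ltimes\mu)\otimes id_B)\circ\Delta_B$, its left-hand side.

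I expect the verification of (\ref{Haar_fam_axiom2}) for $\varphi\ltimes\mu$ to be the one real obstacle: it is the only place where the bimonoid structure of $A$, the homomorphism and kernel properties of $\xi$ and $ker(\xi)$, and the coalgebra axiom for $\mu$ must all be combined. Carrying it out as a manipulation of string diagrams keeps the bookkeeping tractable; everything else in the proof is routine unit/associativity juggling.
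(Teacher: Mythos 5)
Your proposal is correct and follows essentially the same route as the paper: the paper verifies axioms (\ref{Haar_fam_axiom1}) and (\ref{Haar_fam_axiom2}) for $\varphi\ltimes\mu$ by direct string-diagram computation (explicitly noting, as you do, that the target of $\varphi$ being $Ker(\xi)$ is needed only for (\ref{Haar_fam_axiom2})), and establishes $\mu\rtimes\varphi=(\epsilon_{Ker(\xi)}\circ\varphi)\cdot\mu$ by the same collapse through axiom (\ref{Haar_fam_axiom1}) and the kernel identity $\xi\circ ker(\xi)=\eta_B\circ\epsilon_{Ker(\xi)}$. Your algebraic write-up, including the preliminary identities for $a=ker(\xi)\circ\varphi$, is a faithful translation of the paper's diagrammatic argument.
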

\begin{proof}
For simplicity we denote $j= ker (\xi) : Ker (\xi) \to A$.
We show that $\varphi \ltimes \mu \in Int_r (\xi)$.
The axiom (\ref{Haar_fam_axiom1}) is verified by Figure \ref{phi_mu_is_integral}.
The axiom (\ref{Haar_fam_axiom2}) is verified by Figure \ref{phi_mu_is_integral(2)}.
Note that the target of $\varphi$ needs to be $Ker (\xi)$ to verify Figure \ref{phi_mu_is_integral(2)}.

We show that $\mu \rtimes \varphi = (\epsilon_{Ker(\xi)} \circ \varphi ) \cdot \mu \in Int_r (\xi)$.
The equation is verified by Figure \ref{mu_phi_is_integral}.
Since $\mu \in Int_r (\xi)$,  $\mu \rtimes \varphi $ lives in $Int_r (\xi)$.

\begin{figure}[ht]
  \includegraphics[width=\linewidth]{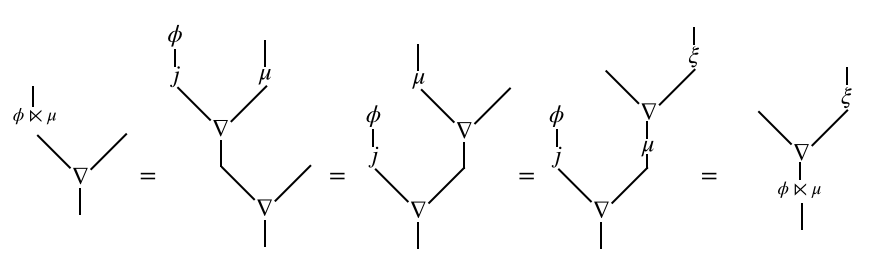}
  \caption{}
  \label{phi_mu_is_integral}
\end{figure}

\begin{figure}[ht]
  \includegraphics[width=\linewidth]{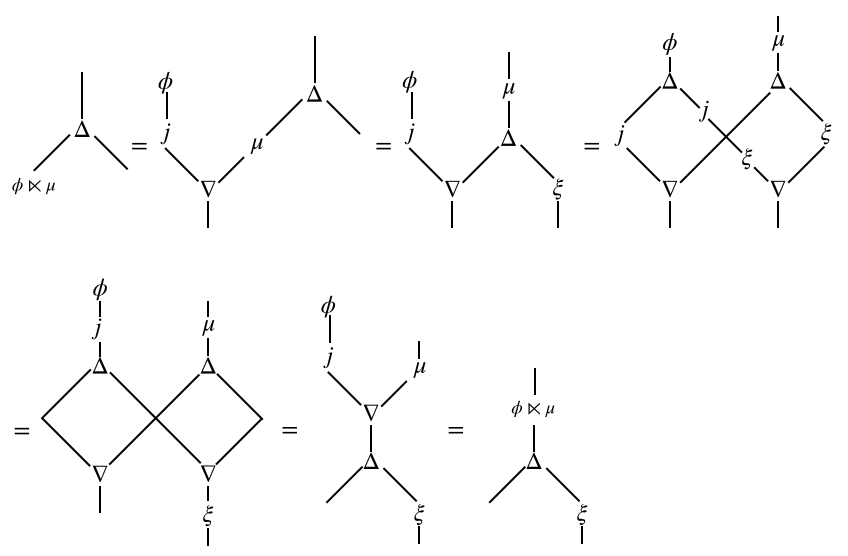}
  \caption{}
  \label{phi_mu_is_integral(2)}
\end{figure}

\begin{figure}[ht]
  \includegraphics[width=12cm]{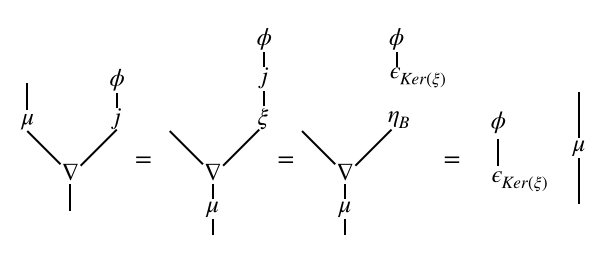}
  \caption{}
  \label{mu_phi_is_integral}
\end{figure}
\end{proof}

\begin{Lemma}
\label{201907311125}
Let $\xi : A \to B$ be a bimonoid homomorphism which is conormal.
Let $\mu$ be a generator integral along $\xi$.
For an integral $\mu^\prime \in Int ( \xi) $, we have
\begin{align}
\check{F}(\mu^\prime) \ltimes \mu 
= \mu^\prime . 
\end{align}

In particular, if a bimonoid homomorphism $\xi$ has a generator integral, then $\check{F} : Int (\xi ) \to Int (Ker (\xi))$ is injective.
\end{Lemma}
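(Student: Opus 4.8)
The plan is to establish the single identity $\check{F}(\mu^\prime) \ltimes \mu = \mu^\prime \circ \xi \circ \mu$ and then use the hypothesis that $\mu$ is a generator to collapse the right-hand side to $\mu^\prime$. Throughout, $\xi$ being conormal guarantees (Definition \ref{201907311644}) that $\check{F}(\mu^\prime) : \mathds{1} \to Ker(\xi)$ is defined for every $\mu^\prime \in Int(\xi) \subseteq Int_r(\xi)$, and it is characterized by the equation $ker(\xi) \circ \check{F}(\mu^\prime) = \mu^\prime \circ \eta_B$.

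First I would substitute this characterization into the definition of $\ltimes$. Since $(ker(\xi) \otimes \mu) \circ (\check{F}(\mu^\prime) \otimes id_B) = (\mu^\prime \circ \eta_B) \otimes \mu = (\mu^\prime \otimes id_A) \circ (\eta_B \otimes \mu)$, one gets
\[
\check{F}(\mu^\prime) \ltimes \mu = \nabla_A \circ (\mu^\prime \otimes id_A) \circ (\eta_B \otimes \mu) \circ \mathbf{l}^{-1}_B .
\]
Next, apply the right-integral axiom (\ref{Haar_fam_axiom1}) for $\mu^\prime$, i.e. $\nabla_A \circ (\mu^\prime \otimes id_A) = \mu^\prime \circ \nabla_B \circ (id_B \otimes \xi)$, to rewrite this as $\mu^\prime \circ \nabla_B \circ \big(\eta_B \otimes (\xi \circ \mu)\big) \circ \mathbf{l}^{-1}_B$. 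Finally, the left unit axiom of the monoid $B$ (namely $\nabla_B \circ (\eta_B \otimes id_B) = \mathbf{l}_B$) together with naturality of the left unitor $\mathbf{l}$ collapses $\nabla_B \circ \big(\eta_B \otimes (\xi\circ\mu)\big) \circ \mathbf{l}^{-1}_B$ to $\xi \circ \mu$. Hence $\check{F}(\mu^\prime) \ltimes \mu = \mu^\prime \circ \xi \circ \mu$. This computation is cleanest in string-diagram form (compare Figure \ref{201908021116}); the only thing requiring attention is the bookkeeping of the coherence isomorphisms $\mathbf{l}_B$, $\mathbf{l}^{-1}_B$, and nothing substantive happens there.

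Now, since $\mu^\prime$ lies in $Int(\xi) \subseteq Int_r(\xi) \cup Int_l(\xi)$ and $\mu$ is a generator integral along $\xi$, the commutative diagram (\ref{201907311522}) applied to $\mu^\prime$ states precisely that $\mu^\prime \circ \xi \circ \mu = \mu^\prime$. Combining with the previous paragraph gives $\check{F}(\mu^\prime) \ltimes \mu = \mu^\prime$, which is the first assertion.

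For the ``in particular'' clause, suppose $\mu^\prime, \mu^{\prime\prime} \in Int(\xi)$ satisfy $\check{F}(\mu^\prime) = \check{F}(\mu^{\prime\prime})$; then the first part yields $\mu^\prime = \check{F}(\mu^\prime) \ltimes \mu = \check{F}(\mu^{\prime\prime}) \ltimes \mu = \mu^{\prime\prime}$, so $\check{F}$ is injective on $Int(\xi)$. I expect the main (and rather mild) obstacle to be the first step — correctly identifying $\check{F}(\mu^\prime) \ltimes \mu$ with $\mu^\prime \circ \xi \circ \mu$ through the integral axiom and the unit laws — after which the generator property finishes the argument in a single line.
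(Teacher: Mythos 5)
Your proof is correct and follows essentially the same route as the paper: the paper's proof is the string-diagram computation of Figure \ref{201912072155}, which encodes exactly your chain of steps (unfold $\ltimes$, substitute $ker(\xi)\circ\check{F}(\mu^\prime)=\mu^\prime\circ\eta_B$, apply the right-integral axiom (\ref{Haar_fam_axiom1}) and the unit law to get $\mu^\prime\circ\xi\circ\mu$, then invoke the generator diagram (\ref{201907311522})). Your identification of $\check{F}(\mu^\prime)\ltimes\mu$ with $\mu^\prime\circ\xi\circ\mu$ and the handling of the unitors are both accurate.
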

\begin{proof}
It follows from Figure \ref{201912072155}.
\begin{figure}[ht]
  \includegraphics[width=10.5cm]{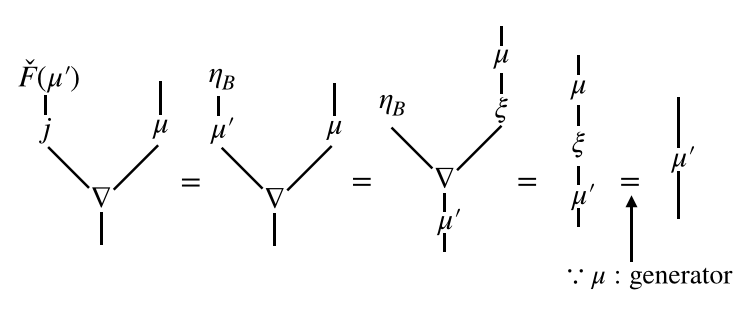}
  \caption{}
  \label{201912072155}
\end{figure}
\end{proof}

\begin{theorem}
\label{201906281559}
Let $\xi : A \to B$ be a bimonoid homomorphism which is either conormal or normal.
Let $\mu$ be a normalized generator integral along $\xi$.
Then the map $End_{\mathcal{C}}(\mathds{1}) \to Int (\xi) ~;~ \lambda \mapsto \lambda \cdot \mu$ is a bijection.
\end{theorem}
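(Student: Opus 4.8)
The plan is to produce an explicit inverse $\Psi\colon Int(\xi)\to End_\mathcal{C}(\mathds{1})$ to the map $\Phi\colon\lambda\mapsto\lambda\cdot\mu$. I will treat the conormal case; the normal case is dual, interchanging $\check F$, $Ker(\xi)$ with $\hat F$, $Cok(\xi)$ and integrals with cointegrals. Throughout I use the two elementary ``scalar-linearity'' facts that $f\mapsto\lambda\cdot f$ commutes with pre- and post-composition and with tensoring against a fixed morphism (immediate from coherence of the unitors). First I would check that $\Phi$ is well defined, i.e.\ $\lambda\cdot\mu\in Int(\xi)$: each of the axioms (\ref{Haar_fam_axiom1})--(\ref{Haar_fam_axiom4}) involves exactly one occurrence of $\mu$, so replacing $\mu$ by $\lambda\cdot\mu$ multiplies both sides of the defining equation by $\lambda$, and the equation survives. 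The same facts, together with the fact that $ker(\xi)$ is a monomorphism (it is an equalizer), give $\check F(\lambda\cdot\nu)=\lambda\cdot\check F(\nu)$ for every $\nu\in Int_r(\xi)$, since $ker(\xi)\circ(\lambda\cdot\check F(\nu))=\lambda\cdot(\nu\circ\eta_B)=(\lambda\cdot\nu)\circ\eta_B$ and $ker(\xi)$ is mono.

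Next I would set $\Psi(\mu')=\epsilon_{Ker(\xi)}\circ\check F(\mu')$, which is legitimate because $\xi$ is conormal. The identity $\Psi\circ\Phi=\mathrm{id}$ is then immediate: $\Psi(\lambda\cdot\mu)=\epsilon_{Ker(\xi)}\circ\check F(\lambda\cdot\mu)=\lambda\cdot(\epsilon_{Ker(\xi)}\circ\check F(\mu))=\lambda$, where the last step uses that $\check F(\mu)$ is a \emph{normalized} right integral of $Ker(\xi)$ by Theorem \ref{norm_inte_along_induces_norm_inte}(1) (this is where the hypothesis that $\mu$ is normalized enters), i.e.\ $\epsilon_{Ker(\xi)}\circ\check F(\mu)=\mathrm{id}_\mathds{1}$. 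In particular $\Phi$ is injective.

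For surjectivity I would prove $\Phi\circ\Psi=\mathrm{id}$. Fix $\mu'\in Int(\xi)$ and put $\varphi=\check F(\mu')$. Since $\mu$ is a generator integral and $\xi$ is conormal, Lemma \ref{201907311125} gives $\varphi\ltimes\mu=\mu'$, so it is enough to show $\varphi\ltimes\mu=(\epsilon_{Ker(\xi)}\circ\varphi)\cdot\mu$. I would compute $\check F$ of the left-hand side: unwinding the definition of $\ltimes$, precomposing with $\eta_B$, and using $\mu\circ\eta_B=ker(\xi)\circ\check F(\mu)$ together with the fact that $ker(\xi)$ is a bimonoid homomorphism, one gets $(\varphi\ltimes\mu)\circ\eta_B=ker(\xi)\circ\bigl(\nabla_{Ker(\xi)}\circ(\varphi\otimes\check F(\mu))\bigr)$; since $\check F(\mu)$ is a right integral of $Ker(\xi)$, the right-integral identity (\ref{Haar_axiom3}) rewrites $\nabla_{Ker(\xi)}\circ(\varphi\otimes\check F(\mu))$ as $(\epsilon_{Ker(\xi)}\circ\varphi)\cdot\check F(\mu)$. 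Hence $\check F(\varphi\ltimes\mu)=(\epsilon_{Ker(\xi)}\circ\varphi)\cdot\check F(\mu)=\check F\bigl((\epsilon_{Ker(\xi)}\circ\varphi)\cdot\mu\bigr)$ by scalar-linearity of $\check F$. Both $\varphi\ltimes\mu=\mu'$ and $(\epsilon_{Ker(\xi)}\circ\varphi)\cdot\mu$ lie in $Int(\xi)$, and $\check F$ is injective on $Int(\xi)$ because $\xi$ admits the generator integral $\mu$ (Lemma \ref{201907311125}); therefore $\varphi\ltimes\mu=(\epsilon_{Ker(\xi)}\circ\varphi)\cdot\mu$, i.e.\ $\mu'=\Phi(\Psi(\mu'))$. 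Together with the previous paragraph this shows $\Phi$ is a bijection with inverse $\Psi$.

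I expect the computation in the last paragraph to be the main obstacle: one must handle the definition of $\ltimes$ carefully, keep track of the fact that $\check F$ is only defined on (right) integrals along $\xi$ so that the various applications of $\check F$ are legitimate, and confirm that the two morphisms to which $\check F$ is applied really are two-sided integrals along $\xi$ so that the injectivity clause of Lemma \ref{201907311125} applies. Everything else is bookkeeping with the scalar action and the already-established properties of $\check F$, $\ltimes$, and generator integrals.
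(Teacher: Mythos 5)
Your proposal is correct and follows essentially the same route as the paper: the same inverse map $\mu'\mapsto\epsilon_{Ker(\xi)}\circ\check F(\mu')$, the same reduction to the two identities $\epsilon_{Ker(\xi)}\circ\check F(\lambda\cdot\mu)=\lambda$ (via normality of $\check F(\mu)$ from Theorem \ref{norm_inte_along_induces_norm_inte}) and $\mu'=\bigl(\epsilon_{Ker(\xi)}\circ\check F(\mu')\bigr)\cdot\mu$ (via $\check F(\mu')\ltimes\mu=\mu'$ from Lemma \ref{201907311125}). The only divergence is in establishing $\check F(\mu')\ltimes\mu=\bigl(\epsilon_{Ker(\xi)}\circ\check F(\mu')\bigr)\cdot\mu$: the paper computes this directly as a string-diagram identity of morphisms $B\to A$ (Figure \ref{201912072203}, using the left-integral axiom for $\mu$ and triviality of $\xi\circ ker(\xi)$), whereas you precompose with $\eta_B$, work inside $Ker(\xi)$ using the right-integral property of $\check F(\mu)$, and lift back by injectivity of $\check F$ on $Int(\xi)$ — a valid, if slightly more roundabout, substitute.
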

\begin{proof}
We only prove the statement for conormal $\xi$.
It suffices to replace $\check{F}(\mu)$ with $\hat{F}(\mu)$ for normal $\xi$ and other discussion with a dual one.

We claim that $Int (\xi) \to End_{\mathcal{C}}(\mathds{1}) ; \mu^\prime \mapsto \epsilon_{Ker(\xi)} \circ \check{F}(\mu^\prime)$ gives an inverse map.
It suffices to prove that $\mu^\prime = \left( \epsilon_{Ker(\xi)} \circ \check{F}(\mu^\prime) \right) \cdot \mu$ and $\epsilon_{Ker(\xi)} \circ \check{F}(\lambda \cdot \mu) = \lambda$.
The latter one follows from $\epsilon_{Ker(\xi)} \circ \check{F}(\mu) = id_{\mathds{1}}$ which is nothing but the normality of $\check{F}(\mu)$ by Theorem \ref{norm_inte_along_induces_norm_inte}.
We show the former one by calculating $\check{F}(\mu^\prime) \ltimes \mu$ in a different way.
It follows from Figure \ref{201912072203}.
\begin{figure}[ht]
  \includegraphics[width=10.5cm]{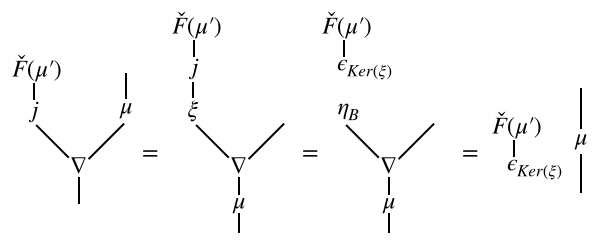}
  \caption{}
  \label{201912072203}
\end{figure}
By Lemma \ref{201907311125}, $\check{F}(\mu^\prime ) \ltimes \mu  = \mu^\prime$, so that $\mu^\prime = \left( \epsilon_{Ker(\xi)} \circ \check{F}(\mu^\prime ) \right) \cdot \mu$.
\end{proof}


\section{Existence of a normalized generator integral}

In this section, we give sufficient conditions for a normalized generator integral along a homomorphism exists.


\subsection{Key Lemma}
\label{201908051614}

\begin{Lemma}
\label{ker_cok_integral_exist}
Let $A,B$ be bimonoids.
Let $\xi : A \to B$ be a bimonoid homomorphism.
\begin{enumerate}
\item
Suppose that $A$ is small.
In particular, the canonical morphism $_\xi\gamma : \alpha^{\to}_\xi \backslash\backslash B \to \alpha^{\to}_\xi \backslash B$ is an isomorphism.
Here, the left action $\alpha^{\to}_\xi$ is defined in Definition \ref{201912021024}.
Let 
\begin{align}
\mu_{0} = \left( \alpha^{\to}_\xi \backslash B \stackrel{(_{\xi}\gamma)^{-1}}{\to}  \alpha^{\to}_\xi \backslash \backslash B \to B \right) . 
\end{align}
If $\alpha^{\to}_\xi \backslash B$ has a bimonoid structure such that the canonical morphism $\pi : B \to \alpha^{\to}_\xi \backslash B$ is a bimonoid homomorphism, then we have
\begin{itemize}
\item 
$\mu_0  \in Int_r (\pi)$. 
In particular, $Int_r (\pi) \neq \emptyset$.
\item
$\pi \circ \mu_0 = id_{\alpha^{\to}_\xi \backslash B}$.
In particular, the right integral $\mu_0$ is normalized.
\item
By Remark \ref{201912021229}, the bimonoid $A$ has a normalized integral $\sigma_A$.
We have,
\begin{align}
\mu_0 \circ \pi = L_{\alpha^{\to}_{\xi}} ( \sigma_A ) . 
\end{align}
\end{itemize}
If $B$ is commutative, then $\mu_0 \in Int_l (\pi)$, in particular, $\mu_{0} \in Int (\pi) \neq \emptyset$.
We have an analogous statement for the right action $(B, \alpha^{\leftarrow}_\xi, A)$.
\item
Suppose that $B$ is cosmall.
In particular, the canonical morphism $\gamma^\xi : A \backslash \beta^{\leftarrow}_\xi \to A \backslash \backslash \beta^{\leftarrow}_\xi$ is an isomorphism.
Here, the right coaction $\beta^{\leftarrow}_\xi$ is defined in Definition \ref{201912021024}.
Let 
\begin{align}
\mu_{1} = \left(A \to A \backslash \backslash \beta_{\xi} \stackrel{(\gamma^{\xi})^{-1}}{\to} A \backslash \beta_{\xi}  \right) . 
\end{align}
If $A \backslash \beta_{\xi}$ has a bimonoid structure such that the canonical morphism $\iota : A \backslash \beta \to A$ is a bimonoid homomorphism, then we have
\begin{itemize}
\item 
$\mu_1 \in Int_l (\iota)$.
In particular, $Int_l (\iota) \neq \emptyset$.
\item
$\mu_1 \circ \iota = id_{A \backslash \beta^{\leftarrow}_\xi }$.
In particular, the left integral $\mu_1$ is normalized.
\item
By Remark \ref{201912021229}, the bimonoid $B$ has a normalized cointegral $\sigma^B$.
We have,
\begin{align}
\iota \circ \mu_1 = R^{\beta^{\leftarrow}_{\xi}} ( \sigma^B ) . 
\end{align} 
\end{itemize}
If $A$ is cocommutative, then $\mu_{1} \in Int_r (\iota)$, in particular, $\mu_{1} \in Int (\iota) \neq \emptyset$.
We have an analogous statement for the left coaction $(B, \beta^{\to}_\xi , A)$.
\end{enumerate}
\end{Lemma}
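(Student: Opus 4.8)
The plan is to prove part (1) in full detail and to obtain part (2), together with the two ``analogous statements'' and the commutative/cocommutative refinements, by the evident categorical duality and left--right symmetry.

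\emph{Setup and the two easy bullets.} Since $A$ is small, Definition \ref{202002271041} applied to the left action $(A,\alpha^{\to}_\xi,B)$ of Definition \ref{201912021024} gives that the invariant object $\alpha^{\to}_\xi\backslash\backslash B$ and the stabilized object $\alpha^{\to}_\xi\backslash B$ exist and that $_\xi\gamma=\pi\circ\iota$ is an isomorphism, where $\iota:\alpha^{\to}_\xi\backslash\backslash B\to B$ and $\pi:B\to\alpha^{\to}_\xi\backslash B$ denote the canonical morphisms; hence $\mu_0=\iota\circ(_\xi\gamma)^{-1}$ is defined. Then $\pi\circ\mu_0=\pi\circ\iota\circ(_\xi\gamma)^{-1}=id$, which is the second bullet and, via axiom (\ref{Haar_fam_axiom5}) for $\pi$, also gives that $\mu_0$ is normalized. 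For the third bullet, observe that $\mu_0\circ\pi=\iota\circ(_\xi\gamma)^{-1}\circ\pi$ is precisely the split idempotent $_{\alpha^{\to}_\xi}p$ of Proposition \ref{201912021539}, and Proposition \ref{201912021501} (applied to $X=B$, $\alpha=\alpha^{\to}_\xi$) identifies it with $L_{\alpha^{\to}_\xi}(\sigma_A)$, where $\sigma_A$ is the normalized integral furnished by Remark \ref{201912021229}.

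\emph{The first bullet: $\mu_0\in Int_r(\pi)$.} The key device is that $\pi$ is a split epimorphism (split by $\mu_0$), hence so is $\pi\otimes id_B$ (split by $\mu_0\otimes id_B$); therefore the defining equations (\ref{Haar_fam_axiom1}) and (\ref{Haar_fam_axiom2}) of a right integral along $\pi$ may be checked after precomposition with $\pi\otimes id_B$, respectively $\pi$. Write $w=\xi\circ\sigma_A:\mathds{1}\to B$ and $\ell_w=\nabla_B\circ(w\otimes id_B)\circ\mathbf{l}^{-1}_B$ for left multiplication by $w$, so that $\mu_0\circ\pi=L_{\alpha^{\to}_\xi}(\sigma_A)=\ell_w$ by the previous paragraph and Definition \ref{201907132333}. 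Using that $\pi$ is a bimonoid homomorphism, axiom (\ref{Haar_fam_axiom1}) precomposed with $\pi\otimes id_B$ collapses to $\nabla_B\circ(\ell_w\otimes id_B)=\ell_w\circ\nabla_B$, which is just associativity of $\nabla_B$, and axiom (\ref{Haar_fam_axiom2}) precomposed with $\pi$ collapses to $(\ell_w\otimes\pi)\circ\Delta_B=(id_B\otimes\pi)\circ\Delta_B\circ\ell_w$.

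The verification of this last identity is the main obstacle, and everything else in the proof is formal. The plan there is: expand $\Delta_B\circ\ell_w$ using that $\Delta_B$ is multiplicative (since $B$ is a bimonoid), rewrite $\Delta_B\circ\xi\circ\sigma_A=(\xi\otimes\xi)\circ\Delta_A\circ\sigma_A$ using that $\xi$ is a coalgebra homomorphism, absorb the resulting $\xi$-coloured strand into $\pi$ via the coequalizer relation $\pi\circ\alpha^{\to}_\xi=\pi\circ\tau_{A,B}$, and conclude by the counit axiom for $\sigma_A$; in the string-diagram language of the paper this should be a single short picture. Finally, if $B$ is commutative then $\ell_w$ is also right multiplication by $w$, the left action $\alpha^{\to}_\xi$ agrees with $\alpha^{\leftarrow}_\xi$ up to the symmetry, and the mirror images of the two computations give axioms (\ref{Haar_fam_axiom3}), (\ref{Haar_fam_axiom4}), whence $\mu_0\in Int_l(\pi)$ and $\mu_0\in Int(\pi)\neq\emptyset$. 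The assertion for the right action $(B,\alpha^{\leftarrow}_\xi,A)$ is the left--right mirror of all of the above, and part (2) is the categorical dual of part (1): actions are replaced by coactions, ``$A$ small'' by ``$B$ cosmall'', the morphisms $\pi,\iota$ by the canonical morphisms attached to $A\backslash\beta^{\leftarrow}_\xi$ and $A\backslash\backslash\beta^{\leftarrow}_\xi$, and Propositions \ref{201912021539}, \ref{201912021501} together with Remark \ref{201912021229} by their duals, which produces $\iota\circ\mu_1=R^{\beta^{\leftarrow}_\xi}(\sigma^B)$ in place of $\mu_0\circ\pi=L_{\alpha^{\to}_\xi}(\sigma_A)$.
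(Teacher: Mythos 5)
Your proposal is correct and follows essentially the same route as the paper: the same definition of $\mu_0$, the same immediate derivations of $\pi\circ\mu_0=id$ and $\mu_0\circ\pi=L_{\alpha^{\to}_\xi}(\sigma_A)$ via Proposition \ref{201912021501}, and the same reduction of the integral axioms by precomposing with an epimorphism before a short string-diagram computation. The only (harmless) variation is that you verify axioms (\ref{Haar_fam_axiom1})--(\ref{Haar_fam_axiom2}) by first rewriting $\mu_0\circ\pi$ as left multiplication by $w=\xi\circ\sigma_A$ and then invoking associativity, multiplicativity of $\Delta_B$, the coequalizer relation for $\pi$ and the counit axiom, whereas the paper's figures work directly with the invariance property of $\iota$ and the isomorphism $_\xi\gamma$.
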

\begin{proof}
We prove the first claim here and leave the second claim to the readers.
Recall Lemma \ref{small_has_nor_integral} that a small bimonoid $A$ has a normalized integral.
We denote the normalized integral by $\sigma_A : \mathds{1} \to A$.

We prove that $\mu_0$ satisfies the axiom (\ref{Haar_fam_axiom1}).
Denote by $j : \alpha^{\to}_\xi \backslash \backslash B \to B$ the canonical morphism.
Since $\gamma = _\xi\gamma$ is an isomorphism, it suffices to show that $\nabla_B \circ ((\mu_0 \circ \gamma) \otimes id_B) = \mu_0 \circ \nabla_{\alpha^{\to}_\xi \backslash B} \circ (\gamma \otimes \pi)$.
It is verified by Figure \ref{mu_0_integral(2)}.

\begin{figure}[ht]
  \includegraphics[width=\linewidth]{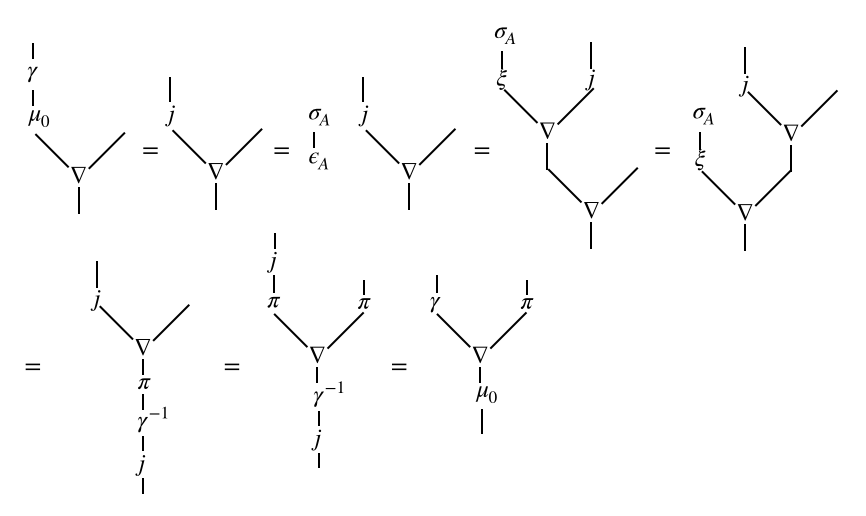}
  \caption{}
  \label{mu_0_integral(2)}
\end{figure}
We prove that $\mu_0$ satisfies the axiom (\ref{Haar_fam_axiom2}).
Due to the universality of $\pi : B \to \alpha^{\to}_\xi \backslash B$, it suffices to show that $(\mu_0 \otimes id_{\alpha^{\to}_\xi \backslash B}) \circ \Delta_{\alpha^{\to}_\xi \backslash B} \circ \pi = (id_B \otimes \pi) \circ \Delta_B \circ \mu_0 \circ \pi$.
It is verified by Figure \ref{mu_0_integral}.
Thus, we obtain $\mu_0  \in Int_r (\pi)$.

\begin{figure}[ht]
  \includegraphics[width=\linewidth]{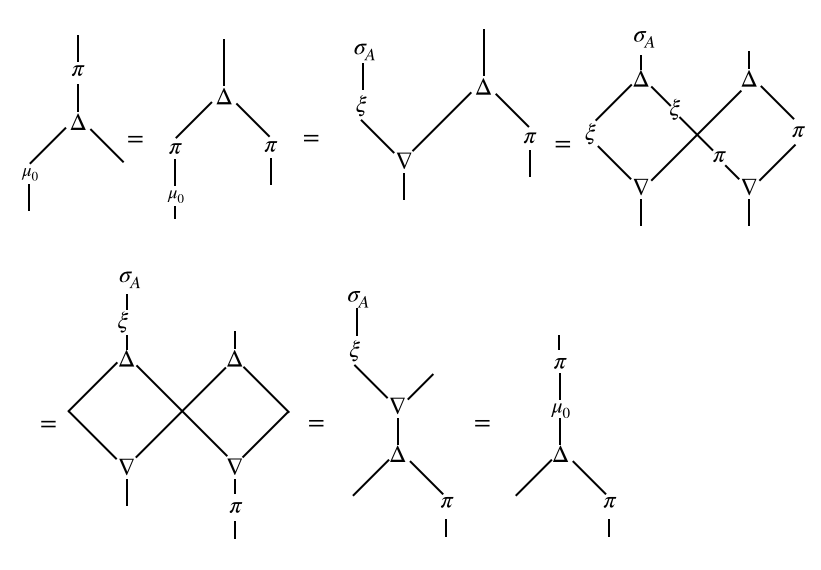}
  \caption{}
  \label{mu_0_integral}
\end{figure}

The claim $\pi \circ \mu_0 id_{\alpha^{\to}_\xi\backslash B}$ follows from $\pi \circ \mu_0 = _\xi\gamma \circ  (_\xi\gamma)^{-1} = id_{\alpha^{\to}_\xi\backslash B}$.

The claim $\mu_0 \circ \pi = L_{\alpha^{\to}_{\xi}} ( \sigma_A)$ follows from the definition of $\alpha^{\to}_\xi$ and Proposition \ref{201912021501}.

From now on, we suppose that $B$ is commutative and show that $\mu \in Int_l (\pi)$.
We prove that $\mu_0$ satisfies the axiom (\ref{Haar_fam_axiom3}).
Since $\gamma = _\xi \gamma$ is an isomorphism, it suffices to show that $\nabla_B \circ (id_B \otimes (\mu \circ \gamma)) = \mu \circ \nabla_{\alpha^{\to}_\xi\backslash B} \circ (\pi \otimes \gamma)$.
It is verified by Figure \ref{mu_0_integral(3)}.
We need the commutativity of $B$ here.

\begin{figure}[ht]
  \includegraphics[width=\linewidth]{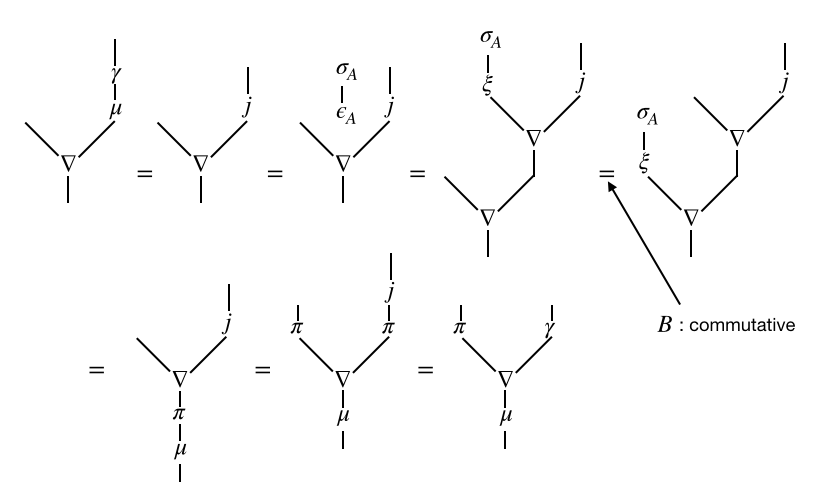}
  \caption{}
  \label{mu_0_integral(3)}
\end{figure}

We prove that $\mu_0$ satisfies the axiom (\ref{Haar_fam_axiom4}).
Due to the universality of $\pi : B \to \alpha^{\to}_\xi \backslash B$, it suffices to show that $(id_{\alpha^{\to}_\xi \backslash B} \otimes \mu) \circ \Delta_{\alpha^{\to}_\xi \backslash B} \circ \pi = (\pi \otimes id ) \circ \Delta_B \circ \mu_0 \circ \pi$.
It is verified by Figure \ref{mu_0_integral(4)}.

\begin{figure}[ht]
  \includegraphics[width=\linewidth]{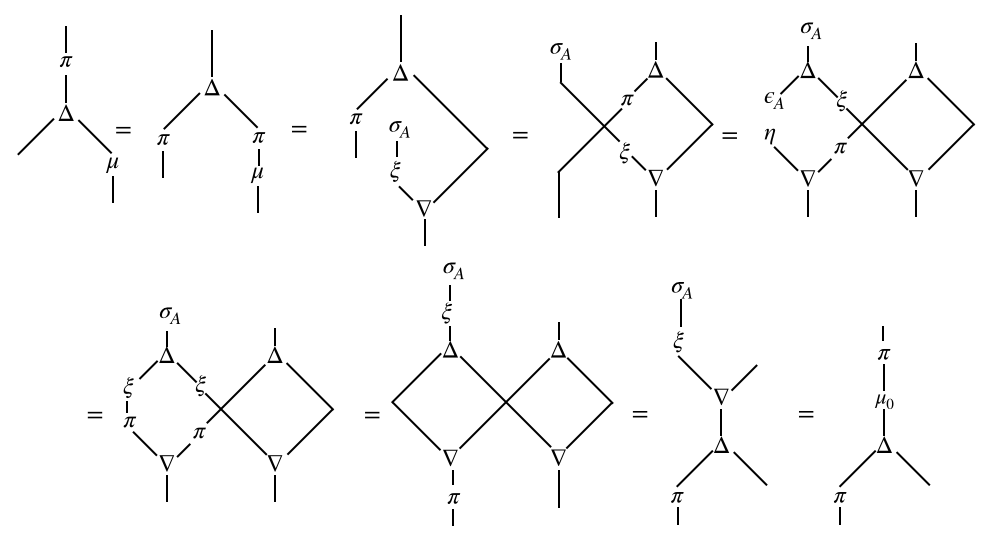}
  \caption{}
  \label{mu_0_integral(4)}
\end{figure}
\end{proof}

\begin{Defn}
\rm
\label{defn_integral_cok_ker}
Let $A,B$ be bimonoids in a symmetric monoidal category $\mathcal{C}$ and $\xi : A \to B$ be a bimonoid homomorphism.
Suppose that the bimonoid $A$ is small and $\xi$ is normal.
By Lemma \ref{ker_cok_integral_exist}, there exists a normalized right integral along the homomorphism $cok(\xi) : B \to Cok ( \xi)$.
Analogously, there also exists a normalized left integral along $cok(\xi)$ since the homomorphism $\xi$ is normal.
By Proposition \ref{r_integral_l_integra_coincide_along}, these coincide to each other.
Denote the normalized integral by $\tilde{\mu}_{cok(\xi)} \in Int ( cok (\xi))$.

Suppose that $B$ is cosmall and $\xi$ is conormal.
Analogously, by Lemma \ref{ker_cok_integral_exist}, we define a normalized integral $\tilde{\mu}_{ker(\xi)} \in Int (ker (\xi))$.
\end{Defn}

\begin{Lemma}
\label{201907311900}
Let $A,B$ be bimonoids and $\xi : A \to B$ be a bimonoid homomorphism.
Suppose that $A$ is small and the homomorphism $\xi$ is normal.
Then we have
\begin{align}
cok (\xi ) \circ \tilde{\mu}_{cok(\xi)} &= id_{Cok(\xi)}  \\
\tilde{\mu}_{cok(\xi)} \circ cok (\xi) 
&=  L_{\alpha^{\to}_{\xi}} (\sigma_A)  \\
&=  R_{\alpha^{\leftarrow}_{\xi}} (\sigma_A) 
\end{align}
In particular, $cok (\xi)$ has a section in $\mathcal{C}$.

Suppose that $B$ is cosmall and the canonical morphism $\xi$ is conormal.
Then we have, 
\begin{align}
\tilde{\mu}_{ker(\xi)} \circ ker(\xi) &= id_{Ker(\xi)}  \\
ker(\xi) \circ \tilde{\mu}_{ker(\xi)} 
&=R^{\beta^{\leftarrow}_{\xi}} ( \sigma^B )  \\
&= L^{\beta^{\to}_{\xi}} ( \sigma^B )  
\end{align}
In particular, $ker (\xi)$ has a retract in $\mathcal{C}$.
\end{Lemma}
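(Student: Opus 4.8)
The plan is to observe that every assertion is a direct re-reading of Lemma~\ref{ker_cok_integral_exist} once the normality (resp.\ conormality) of $\xi$ is used to identify the relevant stabilized object with the cokernel (resp.\ kernel). First I would treat the first part. Since $\xi$ is normal, Definition~\ref{201908040928} lets us take $Cok(\xi) = \alpha^{\to}_\xi \backslash B$ with $cok(\xi) = \pi : B \to \alpha^{\to}_\xi \backslash B$ the canonical projection, equipped with its bimonoid structure. As $A$ is small, Lemma~\ref{ker_cok_integral_exist}(1) applies to the left action $(A, \alpha^{\to}_\xi, B)$ and yields the morphism $\mu_0 : \alpha^{\to}_\xi \backslash B \to B$ with $\mu_0 \in Int_r(cok(\xi))$, $cok(\xi) \circ \mu_0 = id_{Cok(\xi)}$, and $\mu_0 \circ cok(\xi) = L_{\alpha^{\to}_\xi}(\sigma_A)$. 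By Definition~\ref{defn_integral_cok_ker} the integral $\tilde{\mu}_{cok(\xi)}$ is by construction a normalized integral along $cok(\xi)$; by Proposition~\ref{r_integral_l_integra_coincide_along} such a normalized integral is unique, so $\tilde{\mu}_{cok(\xi)} = \mu_0$. This immediately gives the first two displayed equations, $cok(\xi) \circ \tilde{\mu}_{cok(\xi)} = id_{Cok(\xi)}$ and $\tilde{\mu}_{cok(\xi)} \circ cok(\xi) = L_{\alpha^{\to}_\xi}(\sigma_A)$, and exhibits $\tilde{\mu}_{cok(\xi)}$ as a section of $cok(\xi)$ in $\mathcal{C}$.

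For the remaining equality $L_{\alpha^{\to}_\xi}(\sigma_A) = R_{\alpha^{\leftarrow}_\xi}(\sigma_A)$ I would run the symmetric argument with the right action $(B, \alpha^{\leftarrow}_\xi, A)$: normality of $\xi$ lets us also take $Cok(\xi) = B / \alpha^{\leftarrow}_\xi$ with $cok(\xi) = \tilde{\pi}$, and the analogous statement in Lemma~\ref{ker_cok_integral_exist}(1) produces a normalized \emph{left} integral $\tilde{\mu}_0$ along $cok(\xi)$ with $\tilde{\mu}_0 \circ cok(\xi) = R_{\alpha^{\leftarrow}_\xi}(\sigma_A)$. By Proposition~\ref{r_integral_l_integra_coincide_along} again, $\tilde{\mu}_0 = \tilde{\mu}_{cok(\xi)}$, so both $L_{\alpha^{\to}_\xi}(\sigma_A)$ and $R_{\alpha^{\leftarrow}_\xi}(\sigma_A)$ equal $\tilde{\mu}_{cok(\xi)} \circ cok(\xi)$ and are therefore equal.

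The second part is the formal dual: assuming $B$ cosmall and $\xi$ conormal, conormality identifies $Ker(\xi) = A \backslash \beta^{\leftarrow}_\xi$ with $ker(\xi) = \iota$, Lemma~\ref{ker_cok_integral_exist}(2) gives $\mu_1 : A \to Ker(\xi)$ with $\mu_1 \circ ker(\xi) = id_{Ker(\xi)}$ and $ker(\xi) \circ \mu_1 = R^{\beta^{\leftarrow}_\xi}(\sigma^B)$, uniqueness of the normalized integral along $ker(\xi)$ (Definition~\ref{defn_integral_cok_ker} together with Proposition~\ref{r_integral_l_integra_coincide_along}) forces $\tilde{\mu}_{ker(\xi)} = \mu_1$, and the analogous computation with the left coaction $\beta^{\to}_\xi$ gives $ker(\xi) \circ \tilde{\mu}_{ker(\xi)} = L^{\beta^{\to}_\xi}(\sigma^B)$ as well; thus $\tilde{\mu}_{ker(\xi)}$ is the desired retract of $ker(\xi)$ in $\mathcal{C}$.

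The computational content has already been discharged inside Lemma~\ref{ker_cok_integral_exist}, so the only thing requiring care here is the bookkeeping: one must check that the bimonoid isomorphisms among $\alpha^{\to}_\xi \backslash B$, $B / \alpha^{\leftarrow}_\xi$ and an abstract choice of $Cok(\xi)$ are the canonical ones commuting with the projections, so that transporting the integrals along them does not disturb the normalization, and then invoke Proposition~\ref{r_integral_l_integra_coincide_along} to glue the left-action and right-action pictures into a single morphism. I expect this gluing step, rather than any calculation, to be the only subtle point.
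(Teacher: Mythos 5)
Your proof is correct and takes essentially the same route as the paper, whose entire proof of this lemma is the single sentence ``It follows from the definitions of $\tilde{\mu}_{cok(\xi)}$, $\tilde{\mu}_{ker(\xi)}$ and Lemma \ref{ker_cok_integral_exist}''; you simply spell out that reduction, correctly combining Lemma \ref{ker_cok_integral_exist} applied to $\alpha^{\to}_\xi$ and $\alpha^{\leftarrow}_\xi$ with the uniqueness statement of Proposition \ref{r_integral_l_integra_coincide_along} exactly as Definition \ref{defn_integral_cok_ker} prescribes. Your closing remark about identifying the two cokernel models $\alpha^{\to}_\xi \backslash B$ and $B/\alpha^{\leftarrow}_\xi$ compatibly with the projections is the one point the paper leaves implicit, and it is settled by the requirement in Definition \ref{201908040928} that both pairs be cokernels of $\xi$ in $\mathsf{Bimon}(\mathcal{C})$.
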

\begin{proof}
It follows from the definitions of $\tilde{\mu}_{cok(\xi)}$, $\tilde{\mu}_{ker(\xi)}$ and Lemma \ref{ker_cok_integral_exist}.
\end{proof}


\subsection{Sufficient conditions for existence of a normalized generator integral}
\label{201908051616}

In this subsection, we give some sufficient conditions for a normalized generator integral to exist.
Furthermore, we prove Corollary \ref{202002211035} which implies our main theorem.

\begin{Defn}
\label{201907311908}
\rm
Let $A,B$ be bimonoids and $\xi : A \to B$ be a bimonoid homomorphism with a kernel bimonoid $Ker ( \xi)$.
Suppose that $Ker (\xi)$ is small and the canonical morphism $ker (\xi) : Ker (\xi) \to A$ is normal.
We define a normalized integral along $coim(\xi) = cok (ker (\xi)): A \to Coim (\xi)$ by $\tilde{\mu}_{cok( \zeta)}$ in Definition \ref{defn_integral_cok_ker} where $\zeta = ker ( \xi)$.
We denote $\tilde{\mu}_{cok( \zeta)}$ by $\tilde{\mu}_{coim(\xi)} \in Int ( coim (\xi))$.

Analogously we define $\tilde{\mu}_{im(\xi)}$ :
Let $A,B$ be bimonoids and $\xi : A \to B$ be a bimonoid homomorphism with a cokernel bimonoid $Cok ( \xi)$.
Suppose that $Cok (\xi)$ is cosmall and the canonical morphism $ker (\xi) : Ker (\xi) \to A$ is conormal.
We define a normalized integral along $im(\xi) = ker (cok (\xi)): A \to Im (\xi)$ by $\tilde{\mu}_{ker( \zeta)}$ in Definition \ref{defn_integral_cok_ker} where $\zeta = cok ( \xi)$.
We denote $\tilde{\mu}_{ker( \zeta)}$ by $\tilde{\mu}_{im(\xi)} \in Int ( im (\xi))$.
\end{Defn}

\begin{Lemma}
\label{coim_im_norm_integral_exist}
Let $A,B$ be bimonoids and $\xi : A \to B$ be a bimonoid homomorphism with a kernel $Ker(\xi)$.
Suppose that the kernel bimonoid $Ker (\xi)$ is small and the canonical morphism $ker (\xi) : Ker (\xi) \to A$ is normal.
Then we have
\begin{align}
coim (\xi ) \circ \tilde{\mu}_{coim(\xi)} &= id_{Coim(\xi)}  \\
\tilde{\mu}_{coim(\xi)} \circ coim (\xi) 
&=  L_{\alpha^{\to}_{ker(\xi)}} (\sigma_{Ker(\xi)})  \\
&=  R_{\alpha^{\leftarrow}_{ker(\xi)}} (\sigma_{Ker(\xi)}) 
\end{align}
In particular, $coim (\xi)$ has a section in $\mathcal{C}$.

An analogous statement for $Im (\xi)$ holds :
Let $A,B$ be bimonoids and $\xi : A \to B$ be a bimonoid homomorphism with a cokernel bimonoid $Cok(\xi)$.
Suppose that $Cok (\xi)$ is cosmall and the canonical morphism $cok (\xi) : B \to Cok (\xi)$ is conormal.
Then we have, 
\begin{align}
\tilde{\mu}_{im(\xi)} \circ im(\xi) &= id_{Im (\xi)}  \\
im(\xi) \circ \tilde{\mu}_{im(\xi)} 
&=R^{\beta^{\leftarrow}_{cok(\xi)}} ( \sigma^{Cok(\xi)} )  \\
&= L^{\beta^{\to}_{cok(\xi)}} ( \sigma^{Cok(\xi)} )  
\end{align}
In particular, $im (\xi)$ has a retract in $\mathcal{C}$.
\end{Lemma}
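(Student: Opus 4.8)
The plan is to obtain the statement as a direct specialization of Lemma~\ref{201907311900}, with no new diagrammatic work. Recall that $coim(\xi) = cok(ker(\xi))$ and $im(\xi) = ker(cok(\xi))$, and that, by Definition~\ref{201907311908}, the integral $\tilde{\mu}_{coim(\xi)}$ is by construction the integral $\tilde{\mu}_{cok(\zeta)}$ of Definition~\ref{defn_integral_cok_ker} attached to the homomorphism $\zeta = ker(\xi) : Ker(\xi) \to A$, while $\tilde{\mu}_{im(\xi)}$ is the integral $\tilde{\mu}_{ker(\zeta)}$ attached to $\zeta = cok(\xi) : B \to Cok(\xi)$.

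First I would handle the $Coim(\xi)$ part. Put $\zeta = ker(\xi) : Ker(\xi) \to A$. The hypotheses say precisely that the domain $Ker(\xi)$ of $\zeta$ is small and that $\zeta$ is normal, so Definition~\ref{defn_integral_cok_ker} applies, $\tilde{\mu}_{cok(\zeta)}$ is defined, and $Ker(\xi)$ carries a normalized integral $\sigma_{Ker(\xi)}$ by Remark~\ref{201912021229}. Applying the first part of Lemma~\ref{201907311900} with $\xi$ replaced by $\zeta$ then yields $cok(\zeta)\circ\tilde{\mu}_{cok(\zeta)} = id_{Cok(\zeta)}$ and $\tilde{\mu}_{cok(\zeta)}\circ cok(\zeta) = L_{\alpha^{\to}_{\zeta}}(\sigma_{Ker(\xi)}) = R_{\alpha^{\leftarrow}_{\zeta}}(\sigma_{Ker(\xi)})$, together with the fact that $cok(\zeta)$ has a section in $\mathcal{C}$. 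Substituting back $cok(\zeta) = coim(\xi)$, $\tilde{\mu}_{cok(\zeta)} = \tilde{\mu}_{coim(\xi)}$, $Cok(\zeta) = Coim(\xi)$ and $\zeta = ker(\xi)$ gives the three displayed identities for $Coim(\xi)$.

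Dually, for the $Im(\xi)$ part I would put $\zeta = cok(\xi) : B \to Cok(\xi)$; the hypotheses now say that the target $Cok(\xi)$ of $\zeta$ is cosmall and that $\zeta$ is conormal, so $Cok(\xi)$ carries a normalized cointegral $\sigma^{Cok(\xi)}$ by Remark~\ref{201912021229} and $\tilde{\mu}_{ker(\zeta)}$ is defined. Applying the second (dual) part of Lemma~\ref{201907311900} with $\xi$ replaced by $\zeta$ gives $\tilde{\mu}_{ker(\zeta)}\circ ker(\zeta) = id_{Ker(\zeta)}$, $ker(\zeta)\circ\tilde{\mu}_{ker(\zeta)} = R^{\beta^{\leftarrow}_{\zeta}}(\sigma^{Cok(\xi)}) = L^{\beta^{\to}_{\zeta}}(\sigma^{Cok(\xi)})$, and that $ker(\zeta)$ has a retract in $\mathcal{C}$. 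Since $ker(\zeta) = im(\xi)$, $Ker(\zeta) = Im(\xi)$ and $\tilde{\mu}_{ker(\zeta)} = \tilde{\mu}_{im(\xi)}$, this is exactly the second batch of identities.

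The only point requiring any care---and hence the ``main obstacle'', such as it is---is the bookkeeping: one must check that under each substitution the hypotheses of Lemma~\ref{201907311900} are supplied by the hypotheses of the present lemma, and that the names $coim(\xi)$, $im(\xi)$, $\tilde{\mu}_{coim(\xi)}$, $\tilde{\mu}_{im(\xi)}$, $\alpha^{\to}_{ker(\xi)}$, $\beta^{\leftarrow}_{cok(\xi)}$ unwind to what appears in Lemma~\ref{201907311900}. No fresh computation, and in particular no string-diagram manipulation, is needed.
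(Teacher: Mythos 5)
Your proposal is correct and is exactly the paper's argument: the paper's proof of this lemma is a one-line reduction to Lemma \ref{201907311900} via the identifications $coim(\xi) = cok(ker(\xi))$ and $im(\xi) = ker(cok(\xi))$ from Definition \ref{201907311908}, which is precisely the substitution you carry out. Your version just makes the bookkeeping explicit.
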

\begin{proof}
It follows from Lemma \ref{201907311900}.
\end{proof}

\begin{Defn}
\label{201908041300}
\rm
Let $A,B$ be bimonoids.
A bimonoid homomorphism $\xi : A \to B$ is {\it weakly well-decomposable} if the following conditions hold :
\begin{itemize}
\item
$Ker ( \xi )$, $Cok (\xi)$, $Coim (\xi)$, $Im (\xi)$ exist in $\mathsf{Bimon}(\mathcal{C})$.
\item
$ker (\xi) : Ker (\xi) \to A$ is normal and $cok (\xi) : B \to Cok (\xi)$ is conormal.
\item
$\bar{\xi} : Coim (\xi) \to Im (\xi)$ is an isomorphism.
\end{itemize}

A bimonoid homomorphism $\xi : A \to B$ is {\it well-decomposable} if following conditions hold :
\begin{itemize}
\item
$\xi$ is binormal.
In particular, $Ker ( \xi )$, $Cok (\xi)$ exist in $\mathsf{Bimon}(\mathcal{C})$.
\item
$ker (\xi) : Ker (\xi) \to A$ is normal and $cok (\xi) : B \to Cok (\xi)$ is conormal.
In particular, $Coim (\xi)$, $Im (\xi)$ exist.
\item
$\bar{\xi} : Coim (\xi) \to Im (\xi)$ is an isomorphism.
\end{itemize}
\end{Defn}

\begin{Defn}
\label{201907312115}
\rm
Let $\xi : A \to B$ be a weakly well-decomposable homomorphism.
The homomorphism $\xi$ is {\it weakly pre-Fredholm} if the kernel bimonoid $Ker (\xi)$ is small and the cokernel bimonoid $Cok (\xi)$ is cosmall.
Recall Definition \ref{201907311908}.
For a weakly pre-Fredholm homomorphism $\xi : A \to B$, we define
\begin{align}
\mu_\xi \stackrel{\mathrm{def.}}{=} \tilde{\mu}_{coim(\xi)} \circ \bar{\xi}^{-1} \circ \tilde{\mu}_{im(\xi)} : B \to A . 
\end{align}
The homomorphism $\xi$ is {\it pre-Fredholm} if both of the kernel bimonoid $Ker (\xi)$ and the cokernel bimonoid $Cok (\xi)$ are bismall.
\end{Defn}

\begin{prop}
\label{201907311130}
Let $A$ be a bimonoid.
\begin{enumerate}
\item
The unit $\eta_A : \mathds{1} \to A$ and the counit $\epsilon_A : A \to \mathds{1}$ are well-decomposable.
\item
The unit $\eta_A$ is weakly pre-Fredholm if and only if $A$ is cosmall.
Then $\mu_{\eta_A}$ in Definition \ref{201907312115} is well-defined and we have $\mu_{\eta_A} = \sigma^A$.
\item
The counit $\epsilon_A$ is weakly pre-Fredholm if and only if $A$ is small.
Then $\mu_{\epsilon_A}$in Definition \ref{201907312115} is well-defined and we have $\mu_{\epsilon_A} = \sigma_A$.
\end{enumerate}
\end{prop}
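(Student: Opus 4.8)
The plan is to exploit the fact that $\mathds{1}$ is a zero object of $\mathsf{Bimon}(\mathcal{C})$: it is both initial and terminal, with $\eta_A$ and $\epsilon_A$ the unique bimonoid homomorphisms $\mathds{1}\to A$ and $A\to\mathds{1}$. Hence $\epsilon_A$ and $\eta_A$ are precisely the zero morphisms $A\to\mathds{1}$ and $\mathds{1}\to A$ in $\mathsf{Bimon}(\mathcal{C})$. Since these two homomorphisms are interchanged under the duality passing from $\mathcal{C}$ to $\mathcal{C}^{\mathrm{op}}$ — which swaps multiplication with comultiplication, ``small'' with ``cosmall'', and ``integral'' with ``cointegral'' — it suffices to establish the claims for $\epsilon_A$ and then dualize.

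First I would compute the morphisms attached to $\epsilon_A$. As the equalizer (respectively coequalizer) of $\epsilon_A$ with the zero morphism, which is again $\epsilon_A$, the kernel is $(A,id_A)$ and the cokernel is $(\mathds{1},id_{\mathds{1}})$. Then $coim(\epsilon_A)=cok(ker(\epsilon_A))=cok(id_A)$, and since $id_A$ is an isomorphism its cokernel is the zero object, with $cok(id_A)=\epsilon_A$ itself; dually $im(\epsilon_A)=ker(cok(\epsilon_A))=ker(id_{\mathds{1}})=id_{\mathds{1}}$. For binormality I would observe that the action $\alpha^{\to}_{\epsilon_A}:A\otimes\mathds{1}\to\mathds{1}$ equals the canonical isomorphism $\mathds{1}\otimes\mathds{1}\cong\mathds{1}$ precomposed with $\epsilon_A\otimes id_{\mathds{1}}$, which is exactly the trivial action $\tau_{A,\mathds{1}}$; hence its stabilized object is $\mathds{1}$ with projection $id_{\mathds{1}}$, carrying the trivial bimonoid structure and giving the cokernel of $\epsilon_A$ in $\mathsf{Bimon}(\mathcal{C})$, so $\epsilon_A$ is normal, and dually conormal. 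Combining this with the fact that $ker(\epsilon_A)=id_A$ and $cok(\epsilon_A)=id_{\mathds{1}}$ are binormal (Proposition \ref{201907021116}) and that $\bar{\epsilon_A}\in End_{\mathsf{Bimon}(\mathcal{C})}(\mathds{1})=\{id_{\mathds{1}}\}$ is an isomorphism, $\epsilon_A$ is well-decomposable; dualizing gives well-decomposability of $\eta_A$, which is part (1).

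For parts (2) and (3), weak pre-Fredholmness of $\epsilon_A$ means (Definition \ref{201907312115}) that $Ker(\epsilon_A)=A$ is small and $Cok(\epsilon_A)=\mathds{1}$ is cosmall; as the trivial bimonoid $\mathds{1}$ is bismall (every one of its (co)actions is trivial, so the canonical maps between invariant objects and stabilized objects are identities), this holds exactly when $A$ is small. To identify $\mu_{\epsilon_A}$ I would unwind Definition \ref{201907312115}, namely $\mu_{\epsilon_A}=\tilde{\mu}_{coim(\epsilon_A)}\circ\bar{\epsilon_A}^{-1}\circ\tilde{\mu}_{im(\epsilon_A)}$. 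Tracing through Definitions \ref{201907311908} and \ref{defn_integral_cok_ker} together with Lemma \ref{ker_cok_integral_exist}: $\tilde{\mu}_{im(\epsilon_A)}$ is the normalized integral along $ker(id_{\mathds{1}})=id_{\mathds{1}}$, hence $id_{\mathds{1}}$; $\bar{\epsilon_A}^{-1}=id_{\mathds{1}}$; and $\tilde{\mu}_{coim(\epsilon_A)}$ is the normalized integral along $cok(id_A)=\epsilon_A$, which by Proposition \ref{201907311133} (using $Int(\epsilon_A)=Int(A)$ and preservation of the normalization condition) and the uniqueness in Proposition \ref{r_integral_l_integra_coincide_along} is the normalized integral $\sigma_A$ of $A$. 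Hence $\mu_{\epsilon_A}=\sigma_A$. Dualizing: $\eta_A$ is weakly pre-Fredholm iff $Cok(\eta_A)=A$ is cosmall, and then $\mu_{\eta_A}$ equals the normalized integral along $\eta_A$, which is the normalized cointegral $\sigma^A$ since $Int(\eta_A)=Coint(A)$.

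I expect the only real work here to be bookkeeping: carefully following the chain of Definitions \ref{201907311644}, \ref{201907311908}, \ref{defn_integral_cok_ker} and Lemma \ref{ker_cok_integral_exist} to confirm that the ``internally built'' normalized integrals $\tilde{\mu}_{coim(\epsilon_A)}$ and $\tilde{\mu}_{im(\epsilon_A)}$ really are the identity and $\sigma_A$ as claimed, rather than any conceptually difficult step. A subsidiary point to get right is that the relevant coimage/image and kernel/cokernel are all (co)kernels of identity morphisms, hence equal to the zero object $\mathds{1}$ (so $cok(id_A)=\epsilon_A$, $ker(id_{\mathds{1}})=id_{\mathds{1}}$, and so on), and one should double-check that $\mathds{1}$ is bismall so that all the existence hypotheses of Definition \ref{201907312115} are satisfied.
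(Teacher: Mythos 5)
Your proposal is correct and follows essentially the same route as the paper: everything reduces to the observations that $Ker$, $Cok$, $Coim$, $Im$ of the (co)unit are realized by identity morphisms or by $\eta_A,\epsilon_A$ themselves (with the relevant (co)actions being trivial), that identities are binormal and $\mathds{1}$ is bismall, and that the normalized integral is then pinned down by Proposition \ref{201907311133} together with the uniqueness in Proposition \ref{r_integral_l_integra_coincide_along}. The only cosmetic differences are that the paper treats $\eta_A$ and leaves $\epsilon_A$ to the reader (you do the reverse), and that the paper identifies $\mu_{\eta_A}=\sigma^A$ by citing Theorem \ref{201907292156} plus uniqueness rather than unwinding the factors $\tilde{\mu}_{coim}$, $\bar{\xi}^{-1}$, $\tilde{\mu}_{im}$ one by one as you do.
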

\begin{proof}
We prove that $\eta_A$ is well-decomposable and leave the proof of $\epsilon_A$ to the readers.
Note that the unit bimonoid $\mathds{1}$ is bismall since it has a normalized (co)integral.
The bimonoid homomorphism $\eta_A$ is normal due to the canonical isomorphism $\alpha_{\eta_A} \backslash A \leftarrow A = Cok (\eta_A)$.
The bimonoid homomorphism $\eta_A$ is conormal due to the canonical isomorphism $\mathds{1} \backslash \beta_{\eta_A} \to \mathds{1} = Ker (\eta_A)$.
Moreover, $ker (\eta_A ) : Ker (\eta_A) = \mathds{1} \to \mathds{1}$ and $cok (\eta_A) : A \to Cok (\eta_A) = A$ are normal and conormal due to Proposition \ref{201907021116}.
The final axiom is verified since $\bar{\eta}_A : \mathds{1} = Coim (\eta_A) \to Im (\eta_A) = \mathds{1}$ is the identity.

The morphism $\mu_{\eta_A}$ is a normalized integral by the following Theorem \ref{201907292156}.
By Proposition \ref{r_integral_l_integra_coincide_along}, we obtain $\mu_{\eta_A} = \sigma^A$.
\end{proof}

\begin{theorem}
\label{201907292156}
Let $A,B$ be bimonoids and $\xi : A \to B$ be a weakly well-decomposable homomorphism.
If the homomorphism $\xi$ is weakly pre-Fredholm, then the morphism $\mu_\xi$ is a normalized generator integral along $\xi$.
\end{theorem}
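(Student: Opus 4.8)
The plan is to verify, for $\mu_\xi = \tilde{\mu}_{coim(\xi)} \circ \bar{\xi}^{-1} \circ \tilde{\mu}_{im(\xi)}$, the three defining properties of a normalized generator integral, using throughout the factorization $\xi = im(\xi)\circ\bar{\xi}\circ coim(\xi)$. First I would check that $\mu_\xi$ is well defined and is an integral along $\xi$. Since $\xi$ is weakly pre-Fredholm, $Ker(\xi)$ is small and $Cok(\xi)$ is cosmall; since $\xi$ is weakly well-decomposable, $ker(\xi)$ is normal and $cok(\xi)$ is conormal. Hence Definition \ref{201907311908} applies and yields normalized integrals $\tilde{\mu}_{coim(\xi)} \in Int(coim(\xi))$ and $\tilde{\mu}_{im(\xi)} \in Int(im(\xi))$. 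As $\bar{\xi}$ is an isomorphism, $\bar{\xi}^{-1} \in Int(\bar{\xi})$ by Proposition \ref{201907311053}, and it is trivially normalized. Two applications of Proposition \ref{201907311055} to the composite $im(\xi)\circ\bar{\xi}\circ coim(\xi)$ then give $\mu_\xi \in Int(\xi)$.

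Next I would record two closed formulas that carry the rest of the argument. By Lemma \ref{coim_im_norm_integral_exist} we have $coim(\xi)\circ\tilde{\mu}_{coim(\xi)} = id_{Coim(\xi)}$ and $\tilde{\mu}_{im(\xi)}\circ im(\xi) = id_{Im(\xi)}$; cancelling these (together with $\bar{\xi}\circ\bar{\xi}^{-1}\circ\bar{\xi} = \bar{\xi}$) inside $\mu_\xi\circ\xi$ and $\xi\circ\mu_\xi$ gives $\mu_\xi\circ\xi = \tilde{\mu}_{coim(\xi)}\circ coim(\xi)$ and $\xi\circ\mu_\xi = im(\xi)\circ\tilde{\mu}_{im(\xi)}$. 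In particular $\xi\circ\mu_\xi\circ\xi = \xi$, so $\mu_\xi$ is normalized. Moreover Lemma \ref{coim_im_norm_integral_exist} identifies these two endomorphisms with averaging operators,
\begin{align}
\mu_\xi\circ\xi &= L_{\alpha^{\to}_{ker(\xi)}}(\sigma_{Ker(\xi)}) = R_{\alpha^{\leftarrow}_{ker(\xi)}}(\sigma_{Ker(\xi)}) : A \to A, \\
\xi\circ\mu_\xi &= R^{\beta^{\leftarrow}_{cok(\xi)}}(\sigma^{Cok(\xi)}) = L^{\beta^{\to}_{cok(\xi)}}(\sigma^{Cok(\xi)}) : B \to B,
\end{align}
and it is having both the $L$- and the $R$-presentation available on each side that will make the generator verification go through uniformly.

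It then remains to show $\mu_\xi$ is a generator: for every $\mu' \in Int_r(\xi)\cup Int_l(\xi)$ one must check $(\mu_\xi\circ\xi)\circ\mu' = \mu'$ and $\mu'\circ(\xi\circ\mu_\xi) = \mu'$. For the first identity I would put $e = ker(\xi)\circ\sigma_{Ker(\xi)} : \mathds{1}\to A$ and note that $\xi\circ e = \eta_B$, because $\xi\circ ker(\xi)$ is trivial and $\sigma_{Ker(\xi)}$ is normalized. If $\mu'\in Int_r(\xi)$ I would use the presentation $\mu_\xi\circ\xi = R_{\alpha^{\leftarrow}_{ker(\xi)}}(\sigma_{Ker(\xi)})$ (right multiplication by $e$) together with axiom (\ref{Haar_fam_axiom1}) of $\mu'$ precomposed with $id_B\otimes e$, which reduces $(\mu_\xi\circ\xi)\circ\mu'$ to $\mu'\circ\nabla_B\circ(id_B\otimes\eta_B)\circ\mathbf{r}^{-1}_B = \mu'$; if $\mu'\in Int_l(\xi)$ I would use $\mu_\xi\circ\xi = L_{\alpha^{\to}_{ker(\xi)}}(\sigma_{Ker(\xi)})$ and axiom (\ref{Haar_fam_axiom3}) instead. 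For the second identity I would run the dual computation: for $\mu'\in Int_r(\xi)$ use $\xi\circ\mu_\xi = R^{\beta^{\leftarrow}_{cok(\xi)}}(\sigma^{Cok(\xi)})$ and axiom (\ref{Haar_fam_axiom2}), applying $id_A\otimes(\sigma^{Cok(\xi)}\circ cok(\xi))$ to both sides of the axiom and using that $cok(\xi)\circ\xi$ is trivial, that $\sigma^{Cok(\xi)}$ is normalized, and the counit axiom of $A$ to collapse the result back to $\mu'$; for $\mu'\in Int_l(\xi)$ use $\xi\circ\mu_\xi = L^{\beta^{\to}_{cok(\xi)}}(\sigma^{Cok(\xi)})$ and axiom (\ref{Haar_fam_axiom4}).

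The whole conceptual content sits in the two closed formulas for $\mu_\xi\circ\xi$ and $\xi\circ\mu_\xi$ coming from Lemma \ref{coim_im_norm_integral_exist}, together with the integral axioms of $\mu'$; I expect the only real obstacle to be the bookkeeping of the four string-diagram verifications in the last step, where one must keep the left/right variant of the averaging operator matched to the left/right integral axiom of $\mu'$.
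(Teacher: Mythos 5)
Your proposal is correct and follows essentially the same route as the paper's proof: factor $\xi$ as $im(\xi)\circ\bar{\xi}\circ coim(\xi)$, get $\mu_\xi\in Int(\xi)$ from Propositions \ref{201907311053} and \ref{201907311055}, and then use Lemma \ref{coim_im_norm_integral_exist} to identify $\mu_\xi\circ\xi$ and $\xi\circ\mu_\xi$ with the averaging operators $L/R_{(\cdot)}(\sigma_{Ker(\xi)})$ and $L/R^{(\cdot)}(\sigma^{Cok(\xi)})$, against which the integral axioms of an arbitrary $\mu'$ are played off. The only (harmless) cosmetic differences are that you obtain normalization by direct cancellation of the section/retract identities rather than via the string-diagram computation, and that your pairing of the $L$/$R$ presentations with the left/right integral axioms is transposed relative to the paper's prose — immaterial since the two presentations are equal as morphisms.
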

\begin{proof}
Recall that $\tilde{\mu}_{coim(\xi)} \in Int (coim(\xi)),\tilde{\mu}_{im(\xi)} \in Int (im (\xi))$ by Definition \ref{201907311908}.
By Proposition \ref{201907311053}, $\bar{\xi}^{-1} \in Int (\bar{\xi})$.
By Proposition \ref{201907311055}, $\mu_\xi$ is an integral along $\xi$ since $\mu_\xi$ is defined to be a composition of $\tilde{\mu}_{coim(\xi)},\tilde{\mu}_{im(\xi)},\bar{\xi}^{-1}$.

Note that $\mu_\xi \circ \xi = \tilde{\mu}_{coim(\xi)} \circ coim(\xi)$.
In fact, by Lemma \ref{coim_im_norm_integral_exist}, we have
\begin{align}
 \mu_\xi \circ \xi
&=  \left( \tilde{\mu}_{coim(\xi)} \circ \bar{\xi}^{-1} \circ \tilde{\mu}_{im(\xi)} \right) \circ \left( im(\xi) \circ \bar{\xi} \circ coim(\xi) \right) \\
&=
\tilde{\mu}_{coim(\xi)} \circ \bar{\xi}^{-1} \circ \bar{\xi} \circ coim(\xi) \\
&= 
\tilde{\mu}_{coim(\xi)} \circ coim(\xi) 
\end{align}

We prove that the integral $\mu_\xi$ is normalized, i.e. $\xi \circ \mu_\xi \circ \xi = \xi$.
By Lemma \ref{coim_im_norm_integral_exist}, we have $\tilde{\mu}_{coim(\xi)} \circ coim(\xi)  = L_{\alpha^{\to}_{ker(\xi)}} ( \sigma_{Ker(\xi)} )$.
Then the claim $\xi \circ \mu_\xi \circ \xi = \xi$ follows from Figure \ref{201912072240} where we put $j = ker(\xi)$.

\begin{figure}[ht]
  \includegraphics[width=10cm]{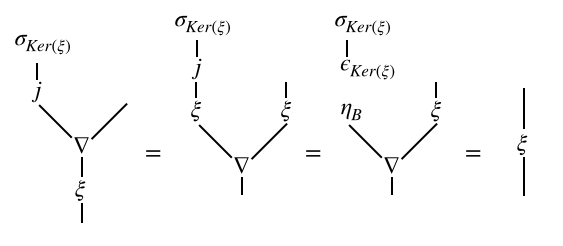}
  \caption{}
  \label{201912072240}
\end{figure}

We prove that the integral $\mu_\xi$ is a generator.
We first prove that $\mu_\xi \circ \xi \circ \mu = \mu$ for any $\mu \in Int_l (\xi) \cup Int_r (\xi)$.
By Lemma \ref{coim_im_norm_integral_exist}, we have $\tilde{\mu}_{coim(\xi)} \circ coim(\xi)  = R_{\alpha^{\leftarrow}_{ker(\xi)}} ( \sigma_{Ker(\xi)} )$.
We obtain $\mu_\xi \circ \xi \circ \mu = \mu$ for arbitrary $\mu \in Int_l (\xi)$ from Figure \ref{201912072248} where we put $j = ker(\xi)$.
Analogously, we prove that $\mu_\xi \circ \xi \circ \mu = \mu$ for arbitrary $\mu \in Int_r (\xi)$ by using $\tilde{\mu}_{coim(\xi)} \circ coim(\xi)  = L_{\alpha^{\to}_{ker(\xi)}} ( \sigma_{Ker(\xi)} )$ in Lemma \ref{coim_im_norm_integral_exist}.

\begin{figure}[ht]
  \includegraphics[width=10cm]{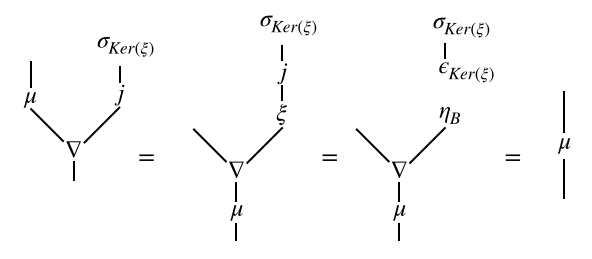}
  \caption{}
  \label{201912072248}
\end{figure}

All that remain is to prove that $\mu \circ \xi \circ \mu_\xi = \mu$ for any $\mu \in Int_l (\xi) \cup Int_r (\xi)$.
Note that we have $\xi \circ \mu_\xi = im(\xi) \circ \tilde{\mu}_{im(\xi)}$ by Lemma \ref{coim_im_norm_integral_exist}.
We prove that $\mu \circ im(\xi) \circ \tilde{\mu}_{im(\xi)}= \mu$ for arbitrary $\mu \in Int_l (\xi)$.
By Lemma \ref{coim_im_norm_integral_exist}, we have $im(\xi) \circ \tilde{\mu}_{im(\xi)} = R^{\beta^{\leftarrow}_{cok(\xi)}} ( \sigma^{Cok(\xi)} )$.
Then the claim $\mu \circ im(\xi) \circ \tilde{\mu}_{im(\xi)} = \mu$ follows from Figure \ref{201912072305}.
Analogously, we prove that $\mu \circ im(\xi) \circ \tilde{\mu}_{im(\xi)}= \mu$ for arbitrary $\mu \in Int_r (\xi)$ by using $im(\xi) \circ \tilde{\mu}_{im(\xi)} = L^{\beta^{\to}_{cok(\xi)}} ( \sigma^{Cok(\xi)} )$ in Lemma \ref{coim_im_norm_integral_exist}.
It completes the proof.

\begin{figure}[ht]
  \includegraphics[width=10cm]{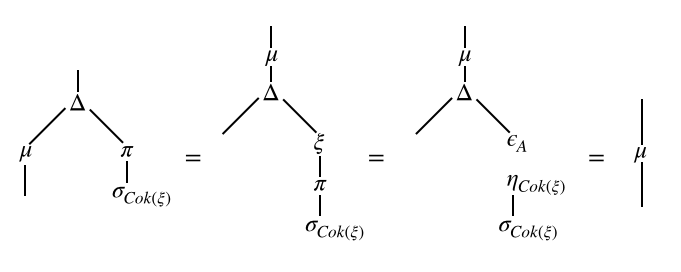}
  \caption{}
  \label{201912072305}
\end{figure}
\end{proof}

\begin{Corollary}
\label{201912050852}
Let $A,B$ be bimonoids in a symmetric monoidal category $\mathcal{C}$ and $\xi : A \to B$ be a weakly well-decomposable homomorphism.
If the homomorphism $\xi$ is weakly pre-Fredholm, then there exists a unique normalized generator integral $\mu_\xi : B \to A$ along $\xi$.
\end{Corollary}
\begin{proof}
The existence follows from Theorem \ref{201907292156} and the uniqueness follows from Proposition \ref{r_integral_l_integra_coincide_along}.
\end{proof}

\begin{Corollary}
\label{202002211035}
Suppose that every idempotent in $\mathcal{C}$ is a split idempotent.
Let $\xi$ be a well-decomposable bimonoid homomorphism.
There exists a normalized generator integral $\mu_\xi$ along $\xi$ if and only if the homomorphism $\xi$ is weakly pre-Fredholm.
Note that if a normalized integral exists, then it is unique.
\end{Corollary}
\begin{proof}
It is immediate from Theorem \ref{small_integral_equiv}, \ref{201912051457}, and Corollary \ref{201912050852}.
\end{proof}

Corollary \ref{202002211035} implies Theorem \ref{202002211033} by definitions of pre-Fredholmness and Corollary \ref{201912021909}.
Note that (Assumption 0) implies that every idempotent in $\mathcal{C}$ is a split idempotent.

On the other hand, Corollary \ref{202002211035} is a strict generalization of Theorem \ref{202002211033} as the following example describes.

\begin{Example}
\label{202010121400}
Let $G,H$ be (possibly infinite and non-abelian) groups and $\varrho : G \to H$ be a group homomorphism.
Put $A = k G$, $B = k H$, $\xi = \varrho_\ast$.
We give a condition for $\xi$ to have a normalized generator integral.
Note that since $G,H$ could not be abelian, $A,B$ are not necessarily bicommutative so that we can not apply Theorem \ref{202002211033}.
If the image of $\varrho$ is a normal subgroup in $H$., then $\xi$ is a well-decomposable bialgebra homomorphism by definitions.
The homomorphism $\xi$ is weakly pre-Fredholm if and only if the kernel and cokernel of $\varrho$ are finite groups whose orders are coprime to the characteristic of the field $k$.
By Corollary \ref{202002211035}, it is equivalent with the existence of a normalized generator integral along $\xi$.
Concretely, if such assumptions are satisfied, the normalized generator integral $\mu_\xi$ is given by $\mu_\xi ( h ) = |Ker (\varrho ) |^{-1} \sum_{\varrho ( g ) = h} g$.
\end{Example}


\section{Commutativity of homomorphisms and integrals}
\label{201908051617}

In this subsection, we study a commutativity of some bimonoid homomorphisms and integrals.
We prove the following theorem in this section.

\begin{theorem}
\label{key_theorem_existence_integral}
Let $A,B, C, D$ be bimonoids.
Consider a commutative diagram (\ref{202002271136}) of bimonoid homomorphisms.
Suppose that the bimonoid homomorphisms $\varphi$, $\psi$ are weakly well-decomposable and weakly pre-Fredholm.
Recall that there exist normalized generator integrals $\mu_\varphi, \mu_\psi$ along $\varphi,\psi$ respectively by Corollary \ref{201912050852}.
If the following conditions hold, then we have $\mu_{\psi} \circ \psi^\prime = \varphi^\prime \circ \mu_\varphi$.
\begin{itemize}
\item[(a)]
The induced bimonoid homomorphism $\varphi^\prime_0 : Ker ( \varphi ) \to Ker ( \psi )$ has a section in $\mathcal{C}$.
In other words, there is a morphism $s : Ker ( \psi ) \to Ker ( \varphi )$ in $\mathcal{C}$ such that $\varphi^\prime_0 \circ s = id$.
\item[(b)]
The induced bimonoid homomorphism $\psi^\prime_0 : Cok ( \varphi ) \to Cok ( \psi )$ has a retract in $\mathcal{C}$.
In other words, there exists a morphism $r : Cok ( \psi ) \to Cok ( \varphi )$ in $\mathcal{C}$ such that $r \circ \psi^\prime_0 = id$.
\end{itemize}
\begin{equation}
\label{202002271136}
\begin{tikzcd}
A \ar[r, "\varphi^\prime"] \ar[d, "\varphi"] & C \ar[d, "\psi"] \\
B \ar[r, "\psi^\prime"] & D
\end{tikzcd}
\end{equation}
\end{theorem}

\begin{remark}
It is sufficient that the section $s$ and the retract $r$ in Theorem \ref{key_theorem_existence_integral} are morphisms in $\mathcal{C}$, not bimonoid homomorphisms.
\end{remark}

\begin{remark}
We give a remark about assumptions (a), (b) in Theorem \ref{key_theorem_existence_integral}.
Suppose that the symmetric monoidal category $\mathcal{C}$ satisfies (Assumption 0,1,2) in section \ref{202002201415}.
Consider bicommutative Hopf monoids $A,B,C,D$ and pre-Fredholm homomorphisms $\varphi,\psi$.
In particular, $Ker(\varphi),Ker(\psi),Cok(\varphi),Cok(\psi)$ are small and cosmall.
If the induced bimonoid homomorphism $\varphi^\prime_0$ is an epimorphism in $\mathsf{Hopf}^\mathsf{bc}(\mathcal{C})$, then the assumption (a) is immediate.
In fact, the normalized generator integral along the homomorphism $\varphi^\prime_0$, which exists due to Corollary \ref{201912050852}, is a section of $\varphi^\prime_0$.
See Lemma.
Dually, if the induced bimonoid homomorphism $\psi^\prime_0$ is a monomorphism in $\mathsf{Hopf}^\mathsf{bc}(\mathcal{C})$, then the assumption (b) is immediate.
Especially, by (Assumption 2), the conditions (a), (b) are equivalent with an exactness of the induced chain complex below where $(\varphi,\varphi^\prime ) = ( \varphi \otimes \varphi^\prime) \circ \Delta_A$ and $\psi^\prime - \psi = \nabla_D \circ \left(\psi^\prime\otimes (S_C \circ \psi ) \right)$ :
\begin{align}
A
\stackrel{(\varphi , \varphi^\prime)}{\longrightarrow} 
B \otimes C
\stackrel{\psi^\prime - \psi}{\longrightarrow} 
D
\end{align}
\end{remark}

\begin{Lemma}
\label{201907290857}
Consider the following commutative diagram of bimonoid homomorphisms.
Suppose that $\varphi, \psi$ are weakly well-decomposable and weakly pre-Fredholm.
$$
\begin{tikzcd}
A \ar[r, "\varphi^\prime"] \ar[d, "\varphi"] & C \ar[d, "\psi"] \\
B \ar[r, "\psi^\prime"] & D
\end{tikzcd}
$$
Then we have $\psi \circ \left( \varphi^\prime \circ \mu_\varphi \right) \circ \varphi  = \psi \circ \left( \mu_\psi \circ \psi^\prime \right) \circ \varphi$.
In particular, if $\varphi$ is an epimorphism in $\mathcal{C}$ and $\psi$ is a monomorphism in $\mathcal{C}$, then $\varphi^\prime \circ \mu_\varphi =\mu_\psi \circ \psi^\prime$.
\end{Lemma}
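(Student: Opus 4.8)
The plan is to reduce everything to the normalization axiom (\ref{Haar_fam_axiom5}) together with the commutativity of the given square. First I would record the inputs: since $\varphi$ and $\psi$ are weakly well-decomposable and weakly pre-Fredholm, Theorem \ref{201912050852} provides the normalized generator integrals $\mu_\varphi : B \to A$ and $\mu_\psi : D \to C$, and these are the only consequences of the decomposability hypotheses that the argument needs. The facts I would then use are the two instances of (\ref{Haar_fam_axiom5}), namely $\varphi \circ \mu_\varphi \circ \varphi = \varphi$ and $\psi \circ \mu_\psi \circ \psi = \psi$, and the commutativity relation $\psi \circ \varphi^\prime = \psi^\prime \circ \varphi$.

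Next I would compute the two sides of the claimed identity separately. For the left-hand side, insert the commutativity relation at the front and then collapse the middle using normalization of $\mu_\varphi$:
\[
\psi \circ \varphi^\prime \circ \mu_\varphi \circ \varphi
= \psi^\prime \circ \varphi \circ \mu_\varphi \circ \varphi
= \psi^\prime \circ \varphi .
\]
For the right-hand side, move the commutativity relation past $\psi \circ \mu_\psi$ and then collapse using normalization of $\mu_\psi$:
\[
\psi \circ \mu_\psi \circ \psi^\prime \circ \varphi
= \psi \circ \mu_\psi \circ \psi \circ \varphi^\prime
= \psi \circ \varphi^\prime .
\]
Since $\psi^\prime \circ \varphi = \psi \circ \varphi^\prime$ once more by commutativity of the square, the two sides agree, which is exactly $\psi \circ \left( \varphi^\prime \circ \mu_\varphi \right) \circ \varphi = \psi \circ \left( \mu_\psi \circ \psi^\prime \right) \circ \varphi$.

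For the ``in particular'' clause, if $\varphi$ is an epimorphism in $\mathcal{C}$ it is right-cancellable and if $\psi$ is a monomorphism in $\mathcal{C}$ it is left-cancellable, so applying both cancellations to the displayed equality gives $\varphi^\prime \circ \mu_\varphi = \mu_\psi \circ \psi^\prime$. I do not expect a genuine obstacle here — the statement is a formal diagram chase — so the only points to be careful about are that we invoke precisely the normalization property (\ref{Haar_fam_axiom5}) of $\mu_\varphi$ and $\mu_\psi$ and nothing stronger (the generator property plays no role in this lemma), and that the weakly-well-decomposable and weakly-pre-Fredholm hypotheses enter only through Theorem \ref{201912050852} to guarantee that these normalized integrals exist at all.
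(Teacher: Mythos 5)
Your proposal is correct and follows essentially the same argument as the paper: both sides are reduced to $\psi^\prime \circ \varphi = \psi \circ \varphi^\prime$ using only the normalization identities $\varphi \circ \mu_\varphi \circ \varphi = \varphi$ and $\psi \circ \mu_\psi \circ \psi = \psi$ together with the commutativity of the square. Your remark that the generator property is not needed here and that the decomposability hypotheses enter only to guarantee existence of the normalized integrals is accurate and matches the paper's use of the hypotheses.
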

\begin{proof}
Since $\mu_\varphi$ is normalized, we have,
\begin{align}
\psi \circ  \varphi^\prime \circ \mu_\varphi  \circ \varphi 
&= 
\psi^\prime \circ \varphi \circ \mu_\varphi \circ \varphi  \\
&=
\psi^\prime \circ \varphi . 
\end{align}
Since $\mu_\psi$ is normalized, we have
\begin{align}
\psi \circ  \mu_\psi \circ \psi^\prime  \circ \varphi
&=
\psi \circ \mu_\psi \circ \psi \circ \varphi^\prime \\
&=
\psi \circ \varphi^\prime . 
\end{align}
It completes the proof.
\end{proof}

{\it Proof of Theorem \ref{key_theorem_existence_integral}}
By Theorem \ref{201907292156}, the morphisms $\mu_\varphi, \mu_\psi$ in Definition \ref{201907312115} are the normalized generator integrals.
Note that the homomorphisms in the above diagram are decomposed into the following diagram.
$$
\begin{tikzcd}
A \ar[r, "\varphi^\prime"] \ar[d, shift left, "coim(\varphi)"] \ar[dr, "\varphi^{\prime\prime}"] & C \ar[d, shift left, "coim(\psi)"] \\
Coim (\varphi )  \ar[d, "\bar{\varphi}"] \ar[u, shift left, "\tilde{\mu}_{coim(\varphi)}"] & Coim (\psi) \ar[d, "\bar{\psi}"] \ar[u, shift left, "\tilde{\mu}_{coim(\psi)}"] \\
Im (\varphi) \ar[dr, "\psi^{\prime\prime}"'] \ar[d, shift left, "im(\varphi)"] & Im (\psi) \ar[d, shift left, "im(\psi)"] \\
B \ar[r, "\psi^\prime"'] \ar[u, shift left, "\tilde{\mu}_{im(\varphi)}"] & D \ar[u, shift left,  "\tilde{\mu}_{im(\psi)}"]
\end{tikzcd}
$$
By Lemma \ref{201907290857}, we have $\varphi^{\prime\prime} \circ \tilde{\mu}_{coim (\varphi)} \circ \bar{\varphi}^{-1} = \bar{\psi}^{-1} \circ \tilde{\mu}_{im (\psi)} \circ \psi^{\prime\prime}$.
Here, we use the fact that $coim(\varphi)$ is an epimorphism in $\mathcal{C}$ and $im(\psi)$ is a monomorphism in $\mathcal{C}$ by Lemma \ref{coim_im_norm_integral_exist}.
Thus, we have $coim (\psi) \circ \varphi^\prime \circ \tilde{\mu}_{coim (\varphi)} \circ \bar{\varphi}^{-1} =\bar{\psi}^{-1} \circ \tilde{\mu}_{im (\psi)} \circ \psi^{\prime} \circ im(\varphi)$.

We claim that 
\begin{enumerate}
\item
$\tilde{\mu}_{coim(\psi)} \circ coim (\psi) \circ \varphi^\prime \circ \tilde{\mu}_{coim(\varphi)} = \varphi^\prime \circ \tilde{\mu}_{coim(\varphi)} $.
\item
$\tilde{\mu}_{im(\psi)} \circ \psi^\prime \circ im(\varphi) \circ \tilde{\mu}_{im(\varphi)} = \tilde{\mu}_{im(\psi)} \circ \psi^\prime $. 
\end{enumerate}
By these claims, we have
\begin{align}
\mu_{\psi} \circ \psi^\prime
&=
\tilde{\mu}_{coim(\psi)} \circ \bar{\psi}^{-1} \circ \tilde{\mu}_{im(\psi)} \circ \psi^\prime  \\
&=
\tilde{\mu}_{coim(\psi)} \circ \bar{\psi}^{-1} \circ \tilde{\mu}_{im(\psi)} \circ \psi^\prime \circ im(\varphi) \circ \tilde{\mu}_{im(\varphi)}  \\
&=
\tilde{\mu}_{coim(\psi)} \circ coim (\psi) \circ \varphi^\prime \circ \tilde{\mu}_{coim (\varphi)} \circ \bar{\varphi}^{-1} \circ \tilde{\mu}_{im(\varphi)}  \\
&=
\varphi^\prime \circ \tilde{\mu}_{coim (\varphi)} \circ \bar{\varphi}^{-1} \circ \tilde{\mu}_{im(\varphi)}  \\
&=
\varphi^\prime \circ \mu_{\varphi} . 
\end{align}
It suffices to prove the above claims.

From now on, we show the first claim.
We use the hypothesis to prove $\varphi^\prime \circ ker (\varphi) \circ \sigma_{Ker(\varphi)} = ker (\psi ) \circ \sigma_{Ker(\psi)}$.
Since $\varphi^\prime_0 = \varphi^\prime|_{Ker ( \varphi )} : Ker ( \varphi ) \to Ker (\psi)$ has a section in $\mathcal{C}$, we have $\varphi^\prime_0 \circ \sigma_{Ker(\varphi)} =\sigma_{Ker(\psi)}$ by Lemma \ref{201908020749}.
Hence, we obtain $\varphi^\prime \circ ker (\varphi) \circ \sigma_{Ker(\varphi)} = ker (\psi) \circ \varphi^\prime_0 \circ \sigma_{Ker(\varphi)} = ker (\psi ) \circ \sigma_{Ker(\psi)}$.

Recall that $\tilde{\mu}_{coim(\psi)} \circ coim (\psi) : C \to C$ coincides with the action by $ker (\psi ) \circ \sigma_{Ker(\psi)} : \mathds{1} \to C$ by Lemma \ref{coim_im_norm_integral_exist}.
Then Figure \ref{201907021012} completes the proof of the first claim.
\begin{figure}[ht]
  \includegraphics[width=\linewidth]{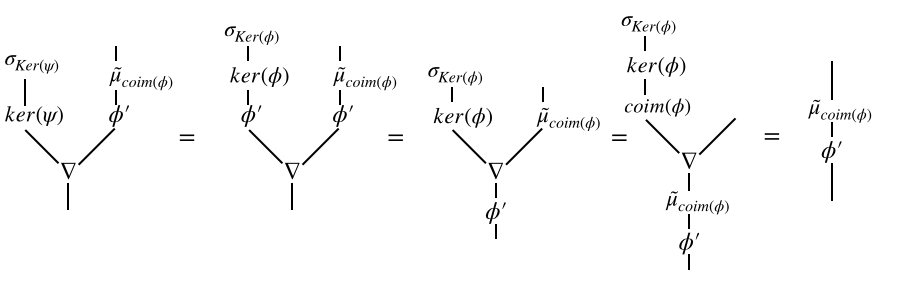}
  \caption{}
  \label{201907021012}
\end{figure}

Dually we can prove the second claim.
Here, we use the section of $\psi^\prime_0 : Cok ( \varphi ) \to Cok ( \psi )$ and apply Lemma \ref{201908020749} again.
It completes the proof.


\section{Inverse volume}

\subsection{Inverse volume of bimonoid}
\label{201908051620}

In this subsection, we introduce a notion of {\it inverse volume} $vol^{-1} (A)$ of a bimonoid $A$ with a normalized integral and a normalized cointegral.
It gives an invariant of such bimonoids by Proposition \ref{201908010934}.
By Remark \ref{201912021229}, it  defines an invariant of bismall bimonoids.

\begin{Defn}
\label{201907311409}
\rm
Let $A$ be a bimonoid with a normalized integral $\sigma_A : \mathds{1} \to A$ and a normalized cointegral $\sigma^A : A \to \mathds{1}$.
An {\it inverse volume} of the bimonoid $A$ is an endomorphism $vol^{-1} (A) : \mathds{1} \to \mathds{1}$ in $\mathcal{C}$, defined by a compostiion,
\begin{align}
vol^{-1} (A)  \stackrel{\mathrm{def.}}{=} \sigma^A \circ \sigma_A . 
\end{align}
\end{Defn}

\begin{Defn}
\label{201912041843}
\rm
A bimonoid $A$ {\it has a finite volume} if $A$ has a normalized integral and a normalized cointegral, and its inverse volume $vol^{-1} ( A) : \mathds{1} \to \mathds{1}$ is invertible.
\end{Defn}

\begin{Example}
Consider the symmetric monoidal category, $\mathcal{C} = \mathsf{Vec}^{\otimes}_{{k}}$.
Let $G$ be a finite group.
Suppose that the characteristic of ${k}$ is not a divisor of the order $\sharp G$ of $G$.
Then the induced Hopf monoid $A= {k} G$ in $\mathsf{Vec}^{\otimes}_{{k}}$ has a normalized integral $\sigma_A$ and a normalized cointegral $\sigma^A$.
In particular, 
\begin{align}
\sigma_A  &: {k} \to {k} G ~;~ 1 \mapsto (\sharp G)^{-1} \sum_{g \in G} g ,  \\
\sigma^A &: {k}G  \to {k}  ~;~ g \mapsto \delta_e (g) , 
\end{align}
give a normalized integral and a normalized cointegral of $A={k}G$ respectively..
Then we have 
\begin{align}
vol^{-1} ( {k}(G)) : {k} \to {k} ~;~ 1 \mapsto (\sharp G)^{-1}. 
\end{align}
\end{Example}

\begin{prop}
\label{201908010934}
Let $A,B$ be bimonoids with a normalized integral and a normalized cointegral.
\begin{itemize}
\item
For the unit bimonoid, we have $vol^{-1} ( \mathds{1}) = id_{\mathds{1}}$.
\item
A bimonoid isomorphism $A \cong B$ implies $vol^{-1} (A) = vol^{-1} (B)$.
\item
$vol^{-1} (A \otimes B) = vol^{-1} (A) \circ vol^{-1} (B) = vol^{-1}(B) \circ vol^{-1} (A)$.
\item
If $A^\vee$ is a dual bimonoid of the bimonoid $A$, then the bimonoid $A^\vee$ has a normalized integral and a normalized cointegral and we have
\begin{align}
vol^{-1}(A^\vee) = vol^{-1} (A) .
\end{align}
\end{itemize}
\end{prop}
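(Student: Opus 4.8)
The plan is to handle the four assertions in order, in each case writing down the normalized integral and cointegral of the bimonoid explicitly and then composing them, the key tool being uniqueness of normalized (co)integrals (Proposition~\ref{norm_r_l_integral_is_integral} and its dual). For the unit bimonoid, the normalization axiom (\ref{Haar_axiom1}) applied to $\varphi : \mathds{1} \to \mathds{1}$ reads $\epsilon_{\mathds{1}} \circ \varphi = id_{\mathds{1}}$, and since $\epsilon_{\mathds{1}} = id_{\mathds{1}}$ this forces $\sigma_{\mathds{1}} = id_{\mathds{1}}$ and, dually, $\sigma^{\mathds{1}} = id_{\mathds{1}}$; hence $vol^{-1}(\mathds{1}) = \sigma^{\mathds{1}} \circ \sigma_{\mathds{1}} = id_{\mathds{1}}$. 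For a bimonoid isomorphism $f : A \to B$, I would check that $f \circ \sigma_A$ satisfies the axioms (\ref{Haar_axiom2}), (\ref{Haar_axiom3}), (\ref{Haar_axiom1}) for $B$, which is immediate because $f$ intertwines all of $\nabla$, $\Delta$, $\eta$, $\epsilon$; by uniqueness $\sigma_B = f \circ \sigma_A$, and dually $\sigma^B = \sigma^A \circ f^{-1}$, whence $vol^{-1}(B) = \sigma^A \circ f^{-1} \circ f \circ \sigma_A = vol^{-1}(A)$.

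For the tensor product, the direct diagram chase carried out in the proof of Corollary~\ref{201907101753} shows that $\sigma_A \otimes \sigma_B$, regarded as a morphism $\mathds{1} \cong \mathds{1} \otimes \mathds{1} \to A \otimes B$, is a normalized integral of $A \otimes B$, and dually $\sigma^A \otimes \sigma^B$ is a normalized cointegral; by uniqueness these are $\sigma_{A\otimes B}$ and $\sigma^{A\otimes B}$. Functoriality of $\otimes$ then gives
\begin{align}
vol^{-1}(A \otimes B) &= (\sigma^A \otimes \sigma^B) \circ (\sigma_A \otimes \sigma_B) \\
&= (\sigma^A \circ \sigma_A) \otimes (\sigma^B \circ \sigma_B)
\end{align}
and transporting along the coherence isomorphism $\mathds{1} \otimes \mathds{1} \cong \mathds{1}$ identifies the right-hand side with the product of $vol^{-1}(A)$ and $vol^{-1}(B)$ in $End_\mathcal{C}(\mathds{1})$. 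It then remains to recall the standard fact (an Eckmann--Hilton argument) that under this isomorphism the monoid operation on $End_\mathcal{C}(\mathds{1})$ induced by $\otimes$ coincides with composition and is commutative, which yields $vol^{-1}(A \otimes B) = vol^{-1}(A) \circ vol^{-1}(B) = vol^{-1}(B) \circ vol^{-1}(A)$.

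For a dual bimonoid $A^\vee$, I would use that its structure morphisms are the transposes of those of $A$ with multiplication and comultiplication interchanged and unit and counit interchanged; consequently the contravariant duality functor carries the cointegral axioms of $A$ exactly onto the integral axioms of $A^\vee$, and vice versa, and preserves normalization, so that (after the canonical identification $\mathds{1}^\vee \cong \mathds{1}$) the bimonoid $A^\vee$ has normalized integral $\sigma_{A^\vee} = (\sigma^A)^\vee$ and normalized cointegral $\sigma^{A^\vee} = (\sigma_A)^\vee$, both unique. Then
\begin{align}
vol^{-1}(A^\vee) = \sigma^{A^\vee} \circ \sigma_{A^\vee} = (\sigma_A)^\vee \circ (\sigma^A)^\vee = (\sigma^A \circ \sigma_A)^\vee = \bigl( vol^{-1}(A) \bigr)^\vee ,
\end{align}
and since $\mathds{1}$ is canonically self-dual with the duality functor acting as the identity on $End_\mathcal{C}(\mathds{1})$, this equals $vol^{-1}(A)$. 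The main obstacle I anticipate lies precisely here: one must pin down the dual bimonoid structure carefully enough to verify that a normalized cointegral of $A$ dualizes to a normalized \emph{integral} of $A^\vee$ rather than to some arbitrary integral, and check the identification $\lambda^\vee = \lambda$ for $\lambda \in End_\mathcal{C}(\mathds{1})$; the first three bullets are essentially formal once uniqueness and the compatibility of normalized (co)integrals with isomorphisms and tensor products are available.
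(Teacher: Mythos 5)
Your proposal is correct and follows essentially the same route as the paper: identify the normalized (co)integrals of $\mathds{1}$, of $B$ via the isomorphism, of $A\otimes B$ as $\sigma_A\otimes\sigma_B$ and $\sigma^A\otimes\sigma^B$, and of $A^\vee$ via duality, invoke uniqueness in each case, and compose. Your phrasing of the dual case through the transpose functor $(-)^\vee$ is just a repackaging of the paper's explicit formulas $\sigma_{A^\vee}=(id_{A^\vee}\otimes\sigma^A)\circ coev_A$ and $\sigma^{A^\vee}=ev_A\circ(\sigma_A\otimes id_{A^\vee})$, with the contravariant functoriality $(g\circ f)^\vee=f^\vee\circ g^\vee$ playing the role of the zigzag identity the paper cites.
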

\begin{proof}
Since $\sigma_{\mathds{1}} = \sigma^{\mathds{1}} = id_{\mathds{1}}$, we have $vol^{-1} (\mathds{1}) = id_{\mathds{1}}$.

If $A \cong B$ as bimonoids, then their normalized (co)integrals coincide via that isomorphism due to their uniqueness.
Hence, we have $vol^{-1} (A) = \sigma^A \circ \sigma_A = \sigma^B \circ \sigma_B = vol^{-1} (B)$.

Since $\sigma_{A\otimes B} = \sigma_{A} \otimes \sigma_{B} : \mathds{1} \to A \otimes B$ and $\sigma^{A \otimes B} : \sigma^{A} \otimes \sigma^{B} : A \otimes B \to \mathds{1}$, we have $vol^{-1} (A \otimes B) = vol^{-1} (A) \ast vol^{-1} (B) = vol^{-1}(A) \circ vol^{-1} (B) = vol^{-1}(B) \circ vol^{-1} (A)$.

By direct calculations, the following morphisms give a normalized integral and a normalized cointegral on the dual bimonoid $A^\vee$ :
\begin{align}
\sigma_{A^\vee} &= \left( \mathds{1} \stackrel{coev_{A}}{\to} A^\vee \otimes A \stackrel{id_{A^\vee}\otimes \sigma^A}{\to} A^\vee \otimes \mathds{1} \cong A^\vee \right)  \\
\sigma^{A^\vee} &= \left( A^\vee \cong \mathds{1} \otimes A^\vee \stackrel{\sigma_A \otimes id_{A^\vee}}{\to} A \otimes A^\vee \stackrel{ev_A}{\to} \mathds{1} \right) 
\end{align}
It implies that $\sigma^{A^\vee} \circ \sigma_{A^\vee} = \sigma^A \circ \sigma_{A}$ since $\mathbf{l}_{A} \circ (ev_A \otimes id_A) \circ (id_A \otimes coev_A) \circ \mathbf{r}_{A} = id_A$ where $\mathbf{l}_A$ denotes the left unitor.
\end{proof}


\subsection{Inverse volume of homomorphisms}
\label{201912060955}

\begin{Defn}
\label{202002271106}
\rm
Let $A$ be a bimonoid with a normalized integral $\sigma_A$ and $B$ be a bimnoid with a normalized cointegral $\sigma^B$.
For a bimonoid homomorphism $\xi : A \to B$, we define a morphism $\langle \xi \rangle : \mathds{1} \to \mathds{1}$ by
\begin{align}
\langle \xi \rangle \stackrel{\mathrm{def.}}{=} \sigma^B \circ \xi \circ \sigma_A . 
\end{align}
\end{Defn}

\begin{remark}
Since $\langle id_A \rangle = vol^{-1} (A)$ by definitions, $\langle - \rangle$ is an extended notion of the inverse volume in Definition \ref{201907311409}.
On the other hand, for some special $\xi$, we can compute $\langle \xi \rangle$ from an inverse volume.
See Proposition \ref{201907312242}.
\end{remark}

\begin{Lemma}
\label{201908020749}
Let $A,B$ be bimonoids.
Let $\sigma_A$ be a normalized integral of $A$.
Let $\xi : A \to B$ be a bimonoid homomorphism.
If there exists a morphism $\xi^\prime : B \to A$ in $\mathcal{C}$ such that $\xi \circ \xi^\prime = id_A$, then $\xi \circ \sigma_A$ is a normalized integral of $B$.
\end{Lemma}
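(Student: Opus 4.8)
The plan is to verify directly that $\tau := \xi \circ \sigma_A \colon \mathds{1} \to B$ meets the three requirements for a normalized integral listed in Definition~\ref{Defn_Haar}. The only inputs needed are that $\xi$ is a bimonoid homomorphism, that $\sigma_A$ is a (two-sided) normalized integral of $A$, and that $\xi'$ is a right inverse of $\xi$, i.e.\ $\xi \circ \xi' = id_B$ (this is what ``$\xi$ has a section in $\mathcal{C}$'' means in the applications). As a preliminary I would record the auxiliary identity $\epsilon_A \circ \xi' = \epsilon_B$, obtained from $\epsilon_B = \epsilon_B \circ \xi \circ \xi' = \epsilon_A \circ \xi'$ since $\xi$ preserves counits.

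The normalization axiom~(\ref{Haar_axiom1}) is immediate: $\epsilon_B \circ \tau = (\epsilon_B \circ \xi) \circ \sigma_A = \epsilon_A \circ \sigma_A = id_{\mathds{1}}$. For the left-integral axiom~(\ref{Haar_axiom2}) the key manoeuvre is to replace the bare $id_B$ appearing in $\nabla_B \circ (\tau \otimes id_B)$ by $\xi \circ \xi'$, factor the two resulting copies of $\xi$ through the multiplication via $\nabla_B \circ (\xi \otimes \xi) = \xi \circ \nabla_A$, apply the left-integral property of $\sigma_A$ to rewrite $\nabla_A \circ (\sigma_A \otimes id_A)$ as $\mathbf{r}_A \circ (\sigma_A \otimes \epsilon_A)$, substitute $\epsilon_A \circ \xi' = \epsilon_B$, and finally pull $\xi$ back out through the right unitor by naturality; the upshot is exactly $\mathbf{r}_B \circ (\tau \otimes \epsilon_B)$. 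The right-integral axiom~(\ref{Haar_axiom3}) is handled by the mirror-image computation, this time invoking that $\sigma_A$ is also a right integral of $A$ together with naturality of the left unitor. Once all three axioms are checked, $\tau$ is a normalized integral of $B$; by Proposition~\ref{norm_r_l_integral_is_integral} it is then the unique one, so in particular $\xi \circ \sigma_A = \sigma_B$.

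I do not anticipate any serious obstacle: the whole argument is a short chain of naturality squares and substitutions of the defining diagrams. The two points that want attention are that it is a \emph{right} inverse of $\xi$ that is needed, not a left inverse (a left inverse, making $\xi$ a split monomorphism, would fail already for $\eta_B \colon \mathds{1} \to \mathbb{F}G$ with $G$ nontrivial, whose retraction is $\epsilon_B$ but for which $\eta_B$ is not an integral), and that the left and right integral properties of $\sigma_A$ are genuinely each used once, one per axiom.
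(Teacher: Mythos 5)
Your proposal is correct and follows essentially the same route as the paper: the paper also verifies the right-integral axiom by the direct string-diagram computation (replacing $id_B$ by $\xi\circ\xi'$, pushing $\xi$ through $\nabla$, applying the integral property of $\sigma_A$, and using $\epsilon_A\circ\xi'=\epsilon_B$), treats the left-integral axiom symmetrically, and checks normalization by the same one-line counit computation. Your observation that the hypothesis $\xi\circ\xi'=id_A$ in the statement must be read as $\xi\circ\xi'=id_B$ (a section of $\xi$, as the type of the composite forces) is accurate and matches how the lemma is used in the paper.
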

\begin{proof}
The morphism $\xi \circ \sigma_A : \mathds{1} \to B$ is a right integral due to Figure \ref{201908021108}.
It can be verified to be a left integral in a similar way.
Moreover, it is normalized since we have $\epsilon_{\xi} \circ \xi \circ \sigma_A = \epsilon_A \circ \sigma_A = id_{\mathds{1}}$.

\begin{figure}[ht]
  \includegraphics[width=\linewidth]{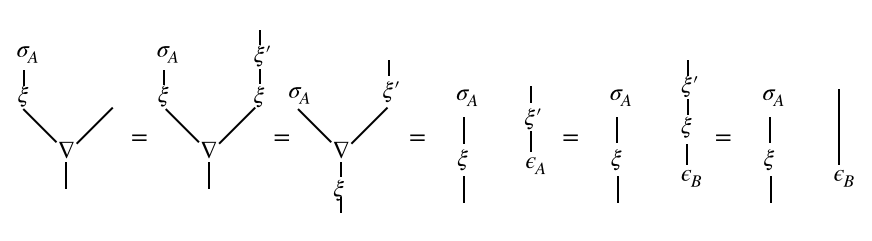}
  \caption{}
  \label{201908021108}
\end{figure}
\end{proof}

\begin{prop}
Let $\xi : A \to B$ be a bimonoid homomorphism.
Suppose that every idempotent in the symmetric monoidal category $\mathcal{C}$ is a split idempotent.
If the bimonoid $A$ is small and there exists a morphism $\xi^\prime : B \to A$ in $\mathcal{C}$ such that $\xi \circ \xi^\prime = id_A$, then the bimonoid $B$ is small.
\end{prop}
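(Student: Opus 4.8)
The plan is to reduce everything to the characterization of smallness by normalized integrals established in Theorem \ref{small_integral_equiv}, which applies because every idempotent in $\mathcal{C}$ is assumed to split. First I would invoke Theorem \ref{small_integral_equiv} for the bimonoid $A$: since $A$ is small, it possesses a normalized integral $\sigma_A : \mathds{1} \to A$ (this is also recorded in Lemma \ref{small_has_nor_integral} and Remark \ref{201912021229}).

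Next I would feed the retraction $\xi' : B \to A$ with $\xi \circ \xi' = id_A$ into Lemma \ref{201908020749}. That lemma, applied verbatim, yields that $\xi \circ \sigma_A : \mathds{1} \to B$ is a normalized integral of the bimonoid $B$. So $B$ has a normalized integral.

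Finally I would apply Theorem \ref{small_integral_equiv} once more, now in the reverse direction and to the bimonoid $B$: since $B$ has a normalized integral and every idempotent in $\mathcal{C}$ is a split idempotent, $B$ is small. This closes the proof.

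There is essentially no obstacle here: the statement is a formal consequence of the ``iff'' in Theorem \ref{small_integral_equiv} together with the transport-of-integral Lemma \ref{201908020749}, and the hypothesis on split idempotents is exactly what makes both implications of Theorem \ref{small_integral_equiv} available. The only point to state carefully is that $\xi \circ \sigma_A$ is genuinely a \emph{normalized} integral (both left and right, with $\epsilon_B \circ \xi \circ \sigma_A = \epsilon_A \circ \sigma_A = id_{\mathds{1}}$), which is precisely the content of Lemma \ref{201908020749} and requires no further work.
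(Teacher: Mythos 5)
Your proof is correct and follows exactly the paper's route: the paper's own proof is simply ``It is immediate from Lemma \ref{201908020749} and Theorem \ref{small_integral_equiv},'' and you have spelled out precisely that chain (smallness of $A$ gives $\sigma_A$, the retraction transports it to a normalized integral $\xi\circ\sigma_A$ of $B$, and the split-idempotent hypothesis lets you run the equivalence backwards for $B$). Nothing further is needed.
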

\begin{proof}
It is immediate from Lemma \ref{201908020749} and 
Theorem \ref{small_integral_equiv}.
\end{proof}

\begin{prop}
\label{201907312242}
Let $\xi :  A \to B$ be a bimonoid homomorphism.
Suppose that a kernel bimoniod $Ker (\xi)$, a cokernel bimonoid $Cok (\xi)$, a coimage bimonoid $Coim (\xi)$, an image bimonoid $Im (\xi)$ exist.
Suppose that $Ker (\xi)$ is small and $Cok (\xi)$ is cosmall.
Suppose that the canonical homomorphism $ker (\xi) : Ker (\xi) \to A$ is normal and $cok (\xi) : B \to Cok (\xi)$ is conormal.
Then for the canonical homomorphism $\bar{\xi}: Coim (\xi) \to Im (\xi)$, we have,
\begin{align}
\langle \xi \rangle = \langle \bar{\xi} \rangle .
\end{align}
In particular, if $\bar{\xi}$ is an isomorphism, then we have $\langle \xi \rangle = \langle \bar{\xi} \rangle = vol^{-1} ( Coim (\xi)) = vol^{-1}( Im (\xi))$.
\end{prop}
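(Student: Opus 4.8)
The plan is to run the proof through the canonical factorization $\xi = im(\xi)\circ\bar\xi\circ coim(\xi)$ and to transport the normalized integral of $A$ and the normalized cointegral of $B$ across the two outer maps, which turn out to be split in $\mathcal{C}$. So the proof will be essentially formal once the factorization and the relevant (co)integrals are in place.

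First I would record what the hypotheses give. Since $\langle\xi\rangle$ is meaningful (Definition \ref{202002271106}), we are implicitly given a normalized integral $\sigma_A$ of $A$ and a normalized cointegral $\sigma^B$ of $B$. The hypotheses ``$Ker(\xi)$ small and $ker(\xi)$ normal'' together with ``$Cok(\xi)$ cosmall and $cok(\xi)$ conormal'' are exactly those of Lemma \ref{coim_im_norm_integral_exist}, which supplies morphisms $\tilde{\mu}_{coim(\xi)}:Coim(\xi)\to A$ and $\tilde{\mu}_{im(\xi)}:B\to Im(\xi)$ with $coim(\xi)\circ\tilde{\mu}_{coim(\xi)}=id_{Coim(\xi)}$ and $\tilde{\mu}_{im(\xi)}\circ im(\xi)=id_{Im(\xi)}$. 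Thus $coim(\xi)$ is a split epimorphism and $im(\xi)$ a split monomorphism in $\mathcal{C}$.

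Next I would apply Lemma \ref{201908020749} to the bimonoid homomorphism $coim(\xi):A\to Coim(\xi)$, which admits a section in $\mathcal{C}$: it follows that $coim(\xi)\circ\sigma_A$ is a normalized integral of $Coim(\xi)$, hence equals $\sigma_{Coim(\xi)}$ by uniqueness of normalized integrals (Proposition \ref{norm_r_l_integral_is_integral}). Dually, the cointegral version of Lemma \ref{201908020749} applied to $im(\xi):Im(\xi)\to B$, which admits a retract in $\mathcal{C}$, shows that $\sigma^B\circ im(\xi)$ is the normalized cointegral $\sigma^{Im(\xi)}$ of $Im(\xi)$. In particular $\langle\bar\xi\rangle$ is defined, and then I simply compute
\begin{align*}
\langle\xi\rangle=\sigma^B\circ\xi\circ\sigma_A=\bigl(\sigma^B\circ im(\xi)\bigr)\circ\bar\xi\circ\bigl(coim(\xi)\circ\sigma_A\bigr)=\sigma^{Im(\xi)}\circ\bar\xi\circ\sigma_{Coim(\xi)}=\langle\bar\xi\rangle.
\end{align*}

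For the last assertion, assume $\bar\xi$ is an isomorphism. Then $Coim(\xi)\cong Im(\xi)$ as bimonoids, so $Im(\xi)$ inherits a normalized integral and $Coim(\xi)$ a normalized cointegral, and both inverse volumes are defined. Transporting the normalized integral along $\bar\xi$ (which, by uniqueness of normalized integrals, forces $\bar\xi\circ\sigma_{Coim(\xi)}=\sigma_{Im(\xi)}$, as in the proof of Proposition \ref{201908010934}) gives $\langle\bar\xi\rangle=\sigma^{Im(\xi)}\circ\sigma_{Im(\xi)}=vol^{-1}(Im(\xi))$; transporting the normalized cointegral along $\bar\xi$ gives $\sigma^{Im(\xi)}\circ\bar\xi=\sigma^{Coim(\xi)}$ and hence $\langle\bar\xi\rangle=vol^{-1}(Coim(\xi))$; the equality $vol^{-1}(Coim(\xi))=vol^{-1}(Im(\xi))$ is also just Proposition \ref{201908010934}. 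The only non-formal input is Lemma \ref{201908020749} and its dual, and the one point requiring genuine care is making sure that $Coim(\xi)$ really carries a normalized integral and $Im(\xi)$ a normalized cointegral — which is precisely where the smallness and normality hypotheses enter, through Lemma \ref{coim_im_norm_integral_exist}; beyond that, I would only need to be disciplined about invoking uniqueness of normalized (co)integrals each time a transported (co)integral is identified with the canonical one.
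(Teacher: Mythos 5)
Your proof is correct and follows essentially the same route as the paper: factor $\xi = im(\xi)\circ\bar\xi\circ coim(\xi)$, use Lemma \ref{coim_im_norm_integral_exist} to get the section of $coim(\xi)$ and retract of $im(\xi)$, and then apply Lemma \ref{201908020749} (and its dual) together with uniqueness of normalized (co)integrals to identify $coim(\xi)\circ\sigma_A$ and $\sigma^B\circ im(\xi)$ with the canonical ones. Your added detail on the final assertion is a harmless elaboration of what the paper leaves implicit.
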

\begin{proof}
It suffices to prove that $\langle \xi \rangle = \langle \bar{\xi} \rangle$.
Since $\langle \xi \rangle = \sigma^B \circ \xi \circ \sigma_A = \sigma^B \circ im (\xi) \circ \bar{\xi} \circ coim (\xi) \circ \sigma_A$, it suffices to show that $coim (\xi) \circ \sigma_A = \sigma_{Coim (\xi)}$ and $\sigma^B \circ im (\xi) = \sigma^{Im(\xi)}$.
The morphism $coim(\xi)$ ($im(\xi)$, resp.) has a section (retract, resp.) in $\mathcal{C}$ by Lemma \ref{coim_im_norm_integral_exist}.
Hence, the compositions $coim (\xi) \circ \sigma_A$ ( $\sigma^B \circ im ( \xi )$, resp.) are normalized integrals by Lemma \ref{201908020749}.
It completes the proof.
\end{proof}


\section{Commutativity of integrals}
\label{201908051621}

In this section, we discuss a relation between two composable integrals and their composition.
Recall that integrals are preserved under compositions by Proposition \ref{201907311055}.
Nevertheless, such a composition does not preserve normalized integrals.
By considering normalized generator integrals rather than normalized integrals, one can deduce that they are preserved up to a {\it scalar}.
Here, a scalar formally means an endomorphism on the unit object $\mathds{1}$.

\begin{theorem}
\label{201907311424}
Let $A,B,C$ be bimonoids.
Let $\xi : A \to B$, $\xi^\prime : B \to C$ be bimonoid homomorphism.
Suppose that 
\begin{itemize}
\item
$\xi$ is normal, $\xi^\prime$ is conormal.
The composition $\xi^\prime\circ \xi$ is either conormal or normal.
\item
$\mu, \mu^\prime$ are normalized integrals along $\xi, \xi^\prime$ respectively.
$\mu^{\prime\prime}$ is a normalized integral along $\xi^\prime \circ \xi$, which is a generator.
\end{itemize}
Recall that the cokernel bimonoid $Cok (\xi)$ has a normalized cointegral and the kernel bimonoid $Ker (\xi^\prime )$ has a normalized integral by Theorem \ref{norm_inte_along_induces_norm_inte}.
Then we have,
\begin{align}
\mu \circ \mu^\prime = \langle cok (\xi) \circ ker(\xi^\prime) \rangle \cdot \mu^{\prime\prime} . 
\end{align}
\end{theorem}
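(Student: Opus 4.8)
The plan is to deduce the identity in two stages: first produce a unique scalar $\lambda$ with $\mu\circ\mu^\prime=\lambda\cdot\mu^{\prime\prime}$, and then pin down $\lambda$ by a short diagram chase. For the first stage, Proposition \ref{201907311055} gives $\mu\circ\mu^\prime\in Int(\xi^\prime\circ\xi)$, where I use that $\mu,\mu^\prime$ are two-sided integrals. Since $\xi^\prime\circ\xi$ is conormal or normal and $\mu^{\prime\prime}$ is a normalized generator integral along it, Theorem \ref{201906281559} says the assignment $\lambda\mapsto\lambda\cdot\mu^{\prime\prime}$ is a bijection $End_\mathcal{C}(\mathds{1})\to Int(\xi^\prime\circ\xi)$; hence there is a unique $\lambda\in End_\mathcal{C}(\mathds{1})$ with $\mu\circ\mu^\prime=\lambda\cdot\mu^{\prime\prime}$.

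To compute $\lambda$, I would apply $\epsilon_A\circ(-)\circ\eta_C$ to both sides of $\mu\circ\mu^\prime=\lambda\cdot\mu^{\prime\prime}$. Since endomorphisms of $\mathds{1}$ are central, $\epsilon_A\circ(\lambda\cdot\mu^{\prime\prime})\circ\eta_C=\lambda\cdot(\epsilon_A\circ\mu^{\prime\prime}\circ\eta_C)$. The normalization of $\mu^{\prime\prime}$ forces $\epsilon_A\circ\mu^{\prime\prime}\circ\eta_C=id_\mathds{1}$: if $\xi^\prime\circ\xi$ is conormal, then by Lemma \ref{201907292200} and Definition \ref{201907311644} we have $\mu^{\prime\prime}\circ\eta_C=ker(\xi^\prime\circ\xi)\circ\check{F}(\mu^{\prime\prime})$, with $\check{F}(\mu^{\prime\prime})$ a normalized integral of $Ker(\xi^\prime\circ\xi)$ by Theorem \ref{norm_inte_along_induces_norm_inte}, whence $\epsilon_A\circ\mu^{\prime\prime}\circ\eta_C=\epsilon_{Ker(\xi^\prime\circ\xi)}\circ\check{F}(\mu^{\prime\prime})=id_\mathds{1}$; the normal case is dual, using $\epsilon_A\circ\mu^{\prime\prime}=\hat{F}(\mu^{\prime\prime})\circ cok(\xi^\prime\circ\xi)$ and $cok(\xi^\prime\circ\xi)\circ\eta_C=\eta_{Cok(\xi^\prime\circ\xi)}$. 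Thus $\lambda=\epsilon_A\circ\mu\circ\mu^\prime\circ\eta_C$.

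It remains to identify the two outer factors with the normalized (co)integrals appearing in $\langle-\rangle$. By Lemma \ref{201907292200} applied to $\mu^\prime\in Int_r(\xi^\prime)$, the morphism $\mu^\prime\circ\eta_C$ equalizes $\xi^\prime$ and the trivial homomorphism, hence factors as $ker(\xi^\prime)\circ\check{F}(\mu^\prime)$; since $\mu^\prime$ is a normalized integral along $\xi^\prime$ and $\xi^\prime$ is conormal, Theorem \ref{norm_inte_along_induces_norm_inte} together with the uniqueness in Proposition \ref{norm_r_l_integral_is_integral} identifies $\check{F}(\mu^\prime)$ with $\sigma_{Ker(\xi^\prime)}$, so $\mu^\prime\circ\eta_C=ker(\xi^\prime)\circ\sigma_{Ker(\xi^\prime)}$. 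Dually, the normality of $\xi$ and the definition of $\hat{F}$ give $\epsilon_A\circ\mu=\hat{F}(\mu)\circ cok(\xi)=\sigma^{Cok(\xi)}\circ cok(\xi)$. Substituting,
\[
\lambda=\epsilon_A\circ\mu\circ\mu^\prime\circ\eta_C=\sigma^{Cok(\xi)}\circ cok(\xi)\circ ker(\xi^\prime)\circ\sigma_{Ker(\xi^\prime)}=\langle cok(\xi)\circ ker(\xi^\prime)\rangle ,
\]
which is the assertion.

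I expect the only friction to be organizational: invoking uniqueness of normalized (co)integrals to replace $\check{F}(\mu^\prime)$ and $\hat{F}(\mu)$ by $\sigma_{Ker(\xi^\prime)}$ and $\sigma^{Cok(\xi)}$, and running the conormal and normal cases in parallel when showing $\epsilon_A\circ\mu^{\prime\prime}\circ\eta_C=id_\mathds{1}$. There is no genuine obstacle beyond this bookkeeping: once $\mu^\prime\circ\eta_C$ and $\epsilon_A\circ\mu$ are rewritten, the computation is three lines, and the string-diagram content of the earlier lemmas (especially Lemma \ref{201907292200} and Theorem \ref{norm_inte_along_induces_norm_inte}) does the real work.
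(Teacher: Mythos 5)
Your proposal is correct and follows essentially the same route as the paper: reduce to a unique scalar via Proposition \ref{201907311055} and Theorem \ref{201906281559}, then evaluate $\epsilon_A\circ(-)\circ\eta_C$ and identify $\check{F}(\mu^\prime)=\sigma_{Ker(\xi^\prime)}$, $\hat{F}(\mu)=\sigma^{Cok(\xi)}$ via Theorem \ref{norm_inte_along_induces_norm_inte}. The only (harmless) divergence is in showing $\epsilon_A\circ\mu^{\prime\prime}\circ\eta_C=id_{\mathds{1}}$: the paper does this by a direct computation $\epsilon_A\circ\mu^{\prime\prime}\circ\eta_C=\epsilon_C\circ(\xi^\prime\circ\xi\circ\mu^{\prime\prime}\circ\xi^\prime\circ\xi)\circ\eta_A=id_{\mathds{1}}$ using only the normalization of $\mu^{\prime\prime}$, which avoids your case split between the conormal and normal hypotheses on $\xi^\prime\circ\xi$.
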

\begin{proof}
By Proposition \ref{201907311055}, $\mu \circ \mu^\prime$ is an integral along the composition $\xi^\prime \circ \xi$.
By Theorem \ref{201906281559}, there exists a unique $\lambda\in End_{\mathcal{C}}(\mathds{1} )$ such that $\mu \circ \mu^\prime = \lambda \cdot \mu^{\prime\prime}$ since $\xi^\prime\circ \xi$ is either conormal or normal.

We have $\epsilon_A \circ \mu^{\prime\prime} \circ \eta_C = id_{\mathds{1}}$ due to the following computation :
\begin{align}
\epsilon_A \circ \mu^{\prime\prime} \circ \eta_C
&=
 \left( \epsilon_C \circ \xi^\prime \circ \xi \right) \circ \mu^{\prime\prime} \circ \left(  \xi^\prime \circ \xi \eta_A \right)  \\
&=
\epsilon_C \circ \left(  \xi^\prime \circ \xi  \circ \mu^{\prime\prime} \circ \xi^\prime \circ \xi \right) \circ \eta_A  \\
&=
\epsilon_C \circ \left(  \xi^\prime \circ \xi  \right) \circ \eta_A ~(\because \mu^{\prime\prime} : \mathrm{normalized} )   \\
&=
 id_{\mathds{1}}  
\end{align}
Hence it suffices to calculate $\epsilon_A \circ \mu \circ \mu^\prime \circ \eta_C$ to know $\lambda$.
Since $\xi^\prime$ is conormal, we have a morphism $\check{F}(\mu^\prime)$ such that $\mu^\prime \circ \eta_C = ker (\xi^\prime) \circ \check{F}(\mu^\prime)$ (see Definition \ref{201907311644}).
Since $\xi$ is normal, we have a morphism $\hat{F}(\mu)$ such that $\epsilon_A \circ \mu = \hat{F}(\mu) \circ cok (\xi)$.
Since the integrals $\mu, \mu^\prime$ are normalized, $\check{F}(\mu^\prime)$ and $\hat{F}(\mu) $ are normalized integrals by Theorem \ref{norm_inte_along_induces_norm_inte}.
By using our notations, $\check{F}(\mu^\prime) = \sigma_{Ker(\xi^\prime)}$ and $\hat{F}(\mu) = \sigma^{Cok(\xi)}$.
Therefore, we have  $\epsilon_A \circ \mu \circ \mu^\prime \circ \eta_C = \sigma^{Cok(\xi)} \circ cok (\xi) \circ ker (\xi^\prime) \circ \sigma_{Ker(\xi^\prime)} = \langle cok (\xi) \circ ker (\xi^\prime) \rangle$ by definitions.
It completes the proof.
\end{proof}

\begin{Corollary}
\label{201907311426}
Let $A,B$ be bimonoids and $\xi : A \to B$ be a bimonoid homomorphism.
Suppose that 
\begin{itemize}
\item
$\xi$ is normal.
\item
$\mu$ is a normalized integral along $\xi$, $\sigma_B$ is a normalized integral of $B$, and $\sigma_A$ is a normalized integral of $A$ which is a generator.
\end{itemize}
Then we have
\begin{align}
\mu \circ \sigma_B = vol^{-1} ( Cok (\xi)) \cdot \sigma_A .  
\end{align}

We have an analogous statement.
Suppose that 
\begin{itemize}
\item
$\xi$ is conormal.
\item
$\mu$ is a normalized integral along $\xi$, $\sigma^A$ is a normalized cointegral of $A$, and $\sigma^B$ is a normalized cointegral of $B$ which is a generator.
\end{itemize}
Then we have
\begin{align}
\sigma^A \circ \mu = vol^{-1} ( Ker(\xi)) \cdot \sigma^B .
\end{align}
\end{Corollary}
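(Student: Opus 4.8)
The plan is to deduce both statements from Theorem \ref{201907311424} by specializing one of its two composable homomorphisms to a counit (for the first statement) or a unit (for the second statement); the scalar $\langle cok(-)\circ ker(-)\rangle$ produced by that theorem will then collapse to an inverse volume.

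\textbf{First statement.} I would apply Theorem \ref{201907311424} to $\xi:A\to B$ and $\epsilon_B:B\to\mathds{1}$, whose composite is $\epsilon_A$. Its hypotheses hold: $\xi$ is normal by assumption, and $\epsilon_B,\epsilon_A$ are well-decomposable, hence binormal, by Proposition \ref{201907311130}(1), so $\epsilon_B$ is conormal and $\epsilon_A$ is normal; by Proposition \ref{201907311133} (which preserves the normalization condition) a normalized integral along $\epsilon_B$ is the same as a normalized integral of $B$, namely $\sigma_B$, and a normalized integral along $\epsilon_A$ is $\sigma_A$, which is moreover a generator integral along $\epsilon_A$ by Proposition \ref{201908021401}. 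Theorem \ref{201907311424} then gives
\begin{align}
\mu\circ\sigma_B=\langle cok(\xi)\circ ker(\epsilon_B)\rangle\cdot\sigma_A .
\end{align}
Next I would identify the scalar with $vol^{-1}(Cok(\xi))$. Expanding Definition \ref{202002271106}, and using from the proof of Theorem \ref{201907311424} that the relevant normalized (co)integrals are $\sigma_{Ker(\epsilon_B)}=\check{F}(\sigma_B)$ and $\sigma^{Cok(\xi)}=\hat{F}(\mu)$ (Theorem \ref{norm_inte_along_induces_norm_inte}), Definition \ref{201907311644} gives $ker(\epsilon_B)\circ\sigma_{Ker(\epsilon_B)}=\sigma_B\circ\eta_{\mathds{1}}=\sigma_B$, so that $\langle cok(\xi)\circ ker(\epsilon_B)\rangle=\sigma^{Cok(\xi)}\circ cok(\xi)\circ\sigma_B$. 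It then suffices to show $cok(\xi)\circ\sigma_B=\sigma_{Cok(\xi)}$: since $\xi$ is normal and $A$ is small --- which holds whenever every idempotent of $\mathcal{C}$ splits, by Theorem \ref{small_integral_equiv} --- the homomorphism $cok(\xi)$ admits a section in $\mathcal{C}$ by Lemma \ref{201907311900}, whence $cok(\xi)\circ\sigma_B$ is a normalized integral of $Cok(\xi)$ by Lemma \ref{201908020749} and therefore equals $\sigma_{Cok(\xi)}$ by uniqueness (Proposition \ref{norm_r_l_integral_is_integral}); in particular $vol^{-1}(Cok(\xi))$ is well defined, and $\langle cok(\xi)\circ ker(\epsilon_B)\rangle=\sigma^{Cok(\xi)}\circ\sigma_{Cok(\xi)}=vol^{-1}(Cok(\xi))$.

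\textbf{Second statement and main obstacle.} The second statement is proved dually, by applying Theorem \ref{201907311424} to $\eta_A:\mathds{1}\to A$ and $\xi:A\to B$, with composite $\eta_B$: here $\eta_A$ is normal and $\eta_B$ binormal by Proposition \ref{201907311130}(1), $\xi$ is conormal by assumption, a normalized integral along $\eta_A$ (resp.\ $\eta_B$) is the normalized cointegral $\sigma^A$ (resp.\ $\sigma^B$) by Proposition \ref{201907311133}, and $\sigma^B$ is a generator by the dual of Proposition \ref{201908021401}; Theorem \ref{201907311424} then yields $\sigma^A\circ\mu=\langle cok(\eta_A)\circ ker(\xi)\rangle\cdot\sigma^B$, and one identifies the scalar with $vol^{-1}(Ker(\xi))$ using $\sigma^{Cok(\eta_A)}\circ cok(\eta_A)=\sigma^A$ (Definition \ref{201907311644}) together with the fact that $\sigma^A\circ ker(\xi)$ is the normalized cointegral of $Ker(\xi)$, which follows from the dual of Lemma \ref{201908020749} since $ker(\xi)$ has a retract in $\mathcal{C}$ (Lemma \ref{201907311900}, as $B$ is cosmall). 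The routine part is the bookkeeping that turns ``normalized generator integral along $\epsilon_A$ / $\eta_A$'' into ``normalized generator (co)integral of $A$'' and verifies binormality of units and counits; the one genuinely substantive step --- and what I expect to be the main obstacle --- is the identification $cok(\xi)\circ\sigma_B=\sigma_{Cok(\xi)}$ (and dually $\sigma^A\circ ker(\xi)=\sigma^{Ker(\xi)}$), i.e.\ knowing that the image of a normalized integral under the surjection $cok(\xi)$ is again a normalized integral, which rests on $cok(\xi)$ admitting a section and hence on $A$ being small.
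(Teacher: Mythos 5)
Your proposal is correct and takes essentially the same route as the paper: the paper's proof of this corollary consists precisely of substituting $(\xi,\xi^\prime)=(\xi,\epsilon_B)$, respectively $(\xi,\xi^\prime)=(\eta_A,\xi)$, into Theorem \ref{201907311424}, using Proposition \ref{201907311130} and Proposition \ref{201907311133} to check the hypotheses exactly as you do; the identification of the scalar $\langle cok(\xi)\circ ker(\epsilon_B)\rangle$ with $vol^{-1}(Cok(\xi))$ is left implicit there, and you are right that it reduces to $cok(\xi)\circ\sigma_B=\sigma_{Cok(\xi)}$. One small improvement: you do not need smallness of $A$ (and hence no split-idempotent hypothesis, which the corollary does not assume) to get that identification --- normality of $\xi$ already realizes $cok(\xi)$ as a coequalizer projection, hence an epimorphism in $\mathcal{C}$, and since the integral axioms only involve tensoring with $\mathds{1}$, the argument of Lemma \ref{201908020749} goes through with ``epimorphism'' in place of ``split epimorphism'' (via the unitors, $id_{\mathds{1}}\otimes cok(\xi)$ is epi whenever $cok(\xi)$ is); dually, conormality makes $ker(\xi)$ an equalizer, hence a monomorphism, which handles the second statement.
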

\begin{proof}
We prove the first claim.
We replace $\xi, \xi^\prime$ in Theorem \ref{201907311424} with $\xi, \epsilon_B$ in the above assumption.
Then the assumption in Theorem \ref{201907311424} is satisfied.

We prove the second claim.
We replace $\xi, \xi^\prime$ in Theorem \ref{201907311424} with $\eta_A, \xi$ in the above assumption.
Then the assumption in Theorem \ref{201907311424} is satisfied.
\end{proof}

\begin{Corollary}
\label{201907312239}
Let $A,B$ be bimonoids and $\xi : A \to B$ be a bimonoid homomorphism.
Suppose that
\begin{itemize}
\item
$\xi$ is binormal.
\item
There exists a normalized integral along $\xi$.
\item
$A$, $B$ are bismall
\item 
The normalized integral $\sigma_A$ of $A$ is a generator.
The normalized cointegral $\sigma^B$ of $B$ is a generator.
\end{itemize} 
Then we have
\begin{align}
\label{201907311428}
vol^{-1}(Cok (\xi)) \circ vol^{-1} (A) = vol^{-1} (Ker (\xi)) \circ vol^{-1} (B) . 
\end{align}
\end{Corollary}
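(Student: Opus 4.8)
The plan is to compute the scalar $\sigma^A \circ \mu \circ \sigma_B \in End_\mathcal{C}(\mathds{1})$ in two different ways, where $\mu$ denotes the normalized integral along $\xi$ whose existence is assumed (it is unique by Proposition \ref{r_integral_l_integra_coincide_along}), where $\sigma_A, \sigma^A$ are the normalized integral and cointegral of the bismall bimonoid $A$, and $\sigma_B, \sigma^B$ those of $B$. Note first that $vol^{-1}(Cok(\xi))$ and $vol^{-1}(Ker(\xi))$ are well-defined: since $\xi$ is binormal and admits a normalized integral along it, Theorem \ref{norm_inte_along_induces_norm_inte} gives a normalized cointegral on $Cok(\xi)$ and a normalized integral on $Ker(\xi)$, while $Cok(\xi)$ is cosmall and $Ker(\xi)$ is small under the hypotheses so that the complementary normalized (co)integrals also exist.

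First I would invoke the first part of Corollary \ref{201907311426}. Its hypotheses hold: $\xi$ is normal (being binormal), $\mu$ is a normalized integral along $\xi$, $\sigma_B$ is a normalized integral of $B$, and $\sigma_A$ is a normalized integral of $A$ which is a generator. Hence $\mu \circ \sigma_B = vol^{-1}(Cok(\xi)) \cdot \sigma_A$. Post-composing with $\sigma^A$ and using that the scalar action of $End_\mathcal{C}(\mathds{1})$ commutes with composition by morphisms into $\mathds{1}$, we obtain $\sigma^A \circ \mu \circ \sigma_B = vol^{-1}(Cok(\xi)) \circ (\sigma^A \circ \sigma_A) = vol^{-1}(Cok(\xi)) \circ vol^{-1}(A)$ by Definition \ref{201907311409}.

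Next I would invoke the second part of Corollary \ref{201907311426}, whose hypotheses hold as well: $\xi$ is conormal, $\mu$ is a normalized integral along $\xi$, $\sigma^A$ is a normalized cointegral of $A$, and $\sigma^B$ is a normalized cointegral of $B$ which is a generator. This gives $\sigma^A \circ \mu = vol^{-1}(Ker(\xi)) \cdot \sigma^B$. Pre-composing with $\sigma_B$ and again commuting the scalar past the composition yields $\sigma^A \circ \mu \circ \sigma_B = vol^{-1}(Ker(\xi)) \circ (\sigma^B \circ \sigma_B) = vol^{-1}(Ker(\xi)) \circ vol^{-1}(B)$.

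Comparing the two resulting expressions for $\sigma^A \circ \mu \circ \sigma_B$ proves the identity (\ref{201907311428}). The only steps requiring care — and the closest thing to an obstacle — are bookkeeping: verifying that each invocation of Corollary \ref{201907311426} meets its hypotheses exactly, and checking that the scalar action of $End_\mathcal{C}(\mathds{1})$ commutes with pre- and post-composition by maps out of and into $\mathds{1}$; the latter is a routine consequence of the interchange law in a symmetric monoidal category together with the fact that $\cdot$ on $End_\mathcal{C}(\mathds{1})$ agrees with composition. No genuinely difficult argument is involved; the corollary is essentially a two-line deduction from Corollary \ref{201907311426}.
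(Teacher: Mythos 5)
Your proposal is correct and follows essentially the same route as the paper: the paper also applies both parts of Corollary \ref{201907311426} to get $\mu_\xi \circ \sigma_B = vol^{-1}(Cok(\xi)) \cdot \sigma_A$ and $\sigma^A \circ \mu_\xi = vol^{-1}(Ker(\xi)) \cdot \sigma^B$, and then equates the two resulting expressions for $\sigma^A \circ \mu_\xi \circ \sigma_B$. The only cosmetic difference is that the paper re-derives the generator property of $\sigma_A$ and $\sigma^B$ from bismallness via Theorem \ref{201907292156}, whereas you take it directly from the stated hypotheses.
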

\begin{proof}
Since $A,B$ are bismall, the counit $\epsilon_A$ and the unit $\eta_B$ are pre-Fredholm.
Since the counit $\epsilon_A$ and the unit $\eta_B$ are well-decomposable, the normalized integral $\sigma_A$ of $A$ and te normalized cointegral $\sigma^B$ of $B$ are generators by Theorem \ref{201907292156}.
Hence, the assumptions in Corollary \ref{201907311426} are satisfied.
By Corollary \ref{201907311426}, we obtain 
\begin{align}
\mu_\xi \circ \sigma_B &= vol^{-1} ( Cok (\xi)) \cdot \sigma_A ,  \\
\sigma^A \circ \mu_\xi &= vol^{-1} ( Ker(\xi)) \cdot \sigma^B .  
\end{align}
Hence, we obtain $vol^{-1} ( Cok (\xi)) \cdot \sigma^A \circ \sigma_A = vol^{-1} ( Ker (\xi)) \cdot \sigma^B \circ \sigma_B$, which is equivalent with (\ref{201907311428}).
\end{proof}

\begin{Corollary}
\label{201912050859}
Let $A,B,C$ be bimonoids.
Let $\xi : A \to B$, $\xi^\prime : B \to C$ be bimonoid homomorphisms.
Suppose that the homomorphisms $\xi,\xi^\prime, \xi^\prime\circ\xi$ are well-decomposable and weakly pre-Fredholm.
Recall that there exist normalized generator integrals $\mu_\xi , \mu_{\xi^\prime}, \mu_{\xi^\prime\circ\xi}$ along the bimonoid homomorphisms $\xi,\xi^\prime,\xi^\prime\circ\xi$ respectively by Corollary \ref{201912050852}.
Then there exists a unique $\lambda \in End_\mathcal{C} ( \mathds{1} )$ such that 
\begin{align}
\mu_\xi \circ \mu_{\xi^\prime} =  \lambda \cdot \mu_{\xi^\prime\circ\xi} . 
\end{align}
\end{Corollary}
\begin{proof}
It is a corollary of Theorem \ref{201907311424}.
Since $\xi, \xi^\prime , \xi^\prime \circ \xi$ are well-decomposable, in particular weakly well-decomposable, and weakly pre-Fredholm, we obtain normalized generator integrals $\mu_\xi , \mu_{\xi^\prime}, \mu_{\xi^\prime\circ\xi}$ by Theorem \ref{201907292156}.
Since $\xi, \xi^\prime , \xi^\prime \circ \xi$ are well-decomposable, they satisfy the first assumption in Theorem \ref{201907311424}.
By Theorem Theorem \ref{201907292156}, the integrals $\mu= \mu_\xi ,\mu^\prime = \mu_{\xi^\prime}, \mu^{\prime\prime} = \mu_{\xi^\prime\circ\xi}$ satisfy the second assumption in Theorem \ref{201907311424}.
\end{proof}

The existence follows from Theorem \ref{201907311424} and the uniqueness follows form Theorem \ref{201906281559}.
In fact, the endomorphism $\lambda$ coincides with $\langle cok (\xi) \circ ker(\xi^\prime) \rangle \in End_\mathcal{C} ( \mathds{1} )$.
The symbol $\langle - \rangle$ represents an invariant of bimonoid homomorphisms from a bimonoid with a normalized integral to a bimonoid with a normalized cointegral (see Definition \ref{202002271106}).
In Corollary $\ref{201912050859}$, the kernel bimonoid $Ker(\xi^\prime)$ has a normalized integral and the cokernel bimonoid $Cok ( \xi)$ has a normalized cointegral since we assume that $\xi,\xi^\prime$ are weakly pre-Fredholm.

\section{Induced bismallness}

In this section, we assume that every idempotent in a symmetric monoidal category $\mathcal{C}$ is a split idempotent.


\subsection{Bismallness of (co)kernels}
\label{201908041309}

In this subsection, we give some conditions where $Ker(\xi)$, $Cok (\xi)$ inherits a (co)smallness from that of the domain and the target of $\xi$.

\begin{prop}
\label{condition_for_ker_cok_small}
Let $\xi : A \to B$ be a bimonoid homomorphism.
Suppose that $A$ is small, $B$ is cosmall.
If $\xi$ is normal, then $Cok (\xi)$ is cosmall.
If $\xi$ is conormal, then $Ker(\xi)$ is small.
\end{prop}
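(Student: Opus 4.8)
Both assertions are formally dual to one another (reverse all arrows and exchange ``small'' with ``cosmall'', $ker$ with $cok$, integrals with cointegrals), so the plan is to treat only the second one: assuming $A$ small, $B$ cosmall and $\xi$ conormal, I would show that $Ker(\xi)$ is small. Since this section runs under the standing hypothesis that every idempotent in $\mathcal{C}$ splits, Theorem \ref{small_integral_equiv} converts this into the task of producing a normalized integral on $Ker(\xi)$; the cokernel statement then follows by the dual argument together with Corollary \ref{201912021909}.

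First I would gather the ingredients. As $A$ is small it carries a normalized integral $\sigma_A : \mathds{1} \to A$ by Theorem \ref{small_integral_equiv}, and by Proposition \ref{201907311133} this is the same datum as a normalized integral along $\epsilon_A$. As $B$ is cosmall it carries a normalized cointegral $\sigma^B$, and since in addition $\xi$ is conormal, Definition \ref{defn_integral_cok_ker} and Lemma \ref{201907311900} furnish a normalized integral $\tilde{\mu}_{ker(\xi)} \in Int(ker(\xi))$ along the canonical homomorphism $ker(\xi) : Ker(\xi) \to A$, with $\tilde{\mu}_{ker(\xi)} \circ ker(\xi) = id_{Ker(\xi)}$ and $ker(\xi) \circ \tilde{\mu}_{ker(\xi)} = R^{\beta^{\leftarrow}_{\xi}}(\sigma^B)$; in particular $ker(\xi)$ is a split mono and $Ker(\xi)$ is a retract of $A$ in $\mathcal{C}$. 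Applying Proposition \ref{201907311055} to the composite $\epsilon_A \circ ker(\xi) : Ker(\xi) \to \mathds{1}$, the morphism $\sigma_0 := \tilde{\mu}_{ker(\xi)} \circ \sigma_A : \mathds{1} \to Ker(\xi)$ lies in $Int(\epsilon_A \circ ker(\xi)) = Int(\epsilon_{Ker(\xi)}) = Int(Ker(\xi))$; that is, $\sigma_0$ is a (two-sided) integral of the bimonoid $Ker(\xi)$.

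The step I expect to be the real work is the normalization. Composing $\sigma_0$ with the counit, using $ker(\xi) \circ \tilde{\mu}_{ker(\xi)} = R^{\beta^{\leftarrow}_{\xi}}(\sigma^B)$ together with the counit axiom, gives
\begin{align}
\epsilon_{Ker(\xi)} \circ \sigma_0 = \epsilon_A \circ ker(\xi) \circ \tilde{\mu}_{ker(\xi)} \circ \sigma_A = \epsilon_A \circ R^{\beta^{\leftarrow}_{\xi}}(\sigma^B) \circ \sigma_A = \sigma^B \circ \xi \circ \sigma_A = \langle \xi \rangle ,
\end{align}
which in general is not $id_{\mathds{1}}$; and since $\sigma_0$ is an integral one checks $\sigma_0 \ast \sigma_0 = \langle \xi \rangle \cdot \sigma_0$, so rescaling is the only way to normalize. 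Hence a normalized integral of $Ker(\xi)$ must be $\langle \xi \rangle^{-1} \cdot \sigma_0$, and the whole argument reduces to showing that the endomorphism $\langle \xi \rangle = \sigma^B \circ \xi \circ \sigma_A \in End_{\mathcal{C}}(\mathds{1})$ is invertible --- this is precisely the point at which the two hypotheses ``$A$ small'' and ``$B$ cosmall'' (together with conormality) have to be used in tandem rather than in isolation. I would attempt the invertibility by analysing the idempotent $R^{\beta^{\leftarrow}_{\xi}}(\sigma^B)$ on $A$, whose splitting realizes $Ker(\xi)$, against the idempotent coming from $\sigma_A$, using that $\tilde{\mu}_{ker(\xi)}$ is unit-preserving, $\tilde{\mu}_{ker(\xi)} \circ \eta_A = \eta_{Ker(\xi)}$, to exhibit a two-sided inverse of $\langle \xi \rangle$. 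Once that is in hand, $\langle \xi \rangle^{-1} \cdot \sigma_0$ is a normalized integral of $Ker(\xi)$, so $Ker(\xi)$ is small by Theorem \ref{small_integral_equiv}; dualizing everything above (exchanging the roles of $A$ and $B$, of $ker$ and $cok$, and of integrals and cointegrals) then gives that $Cok(\xi)$ is cosmall.
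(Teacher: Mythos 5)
Your construction is the same as the paper's, in dual form: the paper takes the candidate $\sigma = \sigma^B \circ \tilde{\mu}_{cok(\xi)}$ (a cointegral of $Cok(\xi)$ by Proposition \ref{201907311055}), and you take the mirror candidate $\sigma_0 = \tilde{\mu}_{ker(\xi)} \circ \sigma_A$; both arguments then reduce smallness to the existence of a normalized (co)integral via Theorem \ref{small_integral_equiv} / Corollary \ref{201912021909}. The divergence is at the normalization step, and it is exactly where your argument has a genuine, unclosed gap. You correctly compute $\epsilon_{Ker(\xi)} \circ \sigma_0 = \sigma^B \circ \xi \circ \sigma_A = \langle \xi \rangle$ (the paper instead asserts outright that its candidate is already normalized, citing $\tilde{\mu}_{cok(\xi)} \circ \pi = L_{\alpha^{\to}_\xi}(\sigma_A)$ and $\epsilon_A \circ \sigma_A = id_{\mathds{1}}$ --- note that even in the extreme case $\xi = id_B$ with $B = \mathbb{Q}[\mathbb{Z}/2]$ that recipe returns $\sigma^B \circ \sigma_B = vol^{-1}(B) = \tfrac{1}{2}$ as a ``normalized cointegral'' of $\mathds{1}$, so your more cautious computation is the right one to worry about). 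But having identified $\langle \xi \rangle$ as the obstruction, you only announce that you ``would attempt'' to prove its invertibility; no argument is given, and this is precisely the step on which everything hinges.

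The gap is not cosmetic, because invertibility of $\langle \xi \rangle$ is not something the ambient framework hands you. Under your hypotheses one checks (as in Proposition \ref{201907312242}) that $\langle \xi \rangle = vol^{-1}(Im(\xi))$: the element $\xi \circ \sigma_A$ is a normalized integral of $Im(\xi)$ and $\sigma^B$ restricts to a normalized cointegral of $Im(\xi)$, so $Im(\xi)$ is bismall --- but the paper is explicit (Definition \ref{201912041843} and the introduction) that the inverse volume of a bismall Hopf monoid need not be invertible in $End_\mathcal{C}(\mathds{1})$. So the route ``rescale $\sigma_0$ by $\langle \xi \rangle^{-1}$'' cannot be completed by a soft argument; either you must prove that smallness of $A$, cosmallness of $B$ and conormality of $\xi$ force $vol^{-1}(Im(\xi))$ to be invertible (which would be a substantive new lemma, not a diagram chase with the two idempotents $R^{\beta^{\leftarrow}_\xi}(\sigma^B)$ and $\sigma_A \circ \epsilon_A$ --- their composites only satisfy $(fe)^2 = \langle \xi \rangle \cdot (fe)$, which is circular), or you must produce a normalized integral of $Ker(\xi)$ by a different mechanism that does not pass through $\sigma_0$ at all. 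As written, the proposal stops exactly at the point it itself flags as ``the real work,'' so it does not constitute a proof.
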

\begin{proof}
We only prove the first claim.
Let $\xi$ be normal.
We have $Cok(\xi) = \alpha^{\to}_\xi \backslash B$.
There exists a normalized cointegral of $B$ since $B$ is cosmall by Corollary \ref{201912021909}.
We denote it by $\sigma^B : B \to \mathds{1}$.
Put $\sigma = \sigma^B \circ \tilde{\mu}_{cok(\xi)} : Cok(\xi) = \alpha^{\to}_\xi \backslash B \to \mathds{1}$.
Note that $\sigma \in Int_r (\eta_{\alpha^{\to}_\xi \backslash B})$ due to Proposition \ref{201907311055}.
In other words, $\sigma$ is a right cointegral of $Cok(\xi) = \alpha^{\to}_\xi \backslash B$.

We prove that $\sigma$ is normalized.
Let $\pi : B \to \alpha^{\to}_\xi \backslash B$ be the canonical morphism.
We have $\sigma \circ \eta_{\alpha^{\to}_\xi \backslash B} = \sigma^B \circ \tilde{\mu}_{cok(\xi)} \circ \eta_{\alpha^{\to}_\xi \backslash B} = \sigma^B \circ \tilde{\mu}_{cok(\xi)} \circ \pi \circ \eta_B$.
$\sigma \circ \eta_{\alpha^{\to}_\xi \backslash B} = id_{\mathds{1}}$ follows from $\tilde{\mu}_{cok(\xi)} \circ \pi =  L_{\alpha^{\to}_\xi} ( \sigma_A )$ in Lemma \ref{ker_cok_integral_exist} (1), and $\epsilon_A \circ \sigma_A = id_{\mathds{1}}$.
Hence, $\sigma$ is a normalized right cointegral of $\alpha^{\to}_\xi \backslash B = Cok (\xi)$.

Analogously, we use $Cok(\xi) = B/ \alpha^{\leftarrow}_\xi$ to verify an existence of a normalized left cointegral of $Cok(\xi)$.
By Proposition \ref{norm_r_l_integral_is_integral}, the cokernel $Cok(\xi)$ has a normalized cointegral.
By Corollary \ref{201912021909}, the cokernel bimonoid $Cok (\xi)$ is cosmall.
\end{proof}

\begin{prop}
\label{201907312056}
Let $A,B$ be bimonoids.
Let $\xi : A \to B$ be a bimonoid homomorphism.
If $A$, $B$ are small and $\xi$ is normal, then $Cok (\xi)$ is small.
If $A$, $B$ are cosmall and $\xi$ is conormal, then $Ker (\xi)$ is cosmall.
\end{prop}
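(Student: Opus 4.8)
The plan is to reduce both statements to the characterization of (co)smallness by the existence of a normalized (co)integral (Theorem \ref{small_integral_equiv} and the dual statement used in Corollary \ref{201912021909}), which is available here because every idempotent in $\mathcal{C}$ splits. This is parallel to the proof of Proposition \ref{condition_for_ker_cok_small}; I would carry out the first claim in detail and leave the second to duality.

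For the first claim, I would first use that $\xi$ is normal to identify $Cok(\xi)$ with the stabilized object $\alpha^{\to}_\xi \backslash B$ equipped with the bimonoid structure for which the canonical projection $\pi = cok(\xi) : B \to Cok(\xi)$ is a bimonoid homomorphism (Definition \ref{201908040928}). Since $A$ is small, the comparison morphism ${}_\xi\gamma : \alpha^{\to}_\xi \backslash\backslash B \to \alpha^{\to}_\xi \backslash B$ is an isomorphism, so Lemma \ref{ker_cok_integral_exist}(1) applies and yields a morphism $\mu_0 : Cok(\xi) \to B$ in $\mathcal{C}$ with $\pi \circ \mu_0 = id_{Cok(\xi)}$; that is, the bimonoid homomorphism $\pi$ has a section in $\mathcal{C}$. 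Since $B$ is small, Theorem \ref{small_integral_equiv} provides a normalized integral $\sigma_B : \mathds{1} \to B$ of $B$. Applying Lemma \ref{201908020749} to the bimonoid homomorphism $\pi$, with section $\mu_0$ and normalized integral $\sigma_B$, shows that $\pi \circ \sigma_B : \mathds{1} \to Cok(\xi)$ is a normalized integral of $Cok(\xi)$. Hence $Cok(\xi)$ carries a normalized integral, and Theorem \ref{small_integral_equiv} again gives that $Cok(\xi)$ is small.

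The second claim follows by the dual argument: conormality of $\xi$ makes $ker(\xi) : Ker(\xi) \to A$ a bimonoid homomorphism with $Ker(\xi) \cong A \backslash \beta^{\leftarrow}_\xi$; since $B$ is cosmall, Lemma \ref{ker_cok_integral_exist}(2) provides a retract $\mu_1 : A \to Ker(\xi)$ of $ker(\xi)$ in $\mathcal{C}$; and since $A$ is cosmall it has a normalized cointegral $\sigma^A$ (dual of Theorem \ref{small_integral_equiv}), so by the dual of Lemma \ref{201908020749} the composite $\sigma^A \circ ker(\xi) : Ker(\xi) \to \mathds{1}$ is a normalized cointegral of $Ker(\xi)$, whence $Ker(\xi)$ is cosmall. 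No new idea beyond the cited lemmas is required; the only point to watch is the bookkeeping of the dualizations — matching the correct coaction, the correct stabilized object, and the correct one-sided inverse in each invocation — and this is the step I would double-check most carefully.
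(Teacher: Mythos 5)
Your proposal is correct and follows essentially the same route as the paper: the paper also obtains a section of $cok(\xi)$ in $\mathcal{C}$ from Lemma \ref{ker_cok_integral_exist} (packaged via Definition \ref{defn_integral_cok_ker} and Lemma \ref{201907311900}), then applies Lemma \ref{201908020749} to conclude that $cok(\xi)\circ\sigma_B$ is a normalized integral of $Cok(\xi)$, and finishes with the split-idempotent characterization of smallness. Your explicit dualization of the second claim, which the paper omits, is also carried out correctly.
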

\begin{proof}
We only prove the first claim.
The small bimonoid $B$ has a unique normalized integral $\sigma_B : \mathds{1} \to B$ by Corollary \ref{201912021909}.
By Definition \ref{defn_integral_cok_ker}, a normalized integral $\tilde{\mu}_{cok(\xi)} \in Int ( cok (\xi))$ exists.
By Lemma \ref{201907311900}, $\tilde{\mu}_{cok(\xi)}$ is a section of $cok (\xi)$ in $\mathcal{C}$.
By Lemma \ref{201908020749}, $cok (\xi) \circ \sigma_B$ is a normalized integral of $Cok (\xi)$.
By Corollary \ref{201912021909}, $Cok (\xi)$ is small.
\end{proof}

\begin{Corollary}
\label{201908011522}
Let $A,B$ be bimonoids.
Let $\xi : A \to B$ be a well-decomposable homomorphism.
If $A$ is small and $B$ is cosmall, then the homomorphism $\xi$ is weakly pre-Fredholm.
If both of $A$, $B$ are bismall, then the homomorphism $\xi$ is pre-Fredholm.
\end{Corollary}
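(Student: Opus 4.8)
The plan is to derive this directly from Propositions \ref{condition_for_ker_cok_small} and \ref{201907312056}, since a well-decomposable homomorphism already carries exactly the structural hypotheses those propositions require. First I would record the elementary observations that a well-decomposable $\xi$ is in particular weakly well-decomposable (all of $Ker(\xi)$, $Cok(\xi)$, $Coim(\xi)$, $Im(\xi)$ exist, $ker(\xi)$ is normal, $cok(\xi)$ is conormal, and $\bar{\xi}$ is an isomorphism, by Definition \ref{201908041300}), and that $\xi$ is binormal, hence both normal and conormal. Thus, by Definition \ref{201907312115}, to conclude weak pre-Fredholmness (resp. pre-Fredholmness) it only remains to establish (co)smallness (resp. bismallness) of $Ker(\xi)$ and $Cok(\xi)$.

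For the first statement, assume $A$ is small and $B$ is cosmall. Since $\xi$ is normal, Proposition \ref{condition_for_ker_cok_small} gives that $Cok(\xi)$ is cosmall; since $\xi$ is conormal, the same proposition gives that $Ker(\xi)$ is small. Together with weak well-decomposability of $\xi$, this is precisely the definition of weakly pre-Fredholm.

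For the second statement, assume $A$ and $B$ are bismall, i.e. each is both small and cosmall. Then I would assemble four facts: $Ker(\xi)$ is small and $Cok(\xi)$ is cosmall by Proposition \ref{condition_for_ker_cok_small} applied as above (using that $A$ is small, $B$ is cosmall); $Cok(\xi)$ is small by Proposition \ref{201907312056} (using that $A,B$ are small and $\xi$ is normal); and $Ker(\xi)$ is cosmall by Proposition \ref{201907312056} (using that $A,B$ are cosmall and $\xi$ is conormal). Hence $Ker(\xi)$ and $Cok(\xi)$ are each both small and cosmall, i.e. bismall, which is the definition of pre-Fredholm.

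The argument is essentially bookkeeping, so there is no serious obstacle; the only point requiring care is to verify that ``well-decomposable'' really supplies every hypothesis invoked---binormality of $\xi$, together with normality of $ker(\xi)$ and conormality of $cok(\xi)$---and that the definitions of weakly pre-Fredholm and pre-Fredholm presuppose weak well-decomposability, so that no condition is left unchecked. One should also note that the standing assumption of this section, that every idempotent in $\mathcal{C}$ splits, is what licenses the free use of Corollary \ref{201912021909} inside Propositions \ref{condition_for_ker_cok_small} and \ref{201907312056}.
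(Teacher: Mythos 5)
Your proof is correct and follows essentially the same route as the paper: the first claim via Proposition \ref{condition_for_ker_cok_small} (using normality for $Cok(\xi)$ cosmall and conormality for $Ker(\xi)$ small), and the second via Proposition \ref{201907312056} to upgrade to bismallness of the kernel and cokernel. Your extra bookkeeping—that well-decomposable implies weakly well-decomposable and supplies binormality, and that the section's standing split-idempotent assumption is in force—is accurate and merely makes explicit what the paper leaves implicit.
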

\begin{proof}
Suppose that $A$ is a small bimonoid and $B$ is a cosmall bimonoid.
Since $\xi$ is well-decomposable, the cokernel bimonoid $Cok (\xi)$ is cosmall and the kernel biomonoid $Ker(\xi)$ is small by Proposition \ref{condition_for_ker_cok_small}.

Suppose that both of $A,B$ are bismall bimonoids.
Then the homomorphism $\xi$ is weakly pre-Fredholm by the above discussion.
Moreover, the cokernel bimonoid $Cok (\xi)$ is small and kernel bimonoid $Ker(\xi)$ is cosmall by Proposition \ref{201907312056}.
\end{proof}

\subsection{Bismallness of bimonoids in exact sequences}
\label{201908051618}

In this subsection, we discuss some conditions for (co)smallness of a bimonoid to be inherited from an exact sequence.

\begin{Lemma}
\label{201907311220}
Let $A,B,C$ be bimonoids.
Let $\iota : B \to A$ be a normal homomorphism and $\pi :A \to C$ be a homomorphism.
Suppose that the following sequence is exact :
\begin{align}
B \stackrel{\iota}{\to} A \stackrel{\pi}{\to} C \to \mathds{1}
\end{align}
Here, the exactness means that $\pi \circ \iota$ is trivial and the induced homomorphism $Cok (\iota) \to C$ is an isomorphism.
If the bimonoids $B$, $C$ are small, then $A$ is small.
\end{Lemma}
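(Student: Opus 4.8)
The plan is to produce a normalized integral of $A$ and then invoke Theorem \ref{small_integral_equiv} (recall that throughout this section every idempotent in $\mathcal{C}$ splits, so smallness and existence of a normalized integral are interchangeable). Since $C$ is small, Theorem \ref{small_integral_equiv} gives a normalized integral $\sigma_C : \mathds{1} \to C$, which by Proposition \ref{201907311133} is the same as an element $\sigma_C \in Int(\epsilon_C)$.

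First I would observe that, because $\iota$ is normal and the sequence is exact, the homomorphism $\pi$ is, up to the isomorphism $Cok(\iota) \to C$, the canonical cokernel morphism $cok(\iota) : A \to Cok(\iota)$. Since the domain $B$ of $\iota$ is small and $\iota$ is normal, Definition \ref{defn_integral_cok_ker}, which relies on Lemma \ref{ker_cok_integral_exist}, supplies a normalized integral $\tilde{\mu}_{cok(\iota)} \in Int(cok(\iota))$; transporting it through the isomorphism $Cok(\iota) \cong C$ yields a normalized integral $\mu_\pi \in Int(\pi)$, and by Lemma \ref{201907311900} it is a section, $\pi \circ \mu_\pi = id_C$. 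Now I would compose the two: by Proposition \ref{201907311055} the morphism $\sigma_A := \mu_\pi \circ \sigma_C : \mathds{1} \to A$ lies in $Int(\epsilon_C \circ \pi)$, and since $\pi$ is a bimonoid homomorphism we have $\epsilon_C \circ \pi = \epsilon_A$, so $\sigma_A \in Int(\epsilon_A) = Int(A)$ by Proposition \ref{201907311133}; that is, $\sigma_A$ is an integral of $A$. It is normalized because $\epsilon_A \circ \sigma_A = \epsilon_C \circ \pi \circ \mu_\pi \circ \sigma_C = \epsilon_C \circ \sigma_C = id_{\mathds{1}}$, using $\pi \circ \mu_\pi = id_C$ and the normalization of $\sigma_C$. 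Finally Theorem \ref{small_integral_equiv} converts the normalized integral $\sigma_A$ back into the desired smallness of $A$.

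I expect the only delicate point to be the normality bookkeeping: one must make sure that normality of $\iota$ is precisely what produces the section $\mu_\pi$ of $\pi$ as an integral along $\pi$, that this $\mu_\pi$ transports correctly across $Cok(\iota)\cong C$, and that the identity $\epsilon_C \circ \pi = \epsilon_A$ is exactly the bridge that turns ``integral along $\pi$ composed with integral of $C$'' into ``integral of $A$''. Everything past that is a formal consequence of Propositions \ref{201907311133} and \ref{201907311055}, Lemmas \ref{ker_cok_integral_exist} and \ref{201907311900}, and Theorem \ref{small_integral_equiv}; in particular the hypothesis that $B$ is small enters only through the construction of $\mu_\pi$, and the hypothesis that $C$ is small only through $\sigma_C$.
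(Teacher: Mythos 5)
Your proposal is correct and follows essentially the same route as the paper's own proof: both use the smallness of $B$ together with the normality of $\iota$ to produce the normalized integral $\tilde{\mu}_{cok(\iota)}$ along $cok(\iota)$ via Definition \ref{defn_integral_cok_ker}, transport it across $Cok(\iota)\cong C$, compose with $\sigma_C$ using Proposition \ref{201907311055}, and verify normalization by the identical computation $\epsilon_A\circ\mu_\pi\circ\sigma_C=\epsilon_C\circ\pi\circ\mu_\pi\circ\sigma_C=\epsilon_C\circ\sigma_C=id_{\mathds{1}}$. Your extra care in spelling out $\epsilon_C\circ\pi=\epsilon_A$ and the identification $Int(\epsilon_A)=Int(A)$ via Proposition \ref{201907311133} only makes explicit what the paper leaves implicit.
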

\begin{proof}
It suffices to prove that $A$ has a normalized integral by Corollary \ref{201912021909}.
We denote by $\sigma_C$ the normalized integral of $C$.
Since $B$ is small and $\iota$ is normal, we have a normalized integral $\tilde{\mu}_{cok(\iota)}$ along $cok(\iota)$ (see Definition \ref{defn_integral_cok_ker}).
Since the induced homomorphism $Cok (\iota) \to C$ is isomorphism by the assumption, we have a normalized integral $\tilde{\mu}_{\pi}$ along $\pi$.
Then the composition $\tilde{\mu}_{\pi} \circ \sigma_C : \mathds{1} \to A$ gives an integral of $A$ by Proposition \ref{201907311055}.
Moreover $\tilde{\mu}_{\pi} \circ \sigma_C $ is normalized since $\epsilon_A \circ \tilde{\mu}_{\pi} \circ \sigma_C = \epsilon_C \circ \pi \circ \tilde{\mu}_{\pi} \circ \sigma_C = \epsilon_C \circ \sigma_C = id_{\mathds{1}}$ by Lemma \ref{ker_cok_integral_exist}.
It completes the proof.
\end{proof}

\begin{prop}
\label{201907311231}
Let $A,B,C,C^\prime$ be bimonoids.
Let $\iota : B \to A$ be a normal homomorphism, $\pi^\prime : C \to C^\prime$ be a conormal homomorphism and $\pi : A \to C$ be a homomorphism.
Suppose that the following sequence is exact :
\begin{align}
B \stackrel{\iota}{\to} A \stackrel{\pi}{\to} C \stackrel{\pi^\prime}{\to} C^\prime 
\end{align}
Suppose that $Cok ( \iota) \to Ker (\pi^\prime)$ is an isomorphism.
If the bimonoids $B,C$ are small and the bimonoid $C^\prime$ is cosmall, then the bimonoid $A$ is small.
\end{prop}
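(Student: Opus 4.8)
\emph{Proof plan.} The plan is to deduce the statement from Lemma \ref{201907311220} by truncating the four-term sequence at the kernel of $\pi^\prime$. First I would use that $\pi^\prime$ is conormal, so that $(Ker(\pi^\prime), ker(\pi^\prime))$ is a kernel of $\pi^\prime$ in $\mathsf{Bimon}(\mathcal{C})$ and $ker(\pi^\prime)$ is a monomorphism in $\mathcal{C}$. Since $\pi^\prime\circ\pi$ is trivial, $\pi$ factors as $\pi = ker(\pi^\prime)\circ\pi_0$ for a unique bimonoid homomorphism $\pi_0 : A \to Ker(\pi^\prime)$. The goal is then to show that the sequence $B \stackrel{\iota}{\to} A \stackrel{\pi_0}{\to} Ker(\pi^\prime) \to \mathds{1}$ is exact in the sense required by Lemma \ref{201907311220} and to invoke that lemma.

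Next I would check the two conditions of Lemma \ref{201907311220} for this truncated sequence. Triviality of $\pi_0\circ\iota$ follows because $ker(\pi^\prime)\circ\pi_0\circ\iota = \pi\circ\iota$ is trivial, $ker(\pi^\prime)$ preserves the unit, and $ker(\pi^\prime)$ is a monomorphism in $\mathcal{C}$. For the second condition, I would identify the canonical comparison homomorphism $\theta : Cok(\iota) \to Ker(\pi^\prime)$: since $\iota$ is normal, $(Cok(\iota), cok(\iota))$ is a cokernel of $\iota$ in $\mathsf{Bimon}(\mathcal{C})$ and $cok(\iota)$ is an epimorphism in $\mathcal{C}$; as $\pi\circ\iota$ is trivial, $\pi$ factors as $\pi = \bar\pi\circ cok(\iota)$, and $\pi^\prime\circ\bar\pi$ is trivial because $cok(\iota)$ is epi and $\pi^\prime\circ\pi$ is trivial, so $\bar\pi$ factors through $ker(\pi^\prime)$ as $\bar\pi = ker(\pi^\prime)\circ\theta$. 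Then $ker(\pi^\prime)\circ\theta\circ cok(\iota) = \bar\pi\circ cok(\iota) = \pi = ker(\pi^\prime)\circ\pi_0$, and cancelling the monomorphism $ker(\pi^\prime)$ gives $\theta\circ cok(\iota) = \pi_0$; hence $\theta$ is precisely the homomorphism on $Cok(\iota)$ induced by $\pi_0$, and by hypothesis it is an isomorphism.

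Finally I would establish the smallness of $Ker(\pi^\prime)$: the homomorphism $\pi^\prime : C \to C^\prime$ is conormal with $C$ small and $C^\prime$ cosmall, so $Ker(\pi^\prime)$ is small by Proposition \ref{condition_for_ker_cok_small}. Since in addition $B$ is small and $\iota$ is normal, Lemma \ref{201907311220} applied to $B \stackrel{\iota}{\to} A \stackrel{\pi_0}{\to} Ker(\pi^\prime) \to \mathds{1}$ yields that $A$ is small. (Throughout this section idempotents split, so smallness amounts to the existence of a normalized integral, which is what Lemma \ref{201907311220} and Proposition \ref{condition_for_ker_cok_small} both rest on, via Corollary \ref{201912021909}.) The only real obstacle is the bookkeeping in the middle paragraph---checking that the comparison map $\theta$ coincides with the map induced by $\pi_0$, so that the hypothesis ``$Cok(\iota)\to Ker(\pi^\prime)$ is an isomorphism'' is exactly the exactness hypothesis Lemma \ref{201907311220} needs; everything else is a direct application of the two cited results.
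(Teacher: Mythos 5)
Your proposal is correct and follows essentially the same route as the paper: truncate the four-term sequence to $B \stackrel{\iota}{\to} A \stackrel{\pi_0}{\to} Ker(\pi^\prime) \to \mathds{1}$, note that $Ker(\pi^\prime)$ is small by Proposition \ref{condition_for_ker_cok_small}, and apply Lemma \ref{201907311220}. The paper states the passage to the truncated exact sequence in one line, whereas you spell out the factorization of $\pi$ through $ker(\pi^\prime)$ and the identification of the comparison map $\theta$ with the map induced by $\pi_0$; that bookkeeping is exactly what the paper leaves implicit, and it is done correctly.
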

\begin{proof}
By the assumption, we obtain an exact sequence in the sense of Lemma \ref{201907311220},
\begin{align}
B \stackrel{\iota}{\to} A \stackrel{\bar{\pi}}{\to} Ker ( \pi^\prime ) \to \mathds{1} .
\end{align}
Note that $Ker (\pi^\prime)$ is small by Proposition \ref{condition_for_ker_cok_small}.
Since $\iota$ is normal and $B, Ker (\pi^\prime)$ are small, the bimonoid $A$ is small due to Lemma \ref{201907311220}.
\end{proof}

We have dual statements as follows.
For convenience of the readers, we give them without proof.

\begin{Lemma}
Let $A,B,C$ be bimonoids.
Let $\iota : B \to A$ be a homomorphism and $\pi :A \to C$ be a conormal homomorphism.
Suppose that the following sequence is exact.
\begin{align}
\mathds{1} \to B \stackrel{\iota}{\to} A \stackrel{\pi}{\to} C 
\end{align}
Here, the exactness means that $\pi \circ \iota$ is trivial and the induced morphism $B \to Ker (\xi)$ is an isomorphism.
If $\pi$ is conormal and the bimonoids $B$, $C$ are cosmall, then $A$ is cosmall.
\end{Lemma}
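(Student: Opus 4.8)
The plan is to dualize the proof of Lemma \ref{201907311220}: I will produce a normalized cointegral on $A$ and then invoke the dual of Theorem \ref{small_integral_equiv} (available since every idempotent in $\mathcal{C}$ splits), which asserts that a bimonoid is cosmall precisely when it admits a normalized cointegral. So the whole problem reduces to exhibiting a normalized cointegral $A \to \mathds{1}$.

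First I would build a normalized integral along $\iota$. Since $C$ is cosmall and $\pi$ is conormal, Definition \ref{defn_integral_cok_ker} supplies a normalized integral $\tilde{\mu}_{ker(\pi)} \in Int(ker(\pi))$ along the canonical morphism $ker(\pi) : Ker(\pi) \to A$, and Lemma \ref{201907311900} (conormal case) gives $\tilde{\mu}_{ker(\pi)} \circ ker(\pi) = id_{Ker(\pi)}$. By the exactness hypothesis the induced bimonoid homomorphism $\theta : B \to Ker(\pi)$ is an isomorphism with $ker(\pi)\circ\theta = \iota$. Hence, combining Proposition \ref{201907311053} (so $\theta^{-1}\in Int(\theta)$) with Proposition \ref{201907311055}, the morphism $\tilde{\mu}_\iota := \theta^{-1}\circ\tilde{\mu}_{ker(\pi)} : A \to B$ is an integral along $\iota$, and it is normalized because $\tilde{\mu}_\iota\circ\iota = \theta^{-1}\circ\tilde{\mu}_{ker(\pi)}\circ ker(\pi)\circ\theta = id_B$.

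Next, since $B$ is cosmall it has a normalized cointegral $\sigma^B : B \to \mathds{1}$ (again by the dual of Theorem \ref{small_integral_equiv}), which by Proposition \ref{201907311133} is a normalized integral along $\eta_B : \mathds{1}\to B$. As $\iota$ is a bimonoid homomorphism, $\iota\circ\eta_B = \eta_A$, so Proposition \ref{201907311055} applied to the pair $(\tilde{\mu}_\iota , \sigma^B)$ shows that $\sigma^B\circ\tilde{\mu}_\iota : A \to \mathds{1}$ is an integral along $\eta_A$, i.e.\ a cointegral of $A$ by Proposition \ref{201907311133}. It is normalized, since $(\sigma^B\circ\tilde{\mu}_\iota)\circ\eta_A = \sigma^B\circ\tilde{\mu}_\iota\circ\iota\circ\eta_B = \sigma^B\circ\eta_B = id_{\mathds{1}}$, which is exactly the normalization condition for a cointegral. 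Therefore $A$ admits a normalized cointegral and is cosmall.

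This is a routine dualization, so no step is a genuine obstacle; the only point needing care is bookkeeping of the composition order in Proposition \ref{201907311055} and checking that transporting $\tilde{\mu}_{ker(\pi)}$ across the isomorphism $\theta$ yields a \emph{normalized} integral along $\iota$ and not merely an integral. For the reader's convenience one could instead simply remark that the statement is dual to Lemma \ref{201907311220} via the dualities in Definitions \ref{201908040928} and \ref{202002271041}, but spelling out the argument as above keeps the reference chain explicit.
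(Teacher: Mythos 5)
Your proof is correct and is precisely the dualization of the paper's proof of Lemma \ref{201907311220} that the paper intends (it states this lemma ``without proof'' as a dual statement): you build the normalized integral $\tilde{\mu}_\iota$ along $\iota$ by transporting $\tilde{\mu}_{ker(\pi)}$ across the isomorphism $B \cong Ker(\pi)$, compose with $\sigma^B$ to get a normalized cointegral of $A$, and conclude cosmallness via the dual of Theorem \ref{small_integral_equiv} under the section's standing split-idempotent assumption. All citations and the normalization checks are used correctly.
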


\begin{prop}
\label{201907311230}
Let $A,B,B^\prime,C$ be bimonoids.
Let $\iota^\prime : B^\prime \to B$ be a normal homomorphism, $\pi :A \to C$ be a conormal homomorphism, and $\iota : B \to A$ be a homomorphism.
Suppose that the following sequence is exact.
\begin{align}
B^\prime \stackrel{\iota^\prime}{\to} B \stackrel{\iota}{\to} A \stackrel{\pi}{\to} C 
\end{align}
Suppose that $Cok ( \iota^\prime ) \to Ker (\pi)$ is an isomorphism.
If the bimonoid $B^\prime$ are small and the bimonoids $B,C$ is cosmall, then the bimonoid $A$ is cosmall.
\end{prop}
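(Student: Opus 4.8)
The plan is to dualize the proof of Proposition \ref{201907311231} step by step: I would use the dual of Lemma \ref{201907311220} (the unnumbered Lemma stated just above) in place of Lemma \ref{201907311220}, and the ``cosmall'' half of Proposition \ref{condition_for_ker_cok_small} in place of its ``small'' half.

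First I would extract from the hypotheses an exact sequence of the shape required by that dual Lemma. Since $\iota^\prime$ is normal, the cokernel bimonoid $Cok(\iota^\prime)$ exists in $\mathsf{Bimon}(\mathcal{C})$; as $\iota\circ\iota^\prime$ is trivial, $\iota$ factors as $B\stackrel{cok(\iota^\prime)}{\to}Cok(\iota^\prime)\stackrel{\bar{\iota}}{\to}A$. Since $\pi$ is conormal, $Ker(\pi)$ exists, and because $cok(\iota^\prime)$ is an epimorphism and $\pi\circ\bar{\iota}\circ cok(\iota^\prime)=\pi\circ\iota$ is trivial, the composite $\pi\circ\bar{\iota}$ is trivial; hence $\bar{\iota}$ factors through $ker(\pi)$ and induces the canonical homomorphism $Cok(\iota^\prime)\to Ker(\pi)$, which is an isomorphism by assumption. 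Therefore
\[
\mathds{1}\to Cok(\iota^\prime)\stackrel{\bar{\iota}}{\to}A\stackrel{\pi}{\to}C
\]
is exact in the sense of the dual of Lemma \ref{201907311220}.

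Next I would invoke Proposition \ref{condition_for_ker_cok_small}: since $\iota^\prime:B^\prime\to B$ is a normal homomorphism with $B^\prime$ small and $B$ cosmall, the cokernel $Cok(\iota^\prime)$ is cosmall. Now $Cok(\iota^\prime)$ and $C$ are both cosmall and $\pi$ is conormal, so the dual of Lemma \ref{201907311220} applies to the displayed exact sequence and yields that $A$ is cosmall, which is the assertion.

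I do not expect a genuine obstacle. The only points needing care are bookkeeping: the short diagram chase producing the exact sequence $\mathds{1}\to Cok(\iota^\prime)\to A\to C$ from the given four-term exactness (which uses only the universal properties of kernels and cokernels in $\mathsf{Bimon}(\mathcal{C})$ together with the fact that $cok(\iota^\prime)$ is an epimorphism), and verifying that the normality and conormality hypotheses match precisely what the dual Lemma and the cosmall half of Proposition \ref{condition_for_ker_cok_small} demand. Both are routine, so the proof is essentially a transcription of the argument for Proposition \ref{201907311231}.
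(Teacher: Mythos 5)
Your proposal is correct and is exactly the intended argument: the paper omits the proof of this proposition, stating only that it is the dual of Proposition \ref{201907311231}, and your step-by-step dualization (replacing Lemma \ref{201907311220} by its dual, and the ``small'' half of Proposition \ref{condition_for_ker_cok_small} by its ``cosmall'' half applied to $\iota^\prime$) reproduces that proof faithfully. The bookkeeping you flag, producing the exact sequence $\mathds{1}\to Cok(\iota^\prime)\to A\to C$ from the hypothesis that $Cok(\iota^\prime)\to Ker(\pi)$ is an isomorphism, is handled the same way in the paper's proof of the small case.
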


\begin{theorem}
\label{201907312238}
Let $A,C$ be bismall bicommutative Hopf monoids in $\mathcal{C}$ and $B$ be an arbitrary bicommutative bimonoid.
If there exists an exact sequence $\mathds{1} \to A \to B \to C \to \mathds{1}$ of bicommutative Hopf monoids, then $B$ is bismall.
\end{theorem}
\begin{proof}
Consider an exact sequence in $\mathsf{Hopf}^\mathsf{bc} (\mathcal{C})$ where $B^\prime = \mathds{1} = C^\prime$.
\begin{align}
B^\prime \stackrel{\iota^\prime}{\to} B \stackrel{\iota}{\to} A \stackrel{\pi}{\to} C \stackrel{\pi^\prime}{\to} C^\prime
\end{align}
By Proposition \ref{stab_gives_cokernel}, any morphism in $\mathsf{Hopf}^\mathsf{bc} ( \mathcal{C})$ is binormal.
By Corollary \ref{201908040952}, a cokernel (kernel, resp.) as a bimonoid is a cokernel (cokernel, resp.) as a bicommutative Hopf monoid.
Hence, the assumptions in Proposition \ref{201907311231}, \ref{201907311230} are deduced from the assumption in the statement.
By Proposition \ref{201907311231}, \ref{201907311230}, we obtain the result.
\end{proof}

\section{Volume on abelian category}
\label{201912022325}

In this section, we introduce and study a notion of {\it volume on an abelian category}.

\subsection{Basic properties}

\begin{Defn}
\label{201911231935}
\rm
For an abelian monoid $M$\footnote{The reason that we consider a monoid $M$, not a group is that we deal with {\it infinite dimension} or {\it infinite order} uniformly.}, {\it an $M$-valued volume on the abelian category $\mathcal{A}$} is an assignment of $v(A) \in M$ to an object $A$ of $\mathcal{A}$ which satisfies
\begin{enumerate}
\item
For a zero object $0$ of $\mathcal{A}$, the corresponding element $v(0) \in M$ is the unit $1$ of the abelian monoid $M$.
\item
For an exact sequence $0 \to A \to B \to C \to 0$ in $\mathcal{A}$, we have $v(B) = v(A) \cdot v(C)$.
\end{enumerate}
\end{Defn}

\begin{prop}
\label{201911231957}
An $M$-valued volume $v$ on an abelian category $\mathcal{A}$ is an isomorphism invariant.
In other words, if objects $A, B$ of $\mathcal{A}$ are isomorphic to each other, then we have $v(A) = v(B)$.
\end{prop}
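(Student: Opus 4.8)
The plan is to realize an isomorphism $A\cong B$ as one of the two nontrivial maps in a short exact sequence whose remaining term is a zero object, and then read off the conclusion from the two axioms of Definition \ref{201911231935}. This is a direct computation with no serious obstacle; I record it mainly for completeness and because it is used implicitly later (e.g. in Proposition \ref{201908010934}).

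Concretely, I would fix an isomorphism $f : A \to B$ in $\mathcal{A}$. In an abelian category an isomorphism has zero kernel and zero cokernel, so, choosing a zero object $0$ of $\mathcal{A}$, the sequence
\begin{equation}
0 \to A \stackrel{f}{\to} B \to 0 \to 0
\end{equation}
is exact in $\mathcal{A}$: at $A$ the image of $0\to A$ is $0=\mathrm{Ker}(f)$, at $B$ the image of $f$ is all of $B=\mathrm{Ker}(B\to 0)$, and exactness at the last $0$ is trivial. Applying axiom (2) of Definition \ref{201911231935} to this short exact sequence $0 \to A \to B \to 0 \to 0$ gives $v(B) = v(A) \cdot v(0)$, and axiom (1) gives $v(0) = 1$, the unit of the abelian monoid $M$. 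Hence $v(B) = v(A) \cdot 1 = v(A)$, as claimed.

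The only points requiring (minimal) care are the standard facts that an isomorphism in an abelian category does fit into the displayed sequence, and that the conclusion is independent of the choice of zero object, since any two zero objects are isomorphic and all have volume $1$ by axiom (1). One could equally use the exact sequence $0 \to 0 \to A \stackrel{f}{\to} B \to 0$, which by axiom (2) yields $v(A) = v(0) \cdot v(B) = v(B)$ directly; in fact this shows more generally that $v$ descends to the Grothendieck group of $\mathcal{A}$, of which isomorphism invariance is the simplest instance.
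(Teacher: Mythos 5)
Your proposal is correct and follows essentially the same route as the paper: both realize the isomorphism as the middle map of the short exact sequence $0 \to A \to B \to 0 \to 0$, apply axiom (2) of Definition \ref{201911231935} to get $v(B) = v(A)\cdot v(0)$, and conclude via axiom (1) that $v(0)=1$. The extra remarks on exactness and independence of the choice of zero object are fine but not needed beyond what the paper records.
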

\begin{proof}
If we choose an isomorphism between $A$ and $B$, then we obtain an exact sequence $0 \to A \to B \to 0 \to 0$.
By the second axiom in Definition \ref{201911231935}, we obtain $v(B) = v(A) \cdot v(0)$.
Since $v(0) = 1$ by the first axiom in Definition \ref{201911231935}, we obtain $v(A) = v(B)$.
\end{proof}

\begin{prop}
An $M$-valued volume $v$ on an abelian category $\mathcal{A}$ is compatible with the direct sum $\oplus$ on the abelian category $\mathcal{A}$.
In other words, for objects $A,B$ of $\mathcal{A}$, we have $v(A \oplus B) = v(A) \cdot v(B)$.
\end{prop}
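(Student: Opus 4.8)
The plan is to reduce the assertion to the second axiom of Definition~\ref{201911231935} by exhibiting a short exact sequence whose outer terms are $A$ and $B$ and whose middle term is $A\oplus B$. In an abelian category the biproduct $A\oplus B$ is equipped with structure morphisms $\iota_A:A\to A\oplus B$, $\iota_B:B\to A\oplus B$, $p_A:A\oplus B\to A$, $p_B:A\oplus B\to B$ satisfying $p_A\iota_A=\mathrm{id}_A$, $p_B\iota_B=\mathrm{id}_B$, $p_A\iota_B=0$, $p_B\iota_A=0$ and $\iota_Ap_A+\iota_Bp_B=\mathrm{id}_{A\oplus B}$. First I would check that
\begin{equation*}
0\longrightarrow A\stackrel{\iota_A}{\longrightarrow} A\oplus B\stackrel{p_B}{\longrightarrow} B\longrightarrow 0
\end{equation*}
is exact: $\iota_A$ is a monomorphism since it is split by $p_A$, the morphism $p_B$ is an epimorphism since it is split by $\iota_B$, and exactness in the middle amounts to the statement that $\iota_A$ is a kernel of $p_B$. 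For the last point one verifies, using the biproduct identities, that any morphism $f$ with $p_B f=0$ factors uniquely through $\iota_A$ via $p_A f$ (indeed $f=\iota_A p_A f+\iota_B p_B f=\iota_A p_A f$), which is the standard characterization of a biproduct as a kernel–cokernel pair.

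Having established this short exact sequence, axiom (2) of Definition~\ref{201911231935} immediately yields $v(A\oplus B)=v(A)\cdot v(B)$, which is exactly the claim. I do not expect any genuine obstacle: the only point requiring (routine) care is the verification that $\iota_A=\ker(p_B)$, i.e.\ exactness in the middle, and this follows directly from the biproduct identities in any additive category possessing the relevant kernels and cokernels. One may also present the argument symmetrically, using the companion sequence $0\to B\to A\oplus B\to A\to 0$ to obtain $v(A\oplus B)=v(B)\cdot v(A)$; commutativity of the abelian monoid $M$ makes the two computations consistent.
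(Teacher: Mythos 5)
Your proposal is correct and follows essentially the same route as the paper: both use the short exact sequence $0 \to A \to A\oplus B \to B \to 0$ together with axiom (2) of Definition \ref{201911231935}. The only difference is that you spell out the (routine) verification of exactness from the biproduct identities, which the paper takes for granted.
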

\begin{proof}
Note that we have an exact sequence $0 \to A \to A \oplus B \to B \to 0$.
By the second axiom in Definition \ref{201911231935}, we obtain $v(A \oplus B) = v(A) \cdot v(B)$.
\end{proof}

\subsection{Fredholm index}

In this subsection, we introduce a notion of {\it index} of morphisms in an abelian category.
By regarding objects of $\mathcal{A}$ with invertible volume as ``finite-dimensional objects'', we define a notion of  Fredholm morphisms in $\mathcal{B}$ and its index which is an invariant respecting compositions and robust to finite perturbations (see Definition \ref{201911232050}).
It generalizes the Fredholm index of Fredholm operator in the algebraic sense.

\begin{Defn}
\label{201912022112}
\rm
Let $\mathcal{B}$ be an abelian category and $\mathcal{A}$ be a abelian subcategory.
The abelian subcategory $\mathcal{A}$ is {\it closed under short exact sequences} if $A,C$ are objects of $\mathcal{A}$ and $B$ is an object of $\mathcal{B}$ for a short exact sequence $0 \to A \to B \to C \to 0$ in $\mathcal{B}$, then $B$ is an object of $\mathcal{A}$.
\end{Defn}

\begin{Defn}
\label{201911232050}
\rm
Let $\mathcal{B}$ be an abelian category and $\mathcal{A}$ be its abelian subcategory closed under short exact sequences.
Let $M$ be an abelian monoid and $v$ be an $M$-valued volume on $\mathcal{A}$.
For two objects $A,B$ of $\mathcal{B}$, a morphism $f : A \to B$ is {\it Fredholm with respect to the volume $v$} if $Ker(f)$ and $Cok(f)$ are essentially objects of $\mathcal{A}$ and the volumes $v(Ker(f)), v(Cok(f)) \in M$ are invertible.
For a Fredholm morphism $f : A \to B$, we define its {\it Fredholm index} by 
\begin{align}
Ind_{\mathcal{B},\mathcal{A},v} (f) \stackrel{\mathrm{def.}}{=} v(Cok(f)) \cdot v(Ker(f))^{-1} \in M .
\end{align}
\end{Defn}

\begin{Lemma}
\label{201911232046}
Let $A$ be an object of $\mathcal{B}$.
The identity $Id_A$ on $A$ is Fredholm.
We have $Ind_{\mathcal{B},\mathcal{A},v} (Id_A) =1 \in M$.
\end{Lemma}
\begin{proof}
It follows from the fact that $Ker (Id_A) = 0 = Cok (Id_A)$ whose volume is the unit $1 \in M$.
\end{proof}

\begin{Lemma}
\label{201911232047}
Let $f : A \to B$ and $g : B \to C$ be morphisms in $\mathcal{B}$.
If the morphisms $f,g$ are Fredholm, then the composition $g \circ f$ is Fredholm.
We have $Ind_{\mathcal{B},\mathcal{A},v} (g \circ f) = Ind_{\mathcal{B},\mathcal{A},v} (g) \cdot Ind_{\mathcal{B},\mathcal{A},v} (f) \in M$.
\end{Lemma}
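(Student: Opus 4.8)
The plan is to reduce the statement to the standard homological algebra fact that for composable morphisms $f:A\to B$ and $g:B\to C$ in an abelian category there is a natural six-term exact sequence
\[
0 \to Ker(f) \to Ker(gf) \to Ker(g) \to Cok(f) \to Cok(gf) \to Cok(g) \to 0 .
\]
First I would produce this exact sequence: the maps $Ker(f)\hookrightarrow Ker(gf)$ and $Cok(gf)\twoheadrightarrow Cok(g)$ are the obvious ones, the map $Ker(gf)\to Ker(g)$ is induced by $f$, the map $Cok(f)\to Cok(gf)$ is induced by $g$, and the connecting map $Ker(g)\to Cok(f)$ is the restriction-corestriction of $f$; exactness is a routine diagram chase (or an application of the snake lemma to the appropriate square). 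Since this is pure abelian-category nonsense, I would cite it rather than grind through it.

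Next I would invoke the hypothesis that $Ker(f),Cok(f),Ker(g),Cok(g)$ are all (essentially) objects of $\mathcal{A}$ with invertible volume. Because $\mathcal{A}$ is closed under short exact sequences in $\mathcal{B}$ (Definition \ref{201912022112}), breaking the six-term sequence into short exact pieces shows successively that $Ker(gf)$ and $Cok(gf)$ are also objects of $\mathcal{A}$; hence the volume $v$ is defined on them. Applying axiom (2) of Definition \ref{201911231935} to each short exact piece, I obtain $v(Ker(gf)) = v(Ker(f))\cdot v(\mathrm{im})$ and $v(Cok(gf)) = v(\mathrm{im}')\cdot v(Cok(g))$ where $\mathrm{im}$ (resp. $\mathrm{im}'$) is the image of the connecting map $Ker(g)\to Cok(f)$, and additionally $v(Ker(g)) = v(\mathrm{coker\ of\ }Ker(gf)\to Ker(g))\cdot v(\mathrm{something})$ — more cleanly: the single long exact sequence gives, after taking the alternating product of volumes (legitimate since every term lies in $\mathcal{A}$ and each has invertible volume, so we may pass to the group completion of $M$ or just manipulate formally in $M$ after multiplying through), the relation
\[
v(Ker(f))\cdot v(Ker(g))\cdot v(Cok(gf)) \;=\; v(Ker(gf))\cdot v(Cok(f))\cdot v(Cok(g)).
\]
In particular $v(Ker(gf))$ and $v(Cok(gf))$ are products/quotients of invertibles, hence invertible, so $g\circ f$ is Fredholm. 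Rearranging the displayed identity, and using that $M$ (or its relevant localization) is abelian,
\[
v(Cok(gf))\cdot v(Ker(gf))^{-1} \;=\; \bigl(v(Cok(g))\cdot v(Ker(g))^{-1}\bigr)\cdot\bigl(v(Cok(f))\cdot v(Ker(f))^{-1}\bigr),
\]
which is exactly $Ind_{\mathcal{B},\mathcal{A},v}(g\circ f) = Ind_{\mathcal{B},\mathcal{A},v}(g)\cdot Ind_{\mathcal{B},\mathcal{A},v}(f)$.

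The main obstacle is bookkeeping rather than conceptual: one must be careful that $M$ is only a monoid, so "alternating product of volumes" is not literally meaningful until one knows every term involved has an invertible volume. The clean way around this is to first establish, step by step along the six-term sequence, that each of $Ker(gf)$, $Cok(gf)$, and the image of the connecting homomorphism lies in $\mathcal{A}$ with invertible volume — using closure under short exact sequences and the fact that a quotient or subobject appearing in a short exact sequence with two invertible-volume terms has invertible volume — and only then to rearrange the resulting genuine identities in $M$ into the index formula, moving all inverses to the correct side before multiplying. A secondary minor point is the phrase "essentially objects of $\mathcal{A}$": I would interpret it, as the paper evidently does, as "isomorphic in $\mathcal{B}$ to an object of $\mathcal{A}$", and use Proposition \ref{201911231957} (volume is an isomorphism invariant) to transport volumes along these isomorphisms without further comment.
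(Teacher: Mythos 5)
Your proposal follows essentially the same route as the paper's proof: the same six-term exact sequence relating the kernels and cokernels of $f$, $g$, and $g\circ f$, the same decomposition into short exact pieces (using closure of $\mathcal{A}$ under short exact sequences and the fact that invertibility of a product in the abelian monoid $M$ forces invertibility of each factor) to establish that $g\circ f$ is Fredholm, and the same resulting identity $v(Ker(f))\cdot v(Ker(g))\cdot v(Cok(g\circ f)) = v(Ker(g\circ f))\cdot v(Cok(f))\cdot v(Cok(g))$, rearranged into the index formula. The argument is correct and matches the paper's.
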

\begin{proof}
We use the exact sequence $0 \to Ker (f) \to Ker (g\circ f) \to Ker (g) \stackrel{cok(f)\circ ker(g)}{\to} Cok (f) \to Cok (g\circ f) \to Cok (g) \to 0$.
Since $v( Ker (g)) \in M$ is invertible, any subobject of $Ker (g)$ has an invertible volume.
The volume $v(Ker ( cok(f) \circ ker(g))) \in M$ is invertible.
By the induced exact sequence $0 \to Ker (f) \to Ker (g\circ f) \to Ker ( cok(f) \circ ker(g)) \to 0$, we see that $v(Ker (g\circ f)) \in M$ is invertible.
Likewise, $v(Cok(g \circ f))$ is invertible.
Hence, the composition $g \circ f$ is Fredholm with respect to the volume $v$.
By repeating the second axiom of volumes in Definition \ref{201911231935}, we obtain 
\begin{align}
v(Ker (f)) \cdot v(Ker (g)) \cdot v(Cok (g\circ f)) = v(Ker (g\circ f)) \cdot v(Cok (f)) \cdot v(Cok (g)) .
\end{align}
It proves that $Ind_{\mathcal{B},\mathcal{A},v} (g \circ f) = Ind_{\mathcal{B},\mathcal{A},v} (g) \cdot Ind_{\mathcal{B},\mathcal{A},v} (f) \in M$.
\end{proof}

\begin{Defn}
\label{201912022319}
\rm
Let $\mathcal{B}$ be an abelian category and $\mathcal{A}$ be an abelian subcategory which is closed under short exact sequences.
Let $v$ be an $M$-valued volume on $\mathcal{A}$.
We define a category $\mathcal{A}^{Fr}$ as a subcategory of $\mathcal{B}$ formed by all Fredholm homomorphisms.
It is a well-defined category due to Lemma \ref{201911232046}, \ref{201911232047}.
\end{Defn}

\begin{prop}
\label{201911232139}
Every morphism $f : A \to B$ between objects with invertible volumes is Fredholm.
Then we have
\begin{align}
Ind_{\mathcal{B},\mathcal{A},v} ( f ) = v (B) \circ v ( A)^{-1} \in M . 
\end{align}
\end{prop}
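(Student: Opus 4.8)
The plan is to factor $f$ through its image and read off both the Fredholm property and the index from the two canonical short exact sequences attached to that factorization, using only the multiplicativity of $v$ from Definition \ref{201911231935} and the commutativity of $M$.

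First I would note that, since $A$ and $B$ are objects of $\mathcal{A}$ and $\mathcal{A}$ is an abelian subcategory of $\mathcal{B}$, the morphism $f$ is also a morphism of $\mathcal{A}$, so its kernel, image and cokernel formed in $\mathcal{B}$ are (essentially) objects of $\mathcal{A}$; in particular $v(\mathrm{Ker}(f))$, $v(\mathrm{Im}(f))$ and $v(\mathrm{Cok}(f))$ are defined. Then I would invoke the two short exact sequences
$$0 \to \mathrm{Ker}(f) \to A \to \mathrm{Im}(f) \to 0, \qquad 0 \to \mathrm{Im}(f) \to B \to \mathrm{Cok}(f) \to 0$$
in $\mathcal{A}$ and apply axiom (2) of Definition \ref{201911231935} to obtain $v(A) = v(\mathrm{Ker}(f)) \cdot v(\mathrm{Im}(f))$ and $v(B) = v(\mathrm{Im}(f)) \cdot v(\mathrm{Cok}(f))$.

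Next I would establish Fredholmness. Since $M$ is a commutative monoid, whenever a product $xy$ is invertible each factor is invertible, with $x^{-1} = y \cdot (xy)^{-1}$. Applying this to the relation $v(A) = v(\mathrm{Ker}(f)) \cdot v(\mathrm{Im}(f))$, whose left side is invertible by hypothesis, shows that $v(\mathrm{Ker}(f))$ and $v(\mathrm{Im}(f))$ are invertible; then $v(\mathrm{Cok}(f)) = v(B) \cdot v(\mathrm{Im}(f))^{-1}$ is invertible as a product of invertible elements. Hence $\mathrm{Ker}(f)$ and $\mathrm{Cok}(f)$ are objects of $\mathcal{A}$ with invertible volume, i.e. $f$ is Fredholm with respect to $v$.

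Finally I would compute the index. From the two relations above we get $v(\mathrm{Ker}(f)) = v(A) \cdot v(\mathrm{Im}(f))^{-1}$ and $v(\mathrm{Cok}(f)) = v(B) \cdot v(\mathrm{Im}(f))^{-1}$, so by Definition \ref{201911232050} and commutativity of $M$,
$$\mathrm{Ind}_{\mathcal{B},\mathcal{A},v}(f) = v(\mathrm{Cok}(f)) \cdot v(\mathrm{Ker}(f))^{-1} = \bigl(v(B) \cdot v(\mathrm{Im}(f))^{-1}\bigr) \cdot \bigl(v(A) \cdot v(\mathrm{Im}(f))^{-1}\bigr)^{-1} = v(B) \cdot v(A)^{-1}.$$
The only genuine subtlety is the very first step: ensuring $\mathrm{Ker}(f)$, $\mathrm{Im}(f)$ and $\mathrm{Cok}(f)$ really lie in $\mathcal{A}$ so that $v$ can be evaluated on them, which is precisely where the hypothesis that $\mathcal{A}$ is an abelian subcategory (closed under subobjects and quotients, hence under kernels, images and cokernels) is used; the remainder is a formal manipulation in the commutative monoid $M$.
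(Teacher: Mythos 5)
Your proof is correct and follows essentially the same route as the paper: the paper splices your two short exact sequences through the image into the four-term sequence $0 \to Ker(f) \to A \to B \to Cok(f) \to 0$, reads off $v(B)\cdot v(Ker(f)) = v(A)\cdot v(Cok(f))$, and deduces invertibility of $v(Ker(f))$ and $v(Cok(f))$ from the second axiom of Definition \ref{201911231935} exactly as you do. Your version merely makes explicit the commutative-monoid fact that factors of an invertible element are invertible, which the paper leaves implicit.
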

\begin{proof}
If objects $A,B$ of $\mathcal{A}$ have invertible volumes, then for a morphism $f : A \to B$ its kernel and cokernel have invertible volumes due to the second axiom in Definition \ref{201911231935}.

By the exact sequence $0 \to Ker(f) \to A \stackrel{f}{\to} B \to Cok (f) \to 0$, we have $v(B) \cdot v(Ker(f)) = v(A) \cdot v(Cok(f))$.
We obtain $Ind_{\mathcal{B},\mathcal{A},v} ( f ) = v (B) \circ v ( A)^{-1}$.
\end{proof}

\subsection{Finite perturbation}

Consider an abelian category $\mathcal{B}$ and its abelian subcategory $\mathcal{A}$ closed under short exact sequences.
Let $v$ be an $M$-valued volume on the abelian category $\mathcal{A}$, not necessarily on $\mathcal{B}$ where $M$ is an abelian monoid.

\begin{Defn}
\label{201912041441}
\rm
Let $f$ be a morphism in $\mathcal{B}$.
A morphism $f$ in $\mathcal{B}$ is {\it finite with respect to the volume $v$} if the value of the image of $f$ (equivalently, the coimage of $f$) by $v$ is invertible in $M$.
In other words, the image $Im ( f )$ is essentially an object of $\mathcal{A}$ and the volume $v( Im (f)) \in M$ is invertible.
\end{Defn}

\begin{prop}[Invariance of index under finite perturbations]
\label{201912021443}
Let $f,k : A \to B$ be morphisms in $\mathcal{B}$.
If the morphism $f$ is Fredholm and the morphism $k$ is finite with respect to the volume $v$, then the morphism $(f + k) : A \to B$ is Fredholm with respect to the volume $v$.
Moreover, we have
\begin{align}
Ind_{\mathcal{B},\mathcal{A},v} ( f + k )
=
Ind_{\mathcal{B},\mathcal{A},v} ( f  ) \in M .
\end{align}
\end{prop}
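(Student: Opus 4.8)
The plan is to reduce the statement to the two facts already at our disposal: the multiplicativity of the index under composition (Lemma \ref{201911232047}) together with the computation of the index of a morphism between objects of invertible volume (Proposition \ref{201911232139}), and the behaviour of kernels and cokernels under the short exact sequences coming from image factorizations. First I would record the standard factorization of $f+k$ and of $k$ through their images, and isolate the ``small'' part: since $k$ is finite, $\mathrm{Im}(k)$ is essentially an object of $\mathcal{A}$ with invertible volume, and this is the only place where the perturbation can change the kernel or the cokernel of $f$.

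The key steps, in order, are as follows. First, establish that $f+k$ is Fredholm. I would use the exact sequences relating $\mathrm{Ker}(f+k)$, $\mathrm{Ker}(f)$ and a subquotient of $\mathrm{Im}(k)$: concretely, the restriction of $f+k$ to $\mathrm{Ker}(f)$ agrees with $k|_{\mathrm{Ker}(f)}$, whose image is a subobject of $\mathrm{Im}(k)$ and hence has invertible volume; dually on the cokernel side, the image of $\mathrm{Cok}(f) \to \mathrm{Cok}(f+k)$ (or the relevant comparison map) is a subquotient of $\mathrm{Im}(k)$. Combining these with the hypothesis that $v(\mathrm{Ker}(f)), v(\mathrm{Cok}(f))$ are invertible, and using that $\mathcal{A}$ is closed under short exact sequences so that the relevant kernels and cokernels land in $\mathcal{A}$, one gets that $v(\mathrm{Ker}(f+k))$ and $v(\mathrm{Cok}(f+k))$ are products of invertible elements of $M$, hence invertible. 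Second, to compute the index, I would exhibit a short exact sequence linking $\mathrm{Ker}(f)$, $\mathrm{Ker}(f+k)$, $\mathrm{Cok}(f)$, $\mathrm{Cok}(f+k)$ and two copies of the same subquotient $K$ of $\mathrm{Im}(k)$ (one entering on the kernel side, one on the cokernel side), so that applying axiom (2) of Definition \ref{201911231935} repeatedly yields
\begin{align}
v(\mathrm{Ker}(f+k)) \cdot v(\mathrm{Cok}(f)) \cdot v(K) = v(\mathrm{Ker}(f)) \cdot v(K) \cdot v(\mathrm{Cok}(f+k)).
\end{align}
Since $v(K)$ is invertible it cancels, and rearranging gives $v(\mathrm{Cok}(f+k)) \cdot v(\mathrm{Ker}(f+k))^{-1} = v(\mathrm{Cok}(f)) \cdot v(\mathrm{Ker}(f))^{-1}$, which is exactly $Ind_{\mathcal{B},\mathcal{A},v}(f+k) = Ind_{\mathcal{B},\mathcal{A},v}(f)$.

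The main obstacle I anticipate is producing the correct exact sequence in the second step with genuinely the \emph{same} object $K$ contributing on both sides; a naive snake-lemma argument applied to the map of complexes $[A \xrightarrow{f} B] \to [A \xrightarrow{f+k} B]$ (with vertical maps $\mathrm{id}_A$, $\mathrm{id}_B$) gives a six-term sequence
$0 \to \mathrm{Ker}(f) \to \mathrm{Ker}(f+k) \to 0 \to \mathrm{Cok}(f) \to \mathrm{Cok}(f+k) \to 0$
only when $k$ factors appropriately, so I would instead factor $k = \iota \circ \bar k \circ p$ through $\mathrm{Im}(k)$ and run the argument with the finite object inserted, tracking that the connecting maps have image (resp. kernel) a subquotient of $\mathrm{Im}(k)$. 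An alternative, possibly cleaner, route is to treat the two-step filtration of the morphism $f+k$ by $f$ and reduce everything via Lemma \ref{201911232047} to the index of an isomorphism-up-to-$\mathcal{A}$ perturbation, i.e. to Proposition \ref{201911232139} applied to the finite pieces; either way the crux is bookkeeping the subquotients of $\mathrm{Im}(k)$ and invoking that their volumes are invertible so they cancel. The remaining verifications — that $f+k$ has a kernel and cokernel in $\mathcal{B}$, and that $\mathcal{A}$-membership is inherited — are routine given the closure hypothesis on $\mathcal{A}$.
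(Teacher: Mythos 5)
Your plan correctly isolates the finite object $C=\mathrm{Im}(k)$ as the only place the perturbation can act, and your first step (Fredholmness of $f+k$) can be made to work --- though note that the restriction controlling $\mathrm{Ker}(f+k)$ is that of $f$ (equivalently of $-k$) to $\mathrm{Ker}(f+k)$, whose kernel is a subobject of $\mathrm{Ker}(f)$ and whose image is a subobject of $\mathrm{Im}(k)$; restricting $f+k$ to $\mathrm{Ker}(f)$, as you write, only controls the subobject $\mathrm{Ker}(f)\cap\mathrm{Ker}(k)$ of $\mathrm{Ker}(f+k)$, not all of it. The genuine gap is in your second step: the exact sequence with ``the same $K$ entering on both sides'' is never constructed, and you yourself observe that the naive snake-lemma input is not even a commutative square. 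As written, the index identity is therefore asserted but not proved.

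The paper closes exactly this gap by making your ``alternative, possibly cleaner, route'' concrete. Writing $C$ for the (co)image of $k$, the morphism $f+k$ factors as
\begin{align*}
A \xrightarrow{\ (id_A\oplus coim(k))\circ\Delta_A\ } A\oplus C \xrightarrow{\ f\oplus id_C\ } B\oplus C \xrightarrow{\ \nabla_B\circ(id_B\oplus im(k))\ } B ,
\end{align*}
where $\Delta_A$ and $\nabla_B$ denote the diagonal and codiagonal of the additive category $\mathcal{B}$. The first map is a monomorphism with cokernel isomorphic to $C$, the last is an epimorphism with kernel isomorphic to $C$, so both are Fredholm precisely because $v(C)$ is invertible, with indices $v(C)$ and $v(C)^{-1}$ respectively, while $f\oplus id_C$ has the same kernel and cokernel as $f$. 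Lemma \ref{201911232047} then yields both the Fredholmness of $f+k$ and $Ind_{\mathcal{B},\mathcal{A},v}(f+k)=v(C)^{-1}\cdot Ind_{\mathcal{B},\mathcal{A},v}(f)\cdot v(C)=Ind_{\mathcal{B},\mathcal{A},v}(f)$ in one stroke, with no separate bookkeeping of subquotients of $\mathrm{Im}(k)$. If you wish to complete your own route, this factorization is exactly the device that ``inserts the finite object'' and, via two applications of the six-term kernel--cokernel sequence for a composition, produces the balanced relation you were after.
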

\begin{proof}
Denote by $C$ the (co)image of the morphism $k : A \to B$.
Note that $(f+k)$ is decomposed into following morphisms :
\begin{align}
A
\stackrel{(id_A \oplus coim(k))\circ \Delta_A}{\longrightarrow}
A \oplus C
\stackrel{f \oplus id_C}{\longrightarrow}
B \oplus C
\stackrel{\nabla_B \circ ( id_B \oplus im(k))}{\longrightarrow}
B
.
\end{align}
Since the volume $v(C) \in M$ is invertible, the morphisms $(id_A \oplus coim(k))\circ \Delta_A$ and $\nabla_B \circ ( id_B \oplus im(k))$ are Fredholm with respect to the volume $v$.
Since the morphism $f$ is Fredholm with respect to the volume $v$, so the morphism $f \oplus id_C$ is.
By Lemma \ref{201911232047}, $(f+k)$ is Fredholm and,
\begin{align}
&Ind_{\mathcal{B},\mathcal{A},v} ( f + k ) \\
&=
Ind_{\mathcal{B},\mathcal{A},v} ( \nabla_B \circ ( id_B \oplus im(k)) )
\cdot
Ind_{\mathcal{B},\mathcal{A},v} ( f \oplus id_C )
\cdot
Ind_{\mathcal{B},\mathcal{A},v} ( (id_A \oplus coim(k))\circ \Delta_A ) .
\end{align}
Note that $Ind_{\mathcal{B},\mathcal{A},v} ( f \oplus id_C ) = Ind_{\mathcal{B},\mathcal{A},v} ( f  )$.
Moreover we have $Ind_{\mathcal{B},\mathcal{A},v} ( \nabla_B \circ ( id_B \oplus im(k)) ) \cdot Ind_{\mathcal{B},\mathcal{A},v} ( (id_A \oplus coim(k))\circ \Delta_A ) = v ( C)^{-1} \cdot v(C) = 1$ by definitions.
It completes the proof.
\end{proof}


\section{Applications to the category $\mathsf{Hopf}^\mathsf{bc}(\mathcal{C})$}
\label{202002201415}

In this section, we give an application of the previous results to the category of bicommutative Hopf monoids $\mathsf{Hopf}^\mathsf{bc}(\mathcal{C})$.
We show that the inverse volume gives a volume on some abelian category.
From now on, we consider the following assumptions on $\mathcal{C}$.
\begin{itemize}
\item
(Assumption 0)
The category $\mathcal{C}$ has any equalizer and coequalizer.
\item
(Assumption 1)
The monoidal structure of $\mathcal{C}$ is bistable.
\item
(Assumption 2)
The category $\mathsf{Hopf}^\mathsf{bc}(\mathcal{C})$ is an abelian category.
\end{itemize}
Here, (co, bi)stability of the monoidal structure of $\mathcal{C}$ is introduced in  Definition \ref{201907230933}.
See Definition \ref{201907311409} for the definition of inverse volume.

\begin{Example}
\label{202002271704}
Note that the assumptions on $\mathcal{C}$ automatically hold if $\mathcal{C} = \mathsf{Vec}^\otimes_{k}$, the category of (possibly, infinite-dimensional) vector spaces over a field ${k}$.
The (Assumption 1) follows from Proposition \ref{comp_monoidal_stab} (see Example \ref{201912022052}).
The (Assumption 2) follows from the fact that $\mathsf{Hopf}^\mathsf{bc} ( \mathsf{Vec}^\otimes_{k})$ is an abelian category by Corollary 4.16 in \cite{takeuchi1972correspondence} or Theorem 4.3 in \cite{newman1975correspondence}.
\end{Example}

\begin{remark}
We remark a relationship between the assumptions.
(Assumption 0,1) implies that the category $\mathsf{Hopf}^\mathsf{bc}(\mathcal{C})$ is an pre-abelian category i.e. an additive category with arbitrary kernel and cokernel.
Under (Assumption 0,1), (Assumption 2) is equivalent with the fundamental theorem on homomorphisms.
\end{remark}

\begin{remark}
We need those (Assumption 0,1,2) because we use the following properties :
\begin{enumerate}
\item
By (Assumption 0), every idempotent in $\mathcal{C}$ is a split idempotent due to Proposition \ref{201912021623}.
By Corollary \ref{201912021909}, a bimonoid $A$ in $\mathcal{C}$ is bismall if and only if $A$ has a normalized integral and a normalized cointegral.
By Corollary \ref{201907101753}, the full subcategory of bismall bimonoids in the symmetric monoidal category $\mathcal{C}$ gives a sub symmetric monoidal category of $\mathsf{Bimon}(\mathcal{C})$.
\item
We need (Assumption 1) to make use of Proposition \ref{stab_gives_cokernel}, i.e. every homomorphism in $\mathsf{Hopf}^\mathsf{bc}(\mathcal{C})$ is binormal.
\item
Recall Definition \ref{201908041300}.
Furthermore, due to (Assumption 0, 1), every homomorphism in $\mathsf{Hopf}^\mathsf{bc}(\mathcal{C})$ is well-decomposable by definition.
\item
From (Assumption 2), we obtain the following exact sequence :
For bicommutative Hopf monoids $A,B,C$ in $\mathcal{C}$ and homomorphisms $\xi : A \to B$, $\xi^\prime : B \to C$, we have an exact sequence,
\begin{align}
\label{201907312237}
\mathds{1} 
\to
Ker ( \xi )
\to
Ker ( \xi^\prime \circ \xi )
\to
Ker ( \xi^\prime )
\to
Cok (\xi)
\to
Cok ( \xi^\prime \circ \xi )
\to
Cok ( \xi^\prime )
\to
\mathds{1} 
\end{align}
Note that until this subsection, we use the notation $Ker(\xi), Cok(\xi)$ for the kernel and cokernel in $\mathsf{Bimon}(\mathcal{C})$.
See Definition \ref{201908040955}.
In (\ref{201907312237}), $Ker(\xi)$, $Cok(\xi)$ denote a kernel and a cokernel in $\mathsf{Hopf}^\mathsf{bc}(\mathcal{C})$.
In fact, these coincide with each other due to (Assumption 1) and Corollary \ref{201908040952}.
\end{enumerate}
\end{remark}


\subsection{A volume on $\mathsf{Hopf^{bc,bs}} ( \mathcal{C} )$}
\label{201911161328}

\begin{prop}
\label{201907312228}
Let $A,B,C$ be bicommutative Hopf monoids.
Let $\xi : A \to B$, $\xi^\prime : B \to C$ be bimonoid homomorphism.
If the bimonoid homomorphisms $\xi, \xi^\prime$ are pre-Fredholm, then the composition $\xi^\prime\circ\xi$ is pre-Fredholm.
Moreover we have,
\begin{align}
vol^{-1} ( Ker (\xi) ) \circ vol^{-1} ( Ker ( \xi^\prime)) = \langle cok (\xi ) \circ ker (\xi^\prime) \rangle \circ vol^{-1} ( Ker ( \xi^\prime \circ \xi ) ) ,  \\
vol^{-1} ( Cok (\xi) ) \circ vol^{-1} ( Cok ( \xi^\prime)) = \langle cok (\xi ) \circ ker (\xi^\prime) \rangle \circ vol^{-1} ( Cok ( \xi^\prime \circ \xi ) ) . 
\end{align}
\end{prop}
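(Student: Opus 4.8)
The plan is to reduce everything to the six-term exact sequence (\ref{201907312237}) in $\mathsf{Hopf}^\mathsf{bc}(\mathcal{C})$ attached to the composable pair, together with the fact that $vol^{-1}$ is an $End_\mathcal{C}(\mathds{1})$-valued volume on the abelian category $\mathsf{Hopf}^\mathsf{bc,bs}(\mathcal{C})$ (Theorem \ref{201912052249}). Write $\delta = cok(\xi)\circ ker(\xi^\prime) : Ker(\xi^\prime)\to Cok(\xi)$ for the connecting homomorphism. I would first prove that $\xi^\prime\circ\xi$ is again pre-Fredholm, and only afterwards compute the two displayed identities, which will then be a formal consequence of additivity of $vol^{-1}$.

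For the pre-Fredholm claim, recall that under (Assumption 0,1,2) every homomorphism of $\mathsf{Hopf}^\mathsf{bc}(\mathcal{C})$ is well-decomposable, so it suffices to show that $Ker(\xi^\prime\circ\xi)$ and $Cok(\xi^\prime\circ\xi)$ are bismall. Insert the images into (\ref{201907312237}) to split it into four short exact sequences in $\mathsf{Hopf}^\mathsf{bc}(\mathcal{C})$: with $W = Ker(\delta)$ (the image of $Ker(\xi^\prime\circ\xi)\to Ker(\xi^\prime)$), $I = Im(\delta)$ and $J = Cok(\delta)$, these are $\mathds{1}\to Ker(\xi)\to Ker(\xi^\prime\circ\xi)\to W\to\mathds{1}$, $\mathds{1}\to W\to Ker(\xi^\prime)\to I\to\mathds{1}$, $\mathds{1}\to I\to Cok(\xi)\to J\to\mathds{1}$ and $\mathds{1}\to J\to Cok(\xi^\prime\circ\xi)\to Cok(\xi^\prime)\to\mathds{1}$. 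Since $Ker(\xi^\prime)$ and $Cok(\xi)$ are bismall and $\delta$ (hence $ker(\delta)$, $cok(\delta)$) is binormal, repeated application of Propositions \ref{condition_for_ker_cok_small} and \ref{201907312056} shows that $W$, $I$ and $J$ are bismall; then the first and last short exact sequences, via closure under short exact sequences (Theorem \ref{201907312238}), give that $Ker(\xi^\prime\circ\xi)$ and $Cok(\xi^\prime\circ\xi)$ are bismall. Thus $\xi^\prime\circ\xi$ is pre-Fredholm. (One may instead invoke Propositions \ref{201907311231} and \ref{201907311230}, which are tailored to exactly this configuration.)

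Now apply the volume axiom of $vol^{-1}$ (Definition \ref{201911231935}(2), Theorem \ref{201912052249}) to the four short exact sequences above, all of whose terms are now known to be bismall:
\begin{align*}
vol^{-1}(Ker(\xi^\prime\circ\xi)) &= vol^{-1}(Ker(\xi))\cdot vol^{-1}(W), \\
vol^{-1}(Ker(\xi^\prime)) &= vol^{-1}(W)\cdot vol^{-1}(I), \\
vol^{-1}(Cok(\xi)) &= vol^{-1}(I)\cdot vol^{-1}(J), \\
vol^{-1}(Cok(\xi^\prime\circ\xi)) &= vol^{-1}(J)\cdot vol^{-1}(Cok(\xi^\prime)).
\end{align*}
The remaining point is to identify the scalar: since $Ker(\delta)=W$ is small, $Cok(\delta)=J$ is cosmall, $ker(\delta)$ is normal, $cok(\delta)$ is conormal, and $\bar\delta : Coim(\delta)\to Im(\delta)$ is an isomorphism (because $\mathsf{Hopf}^\mathsf{bc}(\mathcal{C})$ is abelian by (Assumption 2)), Proposition \ref{201907312242} gives $\langle cok(\xi)\circ ker(\xi^\prime)\rangle = \langle\delta\rangle = vol^{-1}(Im(\delta)) = vol^{-1}(I)$. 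Substituting and using that $End_\mathcal{C}(\mathds{1})$ is a commutative monoid, both $vol^{-1}(Ker(\xi))\cdot vol^{-1}(Ker(\xi^\prime))$ and $\langle\delta\rangle\cdot vol^{-1}(Ker(\xi^\prime\circ\xi))$ reduce to $vol^{-1}(Ker(\xi))\cdot vol^{-1}(W)\cdot vol^{-1}(I)$, and likewise both sides of the cokernel identity reduce to $vol^{-1}(I)\cdot vol^{-1}(J)\cdot vol^{-1}(Cok(\xi^\prime))$; this proves the two equalities.

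The main obstacle is the first step: one must genuinely verify that the intermediate objects $W$, $I$, $J$ lie in $\mathsf{Hopf}^\mathsf{bc,bs}(\mathcal{C})$, so that $vol^{-1}$ is defined on them and the four short exact sequences live in $\mathsf{Hopf}^\mathsf{bc,bs}(\mathcal{C})$ — this is where the inheritance results (Propositions \ref{condition_for_ker_cok_small}, \ref{201907312056}, \ref{201907311231}, \ref{201907311230}) and the closure Theorem \ref{201907312238} do the real work. Once the composition is known to be pre-Fredholm, the volume identities are purely formal, relying only on additivity of $vol^{-1}$ along exact sequences, Proposition \ref{201907312242}, and commutativity of $End_\mathcal{C}(\mathds{1})$.
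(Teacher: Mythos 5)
Your overall strategy --- split the six-term sequence (\ref{201907312237}) at the connecting map $\delta=cok(\xi)\circ ker(\xi^\prime)$, deduce bismallness of the intermediate objects and of $Ker(\xi^\prime\circ\xi)$, $Cok(\xi^\prime\circ\xi)$ from the inheritance results, identify the scalar via Proposition \ref{201907312242}, and then do bookkeeping in the commutative monoid $End_\mathcal{C}(\mathds{1})$ --- is sound, and the pre-Fredholm part matches what the paper does. But there is one genuine logical gap: you derive the four volume identities from Theorem \ref{201912052249} (that $vol^{-1}$ is a volume on $\mathsf{Hopf}^\mathsf{bc,bs}(\mathcal{C})$), and in the paper that theorem is proved (as Theorem \ref{201912022324}) by applying the first equation of \emph{this very proposition} to $\xi=g$, $\xi^\prime=\epsilon_C$. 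As written, your argument is circular within the paper's logical development.

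The repair is exactly what the paper does: the additivity of $vol^{-1}$ along a short (or four-term) exact sequence of bismall Hopf monoids must at this stage be extracted from Corollary \ref{201907312239}, whose hypotheses (binormality, existence of a normalized integral along the map, bismallness of source and target, generator property of $\sigma_A$ and $\sigma^B$) are all available from (Assumption 0,1,2) and Theorem \ref{201907292156}. The paper applies that corollary once, to the induced homomorphism $Ker(\xi^\prime\circ\xi)\to Ker(\xi^\prime)$, whose kernel is $Ker(\xi)$ and whose cokernel is $Im(\varphi)$ by exactness; this yields $vol^{-1}(Im(\varphi))\circ vol^{-1}(Ker(\xi^\prime\circ\xi))=vol^{-1}(Ker(\xi))\circ vol^{-1}(Ker(\xi^\prime))$ directly, and Proposition \ref{201907312242} converts $vol^{-1}(Im(\varphi))$ into $\langle\varphi\rangle$. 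Your four-short-exact-sequence bookkeeping gives the same answer once each additivity step is rederived from Corollary \ref{201907312239} applied to the epimorphism of that sequence; with that substitution your proof is correct and essentially a finer-grained version of the paper's.
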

\begin{proof}
Recall that we have an exact sequence (\ref{201907312237}).
By Theorem \ref{201907312238}, the Hopf monoids $Cok (\xi^\prime \circ \xi)$, $Ker (\xi^\prime \circ \xi)$ are bismall since the Hopf monoids $Ker ( \xi), Ker ( \xi^\prime)$ and cokernels $Cok ( \xi), Cok ( \xi^\prime)$ are bismall.
Hence, the composition $\xi^\prime\circ\xi$ is pre-Fredholm.

We prove the first equation.
Denote by $\varphi = cok (\xi) \circ ker (\xi^\prime) : Ker (\xi^\prime) \to Cok (\xi)$.
From the exact sequence (\ref{201907312237}), we obtain an exact sequence,
\begin{align}
\mathds{1} 
\to
Ker ( \xi )
\to
Ker ( \xi^\prime \circ \xi )
\to
Ker ( \xi^\prime )
\to
Im ( \varphi )
\to
\mathds{1} 
\end{align}
We apply Corollary \ref{201907312239} by assuming $A,B,\xi$ in Corollary \ref{201907312239} are $Ker ( \xi^\prime \circ \xi ), Ker ( \xi^\prime)$ and the homomorphism $Ker ( \xi^\prime \circ \xi) \to Ker ( \xi^\prime)$.
In fact, the first assumption in Corollary \ref{201907312239} follows from (Assumption 1).
The second and fourth assumptions in Corollary \ref{201907312239} follows from Theorem \ref{201907292156}.
The third assumption is already proved as before.
Then we obtain,
\begin{align}
vol^{-1} ( Ker (\xi) ) \circ vol^{-1} ( Ker ( \xi^\prime)) = vol^{-1} ( Im ( \varphi ))  \circ vol^{-1} ( Ker ( \xi^\prime \circ \xi ) ) .
\end{align}
By Proposition \ref{201907312242}, we have $\langle \varphi \rangle = vol^{-1} ( Im (\varphi))$ so that it completes the first equation.
The second equation is proved analogously.
\end{proof}

\begin{prop}
\label{202002201252}
The subcategory $\mathsf{Hopf}^\mathsf{bc,bs} ( \mathcal{C} )$ is an abelian subcategory of the abelian category $ \mathsf{Hopf}^\mathsf{bc} ( \mathcal{C} )$.
\end{prop}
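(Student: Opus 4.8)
The plan is to show that $\mathsf{Hopf}^\mathsf{bc,bs}(\mathcal{C})$, which by definition is a full subcategory of the abelian category $\mathsf{Hopf}^\mathsf{bc}(\mathcal{C})$, satisfies the closure conditions that make a full subcategory of an abelian category abelian with exact inclusion: it contains a zero object, it is closed under finite biproducts, and it is closed under the formation of kernels and cokernels taken in $\mathsf{Hopf}^\mathsf{bc}(\mathcal{C})$. Once these three points are in place, a routine categorical argument finishes the job.

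First I would dispose of the zero object and biproducts. The unit Hopf monoid $\mathds{1}$ is the zero object of $\mathsf{Hopf}^\mathsf{bc}(\mathcal{C})$, and $id_{\mathds{1}}$ is at once a normalized integral and a normalized cointegral, so $\mathds{1}$ is bismall by Corollary \ref{201912021909}; here one uses that every idempotent in $\mathcal{C}$ splits, which follows from (Assumption 0) together with Proposition \ref{201912021623}. The biproduct of $A$ and $B$ in $\mathsf{Hopf}^\mathsf{bc}(\mathcal{C})$ is the monoidal product $A \otimes B$, and by Corollary \ref{201907101753} the full subcategory of bismall bimonoids is a sub symmetric monoidal category of $\mathsf{Bimon}(\mathcal{C})$; hence $A \otimes B$ is bismall whenever $A$ and $B$ are, which gives closure under biproducts (the empty case being the zero object just discussed).

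The substantive step is closure under kernels and cokernels. Let $\xi : A \to B$ be a homomorphism between bismall bicommutative Hopf monoids. Under the standing (Assumption 0,1), every homomorphism in $\mathsf{Hopf}^\mathsf{bc}(\mathcal{C})$ is well-decomposable, as recorded in the remarks opening this section, so Corollary \ref{201908011522} applies: since $A$ and $B$ are bismall, $\xi$ is pre-Fredholm, which by Definition \ref{201907312115} means precisely that $Ker(\xi)$ and $Cok(\xi)$ are bismall. By (Assumption 1) and Corollary \ref{201908040952}, the kernel and cokernel of $\xi$ formed in $\mathsf{Bimon}(\mathcal{C})$ coincide with those formed in $\mathsf{Hopf}^\mathsf{bc}(\mathcal{C})$, so the kernel and cokernel of $\xi$ in $\mathsf{Hopf}^\mathsf{bc}(\mathcal{C})$ are objects of $\mathsf{Hopf}^\mathsf{bc,bs}(\mathcal{C})$.

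Finally I would invoke the general fact that a full subcategory of an abelian category that contains a zero object and is closed under biproducts, kernels and cokernels is itself abelian with exact inclusion: fullness together with closure under kernels forces a monomorphism of $\mathsf{Hopf}^\mathsf{bc,bs}(\mathcal{C})$ to be a monomorphism of $\mathsf{Hopf}^\mathsf{bc}(\mathcal{C})$ (its kernel, computed in the ambient category, already lies in the subcategory and so must vanish), dually for epimorphisms, whence every monomorphism is a kernel and every epimorphism a cokernel. The only delicate points are purely bookkeeping: keeping straight in which ambient category each kernel and cokernel is taken — handled by Corollary \ref{201908040952} and (Assumption 1,2) — and making sure that ``pre-Fredholm'' is legitimately available, which rests on every homomorphism of $\mathsf{Hopf}^\mathsf{bc}(\mathcal{C})$ being well-decomposable under (Assumption 0,1). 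I expect no genuine obstacle beyond this.
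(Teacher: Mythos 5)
Your proof is correct, and it differs from the paper's in two respects worth noting. For the substantive step (closure under kernels and cokernels) the paper writes down the exact sequence $\mathds{1} \to \mathds{1} \to Ker(\xi) \to A \to B$ and invokes Theorem \ref{201907312238} (closure of $\mathsf{Hopf}^\mathsf{bc,bs}(\mathcal{C})$ under short exact sequences), whereas you go through Corollary \ref{201908011522}: $A,B$ bismall and $\xi$ well-decomposable imply $\xi$ is pre-Fredholm, i.e.\ $Ker(\xi)$ and $Cok(\xi)$ are bismall by Definition \ref{201907312115}. Both routes ultimately rest on the same propositions of subsection \ref{201908041309} (Propositions \ref{condition_for_ker_cok_small} and \ref{201907312056}), but your citation is the more direct one --- Corollary \ref{201908011522} says exactly what is needed, while the paper's appeal to the exact-sequence theorem requires unwinding that theorem's proof to see that it covers the degenerate sequence at hand. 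Second, you are more complete: you also verify the zero object ($\mathds{1}$ is bismall since $id_{\mathds{1}}$ is its normalized integral and cointegral), closure under biproducts via Corollary \ref{201907101753} and the fact that $A\otimes B$ is the biproduct in $\mathsf{Hopf}^\mathsf{bc}(\mathcal{C})$, and you spell out the standard categorical fact that a full subcategory with zero, biproducts, kernels and cokernels inherited from the ambient abelian category is itself abelian. The paper's proof checks only kernels and cokernels and leaves the rest implicit, so your version buys a genuinely self-contained argument at the cost of a little length. No gaps.
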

\begin{proof}
Let $A,B$ be bicommutative bismall Hopf monoids.
Let $\xi : A \to B$ be a bimonoid homomorphism, i.e. a morphism in $ \mathsf{Hopf}^\mathsf{bc} ( \mathcal{C} )$.
We have an exact sequence,
\begin{align}
\mathds{1}
\to
\mathds{1}
\to
Ker ( \xi )
\stackrel{ker(\xi)}{\to}
A 
\stackrel{\xi}{\to}
B
.
\end{align}
Due to (Assumption 1) and (Assumption 2), we can apply Theorem \ref{201907312238}.
By Theorem \ref{201907312238}, the kernel Hopf monoid $Ker ( \xi)$ is bismall.
Analogously, the cokernel Hopf monoid $Cok ( \xi )$ is bismall.
It completes the proof.
\end{proof}

\begin{Defn}
\label{201912022344}
\rm
Let $End_{\mathcal{C}} ( \mathds{1} )$ be the set of endomorphism on the unit object $\mathds{1}$.
Note that the composition induces an abelian monoid structure on the set $End_{\mathcal{C}} ( \mathds{1} )$.
We denote by $M_\mathcal{C}$ the smallest submonoid of $End_\mathcal{C} ( \mathds{1} )$ containing all $f \in End_\mathcal{C} ( \mathds{1})$ such that $f = vol^{-1} ( A)$ or $f \circ vol^{-1} (A) = id_{\mathds{1}} = vol^{-1} (A) \circ f$ for some bicommutative bismall Hopf monoid $A$.
Denote by $M^{-1}_\mathcal{C}$ the submonoid consisting of invertible elements in the monoid $M_\mathcal{C}$, i.e. $M^{-1}_\mathcal{C} = M_\mathcal{C} \cap Aut_\mathcal{C} ( \mathds{1} )$.
\end{Defn}

\begin{theorem}
\label{201912022324}
The assignment $vol^{-1}$ of inverse volumes is a $M_\mathcal{C}$-valued volume on the abelian category $\mathsf{Hopf}^\mathsf{bc,bs} ( \mathcal{C} )$.
\end{theorem}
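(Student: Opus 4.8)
The plan is to check the two axioms of Definition \ref{201911231935} for $M = M_\mathcal{C}$, using Corollary \ref{201907312239} as the main tool.

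First I would record that $vol^{-1}$ is well-defined on $\mathsf{Hopf}^\mathsf{bc,bs}(\mathcal{C})$: by (Assumption 0) and Proposition \ref{201912021623} every idempotent of $\mathcal{C}$ splits, so Corollary \ref{201912021909} shows that a bismall Hopf monoid $A$ has a unique normalized integral $\sigma_A$ and a unique normalized cointegral $\sigma^A$, and $vol^{-1}(A) = \sigma^A\circ\sigma_A$ lies in $M_\mathcal{C}$ by the definition of $M_\mathcal{C}$ (Definition \ref{201912022344}). The first axiom holds because the zero object of $\mathsf{Hopf}^\mathsf{bc,bs}(\mathcal{C})$ is the unit Hopf monoid $\mathds{1}$, and $vol^{-1}(\mathds{1}) = id_{\mathds{1}}$ by Proposition \ref{201908010934}, where $id_{\mathds{1}}$ is the unit of the submonoid $M_\mathcal{C}\subseteq End_\mathcal{C}(\mathds{1})$.

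For the second axiom, take a short exact sequence $0\to A\stackrel{\iota}{\to}B\stackrel{\pi}{\to}C\to 0$ in $\mathsf{Hopf}^\mathsf{bc,bs}(\mathcal{C})$. By Proposition \ref{202002201252} and Theorem \ref{201907312238} this subcategory is abelian and closed under short exact sequences, so the sequence is exact in $\mathsf{Hopf}^\mathsf{bc}(\mathcal{C})$ as well; under (Assumption 1) and Corollary \ref{201908040952} the kernel and cokernel taken there coincide with those in $\mathsf{Bimon}(\mathcal{C})$. I would apply Corollary \ref{201907312239} to the epimorphism $\pi : B\to C$. Its hypotheses hold: $\pi$ is binormal by Proposition \ref{stab_gives_cokernel} (via (Assumption 1)) and well-decomposable under (Assumption 0,1,2); $Ker(\pi)\cong A$ is small and $Cok(\pi)\cong\mathds{1}$ is cosmall (all three terms being bismall), so $\pi$ is weakly pre-Fredholm and Theorem \ref{201912050852} furnishes a normalized integral along $\pi$; $B$ and $C$ are bismall; and since $B$ (resp. $C$) is bismall, the counit $\epsilon_B$ (resp. the unit $\eta_C$) is well-decomposable and pre-Fredholm by Proposition \ref{201907311130}, so by Theorem \ref{201907292156} the normalized integral $\sigma_B=\mu_{\epsilon_B}$ (resp. the normalized cointegral $\sigma^C=\mu_{\eta_C}$) is a generator. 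Corollary \ref{201907312239} then gives
\begin{align}
vol^{-1}(Cok(\pi))\circ vol^{-1}(B) = vol^{-1}(Ker(\pi))\circ vol^{-1}(C).
\end{align}
Since $Cok(\pi)\cong\mathds{1}$ and $Ker(\pi)\cong A$ by exactness, and $vol^{-1}(\mathds{1})=id_{\mathds{1}}$, this reads $vol^{-1}(B) = vol^{-1}(A)\cdot vol^{-1}(C)$, an identity in $M_\mathcal{C}$ (the right-hand side lies in $M_\mathcal{C}$ because $M_\mathcal{C}$ is a submonoid). This is exactly the second axiom.

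The substance of the proof is therefore the verification that, under (Assumption 0,1,2), every morphism of $\mathsf{Hopf}^\mathsf{bc}(\mathcal{C})$ is well-decomposable and binormal, that bismall Hopf monoids carry normalized (co)integrals which are generators, and that categorical kernels and cokernels agree with the bimonoid ones — points already assembled in the preceding sections. The one step demanding genuine care, and what I expect to be the main obstacle, is confirming that all the (weak) pre-Fredholm and generator hypotheses of Corollary \ref{201907312239} are met for $\pi$ in this generality; granting that, the numerical identity is immediate from Corollary \ref{201907312239}. (Applying the corollary to the monomorphism $\iota$ instead works equally well, after a harmless use of the commutativity of $End_\mathcal{C}(\mathds{1})$.)
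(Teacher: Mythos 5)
Your proof is correct and follows essentially the same route as the paper: the paper verifies the zero-object axiom via Proposition \ref{201908010934} and then invokes Proposition \ref{201907312228} with $\xi = g$ and $\xi' = \epsilon_C$, whose proof in turn applies Corollary \ref{201907312239} to the induced map $Ker(\epsilon_B)=B \to Ker(\epsilon_C)=C$, i.e.\ to the very epimorphism $\pi$ you treat directly. Your version merely skips that intermediate composition formula, and your verification of the hypotheses of Corollary \ref{201907312239} matches the checks the paper performs inside the proofs of that corollary and of Proposition \ref{201907312228}.
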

\begin{proof}
Put $v = vol^{-1}$.
The unit Hopf monoid $\mathds{1}$ is a zero object of $\mathsf{Hopf}^\mathsf{bc,bs}(\mathcal{C})$.
By the first part of Proposition \ref{201908010934}, we have $v(\mathds{1}) = vol^{-1} ( \mathds{1}) \in M_\mathcal{C}$ is the unit of $M_\mathcal{C}$.

Let $\mathds{1} \to A \stackrel{f}{\to} B \stackrel{g}{\to} C \to \mathds{1}$ be an exact sequence in the abelian category $\mathcal{A} = \mathsf{Hopf}^\mathsf{bc,bs} ( \mathcal{C} )$.
We apply the first equation in Theorem \ref{201907312228} by considering $\xi = g$ and $\xi^\prime = \epsilon_C$.
In fact, $B,C,\mathds{1}$ are bismall bimonoids, the homomorphisms $g$ and $\epsilon_C$ are pre-Fredholm.
We obtain
\begin{align}
vol^{-1} ( Ker (g)) \circ vol^{-1} (Ker (\epsilon_C)) = \langle cok ( g) \circ ker ( \epsilon_C) \rangle \circ vol^{-1} (Ker (\epsilon_B)) .
\end{align}
By the exactness, we have $A \cong Ker ( g)$ and $Cok ( g) \cong \mathds{1}$.
Moreover we have $Ker (\epsilon_C) \cong C$ and $Ker (\epsilon_B) \cong B$.
Hence, we obtain $\langle cok ( g) \circ ker ( \epsilon_C) \rangle = id_{\mathds{1}}$ so that $vol^{-1} ( A ) \cdot vol^{-1} (C) = vol^{-1} ( B)$.
It completes the proof.
\end{proof}


\subsection{Functorial integral}
\label{201908010930}

\begin{Defn}
\rm
\begin{enumerate}
\item
Recall Definition \ref{201911232050}.
For two bicommutative Hopf monoids $A,B$, a bimonoid homomorphism $\xi : A \to B$ is {\it Fredholm} if it is Fredholm with respect to the inverse volume $vol^{-1}$.
In other words, the homomorphism $\xi$ is pre-Fredholm, and its kernel Hopf monoid and cokernel Hopf monoid have finite volumes.
For a Fredholm homomorphism $\xi : A \to B$ between bicommutative Hopf monoids, we denote by $Ind (\xi ) \stackrel{\mathrm{def.}}{=} Ind_{\mathcal{B},\mathcal{A},v} (\xi)$ for $\mathcal{B} = \mathsf{Hopf}^\mathsf{bc}(\mathcal{C})$, $\mathcal{A} = \mathsf{Hopf}^\mathsf{bc,bs}(\mathcal{C})$, $M= M_\mathcal{C}$ and $v = vol^{-1}$.
\item
Denote by $\mathsf{Hopf}^\mathsf{bc,Fr} ( \mathcal{C} )$ the category consisting of Fredholm homomorphisms between bicommutative Hopf monoids.
Recall Definition \ref{201912022319}.
For $\mathcal{B} = \mathsf{Hopf}^\mathsf{bc}(\mathcal{C})$, $\mathcal{A} = \mathsf{Hopf}^\mathsf{bc,bs}(\mathcal{C})$, $M= M_\mathcal{C}$ and $v = vol^{-1}$, the category $\mathsf{Hopf}^\mathsf{bc,Fr} ( \mathcal{C} )$ is defined by $\mathsf{Hopf}^\mathsf{bc,Fr} ( \mathcal{C} ) \stackrel{\mathrm{def.}}{=} \mathcal{A}^{Fr}$.
\item
Let $\xi :A \to B$ be a homomorphism between bicommutative Hopf monoids.
The homomorphism $\xi$ is {\it finite} if the morphism $\xi$ in $\mathsf{Hopf}^\mathsf{bc}$ is finite with respect to the volume $vol^{-1}$.
See Definition \ref{201912041441}.
\end{enumerate}
\end{Defn}

\begin{prop}
\label{202002271226}
\begin{enumerate}
\item
For a bicommutative Hopf monoid $A$, the identity $id_A$ is Fredholm and we have
$Ind ( id_A ) = id_{\mathds{1}} \in M^{-1}_{\mathcal{C}}$.
\item
For Fredholm homomorphisms $\xi : A \to B$ and $\xi^\prime : B \to C$ between bicommutative Hopf monoids, the composition $\xi^\prime\circ\xi$ is Fredholm and we have
$Ind (\xi^\prime \circ \xi ) = Ind ( \xi^\prime ) \circ Ind ( \xi ) \in M^{-1}_{\mathcal{C}}$.
\item
For a Fredholm homomorphism $\xi : A \to B$ and a finite homomorphism $\epsilon : A \to B$, the convolution $\xi \ast \epsilon$ is Fredholm and we have
$Ind ( \xi \ast \epsilon ) = Ind ( \xi )\in M^{-1}_{\mathcal{C}}$.
\end{enumerate}
\end{prop}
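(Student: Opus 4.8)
The plan is to deduce all three statements from the abstract results on Fredholm morphisms in Section \ref{201912022325}, applied to the quadruple $\mathcal{B} = \mathsf{Hopf}^\mathsf{bc}(\mathcal{C})$, $\mathcal{A} = \mathsf{Hopf}^\mathsf{bc,bs}(\mathcal{C})$, $M = M_\mathcal{C}$ and $v = vol^{-1}$. First I would check that this quadruple satisfies the standing hypotheses of that section: by (Assumption 2) the category $\mathcal{B}$ is abelian; by Proposition \ref{202002201252} the full subcategory $\mathcal{A}$ is an abelian subcategory of $\mathcal{B}$; by Theorem \ref{201907312238} it is closed under short exact sequences in $\mathcal{B}$; and by Theorem \ref{201912022324} the assignment $vol^{-1}$ is a genuine $M_\mathcal{C}$-valued volume on $\mathcal{A}$. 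Thus the notion ``Fredholm with respect to $vol^{-1}$'' and the index $Ind = Ind_{\mathcal{B},\mathcal{A},vol^{-1}}$ are exactly the instances of Definition \ref{201911232050} used in the statement, and parts (1)--(3) become special cases of Lemma \ref{201911232046}, Lemma \ref{201911232047} and Proposition \ref{201912021443} respectively.

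For (1), I would apply Lemma \ref{201911232046}: the identity $id_A$ of any object $A$ of $\mathcal{B}$ is Fredholm with $Ind(id_A) = 1 \in M_\mathcal{C}$, and since the unit $1$ of $M_\mathcal{C}$ is $id_{\mathds{1}}$, which is invertible, this gives $Ind(id_A) = id_{\mathds{1}} \in M^{-1}_\mathcal{C}$. For (2), I would apply Lemma \ref{201911232047} to $\xi$ and $\xi^\prime$: the composition $\xi^\prime \circ \xi$ is Fredholm and $Ind(\xi^\prime \circ \xi) = Ind(\xi^\prime) \cdot Ind(\xi)$, where the monoid product $\cdot$ on $M_\mathcal{C} \subseteq End_\mathcal{C}(\mathds{1})$ is composition, so this reads $Ind(\xi^\prime \circ \xi) = Ind(\xi^\prime) \circ Ind(\xi)$. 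For (3), I would first recall that the additive structure underlying the abelian category $\mathsf{Hopf}^\mathsf{bc}(\mathcal{C})$ is the convolution product: the sum of two parallel homomorphisms $\xi, \epsilon : A \to B$ is $\xi \ast \epsilon = \nabla_B \circ (\xi \otimes \epsilon) \circ \Delta_A$, with the antipode supplying additive inverses. Hence the perturbation ``$f + k$'' of Proposition \ref{201912021443} is precisely the convolution $\xi \ast \epsilon$, and applying that proposition with $f = \xi$ (Fredholm) and $k = \epsilon$ (finite, i.e.\ $vol^{-1}(Im(\epsilon))$ invertible in $M_\mathcal{C}$ by Definition \ref{201912041441}) yields that $\xi \ast \epsilon$ is Fredholm with $Ind(\xi \ast \epsilon) = Ind(\xi)$.

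In all three cases it remains to see that the index actually lies in $M^{-1}_\mathcal{C}$, the submonoid of invertible elements of $M_\mathcal{C}$. This is immediate from the definitions: a Fredholm homomorphism $\zeta$ has, by Definition \ref{201911232050}, both $vol^{-1}(Ker(\zeta))$ and $vol^{-1}(Cok(\zeta))$ invertible in $M_\mathcal{C}$; by Definition \ref{201912022344} the inverse of $vol^{-1}(Ker(\zeta))$ again belongs to $M_\mathcal{C}$; therefore $Ind(\zeta) = vol^{-1}(Cok(\zeta)) \cdot vol^{-1}(Ker(\zeta))^{-1}$ is a product of two elements of the monoid $M_\mathcal{C}$ and is invertible, hence $Ind(\zeta) \in M^{-1}_\mathcal{C}$. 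Applying this to $\zeta = id_A$, $\zeta = \xi^\prime \circ \xi$ and $\zeta = \xi \ast \epsilon$ finishes the three parts.

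The verification that the hypotheses of the three cited results are met is routine once the dictionary above is set up; the only point requiring a little care --- and the closest thing to an obstacle --- is the identification in (3) of the abelian-category addition on $\mathsf{Hopf}^\mathsf{bc}(\mathcal{C})$ with the convolution product, so that a ``finite perturbation'' in the sense of Definition \ref{201912041441} is exactly the convolution $\xi \ast \epsilon$ appearing in the statement. Once this is in place, Proposition \ref{201912021443} applies verbatim, and the remaining index bookkeeping is purely formal.
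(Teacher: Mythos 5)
Your proposal is correct and follows essentially the same route as the paper, which simply cites Lemma \ref{201911232046}, Lemma \ref{201911232047} and Proposition \ref{201912021443} for the three parts respectively. The extra details you supply --- checking that the quadruple $(\mathsf{Hopf}^\mathsf{bc}(\mathcal{C}), \mathsf{Hopf}^\mathsf{bc,bs}(\mathcal{C}), M_\mathcal{C}, vol^{-1})$ satisfies the standing hypotheses, and identifying the convolution with the abelian-category addition in part (3) --- are exactly the implicit steps the paper leaves to the reader.
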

\begin{proof}
The first part follows from Lemma \ref{201911232046}.
The second part follows from Lemma \ref{201911232047}.
The third part follows from Proposition \ref{201912021443}.
\end{proof}

\begin{Defn}
\rm
We define a 2-cochain $\omega_\mathcal{C}$ of the symmetric monoidal category $\mathsf{Hopf}^\mathsf{bc,Fr} ( \mathcal{C} )$ with coefficients in the abelian group $M^{-1}_\mathcal{C}$.
Let $\xi : A \to B,\xi^\prime : B \to C$ be composable Fredholm homomorphisms between bicommutative Hopf monoids.
We define 
\begin{align}
\omega_\mathcal{C} ( \xi , \xi^\prime ) \stackrel{\mathrm{def.}}{=} \langle cok (\xi) \circ ker ( \xi^\prime ) \rangle \in M^{-1}_\mathcal{C}  . 
\end{align}
\end{Defn}

\begin{prop}
The 2-cochain $\omega_\mathcal{C}$ is a 2-cocycle.
\end{prop}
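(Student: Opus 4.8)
The plan is to verify, for every composable triple of Fredholm homomorphisms $\xi : A \to B$, $\xi^\prime : B \to C$, $\xi^{\prime\prime} : C \to D$ between bicommutative Hopf monoids, the cocycle identity
\begin{align}
\omega_\mathcal{C} ( \xi , \xi^\prime ) \circ \omega_\mathcal{C} ( \xi^\prime \circ \xi , \xi^{\prime\prime} ) = \omega_\mathcal{C} ( \xi^\prime , \xi^{\prime\prime} ) \circ \omega_\mathcal{C} ( \xi , \xi^{\prime\prime} \circ \xi^\prime ) \qquad \text{in } M^{-1}_\mathcal{C} .
\end{align}
First I would note that all the entries are defined: by Lemma \ref{201911232047} (equivalently, by the first assertion of Proposition \ref{201907312228}) the composites $\xi^\prime \circ \xi$, $\xi^{\prime\prime} \circ \xi^\prime$, and $\xi^{\prime\prime} \circ \xi^\prime \circ \xi$ are again Fredholm, so $\omega_\mathcal{C}$ is defined on each composable pair occurring above and takes values in the abelian group $M^{-1}_\mathcal{C}$.

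The key step is to re-express $\omega_\mathcal{C}$ in terms of inverse volumes of kernels. For a Fredholm homomorphism $\zeta$ put $a ( \zeta ) \stackrel{\mathrm{def.}}{=} vol^{-1} ( Ker ( \zeta ) )$; this lies in $M^{-1}_\mathcal{C}$ because $Ker ( \zeta )$ is a bismall Hopf monoid of finite volume by the definition of Fredholmness. Applying the first equation of Proposition \ref{201907312228} to $\xi$ and $\xi^\prime$ and using that $a ( \xi^\prime \circ \xi )$ is invertible, we obtain
\begin{align}
\omega_\mathcal{C} ( \xi , \xi^\prime ) = \langle cok ( \xi ) \circ ker ( \xi^\prime ) \rangle = a ( \xi ) \circ a ( \xi^\prime ) \circ a ( \xi^\prime \circ \xi )^{-1} .
\end{align}
This exhibits $\omega_\mathcal{C}$ as the simplicial coboundary of the $1$-cochain $\zeta \mapsto a ( \zeta )$ on the nerve of $\mathsf{Hopf}^\mathsf{bc,Fr} ( \mathcal{C} )$, so that the $2$-cocycle property becomes formal; alternatively it can be checked directly, as below.

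Finally I would substitute this formula into the two sides of the identity. On the left, the factor $a ( \xi^\prime \circ \xi )^{-1}$ arising from $\omega_\mathcal{C} ( \xi , \xi^\prime )$ cancels the factor $a ( \xi^\prime \circ \xi )$ arising from $\omega_\mathcal{C} ( \xi^\prime \circ \xi , \xi^{\prime\prime} )$, leaving $a ( \xi ) \circ a ( \xi^\prime ) \circ a ( \xi^{\prime\prime} ) \circ a ( \xi^{\prime\prime} \circ \xi^\prime \circ \xi )^{-1}$; on the right, the factor $a ( \xi^{\prime\prime} \circ \xi^\prime )^{-1}$ arising from $\omega_\mathcal{C} ( \xi^\prime , \xi^{\prime\prime} )$ cancels the factor $a ( \xi^{\prime\prime} \circ \xi^\prime )$ arising from $\omega_\mathcal{C} ( \xi , \xi^{\prime\prime} \circ \xi^\prime )$, leaving $a ( \xi^\prime ) \circ a ( \xi^{\prime\prime} ) \circ a ( \xi ) \circ a ( \xi^{\prime\prime} \circ \xi^\prime \circ \xi )^{-1}$. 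Since $End_\mathcal{C} ( \mathds{1} )$, hence $M^{-1}_\mathcal{C}$, is a commutative monoid, these two expressions coincide, which is exactly the desired identity. The main obstacle is not the algebra, which is routine telescoping, but making sure the hypotheses of Proposition \ref{201907312228} are in force, i.e. that every morphism of $\mathsf{Hopf}^\mathsf{bc,Fr} ( \mathcal{C} )$ is pre-Fredholm with bismall kernel and cokernel of finite volume, so that the elements $a ( \zeta )$ are genuinely invertible; this is precisely the definition of $\mathsf{Hopf}^\mathsf{bc,Fr} ( \mathcal{C} )$. One could equally run the computation using the cokernel equation of Proposition \ref{201907312228} in place of the kernel one.
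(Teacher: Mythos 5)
Your proof is correct, but it takes a genuinely different route from the paper. The paper derives the cocycle identity from the associativity of composition of the normalized generator integrals: it applies Theorem \ref{201907311424} to both bracketings of $\mu_{\xi^{\prime\prime}} \circ \mu_{\xi^{\prime}} \circ \mu_{\xi}$ to get
$\left( \omega_\mathcal{C} ( \xi, \xi^\prime) \circ \omega_\mathcal{C} ( \xi^\prime \circ \xi, \xi^{\prime\prime} ) \right) \cdot \mu_{\xi^{\prime\prime} \circ \xi^\prime \circ \xi} = \left( \omega_\mathcal{C} (\xi^\prime,  \xi^{\prime\prime} ) \circ \omega_\mathcal{C} ( \xi, \xi^{\prime\prime} \circ \xi^\prime ) \right) \cdot \mu_{\xi^{\prime\prime} \circ \xi^\prime \circ \xi}$,
and then cancels the integral using the bijection $End_{\mathcal{C}}(\mathds{1}) \to Int (\xi^{\prime\prime}\circ\xi^\prime\circ\xi)$, $\lambda \mapsto \lambda \cdot \mu$, of Theorem \ref{201906281559}. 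You instead invoke the first equation of Proposition \ref{201907312228} to write $\omega_\mathcal{C}(\xi,\xi^\prime) = a(\xi)\circ a(\xi^\prime)\circ a(\xi^\prime\circ\xi)^{-1}$ with $a(\zeta) = vol^{-1}(Ker(\zeta))$, i.e.\ you exhibit $\omega_\mathcal{C}$ as the coboundary $\delta^1 a$, after which the cocycle identity is a formal telescoping in the abelian group $M^{-1}_\mathcal{C}$. This is exactly the observation the paper defers to Proposition \ref{201908020848} (triviality of the class $o_\mathcal{C}$), so your argument proves the stronger coboundary statement and the cocycle property in one stroke, at the cost of importing Proposition \ref{201907312228} (whose proof runs through Theorem \ref{201907312238} and Corollary \ref{201907312239}) rather than only the integral-composition results; the paper's route, by contrast, works directly with the integrals and does not need the kernel volumes to be assembled first. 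Two small points: you should note, as the paper does, that the normalization $\omega_\mathcal{C}(id_B,\xi) = 1 = \omega_\mathcal{C}(\xi, id_A)$ is also required for the class in $H^2_{nor}$ to be defined — it falls out of your formula immediately since $a(id) = vol^{-1}(\mathds{1}) = id_{\mathds{1}}$ — and your appeal to invertibility of $a(\zeta)$ for every Fredholm $\zeta$ is indeed exactly the finiteness-of-volume clause in the definition of Fredholm, so no gap there.
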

\begin{proof}
The 2-cocycle condition is immediate from the associativity of compositions.
In fact, $\mu_{  \xi^{\prime\prime} } 
\circ
\left( \mu_{  \xi^\prime } \circ  \mu_{   \xi } \right)
=
\left( \mu_{   \xi^{\prime\prime} } \circ  \mu_{   \xi^{\prime} } \right) \circ \mu_{   \xi }$
implies, 
\begin{align}
( \omega_\mathcal{C} ( \xi, \xi^\prime) \circ \omega_\mathcal{C} ( \xi^\prime \circ \xi, \xi^{\prime\prime} ) ) \cdot \mu_{\xi^{\prime\prime} \circ \xi^\prime \circ \xi} = ( \omega_\mathcal{C} (\xi^\prime,  \xi^{\prime\prime} ) \circ \omega_\mathcal{C} ( \xi, \xi^{\prime\prime} \circ \xi^\prime ) ) \cdot \mu_{\xi^{\prime\prime} \circ \xi^\prime \circ \xi}. 
\end{align}
Here, we use Theorem \ref{201907311424} where the assumptions in Theorem are deduced from (Assumption 0, 1).
By Theorem \ref{201906281559}, we obtain 
\begin{align}
\omega_\mathcal{C} ( \xi, \xi^\prime) \circ \omega_\mathcal{C} ( \xi^\prime \circ \xi, \xi^{\prime\prime} )  = \omega_\mathcal{C} (\xi^\prime,  \xi^{\prime\prime} ) \circ \omega_\mathcal{C} ( \xi , \xi^{\prime\prime} \circ \xi^\prime )  .
\end{align}
It proves that the 2-cochain $\omega_\mathcal{C}$ is a 2-cocycle.

Moreover we have $\omega_\mathcal{C} ( id_B , \xi ) = id_\mathds{1} = \omega_\mathcal{C} ( \xi , id_A ) $ by definitions.
Hence, the 2-cocycle $\omega_\mathcal{C}$ is normalized.
It completes the proof.
\end{proof}

\begin{Defn}
\rm
We define a 2-cohomology class $o_\mathcal{C} \in H^2_{nor} ( \mathsf{Hopf}^\mathsf{bc,Fr} ( \mathcal{C} ) ; M^{-1}_\mathcal{C})$ by the class of the 2-cocycle $\omega_\mathcal{C}$.
\end{Defn}

\begin{prop}
\label{201908020848}
We have $o_\mathcal{C} = 1 \in H^2_{nor} ( \mathsf{Hopf}^\mathsf{bc,Fr} ( \mathcal{C} ) ; M^{-1}_\mathcal{C})$.
In particular, the induced 2-cohomology class $o_\mathcal{C} \in H^2_{nor} ( \mathsf{Hopf}^\mathsf{bc,Fr} ( \mathcal{C} ) ; Aut_\mathcal{C} (\mathds{1}) )$ by $M^{-1}_\mathcal{C} \subset Aut_\mathcal{C} ( \mathds{1} )$ is trivial.
\end{prop}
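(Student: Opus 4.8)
The plan is to realize the cocycle $\omega_\mathcal{C}$ as an explicit coboundary, so that its class $o_\mathcal{C}$ is trivial. The natural candidate primitive is the $1$-cochain $\lambda$ on $\mathsf{Hopf}^\mathsf{bc,Fr}(\mathcal{C})$ defined by $\lambda(\xi) = vol^{-1}(Ker(\xi))$ for a Fredholm homomorphism $\xi : A \to B$. First I would check that $\lambda$ is a well-defined normalized $1$-cochain valued in the group $M^{-1}_\mathcal{C}$: since $\xi$ is Fredholm, its kernel $Ker(\xi)$ is a bicommutative bismall Hopf monoid with finite volume, so $vol^{-1}(Ker(\xi))$ is an element of $M_\mathcal{C}$ (by Definition \ref{201912022344}) which is moreover invertible in $End_\mathcal{C}(\mathds{1})$, hence $\lambda(\xi) \in M^{-1}_\mathcal{C}$. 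Normalization is immediate: $\lambda(id_A) = vol^{-1}(Ker(id_A)) = vol^{-1}(\mathds{1}) = id_{\mathds{1}}$ by the first item of Proposition \ref{201908010934}. This is precisely the step where Fredholmness (rather than mere weak pre-Fredholmness) of $\xi$ is needed, and I expect it to be the only point requiring genuine attention.

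Next I would compute the coboundary $\delta\lambda$ and match it with $\omega_\mathcal{C}$. For composable Fredholm homomorphisms $\xi : A \to B$ and $\xi^\prime : B \to C$, both are in particular pre-Fredholm, so the first equation of Proposition \ref{201907312228} applies:
\begin{equation}
vol^{-1}(Ker(\xi)) \circ vol^{-1}(Ker(\xi^\prime)) = \langle cok(\xi) \circ ker(\xi^\prime) \rangle \circ vol^{-1}(Ker(\xi^\prime \circ \xi)) .
\end{equation}
Since $End_\mathcal{C}(\mathds{1})$ is a commutative monoid and $vol^{-1}(Ker(\xi^\prime \circ \xi))$ is invertible in it, this rearranges to $\langle cok(\xi) \circ ker(\xi^\prime) \rangle = vol^{-1}(Ker(\xi)) \circ vol^{-1}(Ker(\xi^\prime)) \circ vol^{-1}(Ker(\xi^\prime \circ \xi))^{-1}$, which (again using commutativity to rearrange factors) is exactly $(\delta\lambda)(\xi, \xi^\prime) = \lambda(\xi^\prime) \circ \lambda(\xi^\prime \circ \xi)^{-1} \circ \lambda(\xi)$ in the sign convention matching the cocycle identity recorded before this statement. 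By definition $\omega_\mathcal{C}(\xi, \xi^\prime) = \langle cok(\xi) \circ ker(\xi^\prime) \rangle$, so $\omega_\mathcal{C} = \delta\lambda$ is a coboundary and therefore $o_\mathcal{C} = [\omega_\mathcal{C}] = 1 \in H^2_{nor}(\mathsf{Hopf}^\mathsf{bc,Fr}(\mathcal{C}); M^{-1}_\mathcal{C})$.

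Finally, the ``in particular'' clause is formal: the inclusion $M^{-1}_\mathcal{C} \subset Aut_\mathcal{C}(\mathds{1})$ induces a homomorphism on normalized $2$-cohomology carrying $[\omega_\mathcal{C}]$ to the image class, and since $[\omega_\mathcal{C}]$ is already the unit, so is its image. I would remark in passing that one could equally well run the argument with $\lambda(\xi) = vol^{-1}(Cok(\xi))$ using the second equation of Proposition \ref{201907312228}. There is no substantive obstacle in this statement — all the weight has been pushed into Proposition \ref{201907312228}, which itself rests on Theorem \ref{201907311424} and Corollary \ref{201907312239}; the residual care is only bookkeeping of the cohomological sign conventions, which commutativity of $End_\mathcal{C}(\mathds{1})$ renders harmless.
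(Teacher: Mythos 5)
Your proposal is correct and follows exactly the paper's own argument: the paper's proof also takes the primitive $\upsilon(\xi) = vol^{-1}(Ker(\xi))$ and invokes the first equation of Proposition \ref{201907312228} to exhibit $\omega_\mathcal{C}$ as a coboundary. Your additional checks (that $\upsilon$ is a normalized $1$-cochain valued in $M^{-1}_\mathcal{C}$, and the functoriality of the coefficient inclusion for the ``in particular'' clause) are details the paper leaves implicit, and your remark that $vol^{-1}(Cok(\xi))$ works equally well matches the paper's later definition of $\upsilon_1$.
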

\begin{proof}
Choose $\upsilon$ defined by $\upsilon ( \xi ) = vol^{-1} ( Ker ( \xi ))$.
Then the first equation in Theorem \ref{201907312228} proves the claim.
\end{proof}

\begin{Defn}[Functorial integral]
\label{201907312157}
\rm
Let $\upsilon$ be a normalized 1-cochain with coefficients in the abelian group $Aut_\mathcal{C}(\mathds{1})$ such that $\delta^1 \upsilon = \omega_\mathcal{C}$.
Let $\xi : A \to B$ be a Fredholm bimonoid homomorphism between bicommutative Hopf monoids.
Recall $\mu_\xi$ in Definition \ref{201907312115}.
We define a morphism $\xi_! : B \to A$ in $\mathcal{C}$ by
\begin{align}
\xi_! \stackrel{\mathrm{def.}}{=} \upsilon ( \xi )^{-1} \cdot \mu_\xi . 
\end{align}
\end{Defn}

\begin{prop}
Let $A$ be a bicommutative Hopf monoid.
Note that the identity $id_A$ is Fredholm.
We have,
\begin{align}
(id_A)_! = id_A . 
\end{align}
\end{prop}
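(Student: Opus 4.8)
The plan is to unwind Definition~\ref{201907312157} and reduce everything to the single identification $\mu_{id_A} = id_A$. By definition, $(id_A)_! = \upsilon(id_A)^{-1} \cdot \mu_{id_A}$, where $\upsilon$ is the fixed normalized $1$-cochain with coefficients in $Aut_\mathcal{C}(\mathds{1})$ satisfying $\delta^1 \upsilon = \omega_\mathcal{C}$. Being a \emph{normalized} $1$-cochain, $\upsilon$ takes the value $id_\mathds{1}$ (the unit of $Aut_\mathcal{C}(\mathds{1})$) on every identity morphism, so $\upsilon(id_A)^{-1} = id_\mathds{1}$ and the scalar factor disappears: $(id_A)_! = \mu_{id_A}$.

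It then remains to show $\mu_{id_A} = id_A$. First, $\mu_{id_A}$ is defined: the identity $id_A$ is Fredholm by Proposition~\ref{202002271226}, hence pre-Fredholm, hence weakly pre-Fredholm, and under the standing (Assumption 0,1) it is well-decomposable (alternatively, combine Proposition~\ref{201907021116} with Proposition~\ref{201907311130}), so Definition~\ref{201907312115} applies. By Theorem~\ref{201907292156}, $\mu_{id_A}$ is a normalized generator integral along $id_A$; in particular it is a normalized integral along $id_A$. On the other hand, $id_A \in Int(id_A)$ by Proposition~\ref{201907311053}, and $id_A$ is trivially normalized as an integral along $id_A$, since the normalization axiom (\ref{Haar_fam_axiom5}) reduces to $id_A = id_A$. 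Thus $id_A$ and $\mu_{id_A}$ are both normalized integrals along $id_A$, and the uniqueness statement of Proposition~\ref{r_integral_l_integra_coincide_along} forces $\mu_{id_A} = id_A$. Combining with the previous paragraph, $(id_A)_! = \mu_{id_A} = id_A$.

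This is bookkeeping rather than a genuine difficulty, and there is no real obstacle. The two points to state carefully are that ``normalized'' for the $1$-cochain $\upsilon$ forces $\upsilon(id_A) = id_\mathds{1}$, and that $\mu_{id_A}$ is legitimately defined via Definition~\ref{201907312115} --- which is immediate once one observes that $Ker(id_A)$ and $Cok(id_A)$ are both the bismall unit bimonoid $\mathds{1}$.
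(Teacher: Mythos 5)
Your proof is correct and matches the paper's (very terse) argument: the paper's proof is just ``It follows from $\upsilon(id_A)=id_{\mathds{1}}$,'' leaving the identification $\mu_{id_A}=id_A$ implicit, which you supply via Proposition \ref{201907311053} and the uniqueness of normalized integrals (Proposition \ref{r_integral_l_integra_coincide_along}). Same approach, just with the bookkeeping written out.
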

\begin{proof}
It follows from $\upsilon (id_A ) = id_{\mathds{1}}$.
\end{proof}

\begin{prop}
\label{201908011125}
Let $A,B,C$ be bicommutative Hopf monoids.
Let $\xi : A \to B, \xi^\prime : B \to C$ be bimonoid homomorphisms.
If $\xi, \xi^\prime$ are Fredholm, then the composition $\xi^\prime \circ \xi$ is Fredholm and we have
\begin{align}
\label{201907312213}
(\xi^\prime \circ \xi )_!  = \xi_! \circ \xi^\prime_! . 
\end{align}
\end{prop}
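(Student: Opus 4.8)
The plan is to settle the Fredholmness of $\xi^\prime\circ\xi$ first, and then to reduce the identity (\ref{201907312213}) to the relation $\delta^1\upsilon=\omega_\mathcal{C}$ combined with Theorem \ref{201907311424}.

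First I would note that $\xi^\prime\circ\xi$ is Fredholm: this is exactly the content of Proposition \ref{202002271226}(2) (equivalently, by Proposition \ref{201907312228} the composition of pre-Fredholm homomorphisms is pre-Fredholm, and by Lemma \ref{201911232047} applied with $\mathcal{B}=\mathsf{Hopf}^\mathsf{bc}(\mathcal{C})$, $\mathcal{A}=\mathsf{Hopf}^\mathsf{bc,bs}(\mathcal{C})$, $v=vol^{-1}$, the snake-type exact sequence (\ref{201907312237}) forces $vol^{-1}(Ker(\xi^\prime\circ\xi))$ and $vol^{-1}(Cok(\xi^\prime\circ\xi))$ to be invertible once those of $\xi,\xi^\prime$ are). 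In particular $\mu_{\xi^\prime\circ\xi}$ is defined by Definition \ref{201907312115} and $(\xi^\prime\circ\xi)_!$ by Definition \ref{201907312157}.

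Then I would compute $\xi_!\circ\xi^\prime_!$ directly. Unwinding Definition \ref{201907312157}, $\xi_!=\upsilon(\xi)^{-1}\cdot\mu_\xi$ and $\xi^\prime_!=\upsilon(\xi^\prime)^{-1}\cdot\mu_{\xi^\prime}$; since scalar multiplication by elements of $End_\mathcal{C}(\mathds{1})$ commutes with composition of morphisms (the same fact already used silently in the proof that $\omega_\mathcal{C}$ is a $2$-cocycle),
\[
\xi_!\circ\xi^\prime_! \;=\; \bigl(\upsilon(\xi)^{-1}\circ\upsilon(\xi^\prime)^{-1}\bigr)\cdot\bigl(\mu_\xi\circ\mu_{\xi^\prime}\bigr).
\]
Now apply Theorem \ref{201907311424} to the composable pair $\xi,\xi^\prime$ — its hypotheses (binormality, (co)normality, existence of normalized generator integrals $\mu_\xi,\mu_{\xi^\prime},\mu_{\xi^\prime\circ\xi}$) hold here because, under (Assumption 0,1,2), every homomorphism in $\mathsf{Hopf}^\mathsf{bc}(\mathcal{C})$ is well-decomposable and binormal, and "Fredholm" includes pre-Fredholm — to obtain $\mu_\xi\circ\mu_{\xi^\prime}=\langle cok(\xi)\circ ker(\xi^\prime)\rangle\cdot\mu_{\xi^\prime\circ\xi}=\omega_\mathcal{C}(\xi,\xi^\prime)\cdot\mu_{\xi^\prime\circ\xi}$. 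Hence
\[
\xi_!\circ\xi^\prime_! \;=\; \bigl(\upsilon(\xi)^{-1}\circ\upsilon(\xi^\prime)^{-1}\circ\omega_\mathcal{C}(\xi,\xi^\prime)\bigr)\cdot\mu_{\xi^\prime\circ\xi}.
\]
Finally, since $\upsilon$ was chosen with $\delta^1\upsilon=\omega_\mathcal{C}$, one has $\omega_\mathcal{C}(\xi,\xi^\prime)=\upsilon(\xi^\prime)\circ\upsilon(\xi^\prime\circ\xi)^{-1}\circ\upsilon(\xi)$ in $Aut_\mathcal{C}(\mathds{1})$; substituting this and cancelling (legitimately, because $End_\mathcal{C}(\mathds{1})$ is a commutative monoid) leaves the scalar $\upsilon(\xi^\prime\circ\xi)^{-1}$, so that $\xi_!\circ\xi^\prime_!=\upsilon(\xi^\prime\circ\xi)^{-1}\cdot\mu_{\xi^\prime\circ\xi}=(\xi^\prime\circ\xi)_!$, which is (\ref{201907312213}).

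The computation itself is just bookkeeping; the only genuine points of care are (i) checking that Theorem \ref{201907311424} applies in the present setting — which is where (Assumption 0,1,2) enter, exactly as in the preceding remarks and in the proof of the cocycle property of $\omega_\mathcal{C}$ — and (ii) fixing the convention for $\delta^1$ so the scalars cancel, which is harmless since all scalars commute. I expect (i) to be the only nontrivial step, and it is already dealt with in the remarks preceding subsection \ref{201911161328}.
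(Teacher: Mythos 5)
Your proposal is correct and follows essentially the same route as the paper: the paper's proof is exactly the computation $(\xi^\prime\circ\xi)_! = \upsilon(\xi^\prime\circ\xi)^{-1}\cdot\mu_{\xi^\prime\circ\xi} = (\upsilon(\xi^\prime\circ\xi)^{-1}\circ\omega_\mathcal{C}(\xi,\xi^\prime)^{-1})\cdot(\mu_\xi\circ\mu_{\xi^\prime}) = (\upsilon(\xi)^{-1}\circ\upsilon(\xi^\prime)^{-1})\cdot(\mu_\xi\circ\mu_{\xi^\prime}) = \xi_!\circ\xi^\prime_!$, using Theorem \ref{201907311424} (via Theorem \ref{201907312228}) and $\delta^1\upsilon=\omega_\mathcal{C}$, just written in the opposite direction from yours. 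Your additional care about Fredholmness of the composition and the applicability of Theorem \ref{201907311424} is consistent with what the paper relies on (Proposition \ref{202002271226} and the remarks following (Assumption 0,1,2)).
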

\begin{proof}
By Theorem \ref{201907312228}, we have
\begin{align}
(\xi^\prime \circ \xi )_! 
&= 
\upsilon ( \xi^\prime \circ \xi )^{-1} \cdot \mu_{\xi^\prime \circ \xi}  \\
&= 
\left( \upsilon ( \xi^\prime \circ \xi )^{-1} \circ \omega (\xi^\prime , \xi )^{-1} \right) \cdot \left( \mu_\xi \circ \mu_{\xi^\prime} \right)  \\
&=
\left( \upsilon ( \xi )^{-1} \circ \upsilon ( \xi^\prime )^{-1} \right) \cdot \left( \mu_\xi \circ \mu_{\xi^\prime} \right)  \\
&=
\xi_! \circ \xi^\prime_! . 
\end{align}
\end{proof}

\begin{Defn}
\rm
We define a normalized 1-cochain $\upsilon_0$ with coefficients in $M^{-1}_\mathcal{C}$.
For a Fredholm homomorphism $\xi$, we define $\upsilon_0 ( \xi ) \stackrel{\mathrm{def.}}{=} vol^{-1} ( Ker ( \xi ) )$.
We define another normalized 1-cochain $\upsilon_1$ with coefficients in $M^{-1}_\mathcal{C}$ by $\upsilon_1 ( \xi ) \stackrel{\mathrm{def.}}{=} vol^{-1} ( Cok ( \xi ) )$.
They satisfy $\delta^1 \upsilon_0 = \omega_\mathcal{C} = \delta^1 \upsilon_1$.
\end{Defn}

\begin{theorem}
\label{201908011123}
Consider $\upsilon = \upsilon_0$ ($\upsilon = \upsilon_1$, resp.) in Definition \ref{201907312157}.
Let $A,B, C, D$ be bicommutative Hopf monoids.
Consider a commutative diagram of Fredholm bimonoid homomorphisms.
Suppose that 
\begin{itemize}
\item
the induced bimonoid homomorphism $Ker ( \varphi ) \to Ker ( \psi )$ is an isomorphism (an epimorphism resp.) in $\mathsf{Hopf}^\mathsf{bc} ( \mathcal{C} )$.
\item
the induced bimonoid homomorphism $Cok ( \varphi ) \to Cok ( \psi )$ is a monomorphism (an isomorphism, resp.) in $\mathsf{Hopf}^\mathsf{bc} ( \mathcal{C} )$.
\end{itemize}
Then we have $\varphi^\prime \circ \varphi_! = \psi_! \circ \psi^\prime$.
$$
\begin{tikzcd}
A \ar[r, "\varphi^\prime"] \ar[d, "\varphi"] & C \ar[d, "\psi"] \\
B \ar[r, "\psi^\prime"] & D
\end{tikzcd}
$$
\end{theorem}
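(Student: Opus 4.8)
The plan is to reduce Theorem \ref{201908011123} to Theorem \ref{key_theorem_existence_integral} by correcting for the normalization cochain $\upsilon$. Recall that $\xi_! = \upsilon(\xi)^{-1}\cdot\mu_\xi$ by Definition \ref{201907312157}, so the desired identity $\varphi^\prime\circ\varphi_! = \psi_!\circ\psi^\prime$ is equivalent to
\begin{align}
\upsilon(\varphi)^{-1}\cdot\left(\varphi^\prime\circ\mu_\varphi\right) = \upsilon(\psi)^{-1}\cdot\left(\mu_\psi\circ\psi^\prime\right).
\end{align}
Here I use that scalars in $End_\mathcal{C}(\mathds{1})$ act centrally on morphisms via the monoidal structure, so they commute past $\varphi^\prime$ and $\psi^\prime$. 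Thus it suffices to prove two things: first, that $\varphi^\prime\circ\mu_\varphi = \mu_\psi\circ\psi^\prime$, and second, that $\upsilon(\varphi) = \upsilon(\psi)$ for the relevant choice of $\upsilon$.

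**Step 1: the underlying integral identity.** First I would verify that the hypotheses of Theorem \ref{key_theorem_existence_integral} are met. The homomorphisms $\varphi,\psi$ are Fredholm, hence pre-Fredholm, and by (Assumption 0,1) every homomorphism in $\mathsf{Hopf}^\mathsf{bc}(\mathcal{C})$ is well-decomposable, so $\varphi,\psi$ are weakly well-decomposable and weakly pre-Fredholm. For condition (a) of Theorem \ref{key_theorem_existence_integral}, I invoke the remark following that theorem: since $Ker(\varphi)$ and $Ker(\psi)$ are bismall, if $\varphi^\prime_0 : Ker(\varphi)\to Ker(\psi)$ is an epimorphism in $\mathsf{Hopf}^\mathsf{bc}(\mathcal{C})$ then the normalized generator integral along $\varphi^\prime_0$ (which exists by Theorem \ref{201912050852}) is a section of $\varphi^\prime_0$ in $\mathcal{C}$; an isomorphism trivially has a section. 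Dually, condition (b) follows when $\psi^\prime_0 : Cok(\varphi)\to Cok(\psi)$ is a monomorphism (resp. isomorphism), using the dual of Lemma \ref{201908020749}. Hence Theorem \ref{key_theorem_existence_integral} gives $\mu_\psi\circ\psi^\prime = \varphi^\prime\circ\mu_\varphi$ in both the case $\upsilon=\upsilon_0$ (where we assume $\varphi^\prime_0$ is an isomorphism and $\psi^\prime_0$ a monomorphism) and the case $\upsilon=\upsilon_1$ (where $\varphi^\prime_0$ is an epimorphism and $\psi^\prime_0$ an isomorphism).

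**Step 2: equality of the normalization scalars.** Next I would show $\upsilon(\varphi) = \upsilon(\psi)$. For $\upsilon = \upsilon_0$ we have $\upsilon_0(\varphi) = vol^{-1}(Ker(\varphi))$ and $\upsilon_0(\psi) = vol^{-1}(Ker(\psi))$; since $\varphi^\prime_0 : Ker(\varphi)\to Ker(\psi)$ is assumed to be an isomorphism in $\mathsf{Hopf}^\mathsf{bc}(\mathcal{C})$, Proposition \ref{201908010934} gives $vol^{-1}(Ker(\varphi)) = vol^{-1}(Ker(\psi))$. For $\upsilon = \upsilon_1$ the analogous argument uses that $\psi^\prime_0 : Cok(\varphi)\to Cok(\psi)$ is an isomorphism, together with $\upsilon_1(\xi) = vol^{-1}(Cok(\xi))$ and Proposition \ref{201908010934}. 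Combining Step 1 and Step 2 with the displayed reduction above completes the proof.

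**Expected main obstacle.** The routine part is the scalar bookkeeping; the delicate point is checking that conditions (a), (b) of Theorem \ref{key_theorem_existence_integral} really are implied by the epi/mono/iso hypotheses on $\varphi^\prime_0,\psi^\prime_0$ — i.e. that an epimorphism of bismall bicommutative Hopf monoids admits a set-theoretic section realized by the normalized generator integral along it. This is exactly the content flagged in the remark after Theorem \ref{key_theorem_existence_integral} (and in the unnamed "Lemma" referenced there), and I would make sure that lemma is in hand: the normalized generator integral $\mu_{\varphi^\prime_0}$ along an epimorphism $\varphi^\prime_0$ satisfies $\varphi^\prime_0\circ\mu_{\varphi^\prime_0} = id$ because $\varphi^\prime_0\circ\mu_{\varphi^\prime_0}\circ\varphi^\prime_0 = \varphi^\prime_0$ by normalization and $\varphi^\prime_0$ is epi. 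The dual statement handles $\psi^\prime_0$. Everything else is formal.
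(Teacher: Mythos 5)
Your proposal is correct and follows essentially the same route as the paper: verify hypotheses (a), (b) of Theorem \ref{key_theorem_existence_integral} to get $\mu_\psi\circ\psi^\prime=\varphi^\prime\circ\mu_\varphi$, then observe that the isomorphism hypothesis on the kernels (resp.\ cokernels) forces $\upsilon_0(\varphi)=\upsilon_0(\psi)$ (resp.\ $\upsilon_1(\varphi)=\upsilon_1(\psi)$) by isomorphism-invariance of $vol^{-1}$, and divide out the scalar.

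One step deserves tightening. To produce the retract of the monomorphism $\psi^\prime_0\colon Cok(\varphi)\to Cok(\psi)$ you cancel $\psi^\prime_0$ on the left in $\psi^\prime_0\circ\mu_{\psi^\prime_0}\circ\psi^\prime_0=\psi^\prime_0$; but $\psi^\prime_0$ is only assumed to be a monomorphism in $\mathsf{Hopf}^\mathsf{bc}(\mathcal{C})$, while $\mu_{\psi^\prime_0}\circ\psi^\prime_0$ and $id$ are merely morphisms of $\mathcal{C}$, so left-cancellation is not automatic (the same caveat applies to your epi-cancellation in the $\upsilon_1$ case). The paper avoids this: since $\mathsf{Hopf}^\mathsf{bc}(\mathcal{C})$ is abelian, the monomorphism $\psi^\prime_0$ is realized as $ker(\xi)$ for some homomorphism $\xi$, and Lemma \ref{201907311900} then gives the explicit retract $\tilde{\mu}_{ker(\xi)}\circ\psi^\prime_0=id_{Cok(\varphi)}$ in $\mathcal{C}$; dually for the epimorphism case one uses $cok$ and the section $\tilde{\mu}_{cok(\xi)}$. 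With that substitution your argument matches the paper's proof.
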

\begin{proof}
We prove the case $\upsilon = \upsilon_0$ and leave to the readers the case $\upsilon = \upsilon_1$.
Note that there exists a section of the induced bimonoid homomorphism $\varphi^{\prime\prime} : Ker ( \varphi ) \to Ker ( \psi )$ in $\mathcal{C}$ since $\varphi^{\prime\prime}$ is an isomorphism in $\mathsf{Hopf}^\mathsf{bc} (\mathcal{C})$, in particular in $\mathcal{C}$.
Moreover, the induced morphism $\psi^{\prime\prime} : Cok ( \varphi) \to Cok ( \psi)$ has a retract in $\mathcal{C}$.
In fact, since $\psi^{\prime\prime}$ is a monomorphism, there exists a morphism $\xi$ in $\mathsf{Hopf}^\mathsf{bc}(\mathcal{C})$ such that $ker ( \xi ) = \psi^{\prime\prime}$.
By Lemma \ref{201907311900}, $\tilde{\mu}_{ker(\xi)} \circ \psi^{\prime\prime} = id_{Cok (\varphi )}$.

By Theorem \ref{key_theorem_existence_integral}, we have $\mu_\psi \circ \psi^\prime = \varphi^\prime \circ \mu_\varphi$.
Since $\upsilon_0 ( \varphi ) = vol^{-1} (Ker (\varphi))$, $\upsilon_0 ( \psi ) = vol^{-1} (Ker (\psi))$ and $\varphi^{\prime\prime}$ is an isomorphism, we have $\upsilon_0 ( \varphi ) = \upsilon_0 ( \psi )$.
By definitions, we obtain $ \psi_! \circ \psi^\prime = \varphi^\prime \circ \varphi_!$.
\end{proof}


\bibliography{Note_on_integrals}{}
\bibliographystyle{plain}

\end{document}